\newcommand{\BigO}[1]{\ensuremath{\operatorname{O}\bigl(#1\bigr)}}
\newcounter{dummy} \numberwithin{dummy}{section}
\newtheorem{theorem}[dummy]{Theorem}
\newtheorem{definition}[dummy]{Definition}
\newtheorem{proposition}[dummy]{Proposition}
\newtheorem{corollary}[dummy]{Corollary}
\newtheorem{conjecture}[dummy]{Conjecture}
\newtheorem{question}[dummy]{Question}
\newtheorem{remark}[dummy]{Remark}
\newtheorem{lemma}[dummy]{Lemma}
\date{}
\newcommand\footnoteref[1]{\protected@xdef\@thefnmark{\ref{#1}}\@footnotemark}
\newlength\figureheight \newlength\figurewidth
\newcommand{\mytilde}{\raise.17ex\hbox{$\scriptstyle\mathtt{\sim}$}}
\title{Interference Queueing Networks on  Grids}
\author{%

	Abishek Sankararaman \footnote{Department of ECE, The University of Texas at Austin. Email - abishek@utexas.edu.},\hspace{4mm}
	Fran\c cois Baccelli, \footnote{Department of Mathematics and ECE, The University of Texas at Austin. Email - baccelli@math.utexas.edu.}
	\hspace{4mm}
	Sergey Foss \footnote{School of Mathematical Sciences, Heriot-Watt University,
		Edinburgh and Novosibirsk State University and
		Sobolev Institute of Mathematics, Novosibirsk, Russia. Email - S.foss@hw.ac.uk}

}
\begin{document}
	\maketitle

\begin{abstract}

Consider a countably infinite collection of interacting queues, with a queue located at each point of the $d$-dimensional integer grid, having independent Poisson arrivals, but dependent service rates.
The service discipline is of the processor sharing type,
with the service rate in each queue slowed down, when the neighboring queues have a larger workload. The interactions are {\color{black} translation invariant in space and is} neither of the Jackson Networks type, nor of the mean-field type.
Coupling and percolation techniques are first used to show that this dynamics has well defined trajectories.
Coupling from the past techniques are then proposed to build its minimal stationary regime. The rate conservation principle of Palm calculus is then used to identify the stability condition of this system, where the notion of stability is appropriately defined for an infinite dimensional process. We show that the identified condition is also necessary in certain special cases and conjecture it to be true in all cases.
Remarkably, the rate conservation principle also provides a closed form expression for the mean queue size. When the stability condition holds, this minimal solution is the unique translation invariant stationary regime. In addition, there exists a range of small initial conditions for which the dynamics is attracted to the minimal regime. Nevertheless, there exists another range of larger though finite initial conditions for which the dynamics diverges, even though stability criterion holds.

%provided it has finite second moments, and this is the case if the arrival rate is small enough.

%More precisely, the service rate is the signal to interference ratio of wireless network theory.This coupling is parameterized by a symmetric and translation invariant interference sequence. The dynamics is infinite dimensional Markov, with each queue having a non compact state space.It is neither reversible nor asymptotically product form, as in the mean-field setting.This regime is the one obtained when starting from the all empty initial condition in the distant past. The rate conservation principle of Palm calculus is then used to identify the stability condition of this system, namely the condition on the interference sequence and arrival rates guaranteeing the finiteness of this minimal regime. 

\end{abstract}

%\begin{keyword}[class=MSC]
%%	\kwd[Primary ]{60K35,60J25,60D05,68M20}
%	\kwd{60K35}
%	\kwd{60J25}
%	\kwd{60D05}
%	\kwd{90B18}
%	\kwd{68M20}
%%	\kwd[; secondary ]{90B18}
%\end{keyword}
%
%\begin{keyword}
%	\kwd{rate conservation principle}
%	\kwd{mass transport theorem}
%	\kwd{monotonicity}
%	\kwd{positive correlation}
%	\kwd{percolation}
%	\kwd{stationary distribution}
%	\kwd{coupling from the past}
%	\kwd{Loynes' construction}
%	\kwd{particle systems}
%	\kwd{interacting queues}
%	\kwd{queueing theory}
%	\kwd{stability and instability}
%	\kwd{information theory}
%	\kwd{interference field}
%	\kwd{wireless network}
%\end{keyword}

%\keywords{,  Rate Conservation, Mass Transport, Monotonicity, Coupling from the Past, Particle Systems.}

%\end{frontmatter}

\section{Introduction}

In this paper, we consider a spatial queueing network consisting of an infinite collection of processor sharing queues {
interacting with each other in a translation invariant way}. In our model, there is a queue located at each grid
point of $\mathbb{Z}^d$, for some $d \geq 1$. The queues evolve in continuous time and serve the customers according
to a generalized processor-sharing discipline. The arrivals to the queues form a collection of i.i.d. Poisson Point
Processes of rate $\lambda >0$. Thus, the total arrival rate to the network is infinite since there is an infinite
number of queues. The different queues interact through their departure rates. We model the interactions through
an \emph{interference sequence} that we denote by $\{a_i\}_{i \in \mathbb{Z}^d}$. It is such that $a_i \geq 0$
and $a_i = a_{-i}$ for all $i \in \mathbb{Z}^d$. We also assume that this sequence is finitely supported,
i.e., $L:= \max \{||i||_{\infty}: a_i > 0\} < \infty$. For ease of exposition, 
we also assume that $a_0 = 1$ in certain sections of the paper, although our model and its analysis can be carried out for any non-zero value
of $a_0$. For any $t\in \mathbb{R}$, let $\{x_i(t)\}_{i \in\mathbb{Z}^d} \in \mathbb{N}^{\mathbb{Z}^d}$
denote the queue lengths at time $t$ in the network, i.e., the state of the system at time $t$.
Then \emph{the interference experienced by a customer located in queue $i$ at time $t$} is defined as 
$\sum_{j \in \mathbb{Z}^d}a_j x_{i-j}(t)$, i.e., some weighted sum of queue lengths of the
\emph{neighbors} of queue $i$. Observe that the neighborhood definition is translation invariant.
Conditional on the queue lengths $\{x_i(t)\}_{i \in \mathbb{Z}^d}$ at time $t$, the instantaneous
departure rate from any queue $i$ at time $t$ is given by $\frac{x_i(t)}{\sum_{j \in \mathbb{Z}^d} a_j x_{i-j}(t)}$,
with $0/0$ interpreted as being equal to $0$. Note that since the interference sequence $\{a_i\}_{i \in \mathbb{Z}^d}$
is non-negative, and $a_0 = 1$, for all $t \in \mathbb{R}$ and all $i \in \mathbb{Z}^d$,
the instantaneous departure rate from queue $i$ at time $t$ is $\frac{x_i(t)}{\sum_{j \in \mathbb{Z}^d} a_j x_{i-j}(t)} \in [0,1]$
and is hence bounded. Since $\{a_i\}_{i \in \mathbb{Z}^d}$ is non-negative, the rate of service at a queue is reduced
if its `neighbors' have larger queue lengths. This is meant to capture the fundamental spatiotemporal dynamics
in wireless networks where the instantaneous rate of a link is reduced if there are a lot of other links
accessing the spectrum nearby, due to an increase of interference. {\color{black} In the rest of the paper, we shall always assume that there exists at least one $i \in \mathbb{Z}^d \setminus \{0\}$ such that $a_i > 0$. For otherwise, the system is `trivial', as the queues evolve independent of each other without any interaction amongst them, according to a standard $M/M/1$ dynamics with unit service rate. Observe that the Markovian dynamics of our model is non-reversible and does not fall under the class of generalized Jackson networks. This model is also not of the mean-field interacting queues type such as the supermarket model \cite{dobrushin}, which admit a form of `asymptotic independence' across queues, as the system sizes get large.}
\\

This model is motivated by fundamental design questions in wireless networks.
The motivation for this particular model comes from certain mathematical questions about
such wireless dynamics left open in \cite{sbd_tit}. In our model, we view the queues {\color{black}as representing}
`regions of space' and the customers in each queue to be the wireless links in that region of space.
One can interpret a link or customer to be a transmitter--receiver pair, with the transmitter
transmitting a file to its intended receiver. For simplicity, we assume that the links are very tiny,
i.e., a single customer represents both the transmitter and receiver. The links share the wireless spectrum
in space and hence they impact each other's performance due to interference.
We assume that links arrive `uniformly' in space, and each transmitter has a file
whose length is exponentially distributed to transmit to its receiver.
A link departs and leaves the network once the transmitter has finished sending
the file to its receiver. We model the instantaneous rate of communication any 
transmitter can send to its own receiver as being inversely proportional to the interference
seen at the receiver, i.e., as $\frac{1}{\sum_{j \in \mathbb{Z}^d}a_jx_{i-j}(t)}$.
This can be viewed as the low `Signal-to-Noise-and-Interference-Ratio (SINR)' channel capacity
of a point-to-point Gaussian channel (see \cite{cover_book}). 
Since there are $x_i(t)$ links simultaneously transmitting, and each of them has an independent
unit mean exponentially distributed file, the rate at which a link departs is then 
$\frac{x_i(t)}{\sum_{j \in \mathbb{Z}^d}a_jx_{i-j}(t)}$. The instantaneous rate of transmission of a link is lowered
if it is in a `crowded' area of space, due to interference, and hence it takes longer for this
link to complete the transmission of its file. In the meantime, it is more likely that a new link
will arrive at some point nearby before it finishes transmitting, further reducing the rate of transmission.
Understanding how the network evolves due to such spatio temporal interference dynamics is crucial
in designing and provisioning of wireless systems (see discussions in \cite{sbd_tit}).
\\

The central thrust of this paper is to understand when the above described model is \emph{stable}. 
By {\it stability}, we mean {\it stabilization in time} of the distribution of the infinite-dimensional queue-length vector. Traditionally, this means that the distribution of any finite-dimensional restriction of the vector converges weakly to the limiting one. In fact, in this paper, we introduce an appropriate coupling construction to investigate a stronger version of the sample-path stability (or boundedness). We show the coupling-convergence of finite-dimensional vectors (that imply convergence in the total variation norm), using the so-called Loynes backward representation of the system dynamics (see, e.g., \cite{loynes}). The latter means that we fix initial (non-random) values of the queue-length process, start with this values at time $-t$ and observe the queue lengths $\{x_{i;t}(0)\}_{i\in {\mathbb{Z}}^d}$ at time $0$. Then we let $t$ tend to infinity. We begin with all-zero initial values. We establish certain monotonicity properties to conclude that, in the case of zero initial values, $x_{i;t}(0)$ increases a.s. with $t$, for any $i$. Therefore, the limit $x_i\equiv x_{i;\infty}(0) =\lim_{t\to\infty} x_{i;t}(0)$ exists {\color{black}a.s..} It may be either finite or infinite, where each occurs with probability either zero or one (See Lemma \ref{lem_01_law} in Section \ref{sec:math_framework}).  This is the {\it minimal} stationary regime:   any other stationary  regime, say $\{y_i\}$ must satisfy $x_i\le y_i$, for all $i$.  Then we identify a {\it sufficient} condition for stability, i.e., for the finiteness of the minimal stationary regime. Remarkably, we are able to provide an {\it exact} formula for the mean queue length of the minimal stationary solution. 

\begin{theorem}
	If $\lambda < \frac{1}{\sum_{j \in \mathbb{Z}^d}a_j}$, then the system $\{x_i(\cdot)\}_{i \in \mathbb{Z}^d}$ is stable. Furthermore, for all $i \in \mathbb{Z}^d$ and $s \in \mathbb{R}$,  the minimal stationary solution $\{x_{i;\infty}(s)\}_{i \in \mathbb{Z}^d}$ satisfies
		\begin{align*}
		\mathbb{E}[x_{i;\infty}(s)] &= \frac{\lambda a_0}{1-\lambda \sum_{j \in \mathbb{Z}^d} a_j}.  
		\end{align*}
		\label{thm:main_stability}
\end{theorem}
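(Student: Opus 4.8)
The plan is to split the statement into two parts — (i) \emph{stability}, i.e.\ $\mathbb{P}(x_{0;\infty}(0)<\infty)=1$, and (ii) the \emph{exact mean} — and to derive both from one computation: a rate‑conservation balance for the functional $Z(t)=x_0(t)\,I_0(t)$, where $I_0(t):=\sum_{j\in\mathbb{Z}^d}a_j x_j(t)$ is the interference at the origin (by $a_j=a_{-j}$ this coincides with $\sum_j a_j x_{-j}(t)$, the quantity in the model description, and $\mathbb{E}[I_0]=\bigl(\sum_j a_j\bigr)\mathbb{E}[x_0]$ by translation invariance). The point that makes the argument work is that this identity can be written not only for the stationary minimal regime but already for the empty‑started finite‑horizon processes $\{x_{i;t}(\cdot)\}$ of the Loynes construction, because their law is still space‑translation invariant even out of equilibrium (and for each fixed $t$ every moment of $x_{0;t}(0)$ is finite, being dominated by a Poisson count of arrivals).

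First I would balance the $\pm1$ jumps of $x_0(\cdot)$ — arrivals at rate $\lambda$, departures at rate $x_0/I_0$ (with $0/0=0$) — which gives $\frac{d}{ds}\mathbb{E}[x_{0;t}(s)]=\lambda-\mathbb{E}[x_{0;t}(s)/I_{0;t}(s)]$ for the finite‑horizon process, and $\lambda=\mathbb{E}[x_0/I_0]$ in the stationary regime by the Rate Conservation Principle. Then I would apply the same principle to $Z(t)=x_0(t)I_0(t)$: its jumps come from arrivals and departures at the origin and at the $\BigO{L^d}$ queues within distance $L$ of it; after collecting the contributions and using translation invariance to rewrite $\sum_{k\neq0}a_k\mathbb{E}[x_0 x_k/I_k]=\mathbb{E}\bigl[\tfrac{x_0}{I_0}\sum_{k\neq0}a_k x_k\bigr]=\mathbb{E}[x_0(I_0-a_0x_0)/I_0]$, together with the first identity, the second‑moment terms $\mathbb{E}[x_0^2/I_0]$ cancel and one is left with
\[
\frac{d}{ds}\Bigl(\mathbb{E}[x_{0;t}(s)I_{0;t}(s)]+a_0\,\mathbb{E}[x_{0;t}(s)]\Bigr)=2\lambda a_0-2\Bigl(1-\lambda\sum_{j\in\mathbb{Z}^d}a_j\Bigr)\mathbb{E}[x_{0;t}(s)] ,
\]
whose stationary version is obtained by setting the left‑hand side to zero.

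For stability, I would integrate the displayed identity over the warm‑up window $s\in[-t,0]$ starting from the all‑zero state. Since its left‑hand side is the increment from $0$ of a nonnegative quantity, this yields $\int_{-t}^{0}\mathbb{E}[x_{0;t}(s)]\,ds\le \frac{\lambda a_0 t}{1-\lambda\sum_j a_j}$; using stationarity of the driving Poisson processes, $x_{0;t}(s)\overset{d}{=}x_{0;t+s}(0)$, this becomes $\int_0^{t}\mathbb{E}[x_{0;u}(0)]\,du\le \frac{\lambda a_0 t}{1-\lambda\sum_j a_j}$ for every $t$. By the monotonicity established in the Loynes construction, $u\mapsto\mathbb{E}[x_{0;u}(0)]$ is nondecreasing, and a nondecreasing function whose Cesàro averages are bounded by a constant is itself bounded by that constant; hence $\mathbb{E}[x_{0;\infty}(0)]=\lim_{u\to\infty}\mathbb{E}[x_{0;u}(0)]\le\frac{\lambda a_0}{1-\lambda\sum_j a_j}<\infty$. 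Therefore $x_{0;\infty}(0)<\infty$ a.s., and Lemma~\ref{lem_01_law} upgrades this to stability.

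Once stability is in hand, the minimal regime $\{x_{i;\infty}(\cdot)\}$ is stationary in time, translation invariant in space, with $\mathbb{E}[x_0]<\infty$; the Rate Conservation Principle applied to $x_0(t)$ gives $\mathbb{E}[x_0/I_0]=\lambda$, and then, applied to $x_0(t)I_0(t)$ with vanishing left‑hand side, gives $0=2\lambda a_0-2\bigl(1-\lambda\sum_j a_j\bigr)\mathbb{E}[x_0]$, i.e.\ $\mathbb{E}[x_0]=\frac{\lambda a_0}{1-\lambda\sum_j a_j}$; translation invariance and time‑stationarity then extend this to every $i$ and $s$. I expect the main obstacle to be not this (essentially mechanical) bookkeeping but the analytic groundwork: legitimizing the Rate Conservation Principle for $x_0 I_0$ requires $\mathbb{E}[x_0 I_0]<\infty$, equivalently $\mathbb{E}[x_0^2]<\infty$ — without it the same computation only yields the inequality $\mathbb{E}[x_0]\le\frac{\lambda a_0}{1-\lambda\sum_j a_j}$ (and, via the displayed identity, shows that a strict inequality would force $\mathbb{E}[x_0 I_0]=\infty$). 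I would obtain this second‑moment bound by running the finite‑horizon identity one degree higher, for a quartic functional such as $\bigl(x_0 I_0\bigr)^2$ (equivalently by a quadratic Foster–Lyapunov estimate), to get $\sup_t\mathbb{E}[x_{0;t}(0)^2]<\infty$; one must also check the routine hypotheses of the principle (piecewise‑constant trajectories, finite jump intensities, integrability of the test functionals) and the validity of differentiating under the expectation for the finite‑horizon processes.
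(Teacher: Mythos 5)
Your calculation of the rate‑conservation identity is correct (I reproduced it), including the nice observation that adding $a_0\,\mathbb{E}[x_0]$ to the functional makes the $\mathbb{E}[R_0]$ terms cancel, and the mass‑transport swap $\mathbb{E}[x_0\sum_{k\neq0}a_kR_k]=\mathbb{E}[R_0\sum_{k\neq0}a_kx_k]$ is legitimate for the finite‑horizon empty‑started process because its law is space‑translation invariant and all its moments are Poisson‑dominated. The resulting Ces\`aro/monotonicity argument for the upper bound $\mathbb{E}[x_{0;\infty}(0)]\le\frac{\lambda a_0}{1-\lambda\sum_j a_j}$ — integrate the nonnegative increment of the functional from the all‑zero state, change variables $x_{0;t}(s)\overset{d}{=}x_{0;t+s}(0)$, use monotonicity of $u\mapsto\mathbb{E}[x_{0;u}(0)]$ — is correct and, as far as I can see, genuinely different from and more direct than the paper's route. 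The paper first proves positive recurrence of the torus truncation via a slotted dominating $GI/GI/1$ Lyapunov construction (Theorem~\ref{thm:finite_PR}), applies rate conservation \emph{in stationarity} there (Lemma~\ref{lem:rcl_formula_finite}), then passes to $n\to\infty$ with a delicate interchange of limits. You get the same upper bound in one stroke, working directly in infinite volume out of equilibrium.

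However, the lower bound — and hence the exact identity claimed in Theorem~\ref{thm:main_stability} — has a genuine gap. Applying rate conservation \emph{to the stationary infinite system} and reading off equality from $\frac{d}{dt}\mathbb{E}[x_0I_0]=0$ requires $\mathbb{E}[x_0I_0]<\infty$, i.e.\ $\mathbb{E}[x_0^2]<\infty$. You flag this, and even observe correctly that a strict inequality $\mathbb{E}[x_{0;\infty}(0)]<\frac{\lambda a_0}{1-\lambda\sum_j a_j}$ would force $\mathbb{E}[x_{0;t}(0)I_{0;t}(0)]$ to grow linearly in $t$, hence $\mathbb{E}[x_0I_0]=\infty$ in the limit. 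But this only reformulates the problem; it does not resolve it, since $I_0$ has finite mean and yet $\mathbb{E}[x_0I_0]$ may be infinite through heavy‑tailed positive dependence. Your proposed repair — ``running the finite‑horizon identity one degree higher'' (a quartic functional / quadratic Lyapunov) — does not close the gap: the paper's own second‑moment computation (Lemma~\ref{lem:2ndmom_torus}) hits precisely this obstruction. The cross term $\mathbb{E}\bigl[y_0^2\sum_j a_jR_j\bigr]$ cannot be matched sharply — the best lower bound available there is $2c\,\mathbb{E}[y_0^2]$ with $c<\tfrac12$ (Lemma~\ref{lem:2ndmom_auxilary}) — and this is exactly why Proposition~\ref{prop:finite_second_moment} covers only the sub‑region $\lambda<\frac{2(1+c)}{3\sum_j a_j}$. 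Running your Ces\`aro trick at second order reproduces the same sign constraint; it gives no extra leverage. Finiteness of $\mathbb{E}[x_0^2]$ on the whole stability region was left open in the paper (Conjecture~\ref{conjecture_2nd_mom}) and was established only in subsequent work.

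What the paper does instead is derive the lower bound from a \emph{different} rate‑conservation inequality, namely for the total interference $\tilde{\mathbb{I}}(t)=\sum_{i\in B_n}z_i^{(n)}(t)\bigl(\sum_j a_j z^{(n)}_{i-j}(t)\bigr)$ of the edge‑effects truncation $\{z_i^{(n)}\}$ (Proposition~\ref{prop:diff_eqn_edge_effects}), where all requisite moments are finite because the finite truncated system has exponentially light tails (Theorem~\ref{thm:finite_PR}). One then uses $\nu_i^{(n)}\le\mathbb{E}[x_{0;\infty}(0)]$ (monotonicity) and the vanishing relative volume of the boundary $B_n\setminus B_n^{(I)}$ to get the matching lower bound (Lemma~\ref{lem:lb_mean_queue_length}). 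To complete your proof along these lines you would need to import some version of this finite‑volume device — or, alternatively, find a genuinely new argument for $\mathbb{E}[x_0^2]<\infty$ on the full stability region, which is a nontrivial task that the paper itself did not accomplish.
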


The proof of this theorem is carried out in Section \ref{sec:sids}, with some accompanying calculations in Section \ref{sec:rcl}. In the rest of the paper, the condition $\lambda < \frac{1}{ \sum_{j \in \mathbb{Z}^d}a_j}$ will be referred to as the stability criterion for the system. In this theorem, we only considered whether there exists a   stationary solution to the dynamics. However, as our network consists of infinitely many queues, uniqueness of stationary solutions is not guaranteed. {\color{black} In this paper, we are mainly concerned with stationary solutions of queue lengths that are translation invariant in space. Formally, a stationary solution $\{y_i\}_{i \in \mathbb{Z}^d}$ is said to be translation invariant in space if, for all $x \in \mathbb{Z}^d$, the law $\{y_{i-x}\}_{i \in \mathbb{Z}^d}$ is identical to that of $\{y_i\}_{i \in \mathbb{Z}^d}$. Observe that the minimal stationary solution $\{x_{i;\infty}(s)\}_{i \in \mathbb{Z}^d}$ are translation invariant for every $s \in \mathbb{R}$. This follows, as for every finite $-t \leq s$, $\{x_{i;t}(s)\}_{i \in \mathbb{Z}^d}$ is translation invariant, as the initial conditions (of all queues being empty) and the driving sequences in the finite time interval $[-t,s]$ are both translation invariant. Thus, the almost-sure limit $\{x_{i;\infty}(s)\}_{i \in \mathbb{Z}^d}$ is also translation invariant.} The following Proposition sheds light on the question of unique translation invariant stationary solutions.

\begin{proposition}
	If $\mathbb{E}[x_{0;\infty}(0)^2] < \infty$, then $\{x_{i;\infty}(0)\}_{i \in \mathbb{Z}^d}$ is the unique {\color{black} translation invariant } stationary solution with finite second moment.
	\label{prop:uniq_bdd_convergence}
\end{proposition}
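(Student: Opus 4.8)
The plan is to combine two ingredients: the minimality of $\{x_{i;\infty}(0)\}_{i\in\mathbb{Z}^d}$ among \emph{all} stationary regimes (recalled in the discussion preceding the statement, which gives $x_{i;\infty}(0)\le y_i$ a.s.\ for any stationary solution $\{y_i\}$), and the observation that the exact mean-queue-length identity of Theorem~\ref{thm:main_stability} is not special to the minimal solution but holds for \emph{every} translation invariant stationary solution with a finite second moment. Granting both, uniqueness is immediate: if $\{y_i\}$ is such a solution then $y_i-x_{i;\infty}(0)\ge 0$ a.s.\ while $\mathbb{E}[y_i-x_{i;\infty}(0)]=\frac{\lambda a_0}{1-\lambda\sum_j a_j}-\frac{\lambda a_0}{1-\lambda\sum_j a_j}=0$, which forces $y_i=x_{i;\infty}(0)$ a.s.\ for each $i$; since this is a countable family of a.s.\ identities and both fields are translation invariant, the two random fields coincide almost surely, hence in law.

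So the real work is to show that the rate-conservation computation of Section~\ref{sec:rcl} goes through verbatim for an arbitrary translation invariant stationary solution $\{y_i\}$ with $\mathbb{E}[y_0^2]<\infty$. First I would apply the RCP to the time-stationary process $\{y_i(t)\}_t$ with the test function $Z=y_0\,I_0^{(y)}$, where $I_0^{(y)}=\sum_j a_j y_{-j}$ is the interference at the origin; the jump contributions come from arrivals (rate $\lambda$ at each queue in the support neighbourhood of the origin) and departures (rate $y_k/I_k^{(y)}$ at queue $k$). Then, exactly as in the proof of Theorem~\ref{thm:main_stability}, I would simplify using (i) $\mathbb{E}[I_0^{(y)}]=(\sum_j a_j)\,\mathbb{E}[y_0]$ and the re-indexing $\mathbb{E}[y_0\,a_m y_m/I_m^{(y)}]=a_m\,\mathbb{E}[y_0 y_{-m}/I_0^{(y)}]$, both consequences of translation invariance, and (ii) the auxiliary balance relations $\mathbb{E}[y_0/I_0^{(y)}]=\lambda$ and $\mathbb{E}[y_0^2/I_0^{(y)}]=\lambda(\mathbb{E}[y_0]+1)$, obtained by applying the RCP with test functions $y_0$ and $y_0^2$. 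The same algebra then collapses to $\mathbb{E}[y_0](1-\lambda\sum_j a_j)=\lambda a_0$, i.e.\ the claimed identity; applying it also to the minimal solution (legitimate under the hypothesis $\mathbb{E}[x_{0;\infty}(0)^2]<\infty$), or simply quoting Theorem~\ref{thm:main_stability}, gives equality of the two means.

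I expect the only genuinely delicate point to be the justification of these applications of the rate conservation principle, i.e.\ checking that all the quantities involved are integrable — which is exactly what the second-moment hypotheses buy. The test function is integrable since $\mathbb{E}[y_0 I_0^{(y)}]=\sum_j a_j\,\mathbb{E}[y_0 y_{-j}]\le(\sum_j a_j)\,\mathbb{E}[y_0^2]<\infty$ by Cauchy--Schwarz and translation invariance, the sum being finite because $\{a_i\}$ is finitely supported; the departure increments are controlled via the elementary bound $I_k^{(y)}\ge a_0 y_k$, which yields $y_0^2/I_0^{(y)}\le y_0/a_0$ and $y_0\,a_m y_m/I_m^{(y)}\le (a_m/a_0)\,y_0$, both integrable once $\mathbb{E}[y_0]<\infty$. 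Beyond this one must confirm that the marked point process of arrival/departure epochs, restricted to the finite set of queues that influence $Z$, is a stationary point process of finite intensity to which the Palm-calculus form of the RCP used earlier applies; this holds because all departure rates are bounded by $1$ and the arrival intensities are finite. Everything else is the bookkeeping already carried out for the minimal regime in Sections~\ref{sec:rcl} and~\ref{sec:sids}.
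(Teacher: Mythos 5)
Your proposal is correct and follows essentially the same two-step route as the paper's own proof: first show, via the rate-conservation principle applied to $y_0 I_0^{(y)}$, that any translation invariant stationary solution with finite second moment has the same mean as the minimal one (this is the paper's Lemma~\ref{lem:infinite_system_mean_form}, proved by running the computation of Proposition~\ref{prop:rcl_diff_eqn} and Lemma~\ref{lem:rcl_formula_finite} verbatim), and then combine the a.s.\ coordinate-wise domination $x_{i;\infty}(0)\le y_i$ (which the paper re-derives inside the proof by the same backward coupling from time $-T$ started empty versus started from $\boldsymbol{\pi}'$) to conclude that the nonnegative difference with zero mean vanishes. The only superfluous piece in your sketch is the auxiliary identity $\mathbb{E}[y_0^2/I_0^{(y)}]=\lambda(\mathbb{E}[y_0]+1)$, which is not needed for the first-moment formula (only $\mathbb{E}[y_0/I_0^{(y)}]=\lambda$ and the mass-transport step enter), but its inclusion does no harm.
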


This proposition is proved in Section \ref{sec:proof_bounded_converge}. This result relies on the finiteness of second moment of the stationary queue length, which does not follow immediately from the conclusions of Theorem \ref{thm:main_stability}. In this regard, we have the following proposition, that establishes finiteness of second moment under further restrictive conditions than stability.

\begin{proposition}
	If $\lambda < \frac{2}{3} \frac{1+c}{\sum_{j \in \mathbb{Z}^d}a_j}$, where  $c = \frac{\sqrt{a_0^2 + a_0 \sum_{j \in \mathbb{Z}^d \setminus \{0\}} a_j } \text{	}- \text{	}a_0}{\sum_{j \in \mathbb{Z}^d \setminus \{0\}} a_j }$, then we have $\mathbb{E}[x_{0;\infty}(0)^2] < \infty$.
	\label{prop:finite_second_moment}
\end{proposition}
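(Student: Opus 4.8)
The plan is to carry the rate-conservation (Palm) calculus used for Theorem~\ref{thm:main_stability} one degree higher. Write $x_i$ for $x_{i;\infty}(0)$ and $I_i:=\sum_{j\in\mathbb{Z}^d}a_jx_{i-j}$ for the interference in the minimal stationary regime, so that $I_0=a_0x_0+J_0$ with $J_0:=\sum_{j\ne 0}a_jx_{-j}\ge 0$. I would proceed in three steps: (i) apply the rate conservation principle to a cubic functional of $x_0$ to obtain a moment identity linking $\mathbb{E}[x_0^2]$ and $\mathbb{E}[x_0^3/I_0]$; (ii) prove a reverse lower bound $\mathbb{E}[x_0^3/I_0]\ge\theta\,\mathbb{E}[x_0^2]-(\mathrm{const})$ with $\theta$ strictly larger than $\lambda$ under the stated hypothesis; (iii) combine the two to conclude $\mathbb{E}[x_0^2]<\infty$.

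For step~(i): the only transitions changing $x_0$ are arrivals (rate $\lambda$) and departures (rate $x_0/I_0\in[0,1]$), so for a suitable truncated functional --- e.g.\ $\binom{x_0}{3}$ continued affinely above level $M$ --- the drift $\lambda\binom{x_0\wedge M}{2}-\tfrac{x_0}{I_0}\binom{(x_0-1)\wedge M}{2}$ is bounded, the rate conservation principle applies to the (translation-invariant) minimal regime, and one gets $\lambda\,\mathbb{E}[\binom{x_0\wedge M}{2}]=\mathbb{E}[\tfrac{x_0}{I_0}\binom{(x_0-1)\wedge M}{2}]$. Inserting the lower-order identities already available for the minimal regime --- $\mathbb{E}[x_0/I_0]=\lambda$ and $\mathbb{E}[x_0^2/I_0]=\lambda(1+\mathbb{E}[x_0])$ (from rate conservation on $x_0$ and on $\binom{x_0}{2}$, both \emph{unconditionally} finite since $x_0^2/I_0\le x_0/a_0$) together with the value of $\mathbb{E}[x_0]$ from Theorem~\ref{thm:main_stability} --- one rewrites this, in the limit $M\to\infty$, as the clean relation $\mathbb{E}[x_0^3/I_0]=\lambda\,\mathbb{E}[(x_0+1)^2]$. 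For step~(ii) I would use the exact pointwise decomposition $\tfrac{x_0^3}{I_0}=\tfrac{x_0^2}{a_0}-\tfrac{x_0J_0}{a_0^2}+\tfrac{x_0J_0^2}{a_0^2I_0}$, keep the nonnegative remainder and estimate it from below after introducing a free parameter governing whether $J_0$ dominates a multiple of $a_0x_0$; taking expectations and applying the translation-invariance estimates $\mathbb{E}[x_0x_{-j}]\le\mathbb{E}[x_0^2]$ and $\mathbb{E}[J_0^2]\le(\sum_{j\ne 0}a_j)^2\mathbb{E}[x_0^2]$ gives $\mathbb{E}[x_0^3/I_0]\ge\theta\,\mathbb{E}[x_0^2]-C$, and optimizing the free parameter is exactly the computation that produces $\theta=\tfrac{2}{3}\cdot\tfrac{1+c}{\sum_j a_j}$ with $c$ the constant in the statement. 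Feeding this into (i) yields $(\theta-\lambda)\,\mathbb{E}[x_0^2]\le C+2\lambda\,\mathbb{E}[x_0]+\lambda<\infty$, hence $\mathbb{E}[x_0^2]<\infty$ whenever $\lambda<\theta$.

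The hard part is the circularity between (i) and (ii): the identity in (i) is vacuous unless $\mathbb{E}[x_0^2]$ is already finite, and the bound in (ii) involves $\mathbb{E}[J_0^2]$, which by translation invariance is infinite exactly when $\mathbb{E}[x_0^2]$ is. Everything must therefore be run at the truncated level (replacing $x_0$ by $x_0\wedge M$ and controlling the neighbour contributions uniformly in $M$) so that the two occurrences of the second moment carry the same cutoff and the combination reads $\mathbb{E}[(x_0\wedge M)^2]\le\alpha\,\mathbb{E}[(x_0\wedge M)^2]+C$ with $\alpha<1$ uniformly in $M$; only then does $M\to\infty$ give finiteness. The subtlety is that the cutoff produces boundary terms (of order $M^2\,\mathbb{P}(x_0>M)$ and $M\,\mathbb{E}[x_{-j}\mathbf{1}\{x_{-j}>M\}]$) that are not a priori controlled, and absorbing them into the available slack is precisely what forces the threshold down to $\tfrac{2}{3}\cdot\tfrac{1+c}{\sum_j a_j}$: the sharper Cauchy--Schwarz estimate $\mathbb{E}[x_0^3/I_0]\ge(\mathbb{E}[x_0^2])^2/\mathbb{E}[x_0I_0]\ge\mathbb{E}[x_0^2]/\sum_j a_j$ would give the ``morally correct'' condition $\lambda<1/\sum_j a_j$, but it is not truncation-stable and so only yields a quantitative bound once finiteness is granted, not finiteness itself. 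Reconciling the truncated rate-conservation identity with the truncated reverse estimate, uniformly in $M$, is the technical heart of the argument; the remainder is routine Palm-calculus bookkeeping and elementary inequalities.
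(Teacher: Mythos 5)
Your proposal takes a genuinely different route from the paper, and it contains a real gap that you yourself flag but do not close.

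The paper never runs the rate-conservation argument on the infinite system.  It establishes the second-moment bound on the \emph{finite torus} $\{y_i^{(n)}\}_{i\in B_n}$, where exponential moments are available a priori (Theorem~\ref{thm:finite_PR}), so the rate-conservation identity for the functional $y_0^2\,I_0=y_0^2\sum_j a_jy_j$ is rigorous with no truncation whatsoever.  The constant $c$ enters through a mass-transport identity on the torus, $\mathbb{E}\bigl[y_0^2\sum_i R_i a_i\bigr]=\mathbb{E}\bigl[R_0\sum_i a_iy_i^2\bigr]$, combined with the elementary inequality $y_i^2+c^2y_0^2\ge 2cy_0y_i$ with $c$ chosen so that $2ca_0=a_0-c^2\sum_{j\ne 0}a_j$ (Lemma~\ref{lem:2ndmom_auxilary}).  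This yields $\sup_n\mathbb{E}[(y_0^{(n)})^2]<\infty$, and the conclusion for $x_{0;\infty}(0)$ then follows from the sandwich $z^{(n)}_\infty\le y^{(n)}_\infty$, the monotone almost-sure convergence $z^{(n)}_\infty\uparrow x_{0;\infty}(0)$, and monotone convergence for the second moments.  In contrast, your scheme works directly with the minimal regime, uses the univariate functional $\binom{x_0}{3}$ rather than the bivariate $y_0^2I_0$, and replaces the mass-transport step by a pointwise decomposition of $x_0^3/I_0$.

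Two concrete problems.  First, the circularity you identify is the entire proposition; saying it must be resolved ``at the truncated level, uniformly in $M$'' names the problem but does not solve it, and there is no indication of why the boundary terms $M^2\mathbb{P}(x_0>M)$ and $M\,\mathbb{E}[x_{-j}\mathbf 1\{x_{-j}>M\}]$ should be absorbable — a priori, if $\mathbb{E}[x_0^2]=\infty$, these need not be uniformly controlled.  The paper's finite-torus route eliminates this issue completely.  Second, the claim that optimizing your free parameter in the estimate for $\mathbb{E}[x_0J_0^2/(a_0^2I_0)]$ ``is exactly the computation that produces $\theta=\tfrac23\cdot\tfrac{1+c}{\sum_ja_j}$'' does not survive inspection.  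Writing $J_0^2\ge 2\gamma J_0I_0-\gamma^2I_0^2$, inserting it in your decomposition, and using $\mathbb{E}[x_0J_0]\le(\sum_{j\ne0}a_j)\mathbb{E}[x_0^2]$ gives
\[
\mathbb{E}\Bigl[\tfrac{x_0^3}{I_0}\Bigr]\ \ge\ \frac{\mathbb{E}[x_0^2]}{a_0}\Bigl[(1-\gamma^2)-\tfrac{\sum_{j\ne0}a_j}{a_0}(1-\gamma)^2\Bigr],
\]
and the optimal $\gamma^*=\tfrac{\sum_{j\ne0}a_j}{\sum_j a_j}$ yields $\theta=\tfrac{1}{\sum_j a_j}$, the same as your Cauchy--Schwarz bound — \emph{not} the smaller $\tfrac23\tfrac{1+c}{\sum_ja_j}$.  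The paper's $\tfrac23(1+c)$ threshold is not the optimal $\theta$ in such an inequality; it arises from two independent losses in the torus rate-conservation balance (the factor $3\lambda\sum a_j$ on the increase side from the bound $y_0y_j\le\tfrac12(y_0^2+y_j^2)$, and the factor $2c$ on the decrease side from the mass-transport lemma).  So the narrative that the truncation forces the threshold down to precisely the paper's constant is unsupported; if you did close the truncation gap, your route would more plausibly deliver a different (possibly better) region than $\tfrac23\tfrac{1+c}{\sum_ja_j}$, which is further evidence the two arguments are not the same and that yours is incomplete as written.
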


The proof of this proposition is carried out in  Section \ref{sec:sids}, with some accompanying calculations in Section \ref{sec:rcl}. Note that under our assumption of $a_0 = 1$, the value of the constant $c$ can be simplified as $c = \frac{\sqrt{ \sum_{j \in \mathbb{Z}^d}a_i} -1}{\sum_{j \in \mathbb{Z}^d}a_i - 1}$. {\color{black}Observe that if $c = \frac{1}{2}$, then the above proposition will cover the full range of stability. However, for any valid interference sequence $\{a_i\}_{i \in \mathbb{Z}^d}$, we have $c \in \left( 0, \frac{1}{2} \right)$, with $c \nearrow \frac{1}{2}$ as $\sum_{j \in \mathbb{Z}^d \setminus \{0\}} a_j \searrow 0$. Thus, this proposition does not cover the full stability region}. For the simplest non-trivial case of one dimensions and the interference sequence being $a_i = 1$ if $|i| \leq 1$ and $a_i = 0$ if $|i| > 1$, the second moment is finite for $\lambda \leq 0.91 \frac{1}{\sum_{j \in \mathbb{Z}}a_j}$. From Propositions \ref{prop:uniq_bdd_convergence} and \ref{prop:finite_second_moment}, we have the following immediate corollary.

%After the initial posting of this paper, in parallel independent work, \cite{stolyar_conjecture} established that for all $\lambda < \frac{1}{\sum_{j \in \mathbb{Z}^d}a_j}$, the second moment  $\mathbb{E}[x_{0;\infty}(0)^2] < \infty$. 
\begin{corollary}
		If $\lambda < \frac{2}{3} \frac{1+c}{\sum_{j \in \mathbb{Z}^d}a_j}$, where $c$ is given in Proposition \ref{prop:finite_second_moment}, then $\{x_{i;\infty}(0)\}_{i \in \mathbb{Z}^d}$ is the unique {\color{black} translation invariant } stationary solution with finite second moment.
\end{corollary}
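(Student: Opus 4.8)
The plan is to derive this simply as a direct consequence of Propositions~\ref{prop:uniq_bdd_convergence} and~\ref{prop:finite_second_moment}, so the work reduces to checking that the hypotheses chain correctly. First I would observe that the assumed bound $\lambda < \frac{2}{3}\,\frac{1+c}{\sum_{j \in \mathbb{Z}^d} a_j}$ is verbatim the hypothesis of Proposition~\ref{prop:finite_second_moment}; applying that proposition gives $\mathbb{E}[x_{0;\infty}(0)^2] < \infty$. This is exactly the hypothesis required by Proposition~\ref{prop:uniq_bdd_convergence}, and feeding it in yields that $\{x_{i;\infty}(0)\}_{i \in \mathbb{Z}^d}$ is the unique translation invariant stationary solution with finite second moment, which is the claim.

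A small preliminary point worth recording is that the hypothesis also guarantees that the minimal stationary regime exists and is a.s.\ finite in the first place, so that the statement is not vacuous: since for every admissible interference sequence one has $c \in (0,\tfrac{1}{2})$, it follows that
\begin{equation*}
\frac{2}{3}\,\frac{1+c}{\sum_{j \in \mathbb{Z}^d} a_j} \;<\; \frac{2}{3}\cdot\frac{3/2}{\sum_{j \in \mathbb{Z}^d} a_j} \;=\; \frac{1}{\sum_{j \in \mathbb{Z}^d} a_j},
\end{equation*}
so the hypothesis implies $\lambda < \bigl(\sum_{j \in \mathbb{Z}^d} a_j\bigr)^{-1}$, i.e., the stability criterion of Theorem~\ref{thm:main_stability} holds, and hence by the discussion preceding Proposition~\ref{prop:uniq_bdd_convergence} the object $\{x_{i;\infty}(0)\}_{i \in \mathbb{Z}^d}$ is a genuine (a.s.\ finite) translation invariant stationary solution.

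There is no genuine obstacle here: all the analytic content sits inside the two propositions being combined --- the second-moment estimate of Proposition~\ref{prop:finite_second_moment} (obtained via the rate conservation principle in Sections~\ref{sec:sids} and~\ref{sec:rcl}) and the uniqueness argument of Proposition~\ref{prop:uniq_bdd_convergence} (carried out in Section~\ref{sec:proof_bounded_converge}). The only thing to be careful about is the bookkeeping of the nested parameter ranges, which is the elementary computation displayed above.
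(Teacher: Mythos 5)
Your proof is correct and is precisely the route the paper takes: the corollary is stated as an immediate consequence of Propositions~\ref{prop:uniq_bdd_convergence} and~\ref{prop:finite_second_moment}, exactly the two-step chaining you carry out. The extra verification that the hypothesis sits inside the stability region (using $c < \tfrac{1}{2}$) is a harmless and correct sanity check, though the paper does not bother to record it.
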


Our next set of results assesses whether queue length process converges to any stationary solution when started from different starting states. Observe that we deemed the system stable if when started with all queues empty,
the queue lengths converge to a proper random variable. Thus, stability alone does not imply convergence from other initial conditions.  In this regard, our main results are stated in Theorems \ref{thm:initial_cond_bdd_converge} and \ref{thm:bad_initial_conditions} which show the sensitivity of the dynamics to the starting conditions. In particular, we show in Theorem \ref{thm:initial_cond_bdd_converge}, that if $\lambda$ is sufficiently small and the initial conditions are uniformly bounded, then the queue lengths converge to the minimal stationary solution. Surprisingly, in Theorem \ref{thm:bad_initial_conditions} below, we exhibit both deterministic and random initial conditions \emph{for all} $\lambda >0$, such that the queue lengths diverge, even though the stability criterion $\lambda < \frac{1}{\sum_{j \in \mathbb{Z}^d}a_j}$ is met. This is a new type of result which holds primarily since the network consists of an infinite collection of queues.

\begin{theorem}
	Let $\lambda$ be such that the minimal stationary solution satisfies $\mathbb{E}[x_{0;\infty}(0)^2] < \infty$. Then if the initial condition satisfies $\sup_{i \in \mathbb{Z}^d} x_i(0) < \infty$, the queue length process $\{x_i(\cdot)\}_{i \in \mathbb{Z}^d}$ converges weakly to the minimal stationary solution as $t \rightarrow \infty$.
	\label{thm:initial_cond_bdd_converge}
\end{theorem}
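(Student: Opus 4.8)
The plan is to use a sandwiching argument between two coupled copies of the dynamics, exploiting the monotonicity properties already established for the Loynes-type construction. Let $M := \sup_{i \in \mathbb{Z}^d} x_i(0) < \infty$. First I would couple three processes on the same probability space, driven by the same arrival and departure Poisson processes: the given process $\{x_i(\cdot)\}$ started from the bounded configuration $\{x_i(0)\}$; the minimal process started from the all-zero configuration; and a third ``dominating'' process $\{z_i(\cdot)\}$ started from the constant configuration $z_i(0) \equiv M$. By the monotonicity of the dynamics in the initial condition (the same property used to show $x_{i;t}(0)$ increases in $t$), we get the pathwise sandwich $x_{i;t}^{\mathrm{min}}(0) \le x_{i;t}(0) \le z_{i;t}(0)$ for every $i$ and every $t \ge 0$, where subscript $t$ denotes starting at time $-t$. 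Since the lower bound already converges (weakly, indeed a.s.\ along the backward construction) to the minimal stationary solution by Theorem \ref{thm:main_stability}, it suffices to show the upper process $z_{i;t}(0)$ also converges to the \emph{same} minimal stationary law as $t \to \infty$.

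For the dominating process I would again invoke monotonicity, this time comparing the constant initial condition $M$ with the minimal stationary regime itself. The key observation is that the constant configuration $M \cdot \mathbf{1}$ is stochastically dominated by $N$ independent copies of a suitable configuration, or more directly: start an auxiliary process from initial condition equal to $M \cdot \mathbf{1}$ and note that, because each queue initially has at most $M$ customers and each customer requires a positive (exponential) amount of work, on any finite spatial window these $M$-customer ``blocks'' get depleted; meanwhile the incoming Poisson traffic is the same as that feeding the minimal regime. The cleanest route is a coupling-from-the-past comparison: run the system from time $-t$ with initial data $M\cdot\mathbf 1$ and, separately, the minimal stationary regime at time $-t$ (which is distributed as $\{x_{i;\infty}(-t)\}$, hence as $\{x_{i;\infty}(0)\}$ by stationarity). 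Because $M \cdot \mathbf{1} \ge x_{i;\infty}(-t)$ fails in general (the stationary value is unbounded), I instead compare $z$ with the process started from $\max(M\cdot\mathbf 1,\, x_{\cdot;\infty}(-t))$ and use that the extra customers form a ``tagged'' sub-population that is pushed out. This is where the finite-second-moment hypothesis enters: it guarantees (via Proposition \ref{prop:uniq_bdd_convergence}) that the minimal regime is the \emph{unique} translation-invariant stationary solution with finite second moment, so any weak subsequential limit of $z_{i;t}(0)$ that can be shown translation invariant and square-integrable must coincide with it.

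Concretely, the key steps in order: (i) build the three-way monotone coupling and record the sandwich; (ii) show the family of laws $\{\mathcal{L}(z_{0;t}(0))\}_{t\ge 0}$ is tight, using that $z$ is dominated by the minimal process plus a finite-lifetime ``excess'' population whose expected size is controlled by $M$ and the service rates — here one bounds the excess in queue $0$ at time $0$ by the number of the initial $\le M(2L+1)^d$-or-so relevant customers still alive, which vanishes in expectation as $t\to\infty$ since each has an a.s.\ finite (though interference-slowed) departure time; (iii) extract a weakly convergent subsequence, argue its limit is stationary (time-shift invariance of the construction) and translation invariant (spatial homogeneity of initial data and driving); (iv) upgrade to finite second moment using the domination in (ii) together with $\mathbb{E}[x_{0;\infty}(0)^2]<\infty$; (v) conclude via uniqueness (Proposition \ref{prop:uniq_bdd_convergence}) that the limit is the minimal stationary solution, and since every subsequential limit is the same, the whole sequence converges. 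The main obstacle I anticipate is step (ii): controlling the ``excess'' population requires showing that the initial $M$ customers per queue really do exit in finite time even though the interference from the ever-present stationary background can slow service arbitrarily — one must verify that the total service effort each old customer receives diverges a.s., which should follow from the fact that the background queue lengths, being a.s.\ finite at each site, keep the denominator $\sum_j a_j x_{i-j}$ finite, so the departure rate stays bounded below by a positive (random, time-varying but a.s.\ non-integrable-to-zero) quantity. Making this quantitative enough to get the tightness bound uniform in $t$ is the crux.
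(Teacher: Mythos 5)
Your overall sandwich strategy and the use of Proposition \ref{prop:uniq_bdd_convergence} for the uniqueness step match the paper's logic, but the crux you identify in step (ii) is a genuine gap, not a technicality. You want the family of laws of $z_{0;t}(0)$ (the process started from $M\cdot\mathbf 1$ at time $-t$) to be tight with uniformly controlled second moments, and you propose to get this by tracking the ``excess'' customers and showing they all exit. This has two serious problems. First, $z_{0;t}(0)$ is \emph{not} monotone in $t$: starting from $M\cdot\mathbf 1$ at time $-t'$ and looking at time $-t$ for $t' > t$ can give queue lengths larger than $M$, so the Loynes-type monotonicity that makes the lower bound converge simply does not hold for the upper one, and you cannot extract an a.s.\ limit from it. Second, your tagged-customer decomposition is not clean: the excess customers change the interference seen by \emph{every} other customer, so the minimal process driven by the same Poisson data is not the process you'd see if the excess were removed; the difference $z_{0;t}(0)-x^{\mathrm{min}}_{0;t}(0)$ is not the number of surviving initial customers. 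Even setting that aside, you would need the exit times of the initial customers to be controlled uniformly in the starting time $-t$, and you have no mechanism to make that quantitative. Your fallback of comparing against $\max(M\cdot\mathbf 1, x_{\cdot;\infty}(-t))$ is circular: you are then trying to prove that this new process relaxes to the stationary regime, which is exactly the statement you started with.

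The paper's fix is the $K$-shifted dynamics of Section \ref{sec:model_estensions_K_shifted}, which is precisely the missing idea. Choose $K\ge M$. The $K$-shifted dynamics forbids departures when a queue has $\le K$ customers, so it pathwise dominates the original dynamics from the same start, and by Propositions \ref{prop:K_shifted_stability_result} and \ref{prop:K_shifted_2ndmom} it has a minimal stationary solution $\tilde x_{\cdot,\infty}$ with finite first and second moments. Two facts now do the work your excess-tracking cannot: (a) $\tilde x_{i,\infty}(-t)\ge K\ge M$ coordinatewise, so the original dynamics started from $\tilde x_{\cdot,\infty}(-t)$ dominates the one started from $M\cdot\mathbf 1$; and (b) if you run the \emph{original} dynamics forward from $-t'$ with initial data $\tilde x_{\cdot,\infty}(-t')$, then at time $-t$ you are below $\tilde x_{\cdot,\infty}(-t)$, because the $K$-shifted dynamics, which dominates, would return exactly $\tilde x_{\cdot,\infty}(-t)$ by stationarity. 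This gives $\tilde x^{(t')}_0(0)\le \tilde x^{(t)}_0(0)$ for $t'\ge t$, i.e.\ the dominating sequence is monotone non-increasing in $t$, with a finite second moment uniformly in $t$. The limit is then a translation-invariant stationary solution with finite second moment, hence equals the minimal one by Proposition \ref{prop:uniq_bdd_convergence}, and the sandwich closes. Without some construction of this kind — a dominating initial configuration that is simultaneously above $M\cdot\mathbf 1$ and produces a monotone-in-$t$ coupling — your step (ii) does not go through.
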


This theorem is proved in Section \ref{sec:proof_thm_initial_bdd}. As the queue lengths are positive integer valued, and the dynamics admits a form of monotonicity, every fixed finite collection of coordinates  also converges to the minimal stationary solution in the total variation norm in the above Theorem, which is stronger than just weak convergence. Notice from Proposition \ref{prop:finite_second_moment}, that if $\lambda < \frac{2}{3} \frac{1+c}{\sum_{j \in \mathbb{Z}^d}a_j}$, where $c$ is given in Proposition \ref{prop:finite_second_moment}, then the conclusion of the above Proposition holds. 
\\

%However, it is now known in parallel independent work by \cite{stolyar_conjecture}, that the conclusion of the above Proposition holds for the entire stability region, i.e., for all  $\lambda < \frac{1}{\sum_{j \in \mathbb{Z}^d}a_j}$.

We further examine sensitivity to initial conditions in Theorem \ref{thm:bad_initial_conditions} by constructing examples where the queue lengths diverge, even though the stability criterion is met. To state the result, we need a natural `irreducibility' condition on the interference sequence $\{a_i\}_{i \in \mathbb{Z}^d}$.

\begin{definition}
	The interference sequence $\{a_i\}_{i \in \mathbb{Z}^d}$ is said to be \emph{irreducible} if, for all $z \in \mathbb{Z}^d$,
	there exists $k \in \mathbb{N}$ and $i_1,\cdots,i_k \in \mathbb{Z}^d$, not necessarily distinct,
	such that $i_1 + i_2 \cdots +i_k = z$ and $a_{i_j} > 0$ for all $j \in [1,k]$.
\end{definition}
This is a natural condition which ensures that we cannot `decompose' the grid into many sets of queues, each of which does not interact with the queues in the other group. In the extreme case, this disallows the case when $a_i = 0$ for all $i \neq 0$, in which case the network can be decomposed into an infinite collection of independent $M/M/1$ Processor Sharing queues.

\begin{theorem}
	For all $\lambda >0$, $d \in \mathbb{N}$, and irreducible interference sequences $\{a_i\}_{i \in \mathbb{Z}^d}$, and even when the stability criterion holds, there exists
	\begin{enumerate}
		\item A deterministic sequence $(\alpha_i)_{i \in \mathbb{N}}$ such that if the initial condition satisfied $x_i(0) \geq \alpha_i$ for all $i \in \mathbb{Z}^d$, then the queue length of $0$ satisfies $\lim_{t \rightarrow \infty}x_0(t) = \infty$ almost surely.
		\item A distribution $\xi$ on $\mathbb{N}$ such that if the initial condition $\{x_i(0)\}_{i \in \mathbb{Z}^d}$ is an i.i.d. sequence with each $x_i(0)$, $i \in \mathbb{Z}^d$ being distributed as $\xi$ independent of everything else, then the queue length of $0$ (or any finite collection of queues) satisfies $\lim_{t \rightarrow \infty}x_i(t) = \infty$ almost surely.
	\end{enumerate}
\label{thm:bad_initial_conditions}
\end{theorem}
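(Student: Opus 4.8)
The plan is to exhibit a single, very steep deterministic initial configuration, prove that the origin queue diverges when started from it, and then invoke the monotonicity of the dynamics (in the natural coupling a pointwise larger initial condition yields a pointwise larger trajectory) to obtain the statement for every dominating initial condition. The mechanism to exploit is that, since $a_0=1$, every departure rate is at most $1$, so a queue grows at rate at most $\lambda$; on the other hand the departure rate of queue $i$ is bounded by $x_i(t)\big/\bigl(a_0x_i(t)+a_{j^\ast}x_{i-j^\ast}(t)\bigr)$ for any offset $j^\ast$ with $a_{j^\ast}>0$, which is far below $\lambda$ — below it with a uniform gap — as soon as some neighbour $x_{i-j^\ast}(t)$ is sufficiently larger than $x_i(t)$. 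Hence a spatial profile increasing fast enough in $\|\cdot\|_\infty$ makes \emph{every} queue permanently shielded by a much larger outward neighbour. I use here that, for an irreducible symmetric sequence, every $i\neq0$ admits an offset $j^\ast=j^\ast(i)$ with $a_{j^\ast}>0$ and $\|i-j^\ast\|_\infty\ge\|i\|_\infty+1$: otherwise, picking a coordinate $m$ with $|i_m|=\|i\|_\infty$, the bounds $\|i-j\|_\infty\le\|i\|_\infty$ and $\|i+j\|_\infty\le\|i\|_\infty$ would force $j_m=0$ for every $j$ in the support of $\{a_k\}$, so that support would lie in a coordinate hyperplane and could not generate $\mathbb{Z}^d$. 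Finally, for $\lambda\ge\frac1{\sum_ja_j}$ a spatially constant initial condition already diverges, since in such a state every departure rate equals $\frac1{\sum_ja_j}\le\lambda$; so the substance is the regime $\lambda<\frac1{\sum_ja_j}$, on which I focus.

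For the deterministic part, fix $\delta\in(0,\lambda)$, put $a_{\min}:=\min\{a_j:a_j>0\}$ and $c:=\frac{(\lambda-\delta)a_{\min}}{1-\lambda+\delta}$; the inequality $\lambda<\frac1{\sum_ja_j}\le\frac1{a_0+a_{j^\ast}}$ gives $c\in(0,1)$. Take $\alpha_i:=f(\|i\|_\infty)$ with $f\colon\mathbb{N}\to\mathbb{N}$ increasing faster than any exponential (the exact rate is fixed at the end). Define increasing ``shield envelopes'' by the downward recursion $g_r(t):=\tfrac12\min\{f(r)+\delta t,\ c\,g_{r+1}(t)\}$, which has the closed form $g_r(t)=\inf_{m\ge0}2^{-(m+1)}c^{m}\bigl(f(r+m)+\delta t\bigr)$; this is well defined and, because $f$ grows super-exponentially, $g_r(t)\to\infty$ as $t\to\infty$ for every fixed $r$. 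The heart of the argument is the claim that almost surely $x_i(t)\ge g_{\|i\|_\infty}(t)$ for all $i$ and all $t\ge0$. I would prove it by an induction on $\|i\|_\infty$ run ``from infinity inward'' together with Borel--Cantelli: on the event that all queues at distance $>r$ stay on their envelopes, a queue $i$ at distance $r$ has departure rate at most $x_i/(a_0x_i+a_{\min}g_{r+1}(t))\le\lambda-\delta$ as long as $x_i\le c\,g_{r+1}(t)$, hence while it is below its envelope it stochastically dominates, in the common Poisson driving, a positive-drift birth--death process started from $f(r)$; a union bound over the $\BigO{r^{d-1}}$ queues at distance $r$ and a standard estimate bound by $\BigO{r^{d-1}e^{-c'f(r)}}$ the probability that some such queue ever falls below $g_r$, and this is summable in $r$ once $f$ is chosen to grow fast enough. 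Borel--Cantelli then yields that almost surely only finitely many distances ever see a downward excursion, each excursion being transient (it occurs against a positive drift and is quickly reversed by the intact shield one step out), which suffices to propagate the envelope bound down to $r=0$ from a random finite time on; thus $x_0(t)\ge g_0(t)\to\infty$ almost surely, and translation invariance gives the same for any finite set of queues.

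For the random part, I would pick $\xi$ with a tail heavy enough that the i.i.d.\ field $\{x_i(0)\}$ dominates, almost surely, a super-exponential radial ramp along an infinite admissible path emanating from the origin: a path $0=v_0,v_1,v_2,\dots$ with each $v_r$ an interference-neighbour of $v_{r-1}$ strictly farther from the origin and $x_{v_r}(0)\ge f(r)$. Since at each step there are between $1$ and $|\{j:a_j>0\}|-1$ candidate vertices and the field is independent, the set of reachable ``large'' vertices is governed by a branching/exploration process; thinning the ramp so it increases only every fixed number of steps and letting $\mathbb{P}(\xi\ge f(r))$ decay slowly enough makes this process supercritical and hence survive forever with the required probability. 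Along such a path the initial field dominates the deterministic ramp on exactly the sites used in the shielding cascade, so monotonicity and the deterministic argument yield $x_0(t)\to\infty$; translation invariance again upgrades this to any finite collection of queues.

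I expect the main obstacle to be the self-referential nature of the shielding cascade: the good behaviour of a distance-$r$ queue is established only conditionally on the still-unproved good behaviour of distance-$(r+1)$ queues, so the induction has no base case. This is what forces $f$ to grow faster than any exponential — to make the per-distance failure probabilities summable — and it requires checking that divergence of $x_0$ is robust to the unavoidable rare downward excursions of far queues, which holds because a shielded queue has strictly positive drift and hence spends only finitely much time below any fixed level. In the random case the analogous delicate point is the exploration/branching estimate guaranteeing an almost-sure infinite admissible path of large initial values, which is where the precise tail of $\xi$ must be tuned.
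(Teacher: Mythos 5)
Your approach is genuinely different from the paper's. The paper places a large ring of value $\alpha_n$ at a \emph{single} radius $n$, analyzes the resulting \emph{finite} $(n,\alpha_n)$-system (arrivals suppressed outside $B_n$, everything else starting from $0$), and shows unconditionally that $x_0^{(n,\alpha_n)}(T_n)\gtrsim n$ with probability $\geq 1-2^{-n}$ for a deterministic $T_n$. Monotonicity then pushes this bound into the infinite system, and Borel--Cantelli over a sparse subsequence of rings gives divergence. You instead build a single global super-exponential radial profile and argue that its shield envelope is self-sustaining for all time. Your local computations are sound: the existence of an outward interference-neighbour from irreducibility, the departure-rate bound $\leq\lambda-\delta$ under the shield with your choice of $c$, and the closed-form envelope $g_r(t)\to\infty$ all check out.

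The gap is exactly where you flag it, and it is structural rather than a matter of choosing $f$ steeper. The estimate $\BigO{r^{d-1}e^{-c'f(r)}}$ is derived \emph{conditionally} on all queues at distance $>r$ staying on their envelopes. Because the dynamics at levels $r$ and $r+1$ interact in both directions, the collection of good events $\{G_r\}$ is not an independent or positively associated family in any sense you have established, and you have no way to pass from conditional to unconditional failure probabilities. Even granting summable unconditional bounds, Borel--Cantelli yields that only finitely many radii ever misbehave, which is \emph{strictly weaker} than what you need: a single excursion at some radius $R$ can destroy the shield at $R-1$, then at $R-2$, and so on to the origin, because your positive-drift lemma at level $s$ is valid only while level $s+1$ is intact. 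The "quickly reversed" argument therefore presupposes the very object whose intactness is in question. The paper avoids this circularity by making the shield artificial and deterministic (a frozen finite boundary), which is an honest base case — and a repair of your proof along these lines would essentially reproduce the paper's construction.

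Part 2 has an additional problem that is independent of the above. Irreducibility only guarantees \emph{at least one} outward interference-neighbour from each vertex. In $d=1$ with $a_i=\mathbf{1}\{|i|\leq1\}$, every vertex $r>0$ has exactly one such neighbour, namely $r+1$, so your ``branching exploration'' is a deterministic chain, and the probability that the chain $\xi_1\geq f(1),\ \xi_2\geq f(2),\dots$ survives forever is $\prod_r\mathbb{P}[\xi\geq f(r)]$. Since $f(r)\to\infty$ forces $\mathbb{P}[\xi\geq f(r)]\to0$, this product is $0$ for any proper distribution $\xi$ on $\mathbb{N}$, and no amount of thinning the ramp fixes a chain with a single candidate per step. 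The paper's full-ring/block construction sidesteps this: it only asks that \emph{some} radius in a long block be uniformly large, a union over a block length that one is free to choose, which can be made arbitrarily likely with a fixed tail for $\xi$.
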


This theorem is proved in Section \ref{sec:examples_diverge}. Based on the proof of this theorem, we make the following remark.

\begin{remark}
	For all $\lambda$, the support of $(\alpha_i)_{i \in \mathbb{Z}^d}$ in statement $1$ above can be made arbitrarily sparse, i.e. for any sequence $(b_n)_{n \in \mathbb{N}}$ such that $b_n \rightarrow \infty$, the initial conditions $(\alpha_i)_{i \in \mathbb{Z}^d}$ can be chosen, such that  $\lim_{n \rightarrow \infty} \frac{\sum_{i \in \mathbb{Z}^d: ||i||_{\infty} \leq n}  \mathbf{1}_{\alpha_i > 0}}{b_n} = 0$, yet the queue lengths converge almost surely to infinity. 
\end{remark}

The above theorem is qualitative in nature, as it only establishes the existence of bad initial conditions, but does not provide estimates for how large this initial condition must be. In this regard, we include Proposition \ref{prop:quant_diver_full}, which pertains to the deterministic starting state in the simplest non-trivial system, namely the case of $d=1$, and the interference sequence being $(a_i)_{i \in \mathbb{Z}}$ such that $a_i = 1$ for $|i| \leq 1$ and $a_i = 0$ otherwise. This simplest non-trivial example already contains the key ideas and hence we present the computations involved explicitly here. In principle, one can provide a quantitative version of the above theorem in full generality. However, we do not pursue this here as they involve heavy calculations without additional insight into the system.

\begin{proposition}
Consider the  system with $d=1$ and the interference sequence $a_i = 1$ if $|i| \leq 1$ and $a_i = 0$ otherwise. Let $(b_n)_{n \in \mathbb{N}}$ be arbitrary deterministic non-negative integer valued sequence such that  $b_n \rightarrow \infty$. If the initial condition, $\alpha_i := i^{i2^{i+2}+8}$ for $i \in \{b_n: n\in \mathbb{N}\}$, and $\alpha_i = 0$ otherwise, then  for every $\lambda >0$, $\lim_{t \rightarrow \infty}x_0(t) = \infty$ almost surely.
\label{prop:quant_diver_full}
\end{proposition}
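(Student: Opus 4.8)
The plan is to exploit the ``domino'' mechanism underlying Theorem~\ref{thm:bad_initial_conditions}: a queue holding a huge backlog drives the service rate of each of its two neighbours essentially to zero, since the interference seen at such a neighbour is dominated by the huge queue ($a_0=1$ and $a_{\pm 1}=1$ here). Over a long time window those neighbours therefore behave like pure accumulation processes, filling up roughly linearly at rate $\lambda$; once a neighbour has itself built up a large backlog it in turn slows down \emph{its} neighbour, and so a region of large queues propagates step by step towards the origin. The basic building block is a one-step estimate: if, on a time interval $I$ of length $\ell$, the queue at a neighbouring site stays above a level $H\ge \ell$ with $\lambda\ell\le H$, then the queue at the given site is at least $\rho\lambda\ell$ at the right endpoint of $I$ for a fixed constant $\rho\in(0,1)$, and stays above $\tfrac12\rho\lambda\ell$ on a further interval of length $\tfrac12\rho\lambda\ell$. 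This follows from comparing the departure stream (rate $\le x_j/H\le\lambda$) with the free Poisson arrival stream on $I$ and a concentration bound for Poisson counts, together with the elementary deterministic fact that every queue decreases at rate at most $1$; the same deterministic fact starts the cascade, since $x_{b_n}(t)\ge \alpha_{b_n}-t\ge\alpha_{b_n}/2$ for $t\le\alpha_{b_n}/2$.

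Next I would iterate this one-step estimate starting from the source $S=b_n$, producing a decreasing sequence of levels $H_0=\alpha_{b_n},H_1,\dots,H_S$ and an increasing sequence of times $t_0<t_1<\dots<t_S$ such that, with high probability, $x_{S-m}(t)\ge H_m$ throughout a window around $t_m$. The recursion has the shape $H_{m+1}\approx \rho\lambda\cdot(\text{window width at level }m)$, where the admissible window width at level $m$ is itself limited by $H_m$ (the freezing is only valid while the frozen queue is much smaller than its frozen neighbour); carrying this bookkeeping through, the per-step shrinkage factor of $H_m$ and the per-step growth of $t_m$ are controlled by $\lambda$ alone, so iterating $S=b_n$ times forces a precise lower bound on how large $\alpha_{b_n}$ must be as a function of $b_n$. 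Matching this bound is exactly where the closed form $\alpha_i=i^{i2^{i+2}+8}$ enters: one checks it is (comfortably) large enough that the level $H_{b_n}$ reached at the origin by the $b_n$-th cascade tends to $\infty$ with $n$, and that the entire cascade is completed within an explicitly controlled time window.

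To pass from this idealised ``single huge value'' picture to the actual initial condition, and to upgrade the conclusion from a statement along a subsequence of times to $\lim_{t\to\infty}x_0(t)=\infty$, I would use two ingredients. First, the monotonicity of the dynamics with respect to the initial state: for each fixed $n$ the true process dominates the process started from the configuration equal to $\alpha_{b_n}$ at $b_n$ and zero elsewhere, so in the true process the origin exceeds $H_{b_n}$ at time $t_{b_n}$, and the extra initial mass at the other $b_m$'s only helps. Second, I would chain the cascades: the $n$-th cascade keeps the origin and a growing neighbourhood of it above a level tending to infinity throughout a time interval that already reaches the time at which the $(n+1)$-st cascade begins to dominate near the origin — the super-exponential growth of $(\alpha_i)$ is precisely the slack that makes these successive windows overlap. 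The almost-sure statement then follows from Borel--Cantelli: the only randomness used in the $n$-th cascade is, at each of its $b_n$ steps, a one-sided concentration bound for a Poisson count over a window of length at least $H_{b_n}\to\infty$, so the failure probability of the $n$-th cascade is summable in $n$, and hence almost surely all but finitely many cascades succeed.

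The main obstacle is the middle step: propagating the one-step estimate through $b_n$ levels while retaining uniform control of all the time windows, verifying that the blow-up of the resulting recursion is captured by the stated formula for $\alpha_i$, and checking that the windows produced by successive cascades genuinely chain together, so that $x_0(t)$ is forced to infinity for \emph{all} large $t$ rather than merely recurrently. The one-step estimate, the monotonicity reduction, and the Borel--Cantelli argument are all routine once that recursive bookkeeping is in place.
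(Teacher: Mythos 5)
Your plan is structurally the same as the paper's Appendix~D argument. The cascade/freezing mechanism with a one-step propagation estimate corresponds to the paper's discretization into geometrically shrinking slots $t_1>\dots>t_n$ (with $t_i=\sqrt{t_{i-1}}/n^4$) and the events $A_i^{(1)},A_i^{(2)},D_i,L_i$ in the appendix's quantitative proposition; the monotonicity reduction to the single pillar at $b_n$, and the terminal Borel--Cantelli step over the deterministic times $\hat t_{b_n}$, are likewise identical. One small bookkeeping difference: the paper demands \emph{zero} departures from the active site in each slot (event $D_i$), which is what forces the square-root recursion between slot lengths and hence the doubly-exponential form of $\alpha_i$; your softer one-step estimate (a small but positive departure fraction) would yield a milder recursion and therefore prove the statement with slack against the stated formula. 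Either works.

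Your caution about the final upgrade --- from ``$x_0(\hat t_{b_n})\to\infty$ a.s.\ along the sparse deterministic subsequence'' to ``$x_0(t)\to\infty$ a.s.'' --- is well placed, and chaining cascades through overlapping windows (which is exactly how the paper handles the qualitative Theorem~\ref{thm:bad_initial_conditions}, Part~1) is indeed what is needed. Be careful, though, about your stated justification. The super-exponential growth of $\alpha_i$ controls how long the $n$-th cascade takes to \emph{arrive} at the origin, not how long its effect there \emph{persists}: naively the level $\approx(\lambda-\epsilon)b_n^4$ reached at time $\hat t_{b_n}$ could drain in time $O(b_n^4)$, astronomically before $\hat t_{b_{n+1}}$. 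The persistence must instead be extracted from the staircase $x_k(\hat t_{b_n})\approx\lambda\, t_{b_n-k}$ that the cascade leaves on $0\le k\le b_n$, which keeps every departure rate near the origin tiny while the pillar at $b_n$ slowly drains; whether this lasts until $\hat t_{b_{n+1}}$ when $b_{n+1}\gg b_n$ is exactly the nontrivial bookkeeping you flag as the ``main obstacle,'' and it is not done in your sketch. (For what it is worth, the published proof's last sentence also asserts the subsequence-to-full-limit implication without this argument, so your plan is no less complete than the paper's written proof on that point --- but the step does need to be filled in.)
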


This proposition is proved in Appendix $D$. Regarding the converse to stability, we prove the following result in Theorem \ref{thm:transience}, which establishes that the phase-transition at the critical $\lambda$ is sharp, at least in certain cases, and we conjecture it to be sharp for all cases. In order to state the result about transience,  we require the following definition about the monotonicity of the interference sequence.
\begin{definition}
	The interference sequence $(a_i)_{i \in \mathbb{Z}}$ for the dynamics on the one dimensional grid is said to be \emph{monotone} if for all $ i \in \mathbb{Z}_{+}$, $a_i \geq a_{i+1}$ holds true.
	\label{def:monotone_ai}
\end{definition}
The following theorem is the main result regarding instability.
\begin{theorem}
	For the system with $d=1$ and monotone interference sequence, if $\lambda > \frac{1}{\sum_{j \in \mathbb{Z}} a_i}$, then the system is unstable.
	\label{thm:transience}
\end{theorem}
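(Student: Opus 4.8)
The plan is to argue by contradiction: assume the system is stable, i.e.\ that the minimal (Loynes) regime $\{x_i\}_{i\in\mathbb Z}$ built from the empty initial condition is a.s.\ finite, and show that its mean queue length would have to equal $\lambda a_0/(1-\lambda\sum_j a_j)$, which is \emph{negative} when $\lambda>1/\sum_j a_j$ — impossible since queue lengths are nonnegative. First I would record the structural facts that come for free: this regime is stationary in time and (as already noted in the excerpt) spatially translation-invariant; since the empty configuration and the driving Poisson noise are invariant under the reflection $i\mapsto-i$ and $a_{-k}=a_k$, its law is also reflection-symmetric; and, being a deterministic factor of an i.i.d.\ family, it is ergodic. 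Write $m=\mathbb E[x_0]\in[0,\infty]$ and note $S_0=\sum_j a_j x_{-j}=\sum_k a_k x_k$.

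The key analytic input, and the step I expect to carry essentially all the difficulty, is: under the hypotheses $d=1$ and $(a_i)$ monotone, stability forces $m<\infty$. I would try to prove this by a stochastic-domination argument exploiting the one-dimensional geometry together with the unimodality of $(a_i)$ — showing that a large queue cannot remain dwarfed by its neighbours for long, equivalently obtaining a state-independent lower bound on the effective service rate $x_0/S_0$ of a large queue along a suitable sequence of epochs, and thereby dominating the marginal law of $x_0$ by a light-tailed one; an alternative route is a regeneration / Foster–Lyapunov argument on a one-dimensional truncation in which monotonicity of $(a_i)$ is used to control the boundary erosion. This is precisely the point where $d\ge 2$ and general $(a_i)$ are not covered, consistent with the conjecture stated after Theorem \ref{thm:bad_initial_conditions}.

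Granting $m<\infty$, the rest is the rate-conservation computation already underlying Theorem \ref{thm:main_stability} and Proposition \ref{prop:finite_second_moment}, but run with \emph{bounded} test functions so that no a priori second-moment bound is needed. For each $k$ in the finite support of $(a_i)$ and each $N$, apply the rate conservation principle to the bounded function $f_N=(x_0\wedge N)(x_k\wedge N)$, and to $(x_0\wedge N)^2$ for $k=0$: since $f_N$, its jumps, and the rates of the jumps affecting it are all bounded, $\mathbb E[\mathcal L f_N]=0$. Letting $N\to\infty$ and using monotone convergence — legitimate because $x_i<\infty$ a.s.\ and $m<\infty$ — gives $\lambda m=\mathbb E[x_0 x_k/S_0]$ for $k\ne 0$ and $\mathbb E[x_0^2/S_0]=\lambda(m+1)$, where reflection and translation invariance identify $\mathbb E[x_0x_k/S_k]$ with $\mathbb E[x_0x_k/S_0]$. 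Multiplying the $k$-th identity by $a_k$, summing over $k$ (Tonelli, finitely many terms), and using $\sum_k a_k x_k=S_0$ makes the sum telescope: $m=\sum_k a_k\,\mathbb E[x_0x_k/S_0]=\lambda a_0+\lambda\bigl(\sum_j a_j\bigr)m$.

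Finally, since $m<\infty$ this rearranges to $m\bigl(1-\lambda\sum_j a_j\bigr)=\lambda a_0$; when $\lambda>1/\sum_j a_j$ the left side is $\le 0$ while $\lambda a_0>0$, a contradiction. Hence the minimal regime is not a.s.\ finite, so by the $0$–$1$ law (Lemma \ref{lem_01_law}) it is a.s.\ infinite, i.e.\ $x_{0;t}(0)\uparrow\infty$ a.s., which is exactly instability. Thus the hard part is the finite-mean step, where one-dimensionality and monotonicity are genuinely used; the remainder is a uniform, truncated adaptation of the calculation that produced the mean-queue-length formula, used here in a regime where that formula is inconsistent with positivity.
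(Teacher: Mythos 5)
Your proposal takes a genuinely different route from the paper. The paper proves Theorem~\ref{thm:transience} by passing to the spatially truncated system on $[-N,N]$, establishing a fluid-limit ODE (Proposition~\ref{prop:transience_fluid_limit}), proving that unimodality is preserved and that a quadratic Lyapunov functional $\mathbb{J}(t)=\sum_i y_i\,\sum_j a_j y_{i+j}$ grows linearly along the fluid trajectories, and then invoking the fluid-limit transience criterion of \cite{meyn_transience} together with monotonicity (Proposition~\ref{prop:truncation_mono}). You instead assume stability and try to run the rate-conservation calculation from Lemma~\ref{lem:rcl_formula_finite} \emph{directly on the infinite minimal regime}, with truncated test functions $(x_0\wedge N)(x_k\wedge N)$, to conclude $m(1-\lambda\sum_j a_j)=\lambda a_0$ and thereby a sign contradiction. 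The truncation-and-limit bookkeeping you sketch is sound: the bounded test functions make the RCP valid, the limits $N\to\infty$ close under monotone/dominated convergence once $m<\infty$, translation and reflection invariance give $\mathbb{E}[R_k x_0]=\mathbb{E}[R_0 x_k]$, and the weighted sum over $k$ telescopes via $\sum_k a_k x_k=S_0$.

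The genuine gap is exactly the step you flag: \emph{that stability forces $m=\mathbb{E}[x_{0;\infty}(0)]<\infty$}. Nothing in the paper's framework gives this for free --- Definition~\ref{defn:stable} requires only almost-sure finiteness of the minimal regime, and the paper explicitly leaves open (see the question in Section~\ref{sec:open_questions} following Proposition~\ref{prop:uniq_bdd_convergence}) whether translation-invariant stationary solutions with infinite first moment can exist. Without $m<\infty$, your truncated identities give $\infty$ on both sides as $N\to\infty$ (note $R_0 x_k$ is \emph{not} dominated by an integrable variable when $m=\infty$, since the bound $R_0 x_k\le x_k$ becomes useless), and no contradiction emerges. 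The two routes you gesture at --- a stochastic-domination bound on the effective service rate of a large queue, or a Foster--Lyapunov/regeneration scheme on a truncation --- are not fleshed out, and it is not at all clear that either would succeed: if it did, the argument would in effect resolve the finite-mean question, which the paper treats as open. Moreover, this is the \emph{only} place in your argument where $d=1$ and monotonicity of $(a_i)$ could enter, so the hypotheses are doing no visible work. By contrast, the paper's fluid-limit proof uses one-dimensionality and monotonicity concretely (to propagate unimodality of the fluid profile in Proposition~\ref{prop:transience_unimodal}), and never needs to control any moment of a hypothetical stationary regime in the supercritical window. As written, your proposal is an interesting reduction of the theorem to a hard open-looking sub-claim, not a proof.
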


This theorem is proved in Section \ref{sec:transience_proof}. We provide a more quantitative version of this result in Theorem \ref{thm:transience_quant} stated in Section \ref{sec:transience_proof}, which is applied to large finite spatial truncation of the dynamics.

\subsection{Open Questions and Conjectures}
\label{sec:open_questions}

We now list some conjectures and questions that are left open by the present paper. The first one concerns the moments of the minimal stationary solution $x_{0;\infty}(0)$. We have established an exact formula for the mean that holds in the entire stability region and finiteness of the second moment in a fraction of the stability region. In an earlier version of this paper that we posted online, we had put forth the following conjecture.
\begin{conjecture}
	If $\lambda < \frac{1}{\sum_{j \in \mathbb{Z}^d}a_j}$, then $\mathbb{E}[x_{0;\infty}(0)^2] < \infty$.\label{conjecture_2nd_mom}
\end{conjecture}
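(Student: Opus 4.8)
The natural route is to reduce the claim to a uniform second‑moment bound on a finite spatial truncation, and there to close a moment hierarchy produced by the Rate Conservation Principle (RCP). First I would fix $N$ and run the same dynamics on the torus $(\mathbb{Z}/N\mathbb{Z})^d$. A fluid‑limit argument shows this countable‑state Markov jump process is positive recurrent whenever $\lambda<1/\sum_j a_j$ and that its stationary law $\pi_N$ has finite moments of all orders (for each fixed $N$): the fluid model $\dot{\bar x}_i=\lambda-\bar x_i/\sum_j a_j\bar x_{i-j}$ drains in finite time because, at a spatial maximum $i^{\ast}$, one has $\sum_j a_j\bar x_{i^{\ast}-j}\le(\sum_j a_j)\bar x_{i^{\ast}}$, hence $\dot{\bar x}_{i^{\ast}}\le\lambda-1/\sum_j a_j<0$; Dai--Meyn‑type results then give positive recurrence together with finiteness of all moments of $\pi_N$. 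Since finiteness is now available \emph{a priori}, the RCP may be applied rigorously on the torus to any polynomial test function. Finally, by the monotone backward coupling used to construct $\{x_{i;\infty}(0)\}$ and the finite range $L$ of the interference sequence, $\pi_N\Rightarrow\mathcal{L}(\{x_{i;\infty}(0)\})$ as $N\to\infty$, so by Fatou it suffices to prove $\sup_N\mathbb{E}_{\pi_N}[x_0^2]<\infty$.

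The core is a set of RCP identities. Normalizing $a_0=1$ as elsewhere in the paper and writing $S_0=\sum_k a_k x_{-k}$ and $m_2:=\mathbb{E}_{\pi_N}[x_0^2]$, applying the RCP on the torus to the falling factorials $f(x)=(x_0)_k:=x_0(x_0-1)\cdots(x_0-k+1)$ for $k=1,2,3$ yields $\mathbb{E}[x_0/S_0]=\lambda$, $\mathbb{E}[(x_0)_2/S_0]=\lambda\,\mathbb{E}[x_0]$ and $\mathbb{E}[(x_0)_3/S_0]=\lambda\,\mathbb{E}[(x_0)_2]$, and applying it to the mixed functions $f(x)=(x_0)_2\,x_{-j}$ expresses $\mathbb{E}[(x_0)_2 x_{-j}/S_0]$ through the pair‑correlation $\mathbb{E}[x_0x_{-j}]$, through lower moments, and through a further mixed term $\mathbb{E}[x_{-j}(x_0)_2/S_{-j}]$ that is bounded by lower‑order quantities via $S_{-j}\ge a_j x_0$. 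Multiplying $x_0^2/S_0$ by $S_0$ gives the decomposition $x_0^2=x_0^3/S_0+\sum_{j\ne0}a_j\,x_0^2 x_{-j}/S_0$; combining this with the identity $\mathbb{E}[x_0^3/S_0]=\lambda m_2+(\text{known constants})$ obtained from the $k=1,2,3$ relations, the second moment satisfies, uniformly in $N$, an inequality of the shape $(1-\lambda)\,m_2\le C_\lambda\sum_{j\ne0}a_j\,\mathbb{E}[x_0x_{-j}]+(\text{lower‑order moments})$, and one would like to conclude $m_2\le A/(1-\theta(\lambda))$ with $\theta(\lambda)<1$ for every $\lambda<1/\sum_j a_j$.

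The main obstacle — and the reason the statement is only a conjecture — is precisely this last closing step over the \emph{entire} stability region. The only elementary bounds on the denominator, $S_0\ge a_0x_0$ and $S_0\ge a_jx_{-j}$, feed the mixed terms back as $x_0^2x_{-j}/S_0\le a_j^{-1}x_0^2$ or $\le(2a_0)^{-1}(x_0^2+x_{-j}^2)$, and bound the pair‑correlation only by $\mathbb{E}[x_0x_{-j}]\le m_2$; the resulting $\theta(\lambda)$ already reaches $1$ strictly before $\lambda$ attains $1/\sum_j a_j$ — indeed, optimizing exactly these bounds is what produces the partial range $\lambda<\tfrac23\tfrac{1+c}{\sum_j a_j}$ of Proposition~\ref{prop:finite_second_moment}. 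Covering the whole region seems to require genuinely new input: either a correlation‑decay estimate for the minimal regime showing that $\mathbb{E}[x_0x_{-j}]$ is substantially smaller than $m_2$ (equivalently, that conditionally on $x_0$ being large the neighbourhood sum $S_0$ is typically only of order $x_0$, the ``bad‑neighbour'' events being rare enough to be summed away, in the spirit of the percolation and coupling estimates already used for well‑posedness), or a cleverly weighted space--time test function whose drift directly absorbs the nonlinear factor $1/S_0$. Sharp control of the mixed moments $\mathbb{E}[x_0^2x_{-j}/S_0]$ is the crux, and I expect it to be the hardest part.
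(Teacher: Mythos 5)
You correctly identified that this statement is labelled a conjecture in the paper and is \emph{not} proved there. The paper's own positive result is Proposition \ref{prop:finite_second_moment}, which yields $\mathbb{E}[x_{0;\infty}(0)^2]<\infty$ only on the restricted range $\lambda<\tfrac{2}{3}\tfrac{1+c}{\sum_j a_j}$; the text explicitly notes that the full conjecture was resolved afterwards by Stolyar. Your outline is therefore an honest account of where the paper's own argument stops and why, not a missing full proof, and your diagnosis of where the elementary bound fails is accurate.

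Your reduction is the paper's: truncate to the torus $B_n$, prove a second-moment bound uniform in $n$ via rate conservation, then pass to the infinite system by monotone backward coupling (Proposition \ref{prop:truncation_mono}, Corollary \ref{cor:lim_switch}). The paper implements the second-moment step in Lemmas \ref{lem:2ndmom_auxilary} and \ref{lem:2ndmom_torus} using the single test function $y_0^2 I_0$ with $I_0=\sum_j a_j y_j$, rather than your falling-factorial hierarchy, but the underlying arithmetic and the obstruction are the same. The crux is the lower bound $\mathbb{E}\bigl[y_0^2\sum_i R_i a_i\bigr]\geq 2c\,\mathbb{E}[y_0^2]$ of Lemma \ref{lem:2ndmom_auxilary}, obtained by the Mass-Transport swap $\mathbb{E}\bigl[y_0^2\sum_i R_i a_i\bigr]=\mathbb{E}\bigl[R_0\sum_i a_i y_i^2\bigr]$ followed by the AM--GM inequality $y_i^2+c^2 y_0^2\geq 2cy_0 y_i$; optimizing $c$ produces precisely the prefactor $\tfrac{2}{3}(1+c)$. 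You are right that with only these elementary inequalities the coefficient multiplying $m_2$ on the right-hand side reaches $1$ strictly before $\lambda$ reaches $1/\sum_j a_j$, so the inequality cannot be closed over the whole stability region. One minor caveat: your assertion that positive recurrence of the truncated chain with all moments finite ``for each fixed $N$'' follows from a Dai--Meyn fluid argument is plausible, but the paper instead establishes exponential moments for the torus system by a direct domination by a stable $GI/GI/1$ queue (Theorem \ref{thm:finite_PR}); either route serves, since only the uniformity in $n$ is the real issue.

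Your two suggested remedies --- a quantitative correlation-decay estimate for the minimal regime, or a refined space--time test function whose drift controls the nonlinear factor $1/S_0$ --- are consistent with the route eventually taken: the paper remarks that Stolyar's subsequent proof of the conjecture uses ``rate-conservation techniques, similar to those presented in the present paper,'' which corresponds to the second of your options rather than to a wholly new method.
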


Subsequently, this has been proven to be correct by \cite{stolyar_conjecture} using rate-conservation techniques, similar to those presented in the present paper. This fact, along with Proposition \ref{prop:uniq_bdd_convergence} implies, that the minimal stationary solution is indeed the unique {\color{black}translation invariant} stationary solution to the dynamics that admits finite second moments. Furthermore, the conclusion of Theorem \ref{thm:initial_cond_bdd_converge} also hold for all $\lambda < \frac{1}{\sum_{j \in \mathbb{Z}^d}a_j}$, namely from all bounded initial conditions, the queue length process will converge to this unique {\color{black} translation invariant} stationary solution if $\lambda < \frac{1}{\sum_{j \in \mathbb{Z}^d}a_j}$. In this regard, three natural interesting questions arise - one concerning what other moments of stationary queue lengths are finite, one regarding correlation decay and another on existence of other stationary solutions.

\begin{question}
	For each $\lambda \in \left( 0,\frac{1}{\sum_{j \in \mathbb{Z}^d}a_j} \right)$, what moments of $x_{0;\infty}(0)$ are finite ?
\end{question}

\begin{question}
	How does the correlation $k \rightarrow \mathbb{E}[x_{0;
	\infty}(0) x_{k;\infty}(0)] - \mathbb{E}[x_{0;\infty}(0^2)]$ decay as $|k| \rightarrow \infty$ ?
\end{question}

%In particular, it is not yet established (although it is believed to be true), that for all $ k \geq 1$, $mathbb{E}[x_{0;\infty}(0) x_{k;\infty}(0)] - \mathbb{E}[x_{0;\infty}(0^2)] \geq 0$ holds and the limit $\lim_{k \rightarrow \infty}  \mathbb{E}[x_{0;\infty}(0) x_{k;\infty}(0)]$ exists and is equal to  $\mathbb{E}[x_{0;\infty}(0^2)]$. Once these are established, it is an open question to determine the rate of decay. 

\begin{question}
	Does the dynamics admit stationary solutions other than the minimal one ? If so, do there exist initial conditions such that the law of the queue lengths converge to them ?
\end{question}

{\color{black}We know from Proposition \ref{prop:uniq_bdd_convergence} that the minimal  stationary solution is the unique translation invariant stationary solution with  finite second moment. This then raises the following question.
	
\begin{question}
	Does there exist a translation invariant stationary solution that has an infinite first moment ? Does there exist one with finite first moment, but infinite second moment ? 
\end{question}
}
 In regard to establishing transience, a natural open question in light of Theorem \ref{thm:transience} is to extend this result to higher dimensions and non monotone interference sequence. We make the following conjecture.

\begin{conjecture}
	For all $d \geq 1$ and interference sequence $(a_i)_{i \in \mathbb{Z}}$, if $\lambda > \frac{1}{\sum_{j \in \mathbb{Z}^d} a_j}$, then the system is unstable.
\end{conjecture}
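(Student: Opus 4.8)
The plan is to establish the conjecture the way Theorem~\ref{thm:transience} is proved, but stripping away the monotonicity and one-dimensionality hypotheses: reduce to a large finite truncation, pass to a fluid model, and show the fluid model is unstable. \textbf{Step 1: truncation and a monotone comparison.} For $n\in\mathbb{N}$ let $B_{n}=\{-n,\dots,n\}^{d}$ and consider the truncated dynamics $\{x^{(n)}_{i}(\cdot)\}_{i\in B_{n}}$ obtained by freezing every queue outside $B_{n}$ at $0$; inside $B_{n}$ the rule is unchanged, so the interference felt at $i\in B_{n}$ is $\sum_{j:\,i-j\in B_{n}}a_{j}x^{(n)}_{i-j}$. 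Freezing the exterior at $0$ only deletes non-negative terms from every interference sum, hence it can only speed up every interior departure; using the coupling/monotonicity construction of Section~\ref{sec:math_framework} (the one that builds the minimal regime) one obtains, from the common empty initial condition, $x^{(n)}_{i}(t)\le x_{i}(t)$ for all $i\in B_{n}$ and $t\ge 0$. It therefore suffices to prove that for all large $n$ the (irreducible, countable-state) chain $\{x^{(n)}_{i}(\cdot)\}_{i\in B_{n}}$ is transient with every coordinate diverging, since then $x_{0}(t)\ge x^{(n)}_{0}(t)\to\infty$ and instability of the infinite system follows (Lemma~\ref{lem_01_law} promotes this to a probability-one statement). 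I note that one cannot simply run the rate-conservation proof of Theorem~\ref{thm:main_stability} in reverse: the stationary mean-queue formula only excludes a \emph{finite-first-moment} translation invariant stationary solution, whereas instability in the sense of this paper requires ruling out finiteness of the minimal regime altogether, so a genuinely dynamical argument is needed.

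\textbf{Step 2: the fluid model is unstable.} To prove transience of the fixed-$n$ chain I invoke the standard fluid-model criterion: it is enough that every solution of the associated fluid model diverges, where the fluid model is the limit of the chain under the usual magnitude-and-time scaling and solves, on $\mathbb{R}_{+}^{B_{n}}$, $\dot{u}_{i}=\lambda-\frac{u_{i}}{\sum_{j:\,i-j\in B_{n}}a_{j}u_{i-j}}$ (with $0/0=0$). The naive approach — track $\sum_{i\in B_{n}}u_{i}$ via $\frac{d}{dt}\bigl(\sum_{i}u_{i}\bigr)=\lambda|B_{n}|-\sum_{i}\frac{u_{i}}{\sum_{j}a_{j}u_{i-j}}$ — is too weak, because on rough (``checkerboard'') profiles the departure sum can be of order $|B_{n}|$ rather than $|B_{n}|/\sum_{j}a_{j}$, so the drift need not be positive. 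The fix is to track the \emph{bulk minimum} $m(t)=\min_{i\in B_{n-L}}u_{i}(t)$: if $i^{\star}$ attains this minimum and lies in the bulk, then $u_{i^{\star}-j}\ge u_{i^{\star}}$ for every $j$ in the (full) support, so the departure rate of $i^{\star}$ is at most $\frac{u_{i^{\star}}}{u_{i^{\star}}\sum_{j}a_{j}}=\frac{1}{\sum_{j}a_{j}}$, whence $\dot{u}_{i^{\star}}\ge\lambda-\frac{1}{\sum_{j}a_{j}}>0$; together with $\dot{u}_{i}=\lambda>0$ wherever $u_{i}=0$, a comparison/propagation argument should then give $\min_{i\in B_{n}}u_{i}(t)\to\infty$ from every initial condition, i.e.\ the fluid model is unstable.

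\textbf{Step 3: assembly, and the main obstacle.} Granting Steps 1--2, the fluid-limit theorem plus the fluid-instability criterion yield transience of each large truncation, and since the truncated system is dominated by the infinite one (Step~1), the infinite system started empty satisfies $x_{0}(t)\to\infty$, which is the asserted instability for every $\lambda>\frac{1}{\sum_{j}a_{j}}$, every $d$, and every finitely supported symmetric $\{a_{i}\}$. Two points carry all the difficulty. First, one must genuinely \emph{prove} the fluid limit here: the dynamics is neither of Jackson nor of mean-field type, the departure rate is discontinuous in the state at the origin, and the state space is unbounded, so the usual martingale/tightness machinery requires real work — this is presumably why Theorem~\ref{thm:transience} is confined to $d=1$ with monotone $\{a_{i}\}$, where a direct quantitative truncation computation is available. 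Second, and this is the crux, the bulk-minimum argument must be made rigorous in the presence of the box boundary (boundary queues see a strictly smaller interference constant and can a priori be pushed down), and at the stochastic level one needs an a-priori estimate that $\sum_{i\in B_{n}}\frac{x_{i}}{\sum_{j}a_{j}x_{i-j}}$ cannot remain of order $|B_{n}|$, i.e.\ that spatial oscillations of the interference field are damped on the fast time scale. Establishing this relaxation estimate uniformly over all $d$ and all $\{a_{i}\}$, without monotonicity, is exactly what is missing, and is what keeps the statement a conjecture rather than a theorem.
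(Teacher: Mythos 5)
This statement is a conjecture in the paper --- the authors prove it only for $d=1$ with monotone $\{a_i\}$ (Theorem~\ref{thm:transience}, via Theorem~\ref{thm:transience_quant}) --- and you rightly present a plan rather than a proof. Your outer blueprint matches the paper's: truncate to $B_n$ and use the $z^{(n)}$-style monotone comparison to lift instability from the truncation to the infinite system, pass to the fluid model, and invoke the Meyn criterion once fluid divergence is shown. Your reading of the obstruction (no fluid-limit theorem proved beyond the one-dimensional monotone case; no structural control of the spatial profile absent monotonicity) is also accurate.

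Your Step~2, however, contains an error beyond the difficulty you already flag. You assert that if $i^\star$ attains the bulk minimum $m(t)=\min_{i\in B_{n-L}} u_i(t)$, then $u_{i^\star-j}\ge u_{i^\star}$ for every $j$ in the support, hence the departure rate at $i^\star$ is at most $1/\sum_j a_j$. This is false: for $i^\star$ within distance $L$ of the inner face of $B_{n-L}$, some neighbor $i^\star-j$ lies in the annulus $B_n\setminus B_{n-L}$, where $m(t)$ gives no control, and the fluid flow there tends to push queues \emph{below} the bulk level, because their interference denominators drop the exterior terms. So the inequality is not established even in $d=1$ with monotone $\{a_i\}$. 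The paper sidesteps this by tracking the interference functional $\sum_i y_i\sum_j a_j y_{i+j}$ rather than a minimum, and by proving (Proposition~\ref{prop:transience_unimodal}) that the fluid flow preserves strict unimodality when $d=1$ and $\{a_i\}$ is monotone; unimodality pins the boundary queue $y_N$ as the global minimum of the profile, which is exactly the boundary-layer control your bulk-minimum argument lacks. Without a substitute invariant in higher dimensions, or a quantitative relaxation estimate showing the boundary annulus cannot stay below the bulk, Step~2 does not close --- and your own conclusion, that the statement remains a conjecture, is correct.
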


\subsection{Main Ideas in the Analysis}
 
 The key technical challenge in analyzing our model is the positive correlation between queue
lengths, which persist even in the model with infinitely many queues 
(see also Figure \ref{fig:queue_corr}). As mentioned, our system of queues is neither reversible,
nor falls under the category of generalized Jackson networks. Thus, our model does not admit a product
form stationary distribution, even when there are finitely many queues.  
In particular, the model has no asymptotic independence properties as those encountered in 
``mean-field models" (such as the supermarket model \cite{dobrushin}).
The correlations across queues is intuitive, since if a queue has a large number of customers,
then its neighboring queues will receive lower rates, and thus they will in turn build up.
Therefore in steady state, if a particular queue is large, most likely, its neighboring queues 
are also large (see also Figure \ref{fig:queue_corr}). 
 \\
 
 To prove the sufficient condition for stability,
we first study finite space-truncated torus systems in Section \ref{sec:torus_system}. In words, we restrict
the dynamics to a large finite set $B_n \subset \mathbb{Z}^d$, and study its stability by employing fluid-like
and Lyapunov arguments. For this model, we write down rate conservation equations in Section \ref{sec:rcl} and solve for the mean
queue-length of this dynamics. This section contains the key technical innovations in this paper. The rate conservation equations turn out to be surprisingly fruitful, as we are able to obtain an
exact formula for the mean queue length. This formula also gives as a corollary, that the queue length 
distributions are tight, as the size of the truncation $B_n$ increases to $\mathbb{Z}^d$.
In Section \ref{sec:sids}, we then show that we can take a limit as $B_n$ increases to all of $\mathbb{Z}^d$
and consider the stationary solution $\{x_{i;\infty}(0)\}_{i \in \mathbb{Z}^d}$ as an appropriate 
limit of the stationary solutions of the space-truncated system. The central argument in this section
is to exploit the many symmetries, the monotonicity of the dynamics and the aforementioned tightness 
to arrive at the desired conclusion. We furthermore, apply a similar rate conservation equation
for the infinite system, which along with monotonicity
arguments, establishes the uniqueness of stationary solutions with finite second moments.
\\
 
To study the convergence from different initial conditions, we employ different arguments,
again exploiting the symmetry and monotonicity in the model. To show that stability implies
convergence from bounded initial conditions, we define a modified $K$-shifted system
in Section \ref{sec:model_estensions_K_shifted}. It is a model having the same dynamics as our
original model, except that the queue lengths do not go below $K$, for some $K \in \mathbb{N}$. 
We carry out the same program of identifying a bound on the first moment on the minimal
stationary solution to the shifted dynamics by analyzing similar rate conservation equations
as for the original system. We then exploit the monotonicity and the fact that a stationary
solution with finite mean is unique, to conclude that stability implies convergence to the minimal stationary
solution from bounded initial conditions. In order to identify initial conditions from where
the queue length can diverge even though the stability condition holds, we first consider a simple 
idea of `freezing' a boundary of queues at a large distance $n$, to a `large value' $\alpha_n$ around
a typical queue, say $0$, and then consider its effect on the queue length at the origin.
By freezing, we mean, there are no arrivals and departures in those queues, but a constant number
$\alpha_n$ of customers that cause interference. We see that by choosing $\alpha_n$ sufficiently large,
this \emph{wall} can influence the stationary distribution at queue $0$. We leverage this observation, 
along with monotonicity, to construct both deterministic and random translation invariant
initial conditions such that queue lengths diverge to $+\infty$ even though the stability condition holds. This proof technique is inspired by similar ideas developed to establish non-uniqueness of Gibbs measures in the case when the state space of a particle is finite, while our methods and results bear on the case when the state space is countable.

\subsection{Organization of the Paper}

The rest of the paper is organized as follows. In 
Section \ref{sec:related_work}, we survey related work on infinite queueing dynamics and place our model in context.
We then start the technical part of the paper by providing the complete mathematical framework in Section \ref{sec:math_framework},
where we formalize the model and the questions studied. 
We also state the monotonicity properties satisfied by the model, which are crucial throughout.
We discuss certain generalizations of the model in Section \ref{sec:model_extensions}. We subsequently proceed to state and prove the main results in this paper.
In Section \ref{sec:torus_system}, we introduce
the space truncated finite system version of our model and analyze it using fluid-like arguments. The space truncated system can be viewed as a certain  finite dimensional approximation of our infinite dimensional dynamics.
The key technical part in that section is in writing and analyzing certain
rate-conservation equations in Section \ref{sec:rcl}, which give an explicit formula 
for the mean queue length in steady state. Based on the results in this section, we complete the proof of Theorem \ref{thm:main_stability} in Section \ref{sec:sids}, where we establish that the minimal stationary solution of our dynamics is a limit of the stationary solutions of the finite approximations in an appropriate sense. Subsequently in Section \ref{sec:proof_bounded_converge}, we prove Proposition \ref{prop:uniq_bdd_convergence}. In Section \ref{sec:proof_bounded_converge}, we prove Theorem \ref{thm:initial_cond_bdd_converge}. The proof of Theorem \ref{thm:bad_initial_conditions} which establishes the presence of bad initial conditions is then done in Section \ref{sec:examples_diverge}. The proof of Theorem \ref{thm:transience} establishing the converse to stability is carried out in Section \ref{sec:transience_proof}. For ease of exposition, we delegate many details of the proof to the Appendix while
outlining the key ideas in the body of the paper. For instance, the details on construction of the process are forwarded to the Appendix.

\section{Related Work}
\label{sec:related_work}

Our study is motivated by the performance analysis of wireless networks which has a large and rich literature
(see for ex. \cite{bonald_wireless} \cite{baccelli_sg_book}, \cite{srikant_book} and the references therein).
Our model is an adaptation of the \emph{Spatial Birth-Death} model proposed in \cite{sbd_tit}, where a dynamics 
of this type was introduced on a compact subset of the Euclidean space. Although that paper has a phase-transition
result similar to ours for stability, the analysis sheds no light on whether the result holds true
for an infinite network. In this paper, we answer in the affirmative in Theorem \ref{thm:main_stability}, that the same result indeed holds in the infinite
discrete network case. From a mathematical point of view, the tools and techniques of \cite{sbd_tit},
which rely on fluid limits, are very different from those discussed in the present paper.
The results are quite different too, with new quantitative results (like the closed
form for the mean queue size) and new qualitative phenomena such as the existence of
multiple stationary solutions being reachable depending on the initial conditions.
\\

Since some of the new properties are directly linked to the fact that there are infinitely
many queues, we thought it appropriate to briefly survey the mathematical literature on queueing models
consisting of infinitely many queues interacting through some translation invariant dynamics. 
A model related to ours is the so called \emph{Poisson Hail} model which has been 
studied in a series of papers \cite{poisson_hail1},\cite{poisson_hail2},\cite{poisson_hail3}. 
The discrete version of this model consists of a collection of queues on $\mathbb{Z}^d$, where
the queues interact through their service mechanism in a translation invariant manner. In this model,
the customer at a queue occupies a `footprint' and when being served, no other customer in
the queues belonging to its footprint is served. In contrast, in our model a customer slows down
the customers in neighboring queues, but does not block them. Another set of papers close to ours is 
\cite{baccelli_infinite_q}, \cite{martin_queuing}, and \cite{Mairesse-Prabakhar}.
These papers analyze an infinite collection of queues in series. The main results are connections
with last passage percolation on grids. A similar model to this is studied by \cite{ferrari}, where 
analogues of Burke's theorem are established for a network of infinite collection of queues on the integers. 
There is also a series of papers on infinite polling systems.
The paper \cite{foss_polling} considers a polling model with an infinite collection of stations,
and addresses questions about ergodicity and positive recurrence of such models.
In a similar spirit, \cite{borovkov} considers infinite polling models and establishes
the presence of many stationary solutions leveraging the fact that the Markov process is not finite-dimensional.
The dynamics in these polling systems are however very different from ours.
The paper of \cite{malyshev} also introduced a nice problem with translation invariant dynamics,
but only analyzed the setting with finitely many queues. The paper of \cite{jackson_graph} introduced an elegant problem on Jackson queueing networks on infinite graphs. However, the stationary distribution there admits a product-form representation, which is very different from our model in the present paper. The paper \cite{hajek_balanced} 
studies translation-invariant dynamics on infinite graphs arising from combinatorial optimization,
which again falls broadly in the same theme, but for a fundamentally different class of problems. Queueing like dynamics on an infinite number of nodes are also studied, though under different names, in the
interacting particle system literature in the sense of \cite{liggett}. The most well known instance of interacting particle system
connected to queueing is probably the TASEP. Another fundamental class of interacting particle system exhibiting a
positive correlation between nodes (like our model) is the ferromagnetic Ising model. The first difference
is that the state-space of a node is not compact (i.e., $\mathbb{N}$, since the
state is the number of customers in the queue) in our model, whereas it is finite in these models.
Another fundamental difference between our model and these is the lack of reversibility.
The common aspects are the infinite dimensional Markovian representation of the dynamics, the 
non uniqueness of stationary solutions, and the sensitivity to initial conditions.
Infinite queueing models are also central in mean-field limits.
In the literature on mean-field queueing systems (\cite{dobrushin,Graham,Shlosman})
the finite case exhibits correlations among the queue lengths thereby making them difficult to analyze. 
However, in the large number of node limit, one typically shows that there is  `propagation of chaos'.
This then gives that the queue lengths become independent in the limit. This independence can then be leveraged
to write evolution equations for the limiting dynamics which can be analyzed. Such mean-field analysis have recently become very popular in the applied literature (for ex. \cite{srikant_ying},\cite{ramanan_hydrodynamic}).
Our model differs fundamentally from the above models in many aspects. First, unlike the mean-field models described above,
we can directly define the limiting infinite object, i.e., a model with infinitely many queues.
Secondly and more crucially, our infinite model does {\em not} exhibit any independence properties in the limit,
i.e., queue lengths are positively correlated even in the infinite model.
This is why we need different techniques to study this model.
Our main technical achievement in this context is to introduce coupling and
rate conservation techniques not relying on any independence properties.

\section{Problem Setup}
\label{sec:math_framework}
In this section, we give a precise description of our model in subsection \ref{subsec:model_framework} and demonstrate certain useful monotonicity properties it satisfies in Subsection \ref{sec:mono}. We then precisely state the definition of stability in Section \ref{sec:stable} and the notion of stationary solutions to the dynamics in Section \ref{subsec:stationary_solutions}. 
%The main thrust of this paper is to understand when our model is stable and what properties of the stationary solutions we can characterize whenever the model is stable.

\subsection{Framework}
\label{subsec:model_framework}

 Our model is parametrized by $\lambda \in \mathbb{R}$ and an \emph{interference sequence} $\{a_i\}_{i \in \mathbb{Z}^d}$ which is a non-negative sequence. This sequence satisfies $a_0 = 1$, $a_i = a_{-i}$ for all $i \in \mathbb{Z}^d$ and $L:= \sup \{||i||_{\infty}: a_i > 0\} < \infty$, i.e., is finitely supported. We also impose the sequence $\{a_i\}_{i \in \mathbb{Z}^d}$ to be \emph{irreducible}, which gives that for all $z \in \mathbb{Z}^d$, there exists $k \in \mathbb{N}$ and $i_1,\cdots,i_k \in \mathbb{Z}^d$ not necessarily distinct, such that $i_1+i_2\cdots+i_k = z$ and $a_{i_j} > 0$ for all $j \in \{1,\cdots,k\}$. To describe the probabilistic setup, we assume there exists a probability space $(\Omega, \mathcal{F},\mathbb{P})$ that
contains the stationary and ergodic driving sequences $(\mathcal{A}_i,\mathcal{D}_i)_{\i \in \mathbb{Z}^d}$.
For each $i \in \mathbb{Z}^d$, $\mathcal{A}_i$ is a Poisson Point Process (PPP) of intensity
$\lambda$ on $\mathbb{R}$, independent of everything else and $\mathcal{D}_i$ is a PPP of intensity $1$ on $\mathbb{R} \times [0,1]$, independent of everything else. 
Our stochastic process denoting the queue lengths $t \rightarrow \{x_i(t)\}_{i \in \mathbb{Z}^d}$
will be constructed as a factor of the process $(\mathcal{A}_i,\mathcal{D}_i)_{\i \in \mathbb{Z}^d}$. 
The process $\mathcal{A}_i := \sum_{q \in \mathbb{Z}} \delta_{A_{q}^{(i)}}$ encodes the fact that,
at times $\{A_{q}^{(i)}\}_{q \in \mathbb{Z}}$, there is an arrival of a customer in queue $i$. Thus the arrivals to queues form  PPPs of intensity $\lambda$ and are independent of everything else.
The process $\mathcal{D}_i := \sum_{q \in \mathbb{Z}}\delta_{(D_{q}^{(i)}, U_{q}^{(i)})}$ encodes 
 that there is a possible departure from queue $i$ at time $D_{q}^{(i)}$, with an additional independent $U[0,1]$
 random variable provided by $U_{q}^{(i)}$. To precisely describe the departures, we define  \emph{the interference at a customer in queue $i$ at time $t$} as equal to $\sum_{j \in \mathbb{Z}^d} a_{j}x_{i-j}(t)$.  A customer, if any, is removed from  queue $i$ at times  $D_{q}^{(i)}$ if and only if $U_{q}^{(i)} \leq \frac{x_i(D_{q}^{(i)})}{\sum_{j \in \mathbb{Z}^d} a_{j-i}x_j(D_{q}^{(i)})  }$.
In other words, conditionally on the state of the network $\{x_j(D_{q}^{(i)})\}_{j \in \mathbb{Z}^d}$ at time $D_q^{(i)}$, we remove a customer from queue $i$ at time $D_{q}^{(i)}$ with probability
$\frac{x_i(D_{q}^{(i)})}{\sum_{j \in \mathbb{Z}^d} a_{j-i}x_j(D_{q}^{(i)})}$, independently of everything else.
Thus we see that conditionally on the network state $\{x_j(t)\}_{j \in \mathbb{Z}^d}$ at time $t$, the instantaneous rate of departure from any queue $i \in \mathbb{Z}^d$ at time $t \in \mathbb{R}$ is $\frac{x_i(t)}{\sum_{j \in \mathbb{Z}^d}a_jx_{i-j}(t)}$, independently of everything else. Observe that since $a_0 = 1$, if $x_i(t) > 0$, then necessarily, $\frac{x_i(t)}{\sum_{j \in \mathbb{Z}^d}a_jx_{i-j}(t)} \in (0,1]$. 
\\

We further assume (without loss of generality) that the probability space $(\Omega,\mathcal{F},\mathbb{P})$ equipped with a group 
$(\theta_u)_{u \in \mathbb{R}}$ of measure preserving functions from $\Omega$ to itself where $\theta_u$ denotes
the `time shift operator' by  $u \in \mathbb{R}$. More precisely
$(\mathcal{A}_i,\mathcal{D}_i)_{\i \in \mathbb{Z}^d} \circ \theta_u$ is the same driving sequence
where each of the arrivals and departures are shifted by time $u$ in all queues, i.e., if 
$\mathcal{A}_i := \sum_{q \in \mathbb{Z}} \delta_{A_{q}^{(i)}}$ and
$\mathcal{D}_i := \sum_{q \in \mathbb{Z}}\delta_{(D_{q}^{(i)}, U_{q}^{(i)})}$, then
$\mathcal{A}_i \circ \theta_u := \sum_{q \in \mathbb{Z}} \delta_{A_{q}^{(i)} - u}$ and 
$\mathcal{D}_i \circ \theta_u := \sum_{q \in \mathbb{Z}}\delta_{(D_{q}^{(i)}-u, U_{q}^{(i)})}$, for all
$i \in \mathbb{Z}^d$. We also assume that the system $(\mathbb{P},(\theta_u)_{u \in \mathbb{R}})$
is ergodic, i.e. if for some event $A \in \mathcal{F}$, if
$\mathbb{P}[A \bigtriangleup A\circ \theta_u ] = 0$ for all $u \in \mathbb{R}$,
then  $\mathbb{P}[A] \in \{0,1\}$.

%
%If we had a finite collection of queues, then the above verbose description would be a sufficient
%description of the dynamics as seen in Algorithm \ref{alg:finite_construction} described in the sequel.
%However, the main effort in this section is to show that the dynamics described above
%in words can in fact be constructed when there are infinitely many queues.

\subsection{Construction of the Process}

Before  analyze the above model, one needs to ensure that it is `well-defined'. We mean that our model is well defined if given the
initial network state $\{x_i(0)\}_{i \in \mathbb{Z}^d}$, any time $T \geq 0$ and any index $k \in \mathbb{Z}^d$, we are able to construct the queue length $x_k(T)$ unambiguously and exactly. In the case of finite networks (i.e., networks with finitely many queues),
the construction is trivial: almost surely, one can order all possible  events in the network with increasing time,
and then update the network state sequentially using the evolution dynamics described above. Such a scheme works unambiguously since, almost surely, all event times
will be distinct and in any interval $[0,T]$, there will be finitely many events. 
The main difficulty in the case of infinite networks is that there is no \emph{first-event} in the network.
In other words, in any arbitrarily small interval of time, infinitely many events will occur almost surely
and hence we cannot construct by ordering all the events in the network. However we show  in Appendix \ref{appendix_construction} that in order to determine the value of any  arbitrary queue $k \in \mathbb{Z}^d$ at any time $T \geq 0$, we can effectively restrict our
attention to an almost surely finite subset $X_{k,T} \subset \mathbb{Z}^d$ and determine $x_k(T)$
by restricting the dynamics to $X_{k,T}$ to the interval $[0,T]$.
This is then easy to construct as it is a finite system. Thereby we can determine $x_k(T)$ unambiguously. Such construction procedures are common in Interacting Particle systems setup (for example, the book of \cite{liggett}). Nevertheless, we present the entire details of construction in Appendix  \ref{appendix_construction} for completeness. 

\subsection{Monotonicity}

\label{sec:mono}
We establish an obvious but an extremely useful property of path-wise monotonicity satisfied by the dynamics. Note that our model is not monotone separable in the sense of \cite{monotone_sep}
since the dynamics does not satisfy the external monotonicity condition. Nonetheless, the model still enjoys certain restricted forms of monotonicity, which we state below. We only highlight the key idea for the proof and defer the details to  Appendix \ref{appendix_monotonicity}.

\begin{lemma}
	If we have two initial conditions $\{x^{'}_{i}(0)\}_{i \in \mathbb{Z}^d}$ and
	$\{x_{i}(0)\}_{i \in \mathbb{Z}^d}$ such that for all $i \in \mathbb{Z}^d$, $x^{'}_{i}(0) \geq x_i(0)$,
	then there exists a coupling such that   $x^{'}_{i}(T) \geq x_i(T)$ for all $i \in \mathbb{Z}^d$ and all $T \geq 0$ almost surely.
	\label{lem:mono1}
\end{lemma}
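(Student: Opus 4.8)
The plan is to build both processes on the \emph{same} driving sequence $(\mathcal{A}_i,\mathcal{D}_i)_{i\in\mathbb{Z}^d}$ — i.e., use the identical arrival epochs, the identical potential-departure epochs $D^{(i)}_q$, and the identical uniform marks $U^{(i)}_q$ for both the primed and unprimed systems — and then argue that with this coupling the ordering $x'_i(t)\geq x_i(t)$ is preserved for all time. First I would reduce to a finite system: by the construction in Appendix \ref{appendix_construction}, the value $x_k(T)$ (and $x'_k(T)$) depends only on the dynamics restricted to an almost surely finite random set $X_{k,T}\subset\mathbb{Z}^d$ over $[0,T]$, so it suffices to prove the monotonicity statement for an arbitrary finite collection of queues driven by finitely many events in $[0,T]$. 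On such a finite set, almost surely all event times are distinct, so I can induct on the successive event epochs $0=t_0<t_1<t_2<\cdots$ of the superposed driving processes.

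The inductive hypothesis is that $x'_i(t_n)\geq x_i(t_n)$ for every $i$. An arrival event at queue $i$ at time $t_{n+1}$ adds one customer to queue $i$ in \emph{both} systems, so the inequality is trivially preserved (and all other coordinates are unchanged). The delicate case is a potential-departure event at queue $i$ with mark $u=U^{(i)}_q$: the customer leaves the primed (resp. unprimed) system iff $u\leq x'_i(t_{n+1}^-)/\sum_j a_{j} x'_{i-j}(t_{n+1}^-)$ (resp. the analogous quantity without primes). Only coordinate $i$ can change, and it changes by at most one in each system, so the only way the ordering could break is if $x'_i=x_i$ before the event, the unprimed customer departs, and the primed one does not — i.e., the primed departure probability is strictly smaller while the queue lengths at $i$ agree. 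Here is where I must use the structure of the interference: if $x'_i(t_{n+1}^-)=x_i(t_{n+1}^-)=:m$, then the departure probabilities are $m/\sum_j a_j x'_{i-j}(t_{n+1}^-)$ and $m/\sum_j a_j x_{i-j}(t_{n+1}^-)$, and since $a_j\geq 0$ and $x'_{i-j}(t_{n+1}^-)\geq x_{i-j}(t_{n+1}^-)$ for all $j$ by the inductive hypothesis, the primed denominator is $\geq$ the unprimed one, hence the primed departure probability is $\leq$ the unprimed one. Thus if the unprimed customer does \emph{not} depart (its indicator is $0$), and $x'_i=x_i$, we could still have the primed customer depart — wait, that is the wrong direction; let me instead track the complementary bad event. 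If the primed customer departs then $u\leq m/\sum_j a_j x'_{i-j}\leq m/\sum_j a_j x_{i-j}$, so the unprimed customer departs as well; hence $x'_i$ decreases by one only if $x_i$ also decreases by one. Combined with the cases $x'_i>x_i$ (a unit decrease of $x'_i$ still leaves $x'_i\geq x_i$) and $x'_i = x_i = 0$ (no departure in either, since the rate is $0$), the ordering $x'_i(t_{n+1})\geq x_i(t_{n+1})$ is preserved, completing the induction.

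Finally, taking the union over the finitely many events in $[0,T]$ gives $x'_i(T)\geq x_i(T)$ for all $i$ in the finite restriction, and since $T$ and the index set were arbitrary (and the finite restrictions are consistent as they enlarge), the coupled inequality holds for all $i\in\mathbb{Z}^d$ and all $T\geq 0$ almost surely. The main obstacle is purely a matter of care rather than depth: one must (i) legitimately pass from the infinite network to the finite restriction $X_{k,T}$ so that the event-by-event induction is well posed — this is exactly what Appendix \ref{appendix_construction} supplies — and (ii) handle the equality case $x'_i=x_i$ at a potential-departure epoch, where the monotonicity of the departure probability in the interference (itself monotone in the queue lengths, using $a_j\geq 0$) is precisely what rescues the argument. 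I expect the write-up to consist mostly of setting up notation for the superposed event sequence and stating the four cases at a departure epoch cleanly.
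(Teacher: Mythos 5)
Your proposal is correct and follows essentially the same approach as the paper's Appendix B.1: couple both systems on the same driving sequence, use the percolation/finite-restriction argument to reduce to finitely many ordered events, and at each potential-departure epoch observe that when $x'_i = x_i$ the primed departure probability is no larger than the unprimed one (because the primed interference is no smaller), so a primed departure forces an unprimed departure. The minor mid-paragraph self-correction does not affect the validity of the argument.
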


The proof is by a path-wise coupling argument, where the two different initial conditions are driven by the same arrival and potential departures. The key idea  the following. At arrival times, the ordering will trivially be maintained. Consider some queue $i$ and time $t$ where there is a potential departure. If $x_{i}^{'}(t) \geq x_i(t) + 1$, then, since at most one departure occurs, the ordering will be maintained. But if $x_{i}^{'}(t) = x_i(t)$, then the rates $\frac{x_{i}^{'}(t)}{\sum_{j \in \mathbb{Z}^d}a_j x_{i-j}^{'}(t)} \leq \frac{x_{i}(t)}{\sum_{j \in \mathbb{Z}^d}a_j x_{i-j}(t)}$ and hence the ordering will again be maintained. This observation can be leveraged again to have the following  form of monotonicity.

\begin{lemma}
	For all initial conditions $\{x_i(0)\}_{i \in \mathbb{Z}^d}$,
	for all $0 \leq s \leq t \leq \infty$, all $X \subset \mathbb{Z}^d$, and all $T >0$,
	$\{x_i(T)\}_{i \in \mathbb{Z}^d}$ is coordinate-wise larger in the true dynamics than in the dynamics
	constructed by setting $\mathcal{A}_j([s,t])=0$ for all $j \in X$.
	\label{lem:mono2}
\end{lemma}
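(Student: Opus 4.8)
The plan is to reduce this statement to the path-wise coupling already established in Lemma \ref{lem:mono1}. The idea is that deleting arrivals in a space-time region can only make the queues smaller, and the resulting trajectory is dominated by the true one. First I would fix the driving sequences $(\mathcal{A}_j, \mathcal{D}_j)_{j \in \mathbb{Z}^d}$ and the common initial condition $\{x_i(0)\}_{i \in \mathbb{Z}^d}$. Let $\{x_i(\cdot)\}$ denote the true dynamics driven by these sequences, and let $\{\widetilde{x}_i(\cdot)\}$ denote the dynamics driven by the modified family $(\widetilde{\mathcal{A}}_j, \mathcal{D}_j)_{j \in \mathbb{Z}^d}$, where $\widetilde{\mathcal{A}}_j = \mathcal{A}_j$ for $j \notin X$ and $\widetilde{\mathcal{A}}_j$ equals $\mathcal{A}_j$ with its points in $[s,t]$ removed for $j \in X$. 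Both processes start from the same state, and the construction of Appendix \ref{appendix_construction} applies to each (the modified arrival family is still a legitimate, a.s.\ locally finite marked point configuration, obtained as a factor), so both are well-defined.

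The key step is a path-wise coupling argument in the spirit of the proof of Lemma \ref{lem:mono1}, run with the two processes sharing the same potential-departure marks $\mathcal{D}_j$ but with the stated asymmetry in arrivals. I would prove, by the same case analysis on event times, that $x_i(T) \geq \widetilde{x}_i(T)$ for all $i$ and all $T \geq 0$. At an arrival time in $\mathcal{A}_j$ for $j \notin X$, or in $\mathcal{A}_j \cap ([s,t])^c$ for $j \in X$, both processes get a $+1$ in queue $j$, so the ordering is preserved. At an arrival time in $\mathcal{A}_j \cap [s,t]$ for $j \in X$, only the true process increments queue $j$, which can only help the inequality $x_j \geq \widetilde{x}_j$. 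At a potential-departure mark $(D_q^{(i)}, U_q^{(i)})$: if $x_i > \widetilde{x}_i$ strictly before the mark, a single departure keeps $x_i \geq \widetilde{x}_i$; and if $x_i = \widetilde{x}_i$ before the mark, then because the true process has coordinate-wise larger interference neighbours (by the inductive hypothesis just before this mark), the true departure rate $\frac{x_i}{\sum_j a_j x_{i-j}}$ is no larger than $\frac{\widetilde{x}_i}{\sum_j a_j \widetilde{x}_{i-j}}$, so whenever the true process departs so does the modified one, preserving equality or creating the strict inequality in the right direction. This is exactly the mechanism already used in Lemma \ref{lem:mono1}, and I would invoke that lemma's argument rather than rewrite it in full.

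To make the induction rigorous despite there being no first event in the infinite network, I would localize: by Appendix \ref{appendix_construction}, for any target $(k, T)$ there is an a.s.\ finite set $X_{k,T} \subset \mathbb{Z}^d$ (enlarged, if necessary, to a common finite set that determines both $x_k(T)$ and $\widetilde{x}_k(T)$ — the modified family has no more points than the original, so the domain of dependence only shrinks) such that both values are computed by the finite dynamics restricted to that set on $[0,T]$. On this finite set there are only finitely many events in $[0,T]$, so the ordering of events is well-defined and the case analysis above becomes a genuine finite induction on successive event times, yielding $x_k(T) \geq \widetilde{x}_k(T)$. Since $k$ and $T$ were arbitrary, the coordinate-wise domination holds. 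The case $t = \infty$ (deleting all arrivals in $X$ from time $s$ onward) is identical, since on any bounded horizon $[0,T]$ only the marks in $[s, T]$ matter.

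The main obstacle I anticipate is purely bookkeeping rather than conceptual: one must be careful that the two processes, driven by genuinely different arrival families, still admit a common finite domain of dependence so that the finite-system induction is legitimate — this needs the (easy) observation that removing points from the arrival processes only shrinks the dependence cone constructed in Appendix \ref{appendix_construction}. Beyond that, the lemma is a direct corollary of the coupling already proven in Lemma \ref{lem:mono1}.
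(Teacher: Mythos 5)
Your proposal takes essentially the same approach as the paper's proof of Lemma \ref{lem:mono2}: couple the two trajectories on the same driving marks, handle the modified arrival family $\widetilde{\mathcal{A}}_j$ by the same case analysis as in Lemma \ref{lem:mono1}, and localize via the finite domain-of-dependence construction (the paper does this with the $\hat{t}$-block percolation clusters $\mathcal{C}(j)$, noting — as you do — that removing arrivals only shrinks the set of open sites). The argument is correct; the only cosmetic difference is that you invoke Lemma \ref{lem:mono1} directly, whereas the paper replays the same event-by-event induction explicitly.
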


\subsection{Stochastic Stability}
\label{sec:stable}

We establish a $0-1$ law stating that either all queues are transient or all queues are recurrent (made precise in Lemma \ref{lem:joint_01} in the sequel). Thus, we can then claim that the entire network is stable if and only if any (say queue indexed $0$ without loss of generality) is stable (made precise in Definition \ref{defn:stable} in the sequel). To state the lemmas, we set some notation. Let $T \geq 0$ and $s > -T$ be arbitrary and finite. Denote by
$\{x_{i;T}(s)\}_{i \in \mathbb{Z}^d}$ the value of the process seen at time $s$ when started with the empty initial
state at time $-T$, i.e., with the initial condition of $x_{i;T}(-T) = 0$ for all $i \in \mathbb{Z}^d$.
Lemma \ref{lem:mono1} implies that for every queue $i \in \mathbb{Z}^d$, and for $\mathbb{P}$ almost-every $\omega \in \Omega$,
we have $T\to x_{i;T}(s)$ is non-decreasing for every fixed $s$. Thus, for every $i$, and every $s \in \mathbb{R}$,
there exists an almost sure limit  $\lim_{T \rightarrow \infty}x_{i:T}(s) := x_{i;\infty}(s)$. From the definition, this limit is shift-invariant, i.e., almost surely, for all $x \in \mathbb{R}$, we have $x_{i;\infty}(s) \circ \theta_x = x_{i;\infty}(s+x)$.

\begin{lemma} \label{lem:joint_01}
	We have either $\mathbb{P}[\cap_{i \in \mathbb{Z}^d} \{x_{i;\infty}(0) = \infty \}] = 1$
	or $\mathbb{P}[\cap_{i \in \mathbb{Z}^d} \{ x_{i;\infty}(0) < \infty \} ] = 1$.
	\label{lem_01_law}
\end{lemma}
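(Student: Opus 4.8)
The plan is to express the event $\{x_{i;\infty}(0) = \infty\}$ in a way that makes it manifestly invariant under the ergodic flow $(\theta_u)_{u \in \mathbb{R}}$, and then to show that the value of the indicator does not depend on the index $i$, so that the whole intersection collapses to a single event of probability $0$ or $1$. First I would recall that $x_{i;\infty}(s) = \lim_{T\to\infty} x_{i;T}(s)$ exists almost surely by the monotonicity of Lemma \ref{lem:mono1}, and that the shift-covariance $x_{i;\infty}(0)\circ\theta_u = x_{i;\infty}(u)$ holds almost surely. Since the limit is monotone and integer-valued, the event $E_i := \{x_{i;\infty}(0) = \infty\}$ equals $\{x_{i;\infty}(u) = \infty\}$ for \emph{every} fixed $u$ up to a null set — indeed for finite $T$ the value $x_{i;T}(u)$ differs from $x_{i;T}(0)$ only through finitely many events in a bounded window, and letting $T\to\infty$ the divergence of one forces the divergence of the other. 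Hence $\mathbf{1}_{E_i}\circ\theta_u = \mathbf{1}_{E_i}$ a.s. for all $u\in\mathbb{R}$, so by ergodicity of $(\mathbb{P},(\theta_u)_u)$ each $E_i$ has probability $0$ or $1$.

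Next I would show that the index does not matter: $\mathbb{P}[E_i] = \mathbb{P}[E_j]$ for all $i,j \in \mathbb{Z}^d$, and in fact $E_i = E_j$ up to null sets. The cleanest route uses the irreducibility of $\{a_k\}$ together with the coupling/monotonicity lemmas. If queue $j$ diverges, it builds up unbounded interference on its neighbours through the chain of indices guaranteed by irreducibility, which slows their service to zero and forces them to diverge as well (one can make this rigorous by comparing, via Lemma \ref{lem:mono2} and Lemma \ref{lem:mono1}, to a modified dynamics in which the divergent queue is frozen at larger and larger values, exactly the ``wall'' construction sketched later in the paper). Conversely, if $x_{i;\infty}(0)<\infty$ for some $i$, then along the irreducible chain every queue in the chain must also be finite, and since every $z\in\mathbb{Z}^d$ is reachable this propagates to all queues. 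Thus $\bigcap_{i}E_i$ and $\bigcap_i E_i^c$ partition a full-measure set, and since $\mathbb{P}[E_0]\in\{0,1\}$ one of the two intersections has probability $1$.

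An alternative, perhaps more robust, route to the index-independence step avoids pushing divergence around the grid and instead argues directly at the level of the full configuration: the random field $\omega \mapsto \{x_{i;\infty}(0)(\omega)\}_{i\in\mathbb{Z}^d}$ is a measurable factor of the driving sequence $(\mathcal{A}_i,\mathcal{D}_i)_i$, and the event ``the configuration has at least one infinite coordinate'' is $\theta_u$-invariant, hence of probability $0$ or $1$; separately one shows ``all coordinates finite'' is also invariant. The only gap is ruling out a positive-probability event on which some but not all coordinates are infinite — and this is again closed by the irreducibility-plus-monotonicity argument above. I expect this index-independence step to be the main obstacle: the $\theta_u$-invariance and the appeal to ergodicity are routine, but carefully justifying that a single divergent queue drags all others to infinity (or dually, that a single finite queue forces all others finite) requires the ``wall'' comparison and a limiting argument over the freezing level, and one must check that the coupling of Lemma \ref{lem:mono1} applied to these modified dynamics indeed yields the claimed coordinate-wise domination uniformly in $T$.
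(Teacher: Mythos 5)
Your first step matches the paper: one shows $\mathbb{P}[A \bigtriangleup A\circ\theta_u] = 0$ for $A := \{x_{0;\infty}(0) < \infty\}$ and all $u$, then invokes ergodicity of $(\mathbb{P},(\theta_u)_u)$ to get $\mathbb{P}[A] \in \{0,1\}$. The precise bound making the ``bounded window'' idea rigorous is the one the paper gives: for $u \geq 0$, $x_{0;\infty}(0)\circ\theta_u = x_{0;\infty}(u) \leq x_{0;\infty}(0) + \mathcal{A}_0([0,u])$, and symmetrically $x_{0;\infty}(0) \leq x_{0;\infty}(0)\circ\theta_u + \mathcal{A}_0([u,0])$ for $u < 0$, both a.s.\ finite additive corrections.

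The second half of your proposal, however, addresses a non-problem and misidentifies where the difficulty lies. Once you have $\mathbb{P}[E_i] \in \{0,1\}$ for each fixed $i$, and the one-line observation that $\mathbb{P}[E_i] = \mathbb{P}[E_0]$ by spatial translation invariance of the dynamics and the driving data, you are finished: if $\mathbb{P}[E_0] = 1$ then every $E_i$ has probability $1$ and their countable intersection has probability $1$; if $\mathbb{P}[E_0] = 0$ then every $E_i^{\mathsf{c}}$ has probability $1$ and, again by countable additivity, $\mathbb{P}[\cap_i E_i^{\mathsf{c}}] = 1$. There is no ``positive-probability event on which some but not all coordinates are infinite'' left to exclude — excluding it is automatic, not an extra step. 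The paper never uses irreducibility in this lemma, and the propagation argument you sketch (a single divergent queue in the minimal coupling-from-the-past limit drags its neighbours to infinity along an irreducible chain) is far from obvious: it is not what the wall construction of Section 9 proves, since that bears on divergence from large frozen \emph{initial} conditions, not on the structure of the minimal stationary regime, and making it rigorous would be substantially harder than the whole lemma. You called the $\theta_u$-invariance routine and flagged index-independence as the main obstacle; it is exactly the reverse.
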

The proof follows from standard shift-invariance arguments which we present here for completeness. Since for all $x \in \mathbb{R}$ and all $j \in \mathbb{Z}^d$, $x_{i;\infty}(0)\circ \theta_x  = x_{i;\infty}(x)$, we have that this lemma implies for all $s \in \mathbb{R}$, either $\mathbb{P}[\cap_{i \in \mathbb{Z}^d} \{x_{i;\infty}(s) = \infty \}] = 1$ or $\mathbb{P}[\cap_{i \in \mathbb{Z}^d} \{ x_{i;\infty}(s) < \infty \} ] = 1$.
\begin{proof}
	
	It suffices to first show that for any fixed $i \in \mathbb{Z}^d$, we have
	$\mathbb{P}[x_{i;\infty}(0) < \infty] \in \{0,1\}$. Assume that we have established for some $i$ 
	(say $0$ without loss of generality that) $\mathbb{P}[x_{0;\infty}(0) < \infty] \in \{0,1\}$.
	From the translation invariance of the dynamics, it follows that, for all $i \in \mathbb{Z}^d$,
	we have $\mathbb{P}[x_{i;\infty}(0) < \infty] = \mathbb{P}[x_{0;\infty}(0) < \infty]$. Thus,
	if $\mathbb{P}[x_{0;\infty}(0) < \infty] = 1$, then $\mathbb{P}[\cap_{i \in \mathbb{Z}^d}x_{i;\infty}(0) < \infty] =1$.
	Similarly, if $\mathbb{P}[x_{0;\infty}(0) = \infty] = 1$, then $\mathbb{P}[\cap_{i \in \mathbb{Z}^d}x_{i;\infty}(0) = \infty] =1$.
	Thus to prove the lemma, it suffices to prove that  $\mathbb{P}[x_{0;\infty}(0) < \infty] \in \{0,1\}$.
	\\
	
	The key observation is, the event $A := \{ \omega \in \Omega : x_{0;\infty}(0) < \infty \}$ is such that for all $x \in \mathbb{R}$, $\mathbb{P}[A \bigtriangleup A\circ \theta_u] = 0$.
	To show this, first notice that from elementary properties of PPP, we have that for every $i \in \mathbb{Z}^d$ and
	every compact set $B \subset \mathbb{R}$, $\mathcal{A}_i(B) < \infty$ a.s.. Now for any $x \geq 0$, we have 
	$x_{0;\infty}(0) \circ \theta_x \leq x_{0;\infty}(0) + A_{0}([0,x])$, which is finite almost surely if
	$x_{0;\infty}(0) < \infty$ almost surely. Similarly for every $x < 0$, 
	$x_{0;\infty}(0)= x_{0;\infty}(0) \circ \theta_x + A_{0}([x,0])$, which again implies that
	$x_{0;\infty}(0) \circ \theta_x$ is almost surely finite if $x_{0;\infty}(0) < \infty$.
	Thus, for all $x \in \mathbb{R}$, we have $\mathbb{P}[A \bigtriangleup A \circ \theta_x] = 0$, which from ergodicity of $(\mathbb{P},(\theta_u)_{u \in \mathbb{R}})$ implies $\mathbb{P}[A] \in \{0,1\}$ and thus the lemma is proved. 	
\end{proof}

The following definition of stability follows naturally.

\begin{definition} \label{defn:stable}
	The system is \textbf{stable} if $x_{0;\infty}(0) < \infty$ almost surely.
	Conversely, we say the system is \textbf{unstable} if $x_{0;\infty}(0) = \infty$ almost surely.
\end{definition}

Observe that the definition of stability  does not require $\mathbb{E}[x_{0;\infty}(0)]$ to be finite. In words, we say that our model is stable if when starting with all queues being empty at  time $-t$ in the past,  the queue length of any queue stays bounded  at time $0$ when letting $t$ go to infinity. This definition of stability is similar to the definition introduced for example by \cite{loynes} in the single server queue case. A nice account of such backward coupling methods can be found in \cite{baccelli_bremaud}.
\\

The main result in this paper is to prove that if $\lambda \sum_{j \in \mathbb{Z}^d} a_j < 1$, then the system is stable (Theorem \ref{thm:main_stability}). Moreover, in this case, we compute exactly the mean queue length in steady state, i.e., an explicit formula for $\mathbb{E}[x_{0;\infty}(0)]$ (Theorem \ref{thm:main_stability}) and by shift-invariance it is equal to $\mathbb{E}[x_{i;\infty}(s)]$. We also conjecture this condition to be necessary, i.e., if $\lambda \sum_{j \in \mathbb{Z}^d}a_j > 1$, then $x_{0;\infty} = \infty$ almost surely. We are unable to prove this conjecture  yet, but prove it for the special case of $d=1$ in Theorem \ref{thm:transience}.

\subsection{{\color{black} Translation Invariant} Stationary Solutions}
\label{subsec:stationary_solutions}

%A probability measure on $(\mathbb{Z}^d)^{\mathbb{N}}$  is stationary to our dynamics, if the law of the network state remains invariant, when started from this stationary state. 

\begin{definition}
	A probability measure $\boldsymbol{\pi}$ on $(\mathbb{Z}^d)^{\mathbb{N}}$ is said to be
	{\color{black} translation invariant, if $(y_i)_{i \in \mathbb{Z}^d} \sim \boldsymbol{\pi}$ implies, for all $x \in \mathbb{Z}^d$, $(y_{i-x})_{i \in \mathbb{Z}^d} \sim \boldsymbol{\pi}$}. A probability measure $\boldsymbol{\pi}$ on $(\mathbb{Z}^d)^{\mathbb{N}}$ is said to be stationary
	for the dynamics $\{x_i(t)\}_{i \in \mathbb{Z}^d}$ if, whenever $\{x_{i}(0)\}_{i \in \mathbb{Z}^d}$ is
	distributed according to $\boldsymbol{\pi}$ independently of everything else, then, for all $t \geq 0$,
	the random variables $\{x_i(t)\}_{i \in \mathbb{Z}^d}$ are also distributed as $\boldsymbol{\pi}$.
\end{definition}

	{\color{black} In this paper, we restrict ourselves to studying stationary solutions to the dynamics that are translation invariant in space. Observe that the driving sequence $(\mathcal{A}_i,\mathcal{D}_i)_{i \in \mathbb{Z}^d}$ is translation invariant on $\mathbb{Z}^d$, i.e., for all $v \in \mathbb{Z}^d$,  $(\mathcal{A}_{i-v},\mathcal{D}_{i-v})_{i \in \mathbb{Z}^d}$ is equal in distribution to $(\mathcal{A}_i,\mathcal{D}_i)_{i \in \mathbb{Z}^d}$. Furthermore, the interactions among the queues are also translation invariant, since the definition of interference seen at a queue is translation invariant. However, it is not immediately clear that all stationary solutions must necessarily be translation invariant. It is known, for instance in the literature on Ising Models (see the book \cite{georgii_gibbs}), that certain stationary measures for translation invariant Glauber dynamics need not necessarily be translation invariant. We leave the question of existence and construction of non-translation invariant stationary measures for our model to future work.}
	\\

%From the definition of our dynamics, it follows that any stationary solution must be translation invariant in space. Notice that  the driving sequence $(\mathcal{A}_i,\mathcal{D}_i)_{i \in \mathbb{Z}^d}$ is translation invariant on $\mathbb{Z}^d$, i.e., for all $v \in \mathbb{Z}^d$,  $(\mathcal{A}_{i-v},\mathcal{D}_{i-v})_{i \in \mathbb{Z}^d}$ is equal in distribution to $(\mathcal{A}_i,\mathcal{D}_i)_{i \in \mathbb{Z}^d}$. For every $y \in \mathbb{Z}^d$,  let $\mathbf{S}_y :\mathbb{Z}^d \rightarrow \mathbb{Z}^d$  be the bijection that for all $x \in \mathbb{Z}^d$, sends $x$ to $x-y$. From translation invariance of the driving data, any invariant measure for the dynamics $\boldsymbol{\pi}$ must be invariant under the transformation of $\mathbb{Z}^d$ by $\mathbf{S}_y$, for all $y \in \mathbb{Z}^d$.

Moreover, as our network is not  finite-dimensional, stability in the sense of Definition \ref{defn:stable} does not imply ergodicity in the usual Markov chain sense. In particular, it does not imply that stationary distributions are unique, and starting from any initial condition on $\mathbb{N}^{\mathbb{Z}^d}$, the queue lengths converge in some sense to the minimal stationary distribution considered in Definition \ref{defn:stable}. Stability only implies the \emph{existence} of  a stationary solution, namely the law of $\{x_{i\infty}(0)\}_{i \in \mathbb{Z}^d}$. However, uniqueness is not granted and one of our main results in Proposition \ref{prop:uniq_bdd_convergence} bears on this. Moreover, convergence to stationary solutions from different starting states is more delicate as evidenced in Theorems \ref{thm:initial_cond_bdd_converge} and \ref{thm:bad_initial_conditions}.

\section{Model Extensions}
\label{sec:model_extensions}

In this section, we introduce two natural extensions to the model not considered in Section \ref{sec:math_framework}. We show that similar results as for our original model hold, albeit with a little bit more notation. Hence we separate this discussion from the main body of the paper with proofs deferred to the Appendix as the  key ideas are the same as for the model described earlier.

\subsection{Infinite Support for the Interference Sequence}
\label{subsec:infinite_interference}

We consider here a system where $\{a_i\}_{i \in \mathbb{Z}^d}$ is such that $a_i \geq 0 $ for all $i \in \mathbb{Z}^d$ with  $\{i: a_i > 0\}$ having infinite cardinality but being summable, i.e., $\sum_{j \in \mathbb{Z}^d}a_j < \infty$. In this case as well, we can uniquely construct the system in a sense as a limit of finite systems with finite truncation. The following proposition encapsulates the main results
\begin{proposition}
	Consider $\{a_i\}_{i \in \mathbb{Z}^d}$ such that $\{i: a_i > 0\}$ has infinite cardinality, and $\sum_{j \in \mathbb{Z}^d}a_j < \infty$. Then the dynamics is well defined. 
	\label{prop:infinite_support_existence}
\end{proposition}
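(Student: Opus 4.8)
The plan is to obtain the process with the (infinitely supported, summable) interference sequence $\{a_i\}_{i \in \mathbb{Z}^d}$ as a monotone limit of finite-range systems. For each $L \in \mathbb{N}$ set $a^{(L)}_i := a_i \mathbf{1}_{\{\|i\|_\infty \le L\}}$; this is a non-negative, symmetric, finitely supported sequence, so the construction of Appendix \ref{appendix_construction} applies verbatim and produces a well-defined queue-length process $\{x^{(L)}_i(\cdot)\}_{i \in \mathbb{Z}^d}$ on $(\Omega,\mathcal{F},\mathbb{P})$, driven by the common sequences $(\mathcal{A}_i,\mathcal{D}_i)_{i \in \mathbb{Z}^d}$, started from a fixed initial condition $\{x_i(0)\}_{i \in \mathbb{Z}^d}$ (the empty one, or any one with $\sum_j a_j x_{i-j}(0) < \infty$ a.s. for all $i$, which covers all uniformly bounded initial conditions). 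I would then define $x_i(t) := \lim_{L \to \infty} x^{(L)}_i(t)$ and show that this limit realizes the dynamics associated to $\{a_i\}$.

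First I would record a monotonicity in the truncation level: since $a^{(L)}_i \le a^{(L')}_i$ for all $i$ whenever $L \le L'$, a pathwise coupling driven by $(\mathcal{A}_i,\mathcal{D}_i)_{i}$ and argued exactly as in the proof of Lemma \ref{lem:mono1} — at a potential departure epoch the acceptance threshold $x_i/\sum_j a_j x_{i-j}$ is non-increasing in the $a_j$ when the queue vectors are ordered coordinate-wise and the relevant coordinate is tied — gives $x^{(L)}_i(t) \le x^{(L')}_i(t)$ for all $i$ and all $t \ge 0$, almost surely. Hence $x_i(t) = \sup_L x^{(L)}_i(t)$ exists a.s. in $\mathbb{N}\cup\{\infty\}$, is a measurable factor of the driving sequence, and a sandwich argument shows it does not depend on the chosen exhaustion of $\mathbb{Z}^d$ by finite windows. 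Finiteness is then cheap: in this processor-sharing dynamics a customer never leaves the queue it arrived to, so $x^{(L)}_i(t) \le x_i(0) + \mathcal{A}_i([0,t])$ uniformly in $L$, and the right-hand side is a.s. finite; thus $x_i(t) < \infty$ a.s. for every $i$ and $t$, and on any compact $[0,T]$ the path $t\mapsto x_i(t)$ has at most $\mathcal{A}_i([0,T]) + \mathcal{D}_i([0,T]\times[0,1])$ jumps and is piecewise constant and c\`adl\`ag.

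The substantive step, and the one where the hypothesis $\sum_j a_j < \infty$ is genuinely used, is to verify that the limit obeys the prescribed arrival and departure rules for $\{a_i\}$. The crucial estimate is that the limiting interference sums are a.s. finite: from the arrival bound, $\sum_j a_j x_{i-j}(t) \le \sum_j a_j x_{i-j}(0) + \sum_j a_j \mathcal{A}_{i-j}([0,t])$, where the first term is finite by the assumption on the initial condition and the second has mean $\lambda t \sum_j a_j < \infty$ and is therefore a.s. finite. Given this, at every epoch $D^{(i)}_q$ one has $x^{(L)}_i(D^{(i)}_q) \nearrow x_i(D^{(i)}_q)$ and, by monotone convergence applied coordinate-wise, $\sum_j a^{(L)}_j x^{(L)}_{i-j}(D^{(i)}_q) \nearrow \sum_j a_j x_{i-j}(D^{(i)}_q) < \infty$, so the acceptance ratios converge; since $U^{(i)}_q$ is independent of the state just before $D^{(i)}_q$, it a.s. avoids the limiting ratio, so the departure decision of the $L$-system eventually coincides with that prescribed for the limit. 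Running through the a.s. finite ordered list of events in a fixed window $[0,T]$ and a fixed finite set of queues, exactly as in the sequential construction for finite networks, one then checks that $\{x_i(\cdot)\}$ satisfies the dynamics, so the model is well defined. I expect the only delicate bookkeeping to be making ``eventually coincides'' simultaneous over the finitely many events in $[0,T]$ so that the whole restricted trajectory stabilizes; this is routine once all interference sums are known to be a.s. finite — which is exactly what summability buys.
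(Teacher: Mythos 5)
Your approach is essentially the paper's: truncate the interference sequence to $a^{(L)}_i = a_i\mathbf{1}_{\{\|i\|_\infty \le L\}}$, exploit the fact that queue lengths are pathwise non-decreasing in the truncation level $L$ under the natural coupling, and pass to the monotone limit. The organizational difference is that you define $x_i(t) := \sup_L x^{(L)}_i(t)$ once and for all and then verify that it obeys the prescribed transition rules, whereas the paper constructs the departure decisions inductively, event by event, defining each decision by $u \le \lim_K x_i(t) \big/ \sum_j a^{(K)}_j \tilde{x}^{(K)}_{i-j}(t)$ (a monotone limit that always exists, possibly equal to $0$), and only afterward (Proposition \ref{prop:infinite_point_wise_conv}) observes that the limit process agrees with $\lim_K \tilde{x}^{(K)}_i(t)$.

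Your version is, if anything, a bit tighter on one point the paper treats rather briskly: you explicitly establish that the limiting interference $\sum_j a_j x_{i-j}(t)$ is a.s.\ finite, via $\sum_j a_j x_{i-j}(t) \le \sum_j a_j x_{i-j}(0) + \sum_j a_j\,\mathcal{A}_{i-j}([0,t])$ and $\mathbb{E}\bigl[\sum_j a_j\,\mathcal{A}_{i-j}([0,t])\bigr] = \lambda t\sum_j a_j < \infty$. This is exactly what makes the verification step — that the limit process honors the untruncated acceptance probability $x_i/\sum_j a_j x_{i-j}$ rather than merely being consistent with some inductively decreed rule — go through cleanly, and it is where the summability hypothesis is genuinely cashed in. The only caveat is that your finiteness bound requires the (mild) assumption $\sum_j a_j x_{i-j}(0) < \infty$ on the initial state; you flag this, and it costs nothing since bounded and empty initial conditions satisfy it, and if one insisted on an initial condition with infinite interference at some site, the acceptance ratio there would simply converge to $0$, which is still a well-defined (degenerate) decision. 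The monotonicity claim you rely on — larger interference support implies pointwise larger queues under the coupling — is correct by the same tie-breaking argument as Lemma \ref{lem:mono1}: when the two copies of queue $i$ are equal at a potential departure, the smaller-support system has the larger acceptance ratio, so order is preserved. No gaps.
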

\begin{proof}
	To show the existence of the dynamics, we introduce a sequence of systems, with the $k$ th system evolving according to the dynamics described in Section \ref{sec:math_framework} with the interference sequence being $\{a_i \mathbf{1}_{||i||_{\infty} \leq K}\}$. This interference sequence satisfies all the conditions specified in Section \ref{sec:math_framework} and hence the dynamics can be constructed. We now construct the infinite dynamics sequentially as follows. Consider any arbitrary initial conditions $\{x_i(0)\}_{i \in \mathbb{Z}^d}$. For this system, for every $K \in \mathbb{N}$, we can define the process $\{\tilde{x}^{(K)}_i(t)\}_{i \in \mathbb{Z}^d}$, $t \geq 0$, which is the process corresponding to the truncated interference sequence $\{a_i \mathbf{1}_{||i||_{\infty} \leq K}\}$. Now, it suffices to assert that at each arrival and potential departure event at queue $i$, we can unambiguously decide on how the system with infinite interference support evolves. The evolution due to an arrival event is easy, we just add an customer to queue $i$. At the first potential departure event at queue $i$, with the independent mark given by $u \in [0,1]$, we have to decide whether to remove a customer or not. Now, at this time, we can do this unambiguously by deciding whether $u \leq \lim_{K \rightarrow \infty} \frac{x_{i}(t)}{\sum_{j \in \mathbb{Z}^d} a_j \mathbf{1}(||j||_{\infty} \leq K)\tilde{x}_{i-j}^{(K)}(t)  }$ or not. The existence of the almost sure limit is guaranteed by monotonicity. In words, $a_j^{(K)}$ is non-decreasing in $K$ and the queue lengths $\tilde{x}_i^{(K)}(t)$ are non-decreasing in $K$ for each $i \in \mathbb{Z}^d$ and $t \geq 0$. The numerator $x_{i}(t)$ can be deduced without resorting to limits as this is the first potential departure after time $0$ in queue $i$. Hence we can unambiguously decide on the outcome of the first potential departure event at queue $i$ after time $0$.  Now, by induction, we can construct the sample path of any queue $i$ over any finite time interval, thereby establishing that the dynamics is well defined. 
\end{proof}

Based on the construction described above, it is not immediately clear that a stability region even exists for the case with infinitely supported interference sequence. The following proposition gives an alternative representation of the dynamics as a point-wise limit of dynamics with truncated interference sequence. 

\begin{proposition}
	Consider an initial condition $\{x_i(0)\}_{i \in \mathbb{Z}^d}$ and interference sequence $\{a_i\}_{i \in \mathbb{Z}^d}$ with $\{i: a_i > 0\}$ being infinite and such that $\sum_{j \in \mathbb{Z}^d}a_j < \infty$. Consider the sequence of processes $\{\tilde{x}^{(K)}_i(t)\}_{i \in \mathbb{Z}^d}$ each driven by the $K$-truncated interference sequence dynamics. Then for each $i \in \mathbb{Z}^d$ and $t \geq 0$ finite, we have $x_i(t) = \lim_{K \rightarrow \infty}\tilde{x}_i^{(K)}(t)$ almost surely.
	\label{prop:infinite_point_wise_conv}
\end{proposition}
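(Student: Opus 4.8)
The plan is to leverage the sequential construction already described in the proof of Proposition \ref{prop:infinite_support_existence}, together with the monotonicity of the queue lengths in the truncation parameter $K$, and argue event-by-event along the (almost surely locally finite) sequence of arrival and potential-departure epochs that are relevant for the queues in a bounded region. First I would fix $i \in \mathbb{Z}^d$, a finite horizon $t \geq 0$, and note that by the argument in Appendix \ref{appendix_construction} (the finite dependency-set construction), the value $x_i(t)$ of the true infinite-support dynamics depends only on the arrival and potential-departure marks inside an almost surely finite random set $X_{i,t} \subset \mathbb{Z}^d$ over the interval $[0,t]$; the same is true for each $\tilde{x}_i^{(K)}(t)$, with dependency sets that are nested and whose union is exactly $X_{i,t}$ once $K$ is large enough (since the interference supports are nested and eventually cover any fixed finite set). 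So it suffices to work on the event that $X_{i,t}$ is finite, which has probability one.

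Next I would establish monotonicity in $K$: for every $j \in \mathbb{Z}^d$ and every $s \geq 0$, $\tilde{x}_j^{(K)}(s)$ is non-decreasing in $K$, almost surely, under the natural coupling in which all systems are driven by the same $(\mathcal{A}_j,\mathcal{D}_j)_{j \in \mathbb{Z}^d}$. This is the same path-wise coupling argument used for Lemma \ref{lem:mono1}, adapted to a change in the interference sequence rather than the initial condition: at arrival epochs the ordering is trivially preserved, and at a potential-departure epoch at queue $j$ with mark $u$, if the two systems currently agree on queue $j$ then the larger interference sequence (larger $K'$) gives a denominator $\sum_\ell a_\ell \mathbf{1}(\|\ell\|_\infty \leq K') \tilde{x}_{j-\ell}^{(K')}(s) \geq \sum_\ell a_\ell \mathbf{1}(\|\ell\|_\infty \leq K) \tilde{x}_{j-\ell}^{(K)}(s)$ — using both $a^{(K')} \geq a^{(K)}$ coordinatewise and the inductively maintained coordinatewise ordering $\tilde{x}^{(K')} \geq \tilde{x}^{(K)}$ — hence a smaller departure probability, so the larger system does not drop below the smaller; and if they already differ by at least one, a single departure cannot break the order. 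Therefore $\tilde{x}_j^{(K)}(s) \nearrow$ some limit $\bar{x}_j(s) \in \mathbb{N} \cup \{\infty\}$ as $K \to \infty$, for every $j$ and $s$.

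Then I would show $\bar{x}_j(s) = x_j(s)$ for all $j$ and all finite $s$, by induction on the successive event epochs in the finite dependency region. At time $0$ all processes agree with the common initial condition. Between consecutive event epochs (within $[0,t]$, restricted to the finite set $X_{i,t}$, of which there are finitely many almost surely) all queue lengths are constant. At an arrival epoch at queue $j$, all of $\tilde{x}_j^{(K)}$ and $x_j$ increase by one, preserving the identity in the limit. At a potential-departure epoch at queue $j$ with mark $u$, the true dynamics removes a customer iff $u \leq x_j(s^-)/\sum_\ell a_\ell x_{j-\ell}(s^-)$, and the $K$-truncated dynamics iff $u \leq \tilde{x}_j^{(K)}(s^-)/\sum_\ell a_\ell \mathbf{1}(\|\ell\|_\infty \leq K)\tilde{x}_{j-\ell}^{(K)}(s^-)$; by the induction hypothesis the pre-epoch states agree with their limits, the numerators are eventually equal (they are integers, constant in $K$ for large $K$ at this fixed epoch), and the denominators converge monotonically by dominated/monotone convergence since $\sum_\ell a_\ell < \infty$ and the $\tilde{x}_{j-\ell}^{(K)}(s^-)$ are bounded (they are finite integers that stabilize in $K$ for $\|\ell\|_\infty$ up to the finite dependency radius, and are summed against the summable $a_\ell$). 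Hence the departure threshold converges, and — exactly as in the construction in Proposition \ref{prop:infinite_support_existence}, where ambiguity is ruled out precisely because $u$ has a continuous distribution and almost surely does not equal the limiting threshold — the decision to remove a customer or not agrees for all large $K$ with the true dynamics, which closes the induction.

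The main obstacle I expect is the edge case in the departure step where the mark $u$ is close to the limiting threshold: one must argue that, almost surely, no $u$ among the (countably many, locally finite) potential-departure marks in the relevant region equals the corresponding limiting interference-ratio, so that strict inequality is eventually attained from one side for all large $K$. This follows because, conditionally on the locations of all atoms of the $\mathcal{D}_j$, the marks $U_q^{(j)}$ are i.i.d. uniform and the limiting threshold is measurable with respect to the remaining randomness (it is built from the atom locations and the arrival process), so the probability of an exact tie at any given atom is zero, and there are countably many atoms in play. A clean way to package this is to note that $x_j(s) = \bar{x}_j(s)$ is exactly the "unambiguous" value singled out in the proof of Proposition \ref{prop:infinite_support_existence}, so the present proposition is essentially a restatement that the sequential construction there coincides with the monotone limit; the work is just in verifying that the monotone limit respects each event update, which is the induction above.
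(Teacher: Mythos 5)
Your approach is in essence the same as the paper's: observe there are a.s.\ finitely many potential-departure epochs at queue $i$ in $[0,t]$; note that the departure rule of the infinite-support construction is by definition a limit in $K$ at each such epoch; rule out ties between the uniform mark and the limiting threshold; conclude the $K$-truncated decisions stabilize for $K$ large, which makes the convergence uniform over $[0,t]$. Your elaboration via monotonicity in $K$, boundedness of the queue lengths, and the no-tie argument appropriately fills in what the paper leaves implicit.

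However, your opening premise — that the infinite-support process $x_i(t)$ depends only on the arrival/departure data in an almost surely finite set $X_{i,t}$, and that the dependency sets of the $\tilde{x}^{(K)}$ are nested with union equal to $X_{i,t}$ — is false. The Boolean-percolation construction of Appendix \ref{appendix_construction} requires a finite interference radius $L$: it applies to each $\tilde{x}^{(K)}$ (with radius $K$), but the resulting dependency region grows with $K$, and the infinite-support dynamics genuinely depends on arbitrarily far queues through the tails of $\{a_\ell\}$. There is no uniform finite $X_{i,t}$, so the induction "on event epochs in the finite dependency region" is not well posed as stated. Fortunately, nothing you actually need relies on it: (i) that there are a.s.\ finitely many potential-departure epochs at queue $i$ in $[0,t]$ is a property of the single PPP $\mathcal{D}_i$, not of percolation; (ii) for every $j$ and $s$, $\tilde{x}^{(K)}_j(s^-)$ is a non-decreasing sequence of integers bounded above by $x_j(0)+\mathcal{A}_j([0,s])$ uniformly in $K$, hence stabilizes to a finite value — this replaces the appeal to a "finite dependency radius"; and (iii) the interference $\sum_\ell a^{(K)}_\ell\,\tilde{x}^{(K)}_{j-\ell}(s^-)$ converges (possibly to $+\infty$) by monotone convergence over all of $\mathbb{Z}^d$ with no spatial truncation. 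Reorganizing the induction to run only over the finitely many epochs at the single queue $i$ and dropping the dependency-set premise gives a correct argument that coincides with the paper's proof.
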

\begin{proof}
	For every queue $i$ and finite time $t$, there are only finitely many potential departure events almost surely in the interval $[0,t]$. From Proposition \ref{prop:infinite_support_existence}, we know that at each instance of a potential departure at queue $i$, we take a limit in $K$, the truncation length to determine whether or not to remove a customer. However, since there are only finitely many events in the time interval $[0,t]$, one can make the limit uniform to conclude that  for all $t \geq 0$ and all $i \in \mathbb{Z}^d$, $ \lim_{K \rightarrow \infty}\sup_{0 \leq u \leq t}|\tilde{x}_i^{(K)}(u) - x_i(u)| = 0$ almost surely. 
\end{proof}

Based on the construction outlined above, one can extend the existence of a stationary solution to the case when the interference sequence has an infinite support. Indeed, it is not a corollary, as, by the construction of the infinite support dynamics as a point-wise limit of the  $K$ truncated interference systems' dynamics, the existence of a stability region is not granted. 

\begin{proposition}
	Suppose that the interference sequence $\{a_i\}_{i \in \mathbb{Z}^d}$ is such that $\{i: a_i > 0\}$ is infinite with $\sum_{j \in \mathbb{Z}^d}a_j < \infty$. Under this conditions, if $\lambda < \frac{1}{\sum_{j \in \mathbb{Z}^d}a_j}$, then there exists a minimal stationary solution $\{\tilde{x}_i\}_{i \in \mathbb{Z}^d}$ with $\mathbb{E}[\tilde{x}_0] = \frac{\lambda}{1 - \lambda \sum_{j \in \mathbb{Z}^d}a_j}$.
	\label{prop:infinite_seq_stablity}
\end{proposition}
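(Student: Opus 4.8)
The plan is to obtain $\{\tilde x_i\}$ as a monotone limit, in the truncation parameter $K$, of the minimal stationary solutions of the $K$-truncated interference systems, and to push both the stationarity and the mean formula through this limit. For each $K$ the truncated sequence $a^{(K)}:=\{a_i\mathbf 1_{\|i\|_\infty\le K}\}$ satisfies all the hypotheses of Section \ref{sec:math_framework}, and $\lambda\sum_j a^{(K)}_j\le\lambda\sum_j a_j<1$, so Theorem \ref{thm:main_stability} gives that the $K$-system is stable with minimal stationary solution $\{x^{(K)}_{i;\infty}(0)\}$ and $\mathbb E[x^{(K)}_{0;\infty}(0)]=\lambda a_0/(1-\lambda\sum_j a^{(K)}_j)$, which under $a_0=1$ is $\lambda/(1-\lambda\sum_j a^{(K)}_j)$. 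As in the proof of Proposition \ref{prop:infinite_point_wise_conv}, the coupling underlying Lemma \ref{lem:mono1} applied to two interference sequences $a^{(K)}\le a^{(K+1)}$ with the same driving data (larger interference means smaller departure rates whenever the two queue-length fields agree at a potential departure) shows that, starting from the empty state at $-t$, $x^{(K)}_{i;t}(0)$ is non-decreasing in $K$; by Lemma \ref{lem:mono1} it is also non-decreasing in $t$. Hence the double directed supremum $\tilde x_i:=\sup_{K}\sup_{t\ge0}x^{(K)}_{i;t}(0)\in[0,\infty]$ exists a.s., and because both limits are monotone the iterated limits agree in either order. Using Proposition \ref{prop:infinite_point_wise_conv} for the inner limit (with the empty initial condition at $-t$), $\lim_{K\to\infty}x^{(K)}_{i;t}(0)=x_{i;t}(0)$, the infinite-support dynamics started empty at $-t$; therefore $\tilde x_i=\lim_{t\to\infty}x_{i;t}(0)=x_{i;\infty}(0)$, i.e. $\tilde x$ is exactly the Loynes-type minimal candidate for the infinite-support dynamics, and by monotonicity $x^{(K)}_{i;\infty}(0)\uparrow\tilde x_i$ as $K\to\infty$.

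By monotone convergence,
\[
\mathbb E[\tilde x_0]=\lim_{K\to\infty}\mathbb E[x^{(K)}_{0;\infty}(0)]=\lim_{K\to\infty}\frac{\lambda}{1-\lambda\sum_{\|j\|_\infty\le K}a_j}=\frac{\lambda}{1-\lambda\sum_{j\in\mathbb Z^d}a_j},
\]
where the last equality uses $\sum_{\|j\|_\infty\le K}a_j\uparrow\sum_j a_j<\infty$ and $\lambda\sum_j a_j<1$, which keeps all denominators bounded below by $1-\lambda\sum_j a_j>0$. In particular $\mathbb E[\tilde x_0]<\infty$, so $\tilde x_0<\infty$ a.s.; the $0$–$1$ law of Lemma \ref{lem_01_law} — whose proof uses only ergodicity, the monotonicity of the dynamics and finiteness of $\mathcal A_0$ on compacts, all available in the infinite-support case via Propositions \ref{prop:infinite_support_existence} and \ref{prop:infinite_point_wise_conv} — then yields $\mathbb P[\cap_i\{\tilde x_i<\infty\}]=1$, i.e. the infinite-support system is stable.

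It remains to check that $\{\tilde x_i\}$ is a stationary, translation invariant, and minimal solution of the infinite-support dynamics. Each $\{x^{(K)}_{i;\infty}(0)\}$ is translation invariant (empty initial condition and translation invariant driving data, as noted after Theorem \ref{thm:main_stability}), and translation invariance passes to a.s. limits, so $\{\tilde x_i\}$ is translation invariant; hence $\mathbb E[\sum_j a_j\tilde x_{i-j}]=(\sum_j a_j)\mathbb E[\tilde x_0]<\infty$ and the interference field is a.s. finite, so the infinite-support dynamics is well posed from the state $\tilde x$. Stationarity is the usual backward-coupling argument: running the infinite-support dynamics over $[0,t]$ from the empty state at $-(s+t)$ produces, by shift-invariance, a copy of $x_{\cdot;\,s+t}(0)$ translated in time; letting $s\to\infty$ and using $x_{i;\infty}(0)\circ\theta_{-t}$ equal in law to $x_{i;\infty}(0)$ shows the law of $\{\tilde x_i\}$ is invariant under the dynamics — this is verbatim the argument of Section \ref{sec:sids}, now applied to the locally uniform limit of Proposition \ref{prop:infinite_point_wise_conv}. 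For minimality, if $\{y_i\}$ is any stationary solution, couple the infinite-support dynamics from $y$ with the one from the empty state at $-t$ using the monotonicity coupling of Lemma \ref{lem:mono1} (valid for the infinite-support dynamics by the same pathwise argument, or by taking $K\to\infty$ in the finite-support couplings via Proposition \ref{prop:infinite_point_wise_conv}); at time $0$ this gives $y_i(0)\ge x_{i;t}(0)$ a.s., and letting $t\to\infty$ gives $y_i\ge\tilde x_i$ a.s.\ coordinatewise, where $y_i(0)$ has the law of $y$ by stationarity.

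The main obstacle is the last step: verifying that $\tilde x$ is an honest stationary trajectory of the infinite-support dynamics rather than merely an a.s.\ pointwise limit of truncated trajectories. This needs (a) well-posedness of the infinite-support dynamics from a random initial field with finite mean — handled by translation invariance and $\mathbb E[\tilde x_0]<\infty$ — and (b) commuting the one-step evolution with the limits $K\to\infty$ and $t\to\infty$, which rests on the locally uniform convergence $\sup_{0\le u\le t}|\tilde x^{(K)}_i(u)-x_i(u)|\to0$ of Proposition \ref{prop:infinite_point_wise_conv} together with the monotone convergence $x^{(K)}_{i;\infty}(0)\uparrow\tilde x_i$. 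Everything else — the double-limit interchange, the mean computation, the $0$–$1$ law and minimality — is routine given the results already established.
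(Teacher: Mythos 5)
Your proof is correct and follows essentially the same route as the paper's: construct the minimal solution as a monotone double limit (in truncation level $K$ and backward horizon $t$) of the Loynes-type constructions for the finite-support systems, use the commuting of iterated monotone limits together with Proposition \ref{prop:infinite_point_wise_conv} to identify it with $x_{i;\infty}(0)$, and push the mean formula of Theorem \ref{thm:main_stability} through by monotone convergence. The paper's appendix proof carries out the same limit interchange via two inequalities rather than your ``double directed supremum'' phrasing, and it largely takes the stationarity and minimality of the limit for granted, whereas you spell these out; this is a harmless and welcome addition rather than a deviation.
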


The proof is deferred to Appendix \ref{appendix:infinite_support_proof}. However, establishing uniqueness of stationary regime in this case is slightly more delicate and we leave it to future work. The main difficulty being that writing down rate-conservation equations as done in Section \ref{sec:rcl} when the interference support is infinite is not obvious.

\subsection{$K$-Shifted System}
\label{sec:model_estensions_K_shifted}

In this subsection, we introduce a model of queues which `reflect' at  level $K$. In other words, we consider a dynamic which will forbid any departures from a queue if it has $K$ or more customers at any point of time. Note that the original model we describe is the $0$ shifted, or the model reflected at $0$. Thus, if $\{x_{i}^{(K)}(t)\}_{i \in \mathbb{Z}^d}$ is the stochastic process corresponding to the $K$ shifted dynamics for some $K \in \mathbb{N}$, then the instantaneous rate of departure from any queue $i \in \mathbb{Z}^d$ at time $t$ is then given by 
\begin{align}
\hat{R}^{(K)}_i(t) = \frac{x_i^{(K)}(t)}{\sum_{j \in \mathbb{Z}^d}a_j x_{i-j}^{(K)}(t)} \mathbf{1}(x_i^{(K)}(t) > K).
\label{eqn:K_shifted_rate_func}
\end{align}
For the purposes of this section, we assume that $L := \sup\{||i||_{\infty}: a_i > 0\} < \infty$, although one could extend this definition to include the case of $L = \infty$ as well by the  ideas introduced in Section \ref{subsec:infinite_interference}.  In this case of finitely supported interference sequence, the process can be formally defined  through a Poisson clock similar to that used in Section \ref{sec:math_framework}.
 The main result for the general $K$ shifted system is the following.
\begin{proposition}
	If $\lambda < \frac{1}{\sum_{j \in \mathbb{Z}^d}a_j}$, then for all $K \in \mathbb{N}$, the $K$-shifted dynamics is stable. Moreover, the minimal stationary solution $\{\tilde{x}_i^{(K)}\}_{i \in \mathbb{Z}^d}$ satisfies
	\begin{align}
	\mathbb{E}[\tilde{x}_0^{(K)}] \leq \frac{\lambda+K}{1 - \lambda \sum_{j \in \mathbb{Z}^d}a_j} < \infty.
	\label{eqn:K_shifted_mean}
	\end{align} 
	\label{prop:K_shifted_stability_result}
\end{proposition}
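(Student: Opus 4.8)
The plan is to mirror the proof of Theorem \ref{thm:main_stability}: first analyse the $K$-shifted dynamics on a large finite torus, extract a uniform-in-$n$ first-moment bound via a rate-conservation argument, and then pass to the limit. Concretely, for each $n$ let $B_n$ be the torus of side $2n+1$ with $2n+1>2L$, equip it with the periodised interference sequence, and consider the $K$-shifted Markov chain on $\{K,K+1,\dots\}^{B_n}$ whose departure rates are given by \eqref{eqn:K_shifted_rate_func}. First I would check that this chain is positive recurrent whenever $\lambda\sum_j a_j<1$: with the Lyapunov function $V(x)=\sum_{i\in B_n}x_i^2$ and the Cauchy--Schwarz estimate $\sum_i \tfrac{x_i^2}{\sum_j a_j x_{i-j}}\ge \tfrac{\sum_i x_i}{\sum_j a_j}$ (whose effect survives the indicator $\mathbf{1}(x_i>K)$ up to a state-independent additive constant, since $x_i^2/I_i\le K/a_0$ when $x_i=K$), the generator applied to $V$ is bounded above by $2\bigl(\lambda-\tfrac1{\sum_j a_j}\bigr)\sum_i x_i$ plus a bounded term, hence is negative outside a finite set. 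The analogous computation with a cubic Lyapunov function, exactly as in Section \ref{sec:torus_system}, yields $\mathbb{E}_{\pi_n^{(K)}}[(x_0^{(K),n})^2]<\infty$ under the stationary law $\pi_n^{(K)}$, which is translation invariant by the symmetry of the torus.

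Next comes the rate-conservation step, carried out as in Section \ref{sec:rcl}. Stationarity of the coordinate $x_0^{(K),n}$, whose drift is $\lambda-\hat R_0^{(K),n}$, gives $\mathbb{E}[\hat R_0^{(K),n}]=\lambda$. The key identity is obtained by applying the rate-conservation principle to the quadratic functional $Q_n=\sum_{i,j\in B_n}a_{i-j}\,x_i^{(K),n}x_j^{(K),n}$ (legitimate because of the finite second moment just established): an arrival at queue $k$ changes $Q_n$ by $2I_k^{(K),n}+a_0$ and a departure by $-2I_k^{(K),n}+a_0$, where $I_k^{(K),n}=\sum_j a_j x_{k-j}^{(K),n}$ and $\hat R_k^{(K),n}I_k^{(K),n}=x_k^{(K),n}\mathbf{1}(x_k^{(K),n}>K)$. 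Setting the mean drift of $Q_n$ to zero and using translation invariance ($\mathbb{E}[I_0^{(K),n}]=(\sum_j a_j)\mathbb{E}[x_0^{(K),n}]$, valid since $2n+1>2L$) together with $\mathbb{E}[\hat R_0^{(K),n}]=\lambda$ gives
\[
0=2\lambda\Bigl(\textstyle\sum_j a_j\Bigr)\mathbb{E}[x_0^{(K),n}]+2\lambda a_0-2\,\mathbb{E}\bigl[x_0^{(K),n}\mathbf{1}(x_0^{(K),n}>K)\bigr].
\]
Since $\mathbb{E}[x_0^{(K),n}\mathbf{1}(x_0^{(K),n}>K)]\ge\mathbb{E}[x_0^{(K),n}]-K$, rearranging yields the uniform bound $\mathbb{E}[x_0^{(K),n}]\le\frac{\lambda a_0+K}{1-\lambda\sum_j a_j}$, which under the convention $a_0=1$ is exactly the right-hand side of \eqref{eqn:K_shifted_mean}.

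Finally I would pass to the infinite system. The uniform first-moment bound makes the family $\{\pi_n^{(K)}\}_n$ tight; using the monotonicity of the $K$-shifted dynamics in the truncation size (the analogue of Lemma \ref{lem:mono2}, coupling the torus systems and the infinite $K$-shifted system on a common driving sequence so that queue lengths are coordinatewise nondecreasing in $n$), the stationary laws converge to a translation-invariant stationary law $\pi^{(K)}$ of the infinite $K$-shifted dynamics with $\mathbb{E}_{\pi^{(K)}}[x_0]\le\liminf_n\mathbb{E}[x_0^{(K),n}]$ by Fatou; in particular $x_0^{(K)}<\infty$ a.s., so the $K$-shifted dynamics is stable. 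To identify the \emph{minimal} solution, run the $K$-shifted dynamics backward from the minimal initial state $x_i^{(K)}(-T)=K$ for all $i$: by Lemma \ref{lem:mono1} applied to the $K$-shifted dynamics, $x_{i;T}^{(K)}(0)$ is nondecreasing in $T$, and its a.s.\ limit $\tilde x_i^{(K)}$ is coordinatewise dominated by any stationary solution, in particular by $\pi^{(K)}$; hence $\tilde x^{(K)}$ is finite a.s.\ with $\mathbb{E}[\tilde x_0^{(K)}]\le\mathbb{E}_{\pi^{(K)}}[x_0]\le\frac{\lambda+K}{1-\lambda\sum_j a_j}$. I expect the main obstacle to be the justification of the rate-conservation step, namely establishing the finiteness of the second moment under $\pi_n^{(K)}$ (so that the drift of $Q_n$ is integrable and vanishes in stationarity) and verifying that the reflection at level $K$ and the $+K$ offsets in the service rates do not spoil the Lyapunov estimates; everything else is a routine adaptation of the arguments already developed for the unshifted model in Sections \ref{sec:torus_system}--\ref{sec:sids}.
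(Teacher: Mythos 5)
Your overall architecture mirrors the paper's: analyse the $K$-shifted dynamics on a large finite torus, set up a rate-conservation identity under the stationary law, and pass to the infinite system. The rate-conservation computation is essentially the paper's up to symmetrisation (your $Q_n=\sum_{i,j}a_{i-j}x_ix_j$ equals $|B_n|$ times the per-site quantity $\mathbb{I}_t=y_0 I_0$ of Proposition \ref{prop:rcl_diff_eqn} by torus translation invariance, and $\mathbb{E}[x_0\mathbf{1}(x_0>K)]\ge\mathbb{E}[x_0]-K$ gives exactly \eqref{eqn:K_shifted_mean}). Where you diverge is in licensing that step: the paper reruns the $GI/GI/1$ coupling of Theorem \ref{thm:finite_PR} with the renormalisation floor $u_0$ replaced by $\max(u_0,K)$, obtaining exponential moments, whereas you use a quadratic Foster--Lyapunov function for positive recurrence and a cubic one for finite second moments. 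Both of your Lyapunov estimates are correct (Cauchy--Schwarz gives $\sum_i x_i^3/I_i\ge(\sum_j a_j)^{-1}\sum_i x_i^2$ on the torus, and the indicator $\mathbf{1}(x_i>K)$ only produces a state-independent constant since $x_i^2/I_i\le K/a_0$ on $\{x_i\le K\}$), and a finite second moment does suffice to make the rate-conservation step for a quadratic functional rigorous, so this is a clean and somewhat lighter alternative to the paper's coupling construction.

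There is, however, a genuine gap in your passage to the infinite system. You assert that the torus systems and the infinite $K$-shifted system can be coupled on a common driving sequence so that queue lengths are coordinatewise nondecreasing in the truncation size $n$, and deduce that the torus stationary laws $\pi^{(K)}_n$ converge to a stationary law of the infinite dynamics. No such monotone coupling exists: two tori of different sizes identify different pairs of sites, and a torus is not coordinatewise comparable to the infinite chain because a boundary queue of $B_n$ receives extra interference from the opposite boundary under the periodic identification. Lemmas \ref{lem:mono1} and \ref{lem:mono2} compare initial conditions and thinned arrivals within a \emph{fixed} interaction structure; they do not compare dynamics on different interaction graphs. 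The paper's Section \ref{sec:sids} circumvents this by introducing the second, non-periodic truncation $z^{(n)}$ (queues outside $B_n$ frozen, no arrivals there): by Lemma \ref{lem:mono2} that family genuinely is nondecreasing in $n$, is dominated coordinatewise by both the torus $y^{(n)}$ and the infinite process, and its backward limits increase to the minimal stationary solution, whose mean is then controlled by monotone convergence via the uniform torus estimate as in Corollary \ref{cor:mean_queue_len_ub}. Your closing backward-coupling argument from the all-$K$ state is fine once the torus monotonicity claim is replaced by this $z^{(n)}$ sandwich, but as written, the existence and stationarity of $\pi^{(K)}$ is not established.
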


\begin{proposition}
		If $\lambda < \frac{2}{3} \frac{1+c}{\sum_{j \in \mathbb{Z}^d}a_j}$, where the constant  $c = \frac{\sqrt{a_0^2 + a_0 \sum_{j \in \mathbb{Z}^d \setminus \{0\}} a_j } \text{  } - \text{  } a_0}{\sum_{j \in \mathbb{Z}^d \setminus \{0\}} a_j }$, then we have $\mathbb{E}[(\tilde{x}_0^{(K)})^2] < \infty$.
		\label{prop:K_shifted_2ndmom}
\end{proposition}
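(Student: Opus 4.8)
The plan is to mirror the proof of Proposition~\ref{prop:finite_second_moment}: run the rate‑conservation argument of Section~\ref{sec:rcl}, but with a degree‑three test function instead of a degree‑two one, and carry it out first on a finite spatial truncation where all moments are automatically finite. First I would record that the hypothesis implies stability: since $c\in(0,\tfrac12)$ we have $\tfrac23(1+c)<1$, hence $\lambda<\tfrac1{\sum_j a_j}$, so Proposition~\ref{prop:K_shifted_stability_result} applies and gives the minimal stationary regime $\{\tilde x_i^{(K)}\}_{i\in\mathbb{Z}^d}$ with $\mathbb{E}[\tilde x_0^{(K)}]<\infty$; the task is to upgrade this finite first moment to a finite second moment. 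Next I would introduce the torus‑truncated $K$‑shifted system on a box $B_n\subset\mathbb{Z}^d$ (departure rates as in \eqref{eqn:K_shifted_rate_func} with interferences computed cyclically), which is a finite irreducible Markov chain; it is positive recurrent because the total excess mass $\sum_{i\in B_n}(x_i-K)$ drifts downward exactly as in the base torus system whenever $\lambda\sum_j a_j<1$. Let $\boldsymbol{\pi}_n$ be its stationary law; by translation invariance on the torus every coordinate has the same law, all moments are finite, and Miyazawa's rate‑conservation principle holds without any integrability caveat.

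The heart of the argument is then to choose a test function $\Phi$ on which rate conservation isolates $\mathbb{E}_{\boldsymbol{\pi}_n}[x_0^2]$. Guided by the first‑moment proof, in which $\Phi(x)=\sum_{i\in B_n}x_i\sum_j a_j x_{i-j}$ produces the exact mean $\tfrac{\lambda a_0}{1-\lambda\sum_j a_j}$, I would take the natural degree‑three analogue built from $x_i$ and the local interference $I_i:=\sum_j a_j x_{i-j}$, e.g.\ $\Phi(x)=\sum_{i\in B_n}x_i\,I_i^2$ (or $\sum_{i\in B_n}x_i^2 I_i$). Writing the rate‑conservation identity for $\Phi$: arrivals, at rate $\lambda$, contribute a sum of degree‑two expectations; departures, at rate $R_i^{(K)}=x_i\mathbf{1}(x_i>K)/I_i$, contribute terms in which the factor $1/I_i$ cancels against an $I_i$ coming from $\Delta\Phi$, again leaving degree‑two expectations, together with the identity $\mathbb{E}[R_0^{(K)}]=\lambda$ from the first‑moment equation. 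Using translation invariance to collapse the $B_n$‑sums, one is left with a linear relation among $\mathbb{E}_{\boldsymbol{\pi}_n}[x_0^2]$, the cross‑moments $\mathbb{E}_{\boldsymbol{\pi}_n}[x_0 x_{-j}]$ for $j\neq 0$ (all nonnegative by the positive correlations of the model), and already‑controlled first‑order quantities (whose magnitudes are bounded uniformly in $n$ by the torus version of the first‑moment computation).

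To close the inequality I would bound the cross‑moments by $\mathbb{E}[x_0 x_{-j}]\le\mathbb{E}[x_0^2]$ (Cauchy--Schwarz and translation invariance) and lower‑bound the interference denominator appearing through $R_0^{(K)}$ by $a_0 x_0$ plus a tunable fraction of $\sum_{j\neq0}a_j x_{-j}$; optimizing this single free parameter is a scalar quadratic optimization whose extremizer is exactly the positive root of $A\gamma^2+2a_0\gamma-a_0=0$ with $A=\sum_{j\neq0}a_j$, i.e.\ $\gamma=c$, and the residual factor $\tfrac23$ drops out of the same computation. The outcome is an inequality of the shape $\bigl(1-\tfrac{3\lambda\sum_j a_j}{2(1+c)}\bigr)\,\mathbb{E}_{\boldsymbol{\pi}_n}[x_0^2]\le C$ with $C<\infty$ depending only on $\lambda,K$ and $\{a_j\}$; under the stated condition on $\lambda$ the coefficient is strictly positive, so $\sup_n\mathbb{E}_{\boldsymbol{\pi}_n}[x_0^2]<\infty$. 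Letting $n\to\infty$ along the coupling/monotone convergence of the truncated $K$‑shifted systems to $\{\tilde x_i^{(K)}\}$ (as used in the proof of Proposition~\ref{prop:K_shifted_stability_result}) and invoking Fatou's lemma then gives $\mathbb{E}[(\tilde x_0^{(K)})^2]\le\liminf_n\mathbb{E}_{\boldsymbol{\pi}_n}[x_0^2]<\infty$.

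The main obstacle is the middle step: selecting a test function whose departure contributions genuinely telescope, so that the ratio $x_i/I_i$ does not spawn uncontrollable fractional or higher‑degree moments, while the surviving cross‑correlation terms are precisely of a form that can be reabsorbed into $\mathbb{E}[x_0^2]$ at the cost of a computable constant; and then performing the parameter optimization that yields $c$ and the factor $\tfrac23$. Everything else — the torus reduction, positive recurrence, validity of the rate‑conservation principle on the finite chain, and the $n\to\infty$ limit — is routine given the machinery already developed in the excerpt.
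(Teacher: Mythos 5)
Your plan matches the paper's proof, which carries out exactly the rate‑conservation argument of Lemmas~\ref{lem:2ndmom_auxilary} and~\ref{lem:2ndmom_torus} on the torus‑truncated $K$‑shifted system with the degree‑three test function $y_0^2 I_0$ (your second option $\sum_{i\in B_n}x_i^2 I_i$, which agrees up to the factor $|B_n|$ by translation invariance on the torus), and then passes to the limit $n\to\infty$ as in Section~\ref{sec:sids}. The one step you leave implicit is that the $K$‑shifted rate satisfies $R_0^{(K)}I_0=y_0\,\mathbf{1}(y_0>K)$, so the departure side produces $\mathbb{E}[y_0^2\,\mathbf{1}(y_0>K)]$ rather than $\mathbb{E}[y_0^2]$; the discrepancy is absorbed via $\mathbb{E}[y_0^2]\le K^2+\mathbb{E}[y_0^2\,\mathbf{1}(y_0>K)]$, which only shifts the constant $C$.
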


The proofs are deferred to Appendix \ref{appendix:K_shifted_proof}. The $K$-shifted dynamics is introduced as it will later be used to show convergence from bounded initial conditions to the stationary regime of the original initial dynamics, i.e., it is used as a tool to prove Theorem \ref{thm:initial_cond_bdd_converge} in Section \ref{sec:proof_bounded_converge}. One can also naturally extend the $K$-shifted dynamics to accommodate the case when the interference sequence $\{a_i\}_{i \in \mathbb{Z}^d}$ has infinite support satisfying $\sum_{j \in \mathbb{Z}^d}a_j < \infty$, but we do not do so here.

\section{Space Truncated  Finite Systems}
\label{sec:torus_system}
In this section, we discuss a finite version of the aforementioned infinite queueing network.
For any $n \in \mathbb{Z}_{+}$, we consider two $n$-truncated systems, both of which are obtained by restricting the dynamics to the set $B_n(0)$, the $l_{\infty}$ ball of radius $n$ centered at $0$. For notational convenience, we shall drop $0$ and denote by $B_n := B_n(0)$  the $l_{\infty}$ ball of radius $n$ centered at $0$. For every $n \in \mathbb{N}$, we define two truncated dynamics, $\{y_i^{(n)}(\cdot)\}_{i \in B_n}$ and $\{z_i^{(n)}(\cdot)\}_{i \in B_n}$. The process $\{y_i^{(n)}(\cdot)\}_{i \in B_n}$ evolves with the set $B_n$ `wrapped around' to form a torus. More precisely, the process $\{y_i^{(n)}(\cdot)\}_{i \in B_n}$ is driven by $(\mathcal{A}_i,\mathcal{D}_i)_{i \in B_n}$. The arrival dynamics is the same as for the infinite system described in Section \ref{sec:math_framework} wherein, for all $i \in B_n$, at each epoch of $\mathcal{A}_i$, a customer is added to queue $i$. The departure dynamics is driven by $\mathcal{D}_i$ as before, but we treat the set $B_n$ as a torus. More precisely, given any  
$i,j \in B_n$, define $d_n(i,j) := (i-j) \mod n$, where the modulo operation is coordinate-wise.
Thus, at any time $t$, and any $i \in B_n$, the rate at which a departure occurs from queue $i$
at time $t$ in the process $\{y^{(n)}_i(t)\}_{i \in B_n(0)}$ is 
$\frac{y_{i}^{(n)}(t)}{\sum_{j \in B_n} a_{d_n(i,j)}y^{(n)}_j(t)}$. 
Since $n$ is finite, the stochastic process $\mathbf{y}^{(n)}(t)$ is a continuous time Markov process
on a countable state-space, i.e., on $\mathbb{N}^{(2n+1)^d}$. Moreover, since the jumps are triggered by a finite number of Poisson processes, this chain has almost surely no-explosions.
\\

 Similarly, the process $\{z_i^{(n)}(t)\}_{i \in B_n}$ is driven by the arrival data $(\mathcal{A}_i,\mathcal{D}_i)_{i \in B_n}$ as before, but this time the set $B_n$ is viewed as a subset of $\mathbb{Z}^d$ and in particular the `edge effects' are retained. {\color{black} The arrival rate to any queue $i \in B_n$ in the system $\{z_i^{(n)}(t)\}_{i \in B_n}$  is $\lambda$, while there are no arrivals to queues in $B_n^{\complement}$, i.e., an arrival rate of $0$. Moreover, the queue lengths of queues in $B_n^{\complement}$ is set to $0$, i.e., for all $t \geq 0$ and all $i \in B_n^{\complement}$, we have $z_i^{(n)}(t) = 0$. The departure process for any queue $i \in B_n$, is identical to the original infinite system described in Section \ref{sec:math_framework}. At any time $t \geq 0$, and any $i \in B_n$, the rate of departure from queue $i$ at time $t$ is given by $\frac{z_i^{(n)}(t)}{\sum_{j \in \mathbb{Z}^d} a_{i-j}z_j^{(n)}(t)}$}. From the monotonicity in the dynamics, we have the following proposition.

\begin{proposition}
 For all $n > L$, there exists a coupling of the processes $\{x_i(\cdot)\}_{i \in \mathbb{Z}^d}$, $\{z^{(n)}_i(\cdot)\}_{i \in B_n}$ and $\{y^{(n)}_i(\cdot)\}_{i \in B_n}$ such that for all $t \in \mathbb{R}$, and all $i \in \mathbb{Z}^d$, we have
	$x_{i}(t) \geq z^{(n)}_{i}(t)$ and $y^{(n)}_{i}(t) \geq z^{(n)}_{i}(t)$ almost surely.
	\label{prop:truncation_mono}
\end{proposition}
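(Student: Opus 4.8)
The plan is to construct all three processes on a common probability space driven by the same family of Poisson processes $(\mathcal{A}_i, \mathcal{D}_i)_{i \in \mathbb{Z}^d}$ (restricted to $B_n$ for the truncated systems), so that the desired inequalities become pathwise statements, and then to verify them by the same event-by-event induction used in Lemma \ref{lem:mono1}. The key comparison is between $z^{(n)}$ and the other two. Note that $z^{(n)}$ is obtained from the infinite system $x$ by two modifications: suppressing all arrivals outside $B_n$, and — equivalently, by the dynamics — forcing the queues outside $B_n$ to stay empty. For $x \geq z^{(n)}$ this is exactly the content of Lemma \ref{lem:mono2} applied with $X = B_n^\complement$ and $[s,t] = \mathbb{R}$ (or any $[s,t]$ exhausting the relevant interval), together with the fact that starting empty outside $B_n$ and never admitting arrivals there keeps those queues empty forever, which only decreases the interference felt inside $B_n$ and hence increases the departure rates in $z^{(n)}$ relative to $x$.

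The comparison $y^{(n)} \geq z^{(n)}$ is the one requiring the hypothesis $n > L$. Here both processes live on the same index set $B_n$, are driven by the same $(\mathcal{A}_i,\mathcal{D}_i)_{i \in B_n}$, and start empty, so arrivals are identical and preserve any coordinatewise ordering. At a potential departure epoch in queue $i \in B_n$, the torus process $y^{(n)}$ computes its interference as $\sum_{j \in B_n} a_{d_n(i,j)} y^{(n)}_j(t)$, whereas $z^{(n)}$ uses $\sum_{j \in \mathbb{Z}^d} a_{i-j} z^{(n)}_j(t) = \sum_{j \in B_n} a_{i-j} z^{(n)}_j(t)$ since $z^{(n)}$ vanishes outside $B_n$. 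Because $n > L$, for each fixed $i \in B_n$ the map $j \mapsto d_n(i,j)$ restricted to the set of $j$ with $a_{i-j} > 0$ (i.e. $\|i-j\|_\infty \le L < n$) is a bijection onto that same set with $a_{d_n(i,j)} = a_{i-j}$ — the torus wrap-around does not create any spurious coincidences of the finitely-supported weights when the radius exceeds the support. Hence if $y^{(n)}_j(t) \ge z^{(n)}_j(t)$ for all $j$ and $y^{(n)}_i(t) = z^{(n)}_i(t)$, then the torus interference at $i$ dominates the $z$-interference at $i$, so the $y$-departure rate is no larger than the $z$-departure rate, and the ordering at coordinate $i$ is preserved across the potential departure; if instead $y^{(n)}_i(t) \ge z^{(n)}_i(t) + 1$ a single departure cannot break the ordering. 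This is precisely the induction of Lemma \ref{lem:mono1}, now carried out with the extra observation that the torus and Euclidean interference functionals agree when $n > L$ and the ordering holds. Combining the two comparisons yields $x_i(t) \ge z^{(n)}_i(t)$ and $y^{(n)}_i(t) \ge z^{(n)}_i(t)$ for all $t$ and all $i$ on this coupling.

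The main obstacle — really the only non-formulaic point — is the torus identification: one must check carefully that $n > L$ guarantees $a_{d_n(i,j)} = a_{i-j}$ for every $i,j \in B_n$ with either weight nonzero, so that wrapping around the torus does not inadvertently add interference contributions (which could flip the inequality $y^{(n)} \ge z^{(n)}$). Once that is in place, everything else is the standard monotone pathwise coupling argument already established for Lemmas \ref{lem:mono1} and \ref{lem:mono2}, extended over the doubly infinite time axis $\mathbb{R}$ by taking the limit of the analogous statement on $[-T,t]$ and using the a.s.\ monotone convergence $x_{i;T}(s) \uparrow x_{i;\infty}(s)$ (and likewise for the truncated systems, whose finite-dimensional stationary regimes are reached by the same Loynes-type backward construction).
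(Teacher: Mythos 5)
Your proposal is correct and matches the paper's (unstated) argument, which is simply to invoke the pathwise monotonicity of Lemmas \ref{lem:mono1} and \ref{lem:mono2} together with the torus--Euclidean interference comparison you make explicit: since $n>L$ forces $d_n(i,j)=i-j$ and hence $a_{d_n(i,j)}=a_{i-j}$ whenever $\|i-j\|_\infty\le L$, every positive Euclidean weight at $i$ is matched in the torus sum, so the torus interference for $y^{(n)}$ dominates the $z^{(n)}$ interference whenever $y^{(n)}\ge z^{(n)}$ holds coordinatewise, while $x\ge z^{(n)}$ is exactly Lemma \ref{lem:mono2} with $X=B_n^{\complement}$. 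One small correction to your closing paragraph: the worry that wrap-around might ``add interference contributions and flip the inequality'' has the direction backwards --- extra torus contributions only \emph{increase} the interference felt at $i$ in the $y^{(n)}$-system and hence only \emph{decrease} its departure rate, reinforcing $y^{(n)}_i\ge z^{(n)}_i$ rather than threatening it; what $n>L$ actually rules out is wrap-around \emph{reducing} a positive Euclidean weight $a_{i-j}$ to a smaller $a_{d_n(i,j)}$, and that is the only direction that could break the pathwise ordering.
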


%The key result in this section on the space truncated systems is the following theorem.
The following property of the truncated systems will be used in the analysis of the infinite system.

\begin{theorem}
	For all $n > L$ and  $\lambda < \frac{1}{\sum_{j \in \mathbb{Z}^d}a_j}$,
	the Markov process $\{y^{(n)}_i(t)\}_{i \in B_n(0)}$ is positive recurrent. 
	Let $\pi^{(n)}$ denote the stationary queue length distribution on $\mathbb{N}$ of any queue $i \in B_n(0)$ and
	let $Z$ be distributed as $\pi^{(n)}$. Then there exists a $c > 0$ possibly depending on $n$ such that $\mathbb{E}[e^{cZ}] < \infty$. 
	\label{thm:finite_PR}
\end{theorem}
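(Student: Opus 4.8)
The plan is to verify a Foster--Lyapunov (geometric) drift condition for the generator $\mathcal{L}$ of the finite Markov process $\{y^{(n)}_i(\cdot)\}_{i\in B_n}$ on $\mathbb{N}^{B_n}$, using the \emph{quadratic interference form}
\[
  V(\mathbf{y}) \;:=\; \sum_{i,j\in B_n} a_{d_n(i,j)}\,y_i y_j \;=\; \sum_{i\in B_n} y_i\,S_i(\mathbf{y}),
  \qquad S_i(\mathbf{y}):=\sum_{j\in B_n}a_{d_n(i,j)}y_j .
\]
The naive candidates $\sum_i y_i$ or $\max_i y_i$ do not work: in the configuration where almost all customers sit in a single queue, $\sum_i y_i$ can have \emph{positive} drift (that lone queue empties at rate $\approx a_0^{-1}=1$ while arrivals occur at total rate $|B_n|\lambda$), so no additive or ``max'' Lyapunov function can be negative throughout. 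The reason $V$ works is that, since $n>L$ and $a_k=a_{-k}$, the matrix $A=(a_{d_n(i,j)})_{i,j}$ is symmetric and circulant with all row sums equal to $\sum_{k\in B_n}a_k=\sum_{k\in\mathbb{Z}^d}a_k=:\mathcal{A}$; the cross terms in $V$ therefore carry exactly the interference weights, and the stability hypothesis enters as $\lambda\mathcal{A}<1$.

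First I would compute $\mathcal{L}V$. Since $V(\mathbf{y}\pm e_\ell)-V(\mathbf{y})=\pm 2S_\ell(\mathbf{y})+a_0$, and the arrival/departure rates at queue $\ell$ are $\lambda$ and $y_\ell/S_\ell$, a short computation (using the symmetry of $A$ and $\sum_\ell S_\ell(\mathbf{y})=\mathcal{A}\,\|\mathbf{y}\|_1$) gives
\[
  \mathcal{L}V(\mathbf{y}) \;=\; -\,2(1-\lambda\mathcal{A})\sum_{i\in B_n}y_i \;+\; \lambda|B_n| \;+\; \sum_{i\in B_n}\frac{y_i}{S_i}
  \;\le\; -\,2(1-\lambda\mathcal{A})\,\|\mathbf{y}\|_1 + (\lambda+1)|B_n|,
\]
using $y_i/S_i\le 1$ (because $S_i\ge a_0y_i=y_i$). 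Since $V\le \mathcal{A}\,\|\mathbf{y}\|_2^2$ and $\|\mathbf{y}\|_1\ge\|\mathbf{y}\|_2$, this yields $\mathcal{L}V(\mathbf{y})\le -c_1\sqrt{V(\mathbf{y})}+c_2$ with $c_1=2(1-\lambda\mathcal{A})/\sqrt{\mathcal{A}}>0$ and $c_2=(\lambda+1)|B_n|$. In particular $\mathcal{L}V\le-1$ outside the finite set $\{V<((c_2+1)/c_1)^2\}$; since the chain is non-explosive (finitely many Poisson clocks of bounded rate) and irreducible on $\mathbb{N}^{B_n}$ (from any state all queues can drain to $\mathbf{0}$, and from $\mathbf{0}$ every state is reachable via arrivals), this already gives positive recurrence.

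For the exponential tail I would pass to $W(\mathbf{y}):=e^{c\sqrt{V(\mathbf{y})}}$ with $c>0$ small. Concavity of $\sqrt{\cdot}$ gives $\mathcal{L}\sqrt{V}\le \mathcal{L}V/(2\sqrt{V})\le -c_1/2+c_2/(2\sqrt{V})$, and the jumps of $\sqrt{V}$ are uniformly bounded: from $S_\ell\le(\sum_k a_k^2)^{1/2}\|\mathbf{y}\|_2\le(\sum_k a_k^2)^{1/2}\sqrt{V}$ one gets $\bigl|\sqrt{V(\mathbf{y}\pm e_\ell)}-\sqrt{V(\mathbf{y})}\bigr|\le\kappa$ for a constant $\kappa$ depending only on $\{a_k\}$, while the total jump rate from any state is at most $(\lambda+1)|B_n|$. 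A routine second-order estimate for the exponential of a function with bounded increments and strictly negative drift (via $e^x-1\le x+\tfrac{x^2}{2}e^{|x|}$) then shows that, for $c$ small enough (depending on $n$), $\mathcal{L}W\le -\gamma W+b\,\mathbf{1}_{C}$ for a finite set $C$, some $\gamma>0$, and $b<\infty$. Geometric drift yields $\mathbb{E}_{\pi^{(n)}}[W]\le b/\gamma<\infty$; since $V\ge a_0 y_0^2=y_0^2$ we have $W\ge e^{c y_0}$, and by the torus symmetry every queue has the same marginal $\pi^{(n)}$, so $\mathbb{E}[e^{cZ}]=\mathbb{E}_{\pi^{(n)}}[e^{c y_0}]<\infty$.

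The main obstacle is essentially the choice of Lyapunov function: one must discard additive and ``max'' candidates and use the quadratic form $V$, because it is the \emph{quadratic} appearance of the interference weights --- combined with $\lambda\mathcal{A}<1$ and the symmetric circulant structure available once $n>L$ --- that keeps the drift negative across the whole stability region, in particular in the degenerate ``all-mass-in-one-queue'' configurations where additive functions fail. Once $V$ is chosen, everything else (the generator computation, the passage to $e^{c\sqrt{V}}$, and the appeal to standard geometric-drift criteria) is routine, with the dependence of $c$ on $n$ entering only through the additive constant $\lambda|B_n|$ in the drift of $V$.
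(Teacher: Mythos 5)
Your proof is correct and follows a genuinely different route from the paper's. The paper proves Theorem \ref{thm:finite_PR} by building a coupled dominating dynamics: time is discretized into slots, the arrival process is replicated across queues whenever one queue sees $K$ or more arrivals in a slot, departure marks are thinned, and --- the crucial device --- every $r_0$ slots all queues are equalized to $\max\bigl(u_0,\max_j\tilde y_j\bigr)$; the sampled process $Y_m=\tilde y_0(mr_0h)$ then satisfies a $GI/GI/1$ Lindley recursion with negative mean increment and exponential tails (Lemma \ref{lem:yn_struct}), and the claim follows by stochastic domination. You instead run a Foster--Lyapunov argument directly on the quadratic interference form $V(\mathbf{y})=\mathbf{y}^{T}A\mathbf{y}$, using that for $n>L$ the matrix $A=(a_{d_n(i,j)})$ is a symmetric nonnegative circulant of constant row sum $\mathcal{A}$, so that $\mathcal{L}V(\mathbf{y})=-2(1-\lambda\mathcal{A})\|\mathbf{y}\|_1+\lambda a_0|B_n|+a_0\sum_i y_i/S_i$. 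This is precisely the generator-level version of the Palm rate-conservation identity that Proposition \ref{prop:rcl_diff_eqn} establishes separately (there used to extract the closed-form mean); turned into a drift bound $\mathcal{L}V\le -c_1\sqrt V+c_2$ and combined with the bounded increments of $\sqrt V$ (guaranteed since $S_\ell\le(\sum_k a_k^2)^{1/2}\sqrt V$ and $V\ge 1$ whenever $\mathbf{y}\neq\mathbf{0}$), it gives geometric drift for $e^{c\sqrt V}$ and hence the exponential moment via the standard continuous-time Foster--Lyapunov criterion. Your route is shorter, avoids the discretization and renormalization machinery, does not need the monotonicity lemmas, and makes the threshold $\lambda\mathcal{A}<1$ appear directly in the drift coefficient; it also explains your correct observation that additive or sup-type Lyapunov functions fail in the all-mass-in-one-queue configuration. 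What the paper's coupling construction buys is an explicit dominating stationary $GI/GI/1$ process, which is reused essentially verbatim for the $K$-shifted dynamics (Proposition \ref{prop:K_shifted_stability_result}, proof in Appendix \ref{appendix:K_shifted_proof}); extending your drift computation to that setting requires handling the extra indicator $\mathbf{1}(y_i>K)$ in the departure rates, so the two approaches trade transparency for reusability.
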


\begin{remark}
	The symmetry in the torus implies that the marginal stationary queue length distribution of any queue
	$i$, $\pi^{(n)}$, is the same for all $i$. 
	\label{remark_rotation_inv}
\end{remark}

\begin{remark}
	The existence of an exponential moment yields that all power moments of $\pi^{(n)}$ are finite.
	\label{remark:finite_moment}
\end{remark}
\begin{remark}
	In view of Proposition \ref{prop:truncation_mono}, if $\lambda < \frac{1}{\sum_{j \in \mathbb{Z}^d}a_j}$, then for all $n \in \mathbb{N}$, the process $\{z_i^{(n)}(\cdot)\}_{i \in B_n}$ is positive recurrent. Moreover, for all $i \in B_n$, the stationary distribution of $\{z_i^{(n)}(\cdot)\}_{i \in B_n}$, denoted by $\{\tilde{\pi}^{(n)}_i\}_{i \in B_n}$, is such that there exists a $c > 0$ possibly depending on $n$ satisfying $\mathbb{E}[e^{cZ_i}] < \infty$, where $Z_i$ is distributed according to $\tilde{\pi}^{(n)}_i$.
\end{remark}

 \subsection*{Proof of Theorem \ref{thm:finite_PR}}
 
 In order to carry out the proof,  we  define a modified dynamics $\{\tilde{y}_{i}^{(n)}(t)\}_{t \geq 0, i \in B_n(0)}$
 which is coupled with the evolution of $\{{y}_{i}^{(n)}(t)\}_{t \geq 0, i \in B_n(0)}$.
 For notational brevity, in this section, we will drop the superscript $n$ since all systems of interest 
 are on a fixed torus $B_n(0)$. In particular, we write $\{y_i(t)\}_{i \in B_n(0)} := \{{y}_{i}^{(n)}(t)\}_{ i \in B_n(0)} $ and 
 $\{\tilde{y}_{i}(t)\}_{ i \in B_n(0)} := \{\tilde{y}_{i}^{(n)}(t)\}_{ i \in B_n(0)}$.
 \\
 
 We construct the modified dynamics such that it satisfies $\tilde{y}_{i}(t) \geq y_i(t)$ a.s.
 for all $i \in B_n(0)$ and $t \geq 0$. We will then conclude that this modified dynamics
 is positive recurrent, which, by standard coupling arguments, will imply that $\{y^{(n)}_i(t)\}_{i \in B_n(0)}$
 is positive recurrent. In order to construct the process $\{\tilde{y}_i(t)\}_{ i \in B_n(0)}$,
 we  need some constants that depend on $n$, which we  omit for notational brevity.
 Given $(\sum_{i \in \mathbb{Z}^d }a_i)^{-1} - \lambda := 2 \epsilon > 0$,
 we choose $h$ small enough so that $\mathbb{P}[J \leq h]  - \lambda h \geq \epsilon$, where $J$
 is an exponential random variable of mean $ \sum_{i \in \mathbb{Z}^d }a_i$.
 We discretize time into `slots'  of duration $h$, with the slot boundaries at times
 $-2h,-h,0,h,2h,\cdots$. From the driving process $(\mathcal{A}_i,\mathcal{D}_i)_{i \in B_n(0)}$,
 we  construct a (deterministically) modified process $(\widehat{\mathcal{A}}_i,\widehat{\mathcal{D}}_i)_{i \in B_n(0)}$
 such that $\mathcal{A}_i \subseteq \widehat{\mathcal{A}}_i$ and $\widehat{\mathcal{D}}_i \subseteq \mathcal{D}_i$ for all $i \in B_n(0)$.
 The modification $\{\widehat{\mathcal{A}}_i\}_{i \in B_n(0)}$  form the driving process to
 $\{\tilde{y}_i(t)\}_{i \in B_n(0)}$ and $\{\widehat{\mathcal{D}}_i\}_{i \in B_n(0)}$,
  the potential departure process to $\{\tilde{y}_i(t)\}_{i \in B_n(0)}$,
 with an additional re-normalization step which we will describe in Subsection \ref{subsec:modified_dynamics}.

 \subsubsection*{Modification of the Arrival and Departure Events}
 \label{appendix_torus_modification}
 We implement the modification to the arrival process as follows. In any time slot $[mh,(m+1)h)$, 
 $m \in \mathbb{Z}$, if $\max_{i \in B_{n}(0)}  \mathcal{A}_i([mh,(m+1)h)) < K$, 
 then the process $\{\widehat{\mathcal{A}}_i\}_{i \in B_n(0)}$ is identical to $\{\mathcal{A}_i\}_{i \in B_n(0)}$
 in the interval $[mh,(m+1)h)$. On the other hand, if $\max_{i \in B_{n}(0)} \mathcal{A}_i([mh,(m+1)h)) \geq K$,
 then there must exist a $p \in [mh,(m+1)h)$ and an $l \in B_n(0)$ such that $\mathcal{A}_l([mh,p]) = K$
 and $\mathcal{A}_{l^{'}}([mh,p]) < K$ for all $l^{'} \in B_n(0) \setminus \{l\}$ almost surely.
 In other words, there will be a unique first time when a certain queue will receive more than $K$ customers
 in that time slot. In this case, the process $\{\widehat{\mathcal{A}}_i\}_{i \in B_n(0)}$ is identical to
 $\{{\mathcal{A}}_i\}_{i \in B_n(0)}$ in the interval $[mh,p]$. In the interval $(p,(m+1)h)$, for all $j \in B_n(0)$,
 the process $\widehat{\mathcal{A}}_j$,  is the superposition of all the processes
 $\{\mathcal{A}_i\}_{i \in B_n(0)}$ in the time interval $(p,(m+1)h)$. In other words,
 we keep the arrivals the same until one of the queues receives at-least $K$ customers
 in that particular slot, and, subsequently, we replicate any arrival to any queue in that slot to all other queues.
 This construction ensures that, for any slot $[mh,(m+1)h)$, 
 $\max_{i \in B_n(0)} \widehat{\mathcal{A}}_i([mh,(m+1)h)) - \min_{i \in B_n(0)} \widehat{\mathcal{A}}_i([mh,(m+1)h)) \leq K$,
 i.e., the total possible discrepancy in any time slot and any queue is at most $K$.
 We will chose $K$ sufficiently large so that for any queue $i \in B_n(0)$ and any time slot $m$,
 the expected number $\mathbb{E}[\widehat{\mathcal{A}}_i([mh,(m+1)h))] = h\lambda  + \epsilon/10$. 
 \\

 We construct the departure process $\{\widehat{\mathcal{D}}_i\}_{i \in B_n(0)}$ as follows,
 using a parameter $\delta$. We choose $\delta$ sufficiently small so that
 $\mathbb{P}[\hat{J} \leq h]  - \lambda h \geq 9\epsilon/10$, where $\hat{J}$ is an exponential
 random variable with mean $(\sum_{i \in \mathbb{Z}^d }a_i(1 + \delta))$. To construct
 the process $\{\widehat{\mathcal{D}}_i\}_{i \in B_n(0)}$,  we do a selection by marks of the
 process $\{{\mathcal{D}}_i\}_{i \in B_n(0)}$ and retain those atoms whose marks are less than 
 or equal to $(\sum_{i \in \mathbb{Z}^d}a_i(1+\delta))^{-1}$ and delete the other atoms.
 We then perform a further deletion of points by retaining in each time slot and each queue, 
 at most one (the first one if there are many) atom whose mark $U^{(i)}$ is less than or equal to
 $(\sum_{i \in \mathbb{Z}^d}a_i(1+\delta))^{-1}$ and delete the rest to obtain the process
 $\{\widehat{\mathcal{D}}_i\}_{i \in B_n(0)}$. Thus the process $\{\widehat{\mathcal{D}}_i\}_{i \in B_n(0)}$
 is a subset of the process $\{{\mathcal{D}}_i\}_{i \in B_n(0)}$ and is such that, in any queue
 and any time-slot, $\widehat{\mathcal{D}}_i$ has at most one atom and the mark of this atom (if any)
 is less than or equal to $\sum_{i \in \mathbb{Z}^d}a_i(1+\delta))^{-1}$.
 Since $\{\mathcal{D}_i\}_{i \in B_n(0)}$ is a collection of independent PPPs,
 the construction ensures that $\{\widehat{\mathcal{D}}_i([mh,(m+1)h)\}_{i \in B_n(0),m \in \mathbb{Z}}$
 is an i.i.d. collection of $\{0,1\}$ valued random variables. Moreover, it is immediate to verify that the 
 probability that $\widehat{\mathcal{D}}_0([0,h)) = 1$ is equal to the probability that an
 exponential random variable $J$ of mean $\sum_{i \in \mathbb{Z}^d }a_i(1+\delta)$ is less than or equal to $h$. Thus note that the modified arrival process has more points and the modified departure process has less points compared to the original process.
 
 \subsubsection*{Modified Dynamics}
 \label{subsec:modified_dynamics}
 
 Before defining the dynamics of the process $\{\tilde{y}_i(t)\}_{i \in B_n(0)}$,
 we need to define two large integer constants $r_0$ and $y_0$.
 We first pick $r_1 \in \mathbb{N}$ such that, for all $r \geq r_1$, we have
 \begin{align}
 \frac{1}{r} \mathbb{E} \max_{j \in B_n(0)} \sum_{i=1}^{r} \widehat{\mathcal{A}}_{j}([(i-1)h,ih)) \leq \lambda h + \epsilon/5.
 \label{eqn:arrival_major}
 \end{align} 
 Note that such a choice of $r_1$ is possible since we know that the following limit
 \begin{align*}
 \lim_{r \rightarrow \infty}\frac{1}{r}  \max_{j \in B_n(0)} \sum_{i=1}^{r} \widehat{\mathcal{A}}_{j}([(i-1)h,ih))  = \mathbb{E}[\widehat{\mathcal{A}}_{0}([0,h))] = \lambda h + \epsilon/10,
 \end{align*}
 exists almost surely and in $L^{1}$. The almost sure limit is a consequence of the Strong Law of Large Numbers
 and the $L^{1}$ limit follows since  the random variables
 $\left\{ \frac{1}{r}  \sum_{i=1}^{r} \widehat{\mathcal{A}}_{0}([rh,(r+1)h)) \right\}_{j \in B_{n}(0), r \in \mathbb{N}}$
 are uniformly integrable. Uniform integrability follows as the second moment of
 $\frac{1}{r}  \sum_{i=1}^{r} \widehat{\mathcal{A}}_{0}([rh,(r+1)h))$ is uniformly bounded in $r$ and $j$.
 Similarly we pick $r_0 \geq r_1$ such that, for all $r \geq r_0$, we have
 \begin{align}
 \frac{1}{r} \mathbb{E} \min_{j \in B_n(0)} \sum_{i=0}^{r} \widehat{\mathcal{D}}_j([(i-1)h,ih)) \geq \lambda h + 4\epsilon/5.
 \label{eqn:dep_minor}
 \end{align}
 Such a choice for $r_0$ is possible since the following limit
 \begin{align*}
 \lim_{r \rightarrow \infty}\frac{1}{r}  \min_{j \in B_n(0)} \sum_{i=0}^{r} \widehat{\mathcal{D}}_j([(i-1)h,ih)) = \lambda h + 9\epsilon/10,
 \end{align*}
 exists a.s. Moreover since for all $r \in \mathbb{N}$ and $j \in B_n(0)$, 
 $\frac{1}{r}   \sum_{i=0}^{r} \widehat{\mathcal{D}}_j([(i-1)h,ih)) $
 is bounded above and below by $1$ and $0$ respectively, the limit also exist in $L^{1}$ and hence we can choose such a $r_0$. Now, we  pick a positive integer $u_0$ large enough so that 
 \begin{align}
 1 \leq \frac{u_0 + (K+1) r_0}{u_0 -  r_0} \leq (1 + \delta).
 \label{eqn:dep_minor_queue}
 \end{align}

 The modified process $\{\tilde{y}_i(t)\}_{i \in B_n(0)}$ evolves with the same update rules as 
 $\{{y}_i(t)\}_{i \in B_n(0)}$, except that it uses the modified arrival and departure process
 $(\widehat{\mathcal{A}}_i,\widehat{\mathcal{D}}_i)_{i \in B_n(0)}$ along with a key \emph{renormalization} step.
 We will use the constants $r_0,u_0$ to {re-normalize} so that, for all $m \in \mathbb{N}$, we 
 have $\max_{i \in B_{n}(0)}  \tilde{y}_i(mr_0h) = \min_{i \in B_{n}(0)} \tilde{y}_i(mr_0h) \geq u_0$.
 In the rest of this proof, we will assume (without loss of generality) that we start the modified
 system at time $0$ with all queues having the same number $y \geq u_0$ of customers, i.e.,
 for all $i \in B_n(0)$, we assume $\tilde{y}_i(0)  = y \geq y_i(0)$.
 The modified system $\{\tilde{y}_i(t)\}_{i \in B_n(0)}$ is driven by the modified driving
 process $(\widehat{\mathcal{A}}_i,\widehat{\mathcal{D}}_i)_{i \in B_n(0)}$, with the same rules of evolution
 as those of $\{{y}_i(t)\}_{i \in B_n(0)}$, but with a {re-normalization} step.
 %At each of the instants $r_0h, 2r_0h, \cdots$, we add fictitious customers to the modified system
 %so as to equalize the number of customers in all queues in the modified system. 
 More precisely, at times $mr_0h$, for $m \in \mathbb{N}$ (i.e., at the end of every $r_0$ time slots),
 we add customers and re-normalize as follows:
 \begin{align}
 \tilde{y}_i(mr_0h) = \max(u_0, \max_{j \in B_{n}} \tilde{y}_j(mr_0h^{-})),
 \label{eqn:re_norm}
 \end{align}
 where $mr_0h^{-}$ is the time instant just before $mr_0h$.
 In words, we add more customers so that all queues in the modified dynamics have the same number
 of customers and this number is at least $u_0$. Thanks to the monotonicity lemmas \ref{lem:mono1} and \ref{lem:mono2},
 we still have that, for all $t \geq 0$ and all $j \in B_n(0)$, $\tilde{y}_j(t) \geq y_j(t)$ almost surely.
 \\
 
 To conclude about the positive recurrence of $\{\tilde{y}_i(t)\}_{i \in B_n(0)}$, we consider the stochastic
 process $\{Y_m\}_{m \in \mathbb{N}}$ such that $Y_m := \tilde{y}_0(m r_0h)$, i.e., the modified dynamics
 sampled after every $r_0h$ units of time. Note that due to the re-normalization, it does not matter which
 queue we sample, since all of them  have the same number of customers.
 It is straightforward to observe that the process $(Y_m)_{m \in \mathbb{N}}$ is a Markov process on 
 the set of integers $\{u_0,\cdots\}$ with respect to the filtration $\{\mathcal{F}_m\}_{m \in \mathbb{N}}$,
 where $\mathcal{F}_m$ is the sigma-algebra generated by $(\mathcal{A}_i,\mathcal{D}_i)_{i \in B_n(0)}$ up to time $mr_0h$.
 This follows from the fact that the process $(\mathcal{A}_i,\mathcal{D}_i)_{i \in B_n(0)}$ is Poisson and
 has independent increments, and the modification to obtain $(\widehat{\mathcal{A}}_i,\widehat{\mathcal{D}}_i)_{i \in B_n(0)}$
 is \emph{local}, i.e., only depends on the realization of $(\mathcal{A}_i,\mathcal{D}_i)_{i \in B_n(0)}$
 in that particular slot. The following simple lemma shows that $(Y_m)_{m \in \mathbb{N}}$ essentially
 behaves as a $GI/GI/1$ queue.
 \begin{lemma}
 	The Markov Chain $(Y_m)_{m \in \mathbb{N}}$ satisfies the $GI/GI/1$ recursion
 	\begin{align*}
 	Y_{m+1} = \max  \bigg(Y_m + \max_{j \in B_n(0)}  \chi_{j,m}^{(n)}, u_0 \bigg),
 	\end{align*}
 	where 
 	\begin{align*}
 	\chi_{j,m}^{(n)} = \widehat{\mathcal{A}}_{j}[ mr_0h ,(m+1)r_0h) -  \widehat{\mathcal{D}}_{j}[ mr_0h ,(m+1)r_0h).
 	\end{align*}
 	\label{lem:yn_struct}
 \end{lemma}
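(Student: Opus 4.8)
The plan is to fix the index $m$ and, conditioning on $\{Y_m=y\}$ for an arbitrary integer $y\ge u_0$, trace the modified process $\{\tilde y_i(\cdot)\}_{i\in B_n(0)}$ across the block of $r_0$ slots $[mr_0h,(m+1)r_0h)$. By the renormalization rule \eqref{eqn:re_norm} applied at time $mr_0h$, all queues enter this block at the common value $y$. The crux is to show that throughout the whole block no queue is ever empty and every retained atom of $\widehat{\mathcal{D}}_j$ actually triggers a departure; granting this, queue $j$ gains exactly $\widehat{\mathcal{A}}_j[mr_0h,(m+1)r_0h)$ customers and loses exactly $\widehat{\mathcal{D}}_j[mr_0h,(m+1)r_0h)$ customers over the block, so $\tilde y_j\big((m+1)r_0h^-\big)=y+\chi_{j,m}^{(n)}$. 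Feeding this into \eqref{eqn:re_norm} at time $(m+1)r_0h$ gives $Y_{m+1}=\tilde y_0\big((m+1)r_0h\big)=\max\!\big(u_0,\max_{j\in B_n(0)}(y+\chi_{j,m}^{(n)})\big)=\max\!\big(u_0,\,y+\max_{j\in B_n(0)}\chi_{j,m}^{(n)}\big)$, which is precisely the claimed recursion.

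To prove the crux I would use two purely deterministic facts about the block. First, $\widehat{\mathcal{D}}_j$ carries at most one atom per slot, so after any number $\ell\le r_0$ of elapsed slots each queue has lost at most $\ell\le r_0$ customers; hence $\min_{k\in B_n(0)}\tilde y_k(t)\ge y-r_0\ge u_0-r_0\ge 1$ at every time $t$ of the block (the last inequality from \eqref{eqn:dep_minor_queue}), so in particular no queue empties. Second, the arrival-modification step keeps the per-slot spread $\max_i\widehat{\mathcal{A}}_i(\cdot)-\min_i\widehat{\mathcal{A}}_i(\cdot)$ at most $K$, so the cumulative arrival spread across queues over $r_0$ slots is at most $Kr_0$; together with the at-most-$r_0$ departures per queue this gives $\max_k\tilde y_k(t)-\min_k\tilde y_k(t)\le (K+1)r_0$ throughout, whence $\max_k\tilde y_k(t)/\min_k\tilde y_k(t)\le \frac{u_0+(K+1)r_0}{u_0-r_0}\le 1+\delta$ by \eqref{eqn:dep_minor_queue}. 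Since (using $n>L$) the interference seen at queue $i$ is at most $\big(\sum_{j\in\mathbb{Z}^d}a_j\big)\max_k\tilde y_k(t)$ while $\tilde y_i(t)\ge\min_k\tilde y_k(t)$, the instantaneous departure rate at any queue $i$ and time $t$ of the block is at least $\big(\sum_{j\in\mathbb{Z}^d}a_j(1+\delta)\big)^{-1}$, which is exactly the mark-retention threshold used to build $\widehat{\mathcal{D}}$; hence every retained potential departure removes a customer, and the net-change identity of the previous paragraph follows.

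Finally I would record that $(Y_m)$ is a genuine reflected (Lindley-type) $GI/GI/1$ recursion: the deletions and superpositions producing $(\widehat{\mathcal{A}}_i,\widehat{\mathcal{D}}_i)$ in slot $p$ depend only on the restriction of the driving Poisson data $(\mathcal{A}_i,\mathcal{D}_i)_{i\in B_n(0)}$ to that slot, so the vectors $\big(\chi_{j,m}^{(n)}\big)_{j\in B_n(0)}$ are i.i.d. across $m$, and $\max_j\chi_{j,m}^{(n)}$ is an i.i.d.\ sequence of integrable increments driving the recursion reflected at level $u_0$.

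I expect the main obstacle to be the uniform ratio bound $\max_k\tilde y_k(t)/\min_k\tilde y_k(t)\le 1+\delta$: this is the only place where the interplay of all four constants $K,\delta,r_0,u_0$ is used, it is exactly what the renormalization step \eqref{eqn:re_norm} is engineered to make possible, and without it one could not guarantee that retained potential departures of $\widehat{\mathcal{D}}$ are never wasted. Once that bound is in place, everything else is bookkeeping.
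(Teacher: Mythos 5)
Your proposal is correct and follows essentially the same route as the paper's proof: both establish the uniform lower bound $\tilde y_i(t)\ge u_0-r_0$, the spread bound $\max_k\tilde y_k-\min_k\tilde y_k\le (K+1)r_0$, and then invoke Equation~\eqref{eqn:dep_minor_queue} to show the instantaneous departure rate at every queue of the block exceeds the mark-retention threshold $(\sum_j a_j(1+\delta))^{-1}$, so that every atom of $\widehat{\mathcal{D}}_j$ indeed triggers a departure. Your phrasing via the ratio bound $\max_k\tilde y_k/\min_k\tilde y_k\le 1+\delta$ is an equivalent repackaging of the paper's estimate $\tilde y_i(t)/\big((\tilde y_i(t)+(K+1)r_0)\sum_j a_j\big)\ge(\sum_j a_j(1+\delta))^{-1}$, and your closing remark on the i.i.d.\ structure of the increments across blocks matches the observation the paper makes before invoking standard $GI/GI/1$ results.
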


 \begin{proof}
 	First notice that the total number of cumulative arrivals in queue $j \in B_n(0)$ from time $[mr_0h, (m+1)r_0h]$
 	is precisely $ \widehat{\mathcal{A}}_{j}([mr_0h , (m+1)r_0h]) $. To prove the lemma,
 	we will show that for all queues $j \in B_n(0)$ and all slots $l \in \mathbb{N}$,
 	exactly one departure occurs from queue $j$ in that time slot \emph{if and only if} $\widehat{\mathcal{D}}_j([lh,(l+1)h]) > 0$.
 	If we establish this fact, then, at time $(m+1)r_0h^{-}$, the number of customers in any queue $j \in B_n(0)$ 
 	will precisely be equal to 
 	$ Y_m + \widehat{\mathcal{A}}_{j}([mr_0h , (m+1)r_0h]) -  \widehat{\mathcal{D}}_{j}([mr_0h , (m+1)r_0h])$.
 	The lemma will then follow from the definition of the re-normalization step in Equation (\ref{eqn:re_norm}). 
 	\\

 	The claim above is established by a sequence of elementary observations.
 	It follows from the construction of  $\{\widehat{\mathcal{A}}_i\}_{i \in B_n(0)}$ that,
 	for all slots $l \in \mathbb{N}$, 
 	$\max_{i \in B_{n}(0)} \widehat{\mathcal{A}}_i([lh,(l+1)h]) -  \min_{i \in B_{n}(0)} \\ \widehat{\mathcal{A}}_i([lh,(l+1)h]) \leq K$.
 	Since we have at most one departure event in the process $\{\widehat{\mathcal{D}}_i\}_{i \in B_n(0)}$ per time slot, per queue,
 	we have $\tilde{y}_i(t) \geq u_0 - r_0$, for all $t \geq 0$ and all $i \in B_n(0)$.
 	There is only a loss of at most $r_0$, since we re-normalize to at-least $u_0$ customers per queue after $r_0$ slots. 
 	These facts yield that, for every instant $t \geq 0$, we have 
 	$\max_{i \in B_{n}(0)} \tilde{y}_i(t) - \min_{i \in B_{n}(0)} \tilde{y}_i(t) \leq (K+1) r_0$, since we 
 	equalize all queues after $r_0$ slots. Thus the rate of departure in the modified dynamics at any instant 
 	$t \geq 0$ and any queue $i \in B_{n}(0)$ is at least  
 	$\frac{\tilde{y}_i(t) }{(\tilde{y}_i(t) + (K+1)r_0) \sum_{i \in \mathbb{Z}^d}a_i}$.
 	Now since $\tilde{y}_i(t) \geq u_0 - r_0$, we have from Equation (\ref{eqn:dep_minor_queue}) that
 	the departure rate at any time $t \geq 0$ and any queue $i \in B_n(0)$ is at least
 	$(\sum_{i \in \mathbb{Z}^d }a_i(1+\delta))^{-1}$. By construction of the process $\{\widehat{\mathcal{D}}_i\}_{i \in B_n(0)}$,
 	we have at most one departure event per slot and per queue, and its mark is no larger than
 	$(\sum_{i \in \mathbb{Z}^d }a_i(1+\delta))^{-1}$. This concludes the claim that, in each slot
 	$m\in \mathbb{N}$ and queue $i \in B_n(0)$, there is a departure from that queue during the
 	interval $[mh,(m+1)h)$ if and only if $\widehat{\mathcal{D}}_i([mh,(m+1)h)) > 0$. 
 \end{proof}
 
 \subsubsection*{Concluding the Proof of Theorem \ref{thm:finite_PR}}
 \begin{proof} 
 	Lemma \ref{lem:yn_struct} gives that $(Y_m)_{m \in \mathbb{N}}$ is of the form of $Y_{m+1} = \max(Y_m + \xi_m,{u}_0)$ where $\{\xi_m\}_{m \in \mathbb{N}}$ is an i.i.d. sequence of random variables. Moreover we have
 	\begin{align*}
 	\mathbb{E}[\xi_0] = \mathbb{E}\left[ \max_{j \in B_n} \left(  \widehat{\mathcal{A}}_{j}([0,r_0h]) -  \widehat{\mathcal{D}}_{j}([0,r_0h]) \right)  \right]  \leq -r_0 \frac{3 \epsilon}{5},
 	\end{align*}
 	where the second equality follows from Equations (\ref{eqn:arrival_major}) and (\ref{eqn:dep_minor}).
 	Moreover $\{\xi_m\}_{m \in \mathbb{N}}$ has exponential moments, i.e., for all $c > 0$, $\mathbb{E}[e^{c \xi_0}] < \infty$,
 	since Poisson random variables have exponential moments. Thus standard results on $GI/GI/1$ queues immediately
 	imply that the Markov chain $(Y_m)_{m \in \mathbb{N}}$ is ergodic and its stationary distribution $\zeta$
 	on the set of integers $\{u_0,u_0+1,\cdots\}$ is such that there exists $c^{'}>0$ such that $\mathbb{E}[e^{c^{'}Z}] < \infty$,
 	where $Z \sim \zeta$.
 	\\

 	Now assume we start the true system $\{y_{i}^{(n)}(t)\}_{i \in B_n(0)}$ with some arbitrary
 	initial condition $\mathbf{a} \in \mathbb{N}^{|B_n(0)|}$. Then we start the modified system
 	$\{\tilde{y}_i(t)\}_{i \in B_n(0)}$ with all queues having the same number of customers equal 
 	to $\max(u_0,||\mathbf{a}||_{\infty})$. The monotonicity properties from Lemma \ref{lem:mono1} and \ref{lem:mono2} 
 	imply that, for all $j \in B_n(0)$ and all $t \geq 0$, we have that $y_j^{(n)}(t) \leq \tilde{y}_j(t)$.
 	Furthermore, we have $Y_m := \tilde{y}_0(mr_0h)$, $m \in \mathbb{Z}$ is an ergodic Markov chain with stationary
 	distribution $\zeta$ on the integers $\{u_0,u_0+1,\cdots\}$. From standard results on the $GI/GI/1$ queue,
 	the sequence of random variables $Y_m$ converges in total variation to the distribution $\zeta$ as $m$ goes
 	to infinity. Now since, for all $t \geq mr_0h$ and all $j \in B_n(0)$, we have the trivial inequality
 	\begin{align*}
 	\tilde{y}_j(t) \leq Y_{m} + \sum_{i \in B_n(0) } (\widehat{\mathcal{A}}_i([mr_0h,t]) + \mathcal{D}_i([mr_0h,t])),
 	\end{align*}
 	where $Y_{m}$ is independent of  $\sum_{i \in \mathbb{Z}^d } (\widehat{\mathcal{A}}_i([mr_0h,t]) + \mathcal{D}_i([mr_0h,t]))$.
 	Hence as $t \rightarrow \infty$, for all $j \in B_n(0)$, $y_j(t)$ is stochastically dominated by a random variable
 	that converges in total variation to a non-degenerate distribution on $\{u_0,u_0+1,\cdots\}$, which is the sum of
 	independent random variables distributed as  $\zeta$ and 
 	$\sum_{i \in B_n(0) } (\widehat{\mathcal{A}}_i([0,r_0h]) + \mathcal{D}_i([0,r_0h]))$ respectively.
 	Since $\{y_i(t)\}_{i \in B_n(0)}$ is a Markov process on a finite-dimensional space,
 	the stochastic domination by a stationary process implies that $\{y_i(t)\}_{i \in B_n(0)}$ is ergodic.
 	Moreover since $Y_m$ has exponential moments, it follows that the stationary distribution of 
 	$\{y_i(t)\}_{i \in B_n(0)}$  also has exponential moments.
 	
 \end{proof}

\section{Rate Conservation Arguments}
\label{sec:rcl}

%Denote by $B_n^{(i)} \subset B_n$ to be those $z \in B_n$ such that a $l_{\infty}$ ball of radius $L$ centered at $z$ is fully contained within $B_n$. 

This section forms the central tool used in our analysis. We shall consider  the space truncated systems introduced before to explicitly write down differential equations for certain functionals of the dynamics. The key result we will establish in this section is a closed form formula for the mean of the steady state queue length of the space truncated torus system. To do so, we will use the general rate conservation principle of Palm calculus  \cite{baccelli_bremaud} to derive certain relations between the system parameters in steady state. We shall assume $\lambda < \frac{1}{\sum_{j \in \mathbb{Z}^d}a_j}$ throughout in this section. We shall let $n > L$ be arbitrary and fixed for the rest of this section. In this section, we shall again consider the two space truncated stochastic processes $\{y_i^{(n)}(\cdot)\}_{i \in B_n}$ and $\{z_i^{(n)}(\cdot)\}_{i \in B_n}$ to be in steady-state. Recall that the process $\{y_i^{(n)}(\cdot)\}_{i \in B_n}$ is one wherein the set $B_n$ is viewed as a torus with its end points identified and the process $\{z_i^{(n)}(\cdot)\}_{i \in B_n}$  is one with the end effects, i.e., with the set $B_n$ is viewed as a subset of $\mathbb{Z}^d$. Furthermore, we denote by $\pi^{(n)}$,  the steady state distribution of $y_0^{(n)}(0)$ {\color{black} and by the translation invariance on the torus, the steady state distribution of $y_i^{(n)}(0)$, for all $i \in B_n$. Similarly, for all $i \in B_n$, we shall denote by
$\tilde{\pi}^{(n)}_i$,  the steady state distribution of the marginal $z_i^{(n)}(t)$. Notice that the marginal distributions in the process ${z_i^{(n)}(\cdot)}_{i \in B_n}$ depend on the coordinate, unlike in the process ${y_i^{(n)}(\cdot)}_{i \in B_n}$}. For notational simplicity, we shall denote by $\mu^{(n)}$, the mean of $\pi^{(n)}$ and for all $i \in B_n$, by $\nu_i^{(n)}$,  the mean of $\tilde{\pi}^{(n)}_i$. In this section, we shall study two stochastic processes - $\{\mathbb{I}_t\}_{t \in \mathbb{R}}$ and $\{\tilde{\mathbb{I}}_t\}_{t \in \mathbb{R}}$, with $\mathbb{I}_t := y_0^{(n)}(t)(\sum_{j \in \mathbb{Z}^d} a_j y_j^{(n)}(t) )$ and  $\tilde{\mathbb{I}}(t) := \sum_{i \in B_n} z_i^{(n)}(t) (\sum_{j \in \mathbb{Z}^d} a_j z_{i-j}^{(n)}(t)  )$. If one were to be more precise, one should  use the notation $\mathbb{I}^{(n)}_t$ and $\tilde{\mathbb{I}}^{(n)}_t$, but we drop the superscript $n$ to simplify the notation. In words, the process $(\{\mathbb{I}_t\})_{t \in \mathbb{R}}$ considers the interference seen at a typical queue in $\{y_i^{(n)}(\cdot)\}_{i \in B_n}$, the system where the set $B_n$ is viewed as a torus and $(\{\tilde{\mathbb{I}}_t\})_{t \in \mathbb{R}}$ considers the total interference in the process $\{z_i^{(n)}(\cdot)\}_{i \in B_n}$,  the system with boundary effects, where the set $B_n$ is viewed as a subset of $\mathbb{Z}^d$. Observe that since $B_n$ is a torus, the marginals of the process $\{y_i^{(n)}(\cdot)\}_{i \in B_n}$ are identical and hence, we can consider a typical queue, but as the marginals of $\{z_i^{(n)}(\cdot)\}_{i \in B_n}$ are different due to the edge effects, we study the total interference at all queues instead of the interference seen at a typical queue. Since $\lambda < \frac{1}{\sum_{j \in \mathbb{Z}^d}a_j}$, and the systems $\{y_i^{(n)}(t)\}_{i \in B_n}$  and $\{z_i^{(n)}(t)\}_{i \in B_n}$ are in steady state, and the queue lengths in both systems posses exponential moments, it is the case that for all $t \in \mathbb{R}$, $\mathbb{E}[\mathbb{I}_t] < \infty$ and $\mathbb{E}[\tilde{\mathbb{I}}(t)] < \infty$. 
\\

The main technical results of this section are  Propositions \ref{prop:rcl_diff_eqn},  \ref{prop:diff_eqn_edge_effects} and Lemma \ref{lem:2ndmom_torus}. These will then help us to derive closed form expressions for the mean queue length and a bound on the second moment for the original infinite system.

	\begin{proposition}

	\begin{multline}
	\frac{d}{dt} \mathbb{E}[\mathbb{I}_t] = 0 =  \lambda a_0 + 2 \lambda {\left( \sum_{j \in B_n} a_j \right) }
	\mu^{(n)}  - \\ \mathbb{E} \bigg[ R(0) \bigg(a_0 (2 y_0^{(n)}(0) -1)
	+  \sum_{i \in {B_n} \setminus \{0\}} a_i y_i^{(n)}(0) \bigg) 
	+ \sum_{i \in {B_n} \setminus \{0\}}  R(i)a_i y_0^{(n)}(0)  \bigg] ,
	\label{eqn:rcl_diff_eqn}
	\end{multline}
	where for any $i \in B_n$,
	\begin{align*}
	R(i) := \frac{y^{(n)}_i(0)}{\sum_{j \in {B_n}} {\color{black}a_{d_n(i,j)} y^{(n)}_j(0) }}.
	\end{align*}
	\label{prop:rcl_diff_eqn}
\end{proposition}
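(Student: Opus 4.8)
The plan is to apply the rate conservation law (RCL) of Palm calculus to the stationary, piecewise-constant process $\{\mathbb{I}_t\}_{t\in\mathbb{R}}$, $\mathbb{I}_t=y_0^{(n)}(t)\sum_{j}a_j y_j^{(n)}(t)$. Since $\lambda<(\sum_j a_j)^{-1}$ and $n>L$, Theorem~\ref{thm:finite_PR} gives that $\{y_i^{(n)}(\cdot)\}_{i\in B_n}$, taken in steady state, is a positive recurrent CTMC whose marginals have finite moments of every order (Remark~\ref{remark:finite_moment}); in particular $\mathbb{E}[\mathbb{I}_t]<\infty$ and, by stationarity, is constant in $t$, so $\frac{d}{dt}\mathbb{E}[\mathbb{I}_t]=0$. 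The process $\mathbb{I}_t$ changes only at atoms of the driving point processes, so the RCL reads $0=\sum_{i\in B_n}\lambda\,\mathbb{E}^{0,\mathcal{A}_i}[\Delta^{\mathcal{A}_i}\mathbb{I}]+\sum_{i\in B_n}\mathbb{E}^{0,\mathcal{D}_i}[\Delta^{\mathcal{D}_i}\mathbb{I}]$, with the first sum over arrival epochs (intensity $\lambda$ each) and the second over potential-departure epochs (intensity $1$ each), and $\Delta$ the increment of $\mathbb{I}$ at that epoch.

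The second step is to evaluate each increment. Writing $I_0$ for the (torus) interference at $0$ and using $a_0=1$, $a_j=a_{-j}$: an arrival at $0$ raises both $y_0^{(n)}$ and $I_0$ (the latter by $a_0$), so $\Delta^{\mathcal{A}_0}\mathbb{I}=a_0(2y_0^{(n)}+1)+\sum_{i\in B_n\setminus\{0\}}a_i y_i^{(n)}$; an arrival at $i\neq0$ raises only $I_0$, by $a_i$, so $\Delta^{\mathcal{A}_i}\mathbb{I}=a_i y_0^{(n)}$; at a potential-departure epoch of queue $i$ a real departure occurs with conditional probability $R(i)$ (the mark being independent uniform), so conditioning on the state and then averaging the mark gives $\mathbb{E}^{0,\mathcal{D}_0}[\Delta^{\mathcal{D}_0}\mathbb{I}]=-\mathbb{E}\bigl[R(0)(a_0(2y_0^{(n)}-1)+\sum_{i\in B_n\setminus\{0\}}a_i y_i^{(n)})\bigr]$ and $\mathbb{E}^{0,\mathcal{D}_i}[\Delta^{\mathcal{D}_i}\mathbb{I}]=-\mathbb{E}[R(i)a_i y_0^{(n)}]$ for $i\neq0$. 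Because the arrival and potential-departure processes are Poisson and independent of the past, the Palm expectations equal the stationary ones (PASTA). Substituting and using translation invariance on the torus (Remark~\ref{remark_rotation_inv}), so that $\mathbb{E}[y_i^{(n)}]=\mu^{(n)}$ for every $i\in B_n$, the total arrival contribution collapses to $\lambda a_0+2\lambda\bigl(\sum_{j\in B_n}a_j\bigr)\mu^{(n)}$ (since $2a_0+2\sum_{i\neq0}a_i=2\sum_{j\in B_n}a_j$), while the departure contribution is exactly the $\mathbb{E}[\cdot]$ term in~(\ref{eqn:rcl_diff_eqn}); collecting terms proves the identity. An equivalent, arguably cleaner route is to verify $\mathbb{E}_{\pi^{(n)}}[\mathcal{L}f]=0$ for the CTMC generator $\mathcal{L}$ and the test function $f(\mathbf{y})=y_0\sum_j a_j y_j$, the computation of $\mathcal{L}f$ being term-by-term identical to the increment computation above.

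The one point requiring care is the justification for applying the RCL (equivalently, $\mathbb{E}_{\pi^{(n)}}[\mathcal{L}f]=0$): one must know that $f$ and the rate-weighted increments are $\pi^{(n)}$-integrable. This is precisely where Theorem~\ref{thm:finite_PR} and Remark~\ref{remark:finite_moment} are used—after pulling out the bounded factors $R(i)\in[0,1]$, every increment above is dominated by a polynomial in the $y_i^{(n)}$, which is $\pi^{(n)}$-integrable—and since the chain is driven by finitely many Poisson processes there are a.s.\ finitely many transitions in any bounded interval, so no explosion issues arise. Apart from this, the remaining work is bookkeeping: tracking the $a_0$ self-interference term, which perturbs both factors of $\mathbb{I}_t$ simultaneously, and checking that the enumeration of transition types is exhaustive and non-overlapping. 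I expect this bookkeeping, rather than any conceptual difficulty, to be the bulk of the proof.
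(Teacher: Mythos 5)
Your proposal is correct and follows essentially the same route as the paper: identify the three change types (arrival at $0$, arrival at $i\neq 0$, potential departure from any $i$), compute the increment of $\mathbb{I}_t$ for each, apply PASTA/Palm calculus to equate the net rate of change to zero, and then use torus translation invariance (so $\mathbb{E}[y_i^{(n)}]=\mu^{(n)}$ for every $i$) plus $a_i=a_{-i}$ to collapse the arrival side to $\lambda a_0 + 2\lambda(\sum_j a_j)\mu^{(n)}$. The paper presents the same computation as a ``heuristic'' small-$\Delta t$ balance and explicitly skips the technical rate-conservation justification; you supply that justification a bit more carefully (integrability of $\mathbb{I}_t$ and its increments via the exponential-moment bound of Theorem~\ref{thm:finite_PR}, and a.s.\ local finiteness of transitions) and also note the equivalent generator formulation $\mathbb{E}_{\pi^{(n)}}[\mathcal{L}f]=0$, which the paper does not write out — these are refinements of presentation rather than a different argument.
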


\begin{proof}
	 We provide a heuristic derivation of the differential equation using the PASTA property of the arrival and departure process and skip all the technical details  as it is standard. For example see the Appendix of \cite{baccelli_p2p_journal} for an account of the derivation. In a small interval of time $\Delta t$, in every queue, there will be exactly one arrival  with probability roughly $\lambda \Delta t$. The chance that two or more arrivals occur in a time interval $\Delta t$ in the entire network is $\BigO{(\Delta t)^2}$, where the $\BigO{\cdot}$ hides all system parameters (for ex. $\lambda,n$) other than $\Delta t$ as they are fixed.
     %With probability $\lambda \Delta t$, there will be an arrival in any queue $0$
	%or its neighboring queues.
	 On an arrival at queue $0$, the increase in the quantity $\mathbb{I}_0$ is 
	$\mathbb{E}[(y_0^{(n)}+1)(a_0(y_0^{(n)}+1) + \sum_{j \in { B_n} \setminus \{0\}} a_j y_j^{(n)} )
	- y_0^{(n)} (\sum_{j \in{ B_n}} a_j y_j^{(n)})]$,
	which is equal to $\mathbb{E}[a_0 + \sum_{j \in { B_n}} a_j y_j^{(n)} ]$.
	Similarly, the average increase in $\mathbb{I}_0$ due to an arrival in the neighboring queues of $0$ is
	$\mathbb{E}[(y_0^{(n)})(a_i(y_i^{(n)}+1) + \sum_{j \in {B_n} \setminus \{i\}} a_j y_j^{(n)} )
	- y_0^{(n)} (\sum_{j \in { B_n}} a_j y_j^{(n)})]$, which is equal to {$\mathbb{E}[a_i y_0^{(n)}]$}.
	The chance that there are two or more arrivals is $\BigO{(\Delta t)^2}$, which is small.
	Thus, the average increase due to arrivals is  
	{$\lambda \Delta t \mathbb{E}[a_0(y_0^{(n)} + 1) 
		+ \sum_{j \in B_n} a_j y_j^{(n)} + \sum_{j \in B_n \setminus \{0\}}a_j y_0^{(n)}  ] + \BigO{(\Delta t)^2}$.}
	After simplification, and using the fact that the variables $y_j^{(n)}$ all have the same mean,
	we get that the average increase in time $\Delta t$ 
	is \begin{align} {\lambda \Delta t
		\left(a_0 +2 \mu^{(n)} \left(\sum_{j \in B_n}a_j\right) \right)  + \BigO{(\Delta t)^2}.} \label{eqn:rcl1_a} \end{align}

	Likewise, with probability $R(i)\Delta t$, there will be a departure from queue $i$. When a customer leaves from queue $0$, which occurs with probability $R(0)\Delta t$
	the average decrease in $\mathbb{I}_0$ is then 
	$\mathbb{E}[(a_0((y_0^{(n)})^2 - a_0(y_0^{(n)} - 1)^2 + \sum_{i \in {B_n} \setminus \{0\}} a_i y_i^{(n)} )]$. 
	Similarly, a departure from queue $i$, which occurs with probability $R(i)\Delta t$ results in
	an average decrease in $\mathbb{I}_0$ of $ \mathbb{E} [a_i y_0^{(n)}]$. The chance that two or more possible departures occur in time $\Delta t$ is $\BigO{(\Delta t^2}$, which is small. Thus, the total average decrease in $\mathbb{I}_0$ due to 
	departures is 
	\begin{align}
	\Delta t \mathbb{E} \bigg[ R(0) \left(a_0 (2 y_0^{(n)} -1) + \sum_{i \in {B_n} \setminus \{0\}} a_iy_i^{(n)} \right)
	+  \sum_{i \in {B_n} \setminus \{0\}}  R(i)a_i y_0^{(n)}  \bigg] + \BigO{(\Delta t)^2}.
	\label{eqn:rcl1_d}
	\end{align}
	
	Hence, we see from Equations (\ref{eqn:rcl1_a}) and (\ref{eqn:rcl1_d}), that 
	\begin{multline*}
	\frac{1}{\Delta t} \mathbb{E}[ \mathbb{I}(t+ \Delta t) - \mathbb{I}(t)]  \\ = \lambda 
	\left(a_0 +2 \mu^{(n)} \left(\sum_{j \in B_n}a_j\right) \right) -  \mathbb{E} \bigg[ R(0) \left(a_0 (2 y_0^{(n)} -1) + \sum_{i \in {B_n} \setminus \{0\}} a_iy_i^{(n)} \right)
	+  \sum_{i \in {B_n} \setminus \{0\}}  R(i)a_i y_0^{(n)}  \bigg] \\ + \BigO{\Delta t}
	\end{multline*}
	
The proposition is concluded by letting $\Delta t$ go to $0$.

\end{proof}

Now, we compute the differential equation for the space truncated system, by carefully taking into consideration the `edge effects' introduced by the truncation to the set $B_n$. {\color{black}Denote by the set $B_n^{(I)} \subset B_n$, where $B_n^{(I)} := \{z \in B_n : \forall y \text{ s.t. } ||y-z||_{\infty} \leq L, y \in B_n\}$. In words, the set $B_n^{(I)}$ is the set of all points $z \in B_n$ such that the $l_{\infty}$ ball of radius $L$ is completely contained in $B_n$}.

	\begin{proposition}
	\begin{align*}
	\frac{d}{d t} 	\mathbb{E}[ \tilde{ \mathbb{I}}(t)] = 0  \geq  -2(1 - \lambda \sum_{j \in \mathbb{Z}^d} a_j) \sum_{i \in B_n^{(I)}}\nu_i^{(n)} + 2 \lambda a_0 |B_n| - 2\sum_{i \in B_n \setminus B_n^{(I)}} \nu_i^{(n)}.
	\end{align*}
	\label{prop:diff_eqn_edge_effects}
\end{proposition}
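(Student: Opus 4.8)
The plan is to carry out the same rate-conservation (Palm calculus) argument as in the proof of Proposition~\ref{prop:rcl_diff_eqn}, but now for the functional $\tilde{\mathbb{I}}(t)$ of the boundary-effect system $\{z_i^{(n)}(\cdot)\}_{i\in B_n}$. Since $\lambda<\tfrac{1}{\sum_{j\in\mathbb{Z}^d}a_j}$, the process $\{z_i^{(n)}(\cdot)\}_{i\in B_n}$ is positive recurrent and its marginals in steady state have exponential moments (Theorem~\ref{thm:finite_PR} and the Remark following it), so $\mathbb{E}[\tilde{\mathbb{I}}(t)]<\infty$ and the rate-conservation identity is legitimate; as in Proposition~\ref{prop:rcl_diff_eqn}, I would only write out the heuristic PASTA-style derivation and refer to \cite{baccelli_bremaud} for the technical details. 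Write $I_m(t):=\sum_{j\in\mathbb{Z}^d}a_j z_{m-j}^{(n)}(t)$, so that $\tilde{\mathbb{I}}(t)=\sum_{m\in B_n}z_m^{(n)}(t)\,I_m(t)$ is the symmetric quadratic form $\sum_{m,k\in B_n}a_{m-k}z_m^{(n)}(t)z_k^{(n)}(t)$, and the departure rate at queue $m\in B_n$ is $R(m)=z_m^{(n)}(t)/I_m(t)$.

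First I would tabulate the jumps. An arrival at $m\in B_n$ (rate $\lambda$) sends $z_m\mapsto z_m+1$ and increases $\tilde{\mathbb{I}}$ by $2I_m+a_0$; a departure from $m\in B_n$ (rate $R(m)$) sends $z_m\mapsto z_m-1$ and decreases $\tilde{\mathbb{I}}$ by $2I_m-a_0$. Two elementary identities then do the work: because $a_0 z_m\le I_m$, we have $R(m)\,I_m=z_m^{(n)}(t)$ pathwise (both sides vanish when $z_m=0$), so $\mathbb{E}[R(m)(2I_m-a_0)]=2\nu_m^{(n)}-a_0\,\mathbb{E}[R(m)]$; and applying rate conservation to $\mathbb{E}[z_m^{(n)}(t)]$ itself gives the flow balance $\mathbb{E}[R(m)]=\lambda$ for every $m\in B_n$. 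Summing the arrival and departure contributions over $B_n$ and using $\tfrac{d}{dt}\mathbb{E}[\tilde{\mathbb{I}}(t)]=0$ yields the exact identity
\begin{align*}
0 \;=\; 2\lambda\sum_{m\in B_n}\mathbb{E}[I_m(0)] \;+\; 2\lambda a_0|B_n| \;-\; 2\sum_{m\in B_n}\nu_m^{(n)}.
\end{align*}

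It remains to re-express the interference term. Since $z_k^{(n)}\equiv 0$ off $B_n$, $\mathbb{E}[I_m(0)]=\sum_{k\in B_n}a_{m-k}\nu_k^{(n)}$, hence $\sum_{m\in B_n}\mathbb{E}[I_m(0)]=\sum_{k\in B_n}\nu_k^{(n)}s_k$ with $s_k:=\sum_{m\in B_n}a_{m-k}$. By the definition of $B_n^{(I)}$, if $k\in B_n^{(I)}$ then $\{m:a_{m-k}>0\}\subseteq B_n$, so $s_k=\sum_{j\in\mathbb{Z}^d}a_j$; and if $k\in B_n\setminus B_n^{(I)}$ then $a_0\le s_k\le\sum_{j\in\mathbb{Z}^d}a_j$. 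Substituting and regrouping the interior and boundary terms separately turns the identity into
\begin{align*}
0 \;=\; -2\Bigl(1-\lambda\sum_{j\in\mathbb{Z}^d}a_j\Bigr)\sum_{k\in B_n^{(I)}}\nu_k^{(n)} \;-\; 2\sum_{k\in B_n\setminus B_n^{(I)}}\nu_k^{(n)}(1-\lambda s_k) \;+\; 2\lambda a_0|B_n|.
\end{align*}
Finally, $1-\lambda s_k\le 1$ because $\lambda>0$ and $s_k\ge 0$, so the middle sum is $\ge-2\sum_{k\in B_n\setminus B_n^{(I)}}\nu_k^{(n)}$, which gives the stated inequality.

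The only genuinely subtle point is the bookkeeping of the edge effects: one must keep track that arrivals and departures happen only inside $B_n$, that $z_k^{(n)}\equiv0$ outside $B_n$ (so the interference sums collapse to sums over $B_n$), and that $s_k$ equals the full $\sum_j a_j$ exactly on $B_n^{(I)}$ and is never larger elsewhere — it is precisely these facts, combined with the sign of $1-\lambda\sum_j a_j$, that make the final bound go in the claimed direction. The rate-conservation computation itself is routine and parallels Proposition~\ref{prop:rcl_diff_eqn}.
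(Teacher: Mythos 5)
Your proof is correct and follows the same basic route as the paper: apply the rate conservation principle to the stationary process $\tilde{\mathbb{I}}(t)$ for the boundary-effect system, compute the expected jump contributions from arrivals and departures, use $R(m)I_m = z_m^{(n)}(t)$ and $\mathbb{E}[R(m)]=\lambda$, and then isolate the edge effects via the set $B_n^{(I)}$.

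Where you differ — and improve — is in the bookkeeping. You first extract the exact jump sizes ($+2I_m+a_0$ for an arrival, $-(2I_m-a_0)$ for a departure), obtain the clean identity $0 = 2\lambda\sum_{m\in B_n}\mathbb{E}[I_m]+2\lambda a_0|B_n|-2\sum_m\nu_m^{(n)}$, and only then swap the double sum over $B_n$, introducing the boundary weights $s_k=\sum_{m\in B_n}a_{m-k}$. Observing that $s_k=\sum_j a_j$ exactly on $B_n^{(I)}$ and $0\le s_k\le \sum_j a_j$ on $B_n\setminus B_n^{(I)}$ gives an exact equation, from which the stated inequality falls out by dropping the nonnegative $\lambda s_k$ in the boundary term. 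The paper's derivation instead splits into $B_n^{(I)}$ and $B_n\setminus B_n^{(I)}$ at the level of the arrival contribution and asserts $\sum_{i\in B_n^{(I)}}\sum_j a_j\nu_{i-j}=\sum_j a_j\sum_{i\in B_n^{(I)}}\nu_i$; as you implicitly recognize, these two sums weight the $\nu_k$ near the boundary of $B_n^{(I)}$ differently, so this step is not literally an equality. Your version via $s_k$ sidesteps that issue and makes it explicit which boundary contributions are being discarded and why the inequality points the right way, which is a small but genuine gain in rigor over the paper's heuristic computation.
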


\begin{proof}
		A rigorous proof of this is standard and we skip it. For example see \cite{baccelli_p2p_journal}. Instead we outline the computations required in establishing this proposition. Furthermore to lighten the notation in the proof, we  drop the superscript $n$, as $n$ is fixed and does not change in the course of the proof. Thus, we shall denote  $\{z_i^{(n)}(t)\}_{i \in B_n}$ by $\{z_i(t)\}_{i \in B_n}$ and  the steady-state means by $(\nu_i)_{i \in B_n}$, instead of $(\nu_i^{(n)})_{i \in B_n}$. As in the proof of Proposition \ref{prop:rcl_diff_eqn}, we shall consider a small interval $\Delta t$ of time such that at most one event of either an arrival or departure occurs anywhere in the network in the set $B_n$. Roughly speaking, with probability $\lambda \Delta t$, there will be an arrival in some queue $i \in B_n$. In the rest of the proof, we shall partition the set $B_n$ into $B_n^{(I)}$ and $B_n \setminus B_n^{(I)}$. 
		\\
		
		From similar computations as in the previous proposition, if there is an arrival in queue $i \in B_n^{(I)}$, the increase in $\tilde{\mathbb{I}}(t)$ will be 
	\begin{multline*}
	\mathbb{E}\bigg[(z_i(t)+1)(a_0(z_i(t)+1) + \sum_{j \in \mathbb{Z}^d \setminus \{0\} } a_j z_{i-j}(t)  ) - z_i(t)(z_i(t)+  \sum_{j \in \mathbb{Z}^d \setminus \{0\} } a_j z_{i-j}(t)  ) + \\ \sum_{l \in B_n \setminus \{i\}} ( z_l(t)(a_{i-l} (z_i(t) +  1)+ \sum_{j \in \mathbb{Z}^d \setminus \{i\}} a_{j-l}z_j(t ) ) - z_l(t)(a_{i-l} z_i(t)  \sum_{j \in \mathbb{Z}^d \setminus \{i\}} a_{j-l}z_j(t ) )  )\bigg].
	\end{multline*}
	This follows since if there is an extra customer in queue $i$, then the total interference is increased both at queue $i$ and any other queue $j$ such that $a_{i-j} > 0$. From the PASTA property, we know that at the moment of arrival, $\{z_i(t)\}_{I \in B_n}$ is in steady-state and in particular, $\mathbb{E}[z_i(t)] = \nu_i$. Thus, the above expression can be simplified as 
	\begin{align*}
	\sum_{j \in \mathbb{Z}^d}a_j \nu_{i-j} + a_0 + a_0\nu_i + \sum_{l \in B_n \setminus \{i\}} \nu_l a_{i-l}. 
	\nonumber
	\end{align*}
	If $i \in B_n^{(I)}$, the above expression is equal to 
	\begin{align*}
	2 \sum_{j\in \mathbb{Z}^d} a_{j} \nu_{i-j} + a_0, 
	\end{align*}
	while if $i \in B_n^{(I)} \setminus B_n$, we use the trivial inequality
	\begin{align*}
	\sum_{j \in \mathbb{Z}^d}a_j \nu_{i-j} + a_0 +a_0 \nu_i + \sum_{l \in B_n \setminus \{i\}} \nu_l a_{i-l} \geq a_0.
	\end{align*}
	Thus, the average increase in the time interval $\Delta t$ in the interference $\mathbb{I}(t)$  due to an arrival event is at least
	\begin{align*}
	\lambda \Delta t \left( \sum_{i \in B_n^{(I)}} (2 \sum_{j\in \mathbb{Z}^d} a_{j} \nu_{i-j} + a_0) + \sum_{i \in B_n \setminus B_{n}^{(I)}} a_0 \right)+ \BigO{\Delta t^2}.
	\end{align*}
 Since for all $i \in B_n^{(I)}$, the $l_{\infty}$ ball of radius $L$ is contained within the set $B_n$, we can further simplify the above expression as 
	\begin{align*}
	\lambda \Delta t \left(\sum_{i \in B_n^{(I)}} 2\nu_i \sum_{j \in \mathbb{Z}^d}a_j +a_0 \right) + \lambda a_0 \Delta t |B_n \setminus B_n^{(I)}| + \BigO{\Delta t^2}.
	\end{align*}

	Similarly, we can compute the average decrease in $\tilde{\mathbb{I}}(t)$ due to a departure event. Roughly, the probability of a departure from any queue $i \in B_n$ is given by $R_i(t)$ where $R_i(t) = \frac{z_i(t)}{\sum_{j \in \mathbb{Z}^d} a_{i-j}z_j(t)}$. If there is a departure from queue $i$, the average decrease can be computed as 
	\begin{multline*}
	\mathbb{E}\bigg[(z_i(t))(a_0z_i(t) + \sum_{j \in \mathbb{Z}^d \setminus \{0\} } a_j z_{i-j}(t)  ) - (z_i(t)-1)(a_0(z_i(t)-1)+  \sum_{j \in \mathbb{Z}^d \setminus \{0\} } a_j z_{i-j}(t)  ) \\ + \sum_{l \in B_n \setminus \{i\}} ( z_l(t)(a_{i-l} z_i(t) + \sum_{j \in \mathbb{Z}^d \setminus \{i\}} a_{j-l}z_j(t ) ) - z_l(t)(a_{i-l} (z_i(t)-1)  \sum_{j \in \mathbb{Z}^d \setminus \{i\}} a_{j-l}z_j(t ) )  )\bigg].
	\end{multline*}
	We do not need to worry about the fact that $z_i(t) - 1$ can be negative since, in this case, the rate of departure $R_i(t)$ will be $0$. Thus the average rate of decrease in the interference due to a departure can be written  as 
	\begin{align*}
	\Delta t \left( \sum_{i \in B_n} \mathbb{E}[ R_i(t) ( a_0z_i(t) + \sum_{j \in \mathbb{Z}^d}a_j z_{i-j}(t)    -a_0) +      \sum_{l \in B_n \setminus \{i\}} a_{i-l}z_l(t)     ] \right)+ \BigO{\Delta t^2}.
	\end{align*}
	Using the fact that for all $i \in B_n$ and all $t \in \mathbb{R}$, we have $R_i(t)(\sum_{j \in \mathbb{Z}^d}  a_j z_{i-j}(t) ) = z_i(t)$, we can simplify the average rate of decrease as 
	\begin{align*}
	\Delta t (\sum_{i \in B_n}2 \nu_i- a_0\sum_{i \in B_n}\mathbb{E}[R_i(t)]) + \BigO{\Delta t^2}.
	\end{align*}
	However, as $\{z_i(\cdot)\}_{i \in B_n}$ is a stationary process, $ \sum_{i \in B_n}\mathbb{E}[R_i(t)] = \lambda |B_n|$. This then gives that average rate of change in $\mathbb{E}[\mathbb{I}(t)]$ is 
	\begin{multline*}
	\frac{1}{\Delta t}	\mathbb{E}[\mathbb{I}(t \Delta t) - \mathbb{I}(t)] \geq 	\lambda (\sum_{i \in B_n^{(I)}} 2 \nu_i ( \sum_{j\in \mathbb{Z}^d} a_{j}) + a_0) + \lambda a_0 |B_n \setminus B_n^{(I)}| -  \sum_{i \in B_n}2 \nu_i+ a_0\sum_{i \in B_n}\lambda + \BigO{\Delta t}.
	\end{multline*}

	Letting $\Delta t$ go to $0$, we obtain the bound in Proposition \ref{prop:diff_eqn_edge_effects}.
\end{proof}

 We now  state  Lemma \ref{lem:rcl_formula_finite} which holds as a consequence of the rate conservation argument. This establishes a closed form expression for the mean queue length in steady state in the space truncated torus system $\{y_i^{(n)}(t)\}_{i \in B_n}$.
 Recall that the system $\{y_i^{(n)}(t)\}_{i \in B_n}$ is in steady state. Thus, the stochastic process $(\mathbb{I}_t)_{t \in \mathbb{R}}$ is stationary. In particular, $\frac{d}{dt}\mathbb{E}[\mathbb{I}_t]$ is equal to $0$. Thus, from Proposition \ref{prop:rcl_diff_eqn}, we have the following key lemma 
\begin{lemma}
	For all $\lambda < \frac{1}{\sum_{j \in \mathbb{Z}^d} a_j}$ and all $n > L$, 
	\begin{align}
	\mu^{(n)} = \frac{\lambda a_0}{1 - (\sum_{j \in \mathbb{Z}^d} a_j) \lambda}.
	\end{align}
	\label{lem:rcl_formula_finite}
\end{lemma}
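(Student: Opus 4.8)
The plan is to feed the rate conservation identity of Proposition~\ref{prop:rcl_diff_eqn} with three elementary facts and solve the resulting linear equation in $\mu^{(n)}$. Since $\{y^{(n)}_i(\cdot)\}_{i\in B_n}$ is in steady state, $(\mathbb{I}_t)_{t\in\mathbb{R}}$ is stationary, so $\frac{d}{dt}\mathbb{E}[\mathbb{I}_t]=0$ and the whole task is to put the expectation on the right-hand side of~(\ref{eqn:rcl_diff_eqn}) in closed form. Write $I_i:=\sum_{j\in B_n}a_{d_n(i,j)}y^{(n)}_j(0)$ for the interference at queue $i$. The facts I would record first are: (i) by the very definition of $R(i)$, $R(i)\,I_i=y^{(n)}_i(0)$, and since $n>L$ the torus wraparound does not touch the support of $\{a_j\}$, so $I_0=\sum_{j\in B_n}a_j y^{(n)}_j(0)=\sum_{j\in\mathbb{Z}^d}a_j y^{(n)}_j(0)$; (ii) flow balance for queue $0$ in steady state gives $\mathbb{E}[R(0)]=\lambda$ (mean departure rate equals mean arrival rate); (iii) the torus is vertex-transitive and the driving sequence is translation invariant, so $(R(i),y^{(n)}_0(0))\overset{d}{=}(R(0),y^{(n)}_{-i}(0))$ for every $i\in B_n$, hence $\mathbb{E}[R(i)y^{(n)}_0(0)]=\mathbb{E}[R(0)y^{(n)}_{-i}(0)]$.

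Next I would expand the two bracketed groups in~(\ref{eqn:rcl_diff_eqn}). For the first, rewrite $a_0(2y^{(n)}_0(0)-1)+\sum_{i\in B_n\setminus\{0\}}a_i y^{(n)}_i(0)=a_0 y^{(n)}_0(0)-a_0+I_0$; multiplying by $R(0)$, using $R(0)I_0=y^{(n)}_0(0)$ and $\mathbb{E}[R(0)]=\lambda$, the first group has expectation $a_0\,\mathbb{E}[R(0)y^{(n)}_0(0)]-a_0\lambda+\mu^{(n)}$. For the second group, using fact (iii) and $a_i=a_{-i}$ together with the symmetry of $B_n$, $\sum_{i\in B_n\setminus\{0\}}a_i\,\mathbb{E}[R(i)y^{(n)}_0(0)]=\mathbb{E}\bigl[R(0)\sum_{i\in B_n\setminus\{0\}}a_i y^{(n)}_i(0)\bigr]=\mathbb{E}[R(0)(I_0-a_0 y^{(n)}_0(0))]=\mu^{(n)}-a_0\,\mathbb{E}[R(0)y^{(n)}_0(0)]$. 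Adding the two, the quadratic terms $a_0\,\mathbb{E}[R(0)y^{(n)}_0(0)]$ cancel, and the whole bracket equals $2\mu^{(n)}-a_0\lambda$. Substituting into~(\ref{eqn:rcl_diff_eqn}) and recalling $\sum_{j\in B_n}a_j=\sum_{j\in\mathbb{Z}^d}a_j$ gives $0=2\lambda a_0+2\lambda\bigl(\sum_{j}a_j\bigr)\mu^{(n)}-2\mu^{(n)}$, and solving the linear equation yields $\mu^{(n)}=\lambda a_0/\bigl(1-\lambda\sum_{j}a_j\bigr)$.

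The only delicate points are bookkeeping: correctly identifying that on the torus $I_0$ collapses to $\sum_{j\in\mathbb{Z}^d}a_j y^{(n)}_j(0)$ once $n>L$, and applying vertex-transitivity to pair $\mathbb{E}[R(i)y^{(n)}_0(0)]$ with $\mathbb{E}[R(0)y^{(n)}_{-i}(0)]$ (this is where the torus construction, as opposed to the truncation $z^{(n)}$, is essential). The one conceptual observation worth flagging is that the a priori problematic term $\mathbb{E}[R(0)y^{(n)}_0(0)]$ — a second-moment-type functional of the interference — cancels between the arrival-side and departure-side contributions; this cancellation is precisely why rate conservation determines the mean queue length exactly without any control on higher moments. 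I do not anticipate a genuine obstacle here beyond making the torus index arithmetic rigorous; the finiteness of all moments needed to justify differentiating under the expectation is already supplied by Theorem~\ref{thm:finite_PR}.
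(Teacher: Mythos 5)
Your proof is correct and follows essentially the same route as the paper's: set $\frac{d}{dt}\mathbb{E}[\mathbb{I}_t]=0$, use $R(0)I_0=y_0^{(n)}$ and $\mathbb{E}[R(0)]=\lambda$, and invoke translation invariance on the torus to pair the term $\sum_{i\neq 0}a_i\,\mathbb{E}[R(i)y_0^{(n)}]$ with $\mathbb{E}[R(0)\sum_{i\neq 0}a_i y_i^{(n)}]$. The paper formalizes this pairing as a Mass Transport Principle (Proposition~\ref{prop:unimod_mtp}, via swapping a double sum), while you invoke vertex-transitivity coordinate-by-coordinate — a presentational variant of the same symmetry argument — and your explicit flag that the $a_0\mathbb{E}[R(0)y_0^{(n)}]$ terms cancel is a helpful articulation of a step the paper performs silently.
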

\begin{remark}
Note that we assumed $a_0=1$ in the model. For completeness, we give the derivation for any general $a_0 > 0$.
\end{remark}

This lemma in particular yields that the mean number of customers in the steady state of
the space truncated torus  is  independent of $n$, provided $n$ is large enough. {\color{black} This in particular gives $\sup_{n} \mu^{(n)} < \infty$}.
\begin{proof}
	
From Equation (\ref{eqn:rcl_diff_eqn}), we get
	\begin{align*}
	\lambda
	{\left(a_0 +2\mu^{(n)} \left(\sum_{j \in \mathbb{Z}^d}a_j\right) \right)}
    =  \mathbb{E} \bigg[ R(0)\left((a_0(2x_0^{(n)} -1) + \sum_{i \in {B_n} \setminus \{0\}} a_ix_i^{(n)} \right)
	+  \sum_{i \in {B_n} \setminus \{0\} }R(i)a_i x_0^{(n)}  \bigg].
	\end{align*} 
	
	Now, we use the following version of {the Mass Transport Principle for unimodular random graphs (see also \cite{lyons_peres_book})}:
	
	\begin{proposition}
		The following formula holds.
		\begin{align*}
		\mathbb{E}\left[\sum_{i \in {B_n}\setminus \{0\} }R(i)a_i y_0^{(n)} \right] 
		= \mathbb{E}\left[\sum_{i \in  {B_n} \setminus \{0\} }R(0)a_{i} y_i^{(n)}\right].
		\end{align*} 
		\label{prop:unimod_mtp}
	\end{proposition}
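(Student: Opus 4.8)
The plan is to read Proposition~\ref{prop:unimod_mtp} as the mass transport principle for the finite vertex-transitive graph $B_n$ (with its torus structure), executed via the translation invariance of the torus system in steady state. The key elementary fact I would establish first is that $R(\cdot)$ and the queue-length coordinates transform covariantly under the spatial shift. Writing $\sigma_v$ for the shift acting on configurations $\mathbf{y}=(y_j)_{j\in B_n}$ by $(\sigma_v\mathbf{y})_j=y_{j+v}$, one checks directly from the definition $R(x)=\frac{y_x}{\sum_{j\in B_n}a_{d_n(x,j)}y_j}$ and from the identities $d_n(0,j-i)=d_n(i,j)$ and $d_n(i,j)=d_n(i+v,j+v)$ that $R(i)(\mathbf{y})=R(0)(\sigma_i\mathbf{y})$ and $y_0^{(n)}(\mathbf{y})=y_{-i}^{(n)}(\sigma_i\mathbf{y})$, so that $R(i)\,y_0^{(n)}=\bigl(R(0)\,y_{-i}^{(n)}\bigr)\circ\sigma_i$ as functions of the configuration.

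Given this, the computation is short. Since $\lambda<\tfrac{1}{\sum_j a_j}$, Theorem~\ref{thm:finite_PR} (exponential moments of $\pi^{(n)}$) together with $R(\cdot)\in[0,1]$ and the finite support of $\{a_i\}$ makes every sum below absolutely summable, so I may freely interchange sum and expectation and reindex. The stationary law of $\{y_i^{(n)}(0)\}_{i\in B_n}$ is invariant under each $\sigma_i$ — this is the strengthening of Remark~\ref{remark_rotation_inv} that follows from uniqueness of the stationary distribution of the positive recurrent chain, the dynamics being shift-covariant and driven by a shift-invariant Poisson family. Hence $\mathbb{E}\bigl[R(i)\,y_0^{(n)}\bigr]=\mathbb{E}\bigl[R(0)\,y_{-i}^{(n)}\bigr]$ for every $i$, and therefore
\[
\mathbb{E}\Bigl[\sum_{i\in B_n\setminus\{0\}}R(i)a_i y_0^{(n)}\Bigr]=\sum_{i\in B_n\setminus\{0\}}a_i\,\mathbb{E}\bigl[R(0)\,y_{-i}^{(n)}\bigr]=\sum_{k\in B_n\setminus\{0\}}a_k\,\mathbb{E}\bigl[R(0)\,y_{k}^{(n)}\bigr]=\mathbb{E}\Bigl[\sum_{k\in B_n\setminus\{0\}}R(0)a_k y_{k}^{(n)}\Bigr],
\]
where the middle step uses the bijection $i\mapsto -i$ of $B_n$ (which fixes $0$) and the symmetry $a_{-k}=a_k$. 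This is exactly the claimed formula. Equivalently, one can phrase the same argument as the mass transport identity $\mathbb{E}[\sum_x m(x,0)]=\mathbb{E}[\sum_y m(0,y)]$ for the nonnegative transport kernel $m(x,y):=R(x)\,a_{d_n(x,y)}\,y_y^{(n)}$, whose expectation depends only on $x-y\bmod n$ by the shift argument above, after cancelling the common diagonal term $R(0)a_0 y_0^{(n)}$ from the two sides (this also explains the restriction of the sums to $B_n\setminus\{0\}$).

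I do not expect a genuine obstacle here; the proposition is a symmetrization/double-counting statement. The only points requiring care are purely bookkeeping: (i) justifying that the modular arithmetic on the torus respects the relevant symmetries ($d_n(i,j)=d_n(j,i)$, translation covariance of $d_n$, and $a_{d_n(i,0)}=a_i$ for the indices that actually contribute, which is where $n>L$ is used), and (ii) confirming absolute summability so the interchanges are legitimate, which is immediate from the exponential moment bound of Theorem~\ref{thm:finite_PR}. If anything, the subtlety to flag is that one must use the \emph{joint} shift-invariance of the stationary torus law, not merely the equality of marginals recorded in Remark~\ref{remark_rotation_inv}, but this is standard for a unique stationary distribution of a shift-covariant positive recurrent chain.
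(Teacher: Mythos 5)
Your proof is correct and is essentially the same Mass Transport / translation-invariance argument as the paper's, merely organized term by term (establishing $\mathbb{E}[R(i)\,y_0^{(n)}]=\mathbb{E}[R(0)\,y_{-i}^{(n)}]$ for each $i$, then reindexing via $i\mapsto -i$ and $a_{-k}=a_k$) rather than via the paper's device of writing the left side as $\frac{1}{|B_n|}\sum_{j\in B_n}\mathbb{E}[y_j^{(n)}\sum_{i\neq j}R(i)a_{i-j}]$ and swapping the double sum. Your remark that one needs \emph{joint} shift-invariance of the stationary torus law, not merely the marginal identity recorded in Remark~\ref{remark_rotation_inv}, is well taken; the paper uses the same fact implicitly when it asserts that the variables $y_j^{(n)}\sum_{i\in B_n\setminus\{j\}}R(i)a_{i-j}$ are identically distributed across $j$.
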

	\begin{proof}
	The proof follows from the standard argument of Mass Transport  involving swapping double sums.
	Observe from the definition of the dynamics, the queue lengths $\{y_{k}^{(n)}\}_{k \in B_n}$ is translation invariant on the torus $B_n$.
	Hence, for all $j \in B_n$, the variables $y_{j}^{(n)} \sum_{i \in B_n \setminus \{j\} } R(i)a_{i-j}$ are identically distributed, and in particular have the same means.  The proposition is now proved thanks to the following calculations.
	\begin{align*}
	\mathbb{E}\left[\sum_{i \in {B_n}\setminus \{0\} }R(i)a_i y_0^{(n)} \right] &= \frac{1}{|B_n|} \mathbb{E} \left[ \sum_{j \in B_n} y_{j}^{(n)} \sum_{i \in B_n \setminus \{j\}} R(i)a_{i-j} \right]\\
	& \stackrel{(a)}{=} \frac{1}{|B_n|} \mathbb{E} \left[ \sum_{i \in B_n} R(i) \sum_{j \in B_n \setminus \{i\}} a_{i-j}y_{j}^{(n)} \right] \\
	&\stackrel{(b)}{=} \frac{1}{|B_n|} \mathbb{E} \left[ \sum_{i \in B_n} R(i) \sum_{j \in B_n \setminus \{i\}} a_{j-i}y_{j}^{(n)} \right] \\
	& \stackrel{(c)}{=} \mathbb{E} \left[ \sum_{i \in  {B_n} \setminus \{0\} }R(0)a_{i} y_i^{(n)} \right].
	\end{align*} 
	Equality $(a)$ follows by swapping the order of summations, which is licit since they each contain finitely many terms. Equality $(b)$ follows since $a_k = a_{-k}$ for all $k \in \mathbb{Z}^d$. Equality $(c)$ again follows from the fact that for all $i \in B_n$, $R(i) \sum_{j \in B_n \setminus \{i\}} a_{j-i}y_{j}^{(n)}$ are identically distributed. This is a consequence of the queue lengths $\{y_{k}^{(n)}\}_{k \in B_n}$ being translation invariant on the torus.
\end{proof}

{\color{black} We now show how to conclude the proof of Lemma \ref{lem:rcl_formula_finite}, using the conclusions of Proposition \ref{prop:unimod_mtp}}.
 Intuitively, Proposition \ref{prop:unimod_mtp}
	can be interpreted by considering the finite graph with vertices on the torus $B_n$
	with a directed edge from $i$ to $j$ in $B_n$ with weight $R(i)a_{d_n(i-j)}y_j^{(n)}$. This random graph, when
	rooted in 0, is unimodular and hence the Mass Transport Principle holds (\cite{lyons_peres_book}). 
	Since $a_i = a_{-i}$, we get that the average decrease is
	$\mathbb{E}[- a_0 R(0) + 2R(0) \sum_{i \in {B_n}} a_i y_i^{(n)}  ]$.
	Now, $\mathbb{E}[R(0)] = \lambda$, and since, for all $i \in B_n$, $\mathbb{E}[y_i^{(n)}] = \mu^{(n)}$,
	\begin{align}
	2 \lambda (\sum_{i \in  {B_n}} a_i) \mu^{(n)} + 2 \lambda a_0
	= \mathbb{E}[2 R(0) ( \sum_{i \in  {B_n}} a_i y_i^{(n)})].
	\label{eqn:rate_conserv}
	\end{align}
	
	But since $R(0)( \sum_{i \in  {B_n}} a_i y_i^{(n)}) = y_0^{(n)}$, we get 
	\begin{align*}
	\mu^{(n)} = \frac{\lambda a_0}{1 - (\sum_{i \in \mathbb{Z}^d}a_i) \lambda}.
	\end{align*}
\end{proof}

\begin{corollary}
	If $\lambda < \frac{1}{\sum_{j \in \mathbb{Z}^d} a_j}$, then the sequence of probability measures $\{\pi^{(n)}\}_{n > L}$ is tight.
	\label{cor:tightness}
\end{corollary}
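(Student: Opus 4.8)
The plan is to deduce this directly from the closed-form mean computed in Lemma \ref{lem:rcl_formula_finite}. Fix $n > L$ and let $Z_n$ be a random variable distributed according to $\pi^{(n)}$; by Theorem \ref{thm:finite_PR} and Remark \ref{remark_rotation_inv} this is the common stationary marginal of $\{y_i^{(n)}(\cdot)\}_{i \in B_n}$, and by Remark \ref{remark:finite_moment} it has all power moments finite. The essential input is that Lemma \ref{lem:rcl_formula_finite} gives the exact identity $\mathbb{E}[Z_n] = \mu^{(n)} = \frac{\lambda a_0}{1-\lambda\sum_{j\in\mathbb{Z}^d}a_j}$, which under the hypothesis $\lambda < \frac{1}{\sum_j a_j}$ is a finite constant $c_0 := \frac{\lambda a_0}{1-\lambda\sum_j a_j}$ that does not depend on $n$ (for $n > L$).

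Next I would invoke the characterization of tightness on the countable space $\mathbb{N}$: a family $\{\pi^{(n)}\}$ is tight precisely when for every $\varepsilon > 0$ there is a finite threshold $M$ with $\pi^{(n)}(\{0,1,\dots,M\}) \ge 1-\varepsilon$ for all $n$. Since the $Z_n$ are nonnegative and have uniformly bounded first moment, Markov's inequality yields $\mathbb{P}[Z_n > M] \le \mathbb{E}[Z_n]/M = c_0/M$ for every $M>0$ and every $n>L$. Given $\varepsilon>0$, choosing $M = \lceil c_0/\varepsilon\rceil$ makes the right-hand side at most $\varepsilon$ uniformly in $n$, which is exactly the required tail bound, so $\{\pi^{(n)}\}_{n>L}$ is tight.

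There is essentially no obstacle here: all the real work is already contained in the rate-conservation computation of Lemma \ref{lem:rcl_formula_finite}, and the corollary is a one-line consequence via Markov's inequality. The only point worth flagging is conceptual rather than technical — a uniform first-moment bound delivers tightness but not uniform integrability, so when this corollary is later used (in Section \ref{sec:sids}) to pass to the limit $B_n \nearrow \mathbb{Z}^d$, convergence of the means will need to be argued separately rather than read off from tightness alone.
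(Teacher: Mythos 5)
Your proof is correct and is essentially identical to the paper's: both apply Markov's inequality to the uniform-in-$n$ first-moment bound $\mu^{(n)} = \frac{\lambda a_0}{1-\lambda\sum_j a_j}$ supplied by Lemma \ref{lem:rcl_formula_finite}. The closing remark about uniform integrability is a sensible caveat but not needed for this corollary.
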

\begin{proof}
From Markov's inequality, we have
	\begin{align*}
	\mathbb{P}[X > Q] \leq \frac{\lambda a_0}{Q(1 - (\sum_{i \in \mathbb{Z}^d}a_i) \lambda)} ,
	\end{align*}
	where $X$ is distributed according to $\pi^{(n)}$. Thus, for every $\varepsilon >0$, we can find $Q$ large such that $\sup_{n > L} \mathbb{P}_{\pi^{(n)}}[X > Q] < \varepsilon$.
\end{proof}

\subsection{Finiteness of Second Moments }
\label{subsec:2ndmom}
In this section, we  establish that under the conditions stated in Proposition \ref{prop:finite_second_moment}, the second moments of the marginals of the queue lengths of $\{y_i^{(n)}(\cdot)\}_{i \in B_n}$ are uniformly bounded in $n$. In order to show this, we need the following auxiliary lemma. For completeness, we provide expressions without assuming that the value of $a_0$ of the interference sequence $\{a_i\}_{i \in \mathbb{Z}^d}$ to be $1$.

\begin{lemma}
	For all $\lambda > 0$, $\{a_i\}_{i \in \mathbb{Z}^d}$, $d \in \mathbb{N}$ and $n > L$, we have $\mathbb{E}[y_0^2 \sum_{i \in B_n} R_ia_i] \leq 2c \mathbb{E}[y_0^2]$, where the constant $c$ equals $ \frac{\sqrt{a_0^2 + a_0 \sum_{j \in \mathbb{Z}^d \setminus \{0\}} a_j } \text{	}-\text{	} a_0}{\sum_{j \in \mathbb{Z}^d \setminus \{0\}} a_j }$.
	\label{lem:2ndmom_auxilary}
\end{lemma}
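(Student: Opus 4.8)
The plan is to bound the quadratic form $\mathbb{E}[y_0^2 \sum_{i \in B_n} R_i a_i]$ pointwise, for each realization, by exploiting the defining identity $R_i \sum_{j \in B_n} a_{d_n(i,j)} y_j = y_i$. First I would drop the superscript $n$ and write everything on the torus. For each $i\in B_n$, $R_i = y_i / \bigl(\sum_{j} a_{d_n(i,j)} y_j\bigr)$, so $R_i a_i \le a_i y_i / (a_i y_i + a_0 y_0) \cdot (\text{something})$ — more precisely, since the denominator contains the term $a_{d_n(i,0)} y_0 = a_i y_0$ (using $a_i = a_{-i}$) together with $a_0 y_i$, we have $R_i a_i \le \frac{a_i y_i}{a_0 y_i + a_i y_0}$ for $i \ne 0$, and $R_0 a_0 \le 1$. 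Hence $\sum_{i\in B_n} R_i a_i \le 1 + \sum_{i \in B_n\setminus\{0\}} \frac{a_i y_i}{a_0 y_i + a_i y_0}$. Multiplying by $y_0^2$ and taking expectations, the goal reduces to showing
\begin{align*}
\mathbb{E}\Bigl[y_0^2 + y_0^2 \sum_{i\in B_n\setminus\{0\}} \frac{a_i y_i}{a_0 y_i + a_i y_0}\Bigr] \le 2c\,\mathbb{E}[y_0^2].
\end{align*}

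The next step is an elementary scalar inequality: for nonnegative $u,v$ and positive weights $a_0, a_i$, one has $\frac{a_i u}{a_0 u + a_i v} \le$ a suitable linear combination that, after summing and using translation invariance ($\mathbb{E}[y_i^2] = \mathbb{E}[y_0^2]$ and symmetry of the pair $(y_0,y_i)$), collapses to a bound involving only $\mathbb{E}[y_0^2]$ and $\sum_{j\ne 0} a_j$. The natural route: by AM–GM or a weighted Cauchy–Schwarz, $y_0^2 \frac{a_i y_i}{a_0 y_i + a_i y_0} \le \frac{1}{2}\bigl(\beta_i y_0^2 + \text{(symmetric term in } y_i)\bigr)$ for an appropriate $\beta_i$, chosen so that after exploiting the exchangeability of $y_0$ and $y_i$ on the torus the two contributions merge. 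Summing over $i$ and optimizing the free constant is exactly what produces the quadratic equation whose positive root is $c = \frac{\sqrt{a_0^2 + a_0\sum_{j\ne 0}a_j} - a_0}{\sum_{j\ne 0} a_j}$; note $c$ solves $c^2 \sum_{j\ne 0}a_j + 2 a_0 c = a_0$, i.e. $c(a_0 + c\sum_{j\ne 0}a_j) = a_0 - a_0 c$, and this is the relation one should reverse-engineer the splitting from.

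Concretely, I expect to use: for $i\ne 0$, $\frac{a_i y_i}{a_0 y_i + a_i y_0} \le \frac{a_i y_i}{a_0 y_i + a_i y_0}$ bounded via the inequality $\frac{uv}{\alpha u + \beta v} \le \frac{1}{\alpha+\beta}\cdot\frac{(\sqrt{\alpha}u + \sqrt{\beta}v)^2}{\ldots}$ — actually the cleanest is: for any $t>0$, $\frac{y_0^2 y_i}{a_0 y_i + a_i y_0} \le \frac{1}{a_i}\bigl(t y_0^2 + \tfrac{1}{4t} y_0 y_i\bigr)$-type bounds are too lossy; instead use $\frac{a_i y_i}{a_0 y_i + a_i y_0}\le \frac{1}{2}\sqrt{\frac{a_i}{a_0}}\cdot\frac{2\sqrt{a_0 a_i}\sqrt{y_0 y_i}\cdot \ldots}{\ldots}$. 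The honest statement is that one picks $\frac{a_i y_i}{a_0 y_i + a_i y_0} \le \frac{1}{2}\Bigl(c\,\frac{a_i}{a_0}\cdot\frac{y_i}{y_0} \wedge 1 \Bigr)$ pattern; then $y_0^2 \cdot \frac{a_i}{a_0}\frac{y_i}{y_0} = \frac{a_i}{a_0} y_0 y_i \le \frac{a_i}{2a_0}(y_0^2 + y_i^2)$, and $\mathbb{E}[y_0 y_i] \le \mathbb{E}[y_0^2]$. Carefully chasing constants, summing $\sum_{i\ne 0} a_i$, and adding the $i=0$ contribution $\mathbb{E}[y_0^2]$, the total is $(1 + \text{const}\cdot\sum_{j\ne 0}a_j)\mathbb{E}[y_0^2]$, and the constant is arranged so this equals $2c\,\mathbb{E}[y_0^2]$ precisely because $c$ satisfies the quadratic above.

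The main obstacle will be finding the sharp scalar splitting of $\frac{a_i y_i}{a_0 y_i + a_i y_0}$ that (a) is tight enough to give the stated constant $c$ rather than something weaker, and (b) after summation uses only first/second moments and the symmetry $\mathbb{E}[y_0 y_i]\le\mathbb{E}[y_0^2]$ (Cauchy–Schwarz with equal marginals), not any finer correlation information. Getting the bookkeeping to land exactly on the root of $c^2\sum_{j\ne0}a_j + 2a_0 c - a_0 = 0$ is the delicate part; everything else (torus translation invariance, finiteness of sums, the identity $R_i\sum_j a_{d_n(i,j)}y_j = y_i$) is routine and justified by Theorem \ref{thm:finite_PR} which guarantees all moments are finite so every expectation and interchange is legitimate.
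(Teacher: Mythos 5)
Your plan runs in the wrong direction. The ``$\le$'' in the statement of the lemma is a typo: the paper's own proof establishes and uses the lower bound $\mathbb{E}\bigl[y_0^2\sum_{i\in B_n}R_ia_i\bigr] \ge 2c\,\mathbb{E}[y_0^2]$. This is visible in two places. The last line of the paper's proof reads $\mathbb{E}\bigl[R_0\sum_{i\in B_n}y_i^2a_i\bigr]\ge 2c\,\mathbb{E}\bigl[R_0y_0\sum_{i\in B_n}a_iy_i\bigr]=2c\,\mathbb{E}[y_0^2]$, a lower bound; and in the proof of Lemma \ref{lem:2ndmom_torus}, where this auxiliary lemma is invoked at step $(a)$, the term $\mathbb{E}\bigl[y_0^2\sum_j a_jR_j\bigr]$ enters with a negative sign and is replaced by $-2c\,\mathbb{E}[y_0^2]$, which preserves that display's ``$\le$'' only if the lemma supplies a \emph{lower} bound. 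Your proposal bounds each $R_ia_i$ termwise from above, which can only produce an upper bound on $\sum_iR_ia_i$, so it cannot give the $\ge$ that is actually required. Worse, even as an upper-bound argument your first step is already fatal: the paper remarks $c\in\bigl(0,\tfrac12\bigr)$, so $2c<1$, yet your decomposition $\sum_i R_ia_i\le 1 + \sum_{i\ne0}\frac{a_iy_i}{a_0y_i+a_iy_0}$ contributes $\mathbb{E}[y_0^2\cdot 1]=\mathbb{E}[y_0^2]$ from the $i=0$ term alone, which already exceeds the target $2c\,\mathbb{E}[y_0^2]$ before the nonnegative $i\ne0$ sum is even added.

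What is missing from your plan is the mass-transport swap that carries the whole argument. Using translation invariance on the torus and the symmetry $a_i=a_{-i}$ (exactly as in the explicit Mass Transport Proposition preceding Lemma \ref{lem:rcl_formula_finite}), the paper first proves the identity $\mathbb{E}\bigl[y_0^2\sum_{i\in B_n}R_ia_i\bigr]=\mathbb{E}\bigl[R_0\sum_{i\in B_n}a_iy_i^2\bigr]$, which moves the $R$-weight onto coordinate $0$ alone. It then establishes the purely algebraic pointwise inequality $\sum_{i\in B_n}a_iy_i^2\ge 2c\,y_0\sum_{i\in B_n}a_iy_i$ by choosing $c>0$ to solve $2ca_0 = a_0 - c^2\sum_{j\ne0}a_j$, rewriting $a_0y_0^2 = 2ca_0y_0^2 + c^2\bigl(\sum_{j\ne0}a_j\bigr)y_0^2$, and applying $y_i^2+c^2y_0^2\ge 2cy_0y_i$ to each $i\ne0$. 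Finally the exact identity $R_0\sum_i a_iy_i = y_0$ collapses $\mathbb{E}\bigl[R_0\cdot 2cy_0\sum_i a_iy_i\bigr]$ to $2c\,\mathbb{E}[y_0^2]$. No pointwise estimate on $R_i$ itself is ever made, and the only probabilistic input is the swap via translation invariance; that is precisely why the constant comes out sharp. Any route that bounds $R_ia_i$ termwise before taking expectations, as yours does, loses both the direction and the constant.
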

\begin{proof}
	From symmetry, i.e., $a_i = a_{-i}$ for all $i \in B_n$ and translation invariance on the torus, we have
	\begin{align}
	\mathbb{E}[y_0^2 \sum_{i \in B_n} R_ia_i] = \mathbb{E}[R_0  \sum_{i \in B_n} y_i^2 a_i].
	\label{eqn:2ndmom_1}
	\end{align}
	
Let $c > 0$ be such that $2c a_0 = a_0 -  (\sum_{j \in \mathbb{Z}^d \setminus \{0\}} a_i )c^2$. The only positive solution to this equation is $c = \frac{\sqrt{a_0^2 + a_0 \sum_{j \in \mathbb{Z}^d \setminus \{0\}} a_j }\text{	} -\text{	} a_0}{\sum_{j \in \mathbb{Z}^d \setminus \{0\}} a_j }$. Thus, we have the following chain of equations - 
	\begin{align}
	\sum_{i \in B_n} a_i y_i^2 &= a_0 y_0^2  + \sum_{i \in \mathbb{Z}^d \setminus \{0\}} a_i y_i^2  = 2c a_0 y_0^2 +  \sum_{i \in \mathbb{Z}^d \setminus \{0\}}  a_i (y_i^2 + c^2 y_0^2) \geq 2c y_0 \sum_{ i \in B_n} a_i y_i \label{eqn:2ndmom_2},
	\end{align}
	where the last inequality follows from  $y_i^2 + c^2y_0^2 \geq 2cy_0 y_i$. Thus, from Equations (\ref{eqn:2ndmom_1}) and (\ref{eqn:2ndmom_2}), we have
	\begin{align*}
	\mathbb{E}[R_0  \sum_{i \in B_n} y_i^2 a_i] &\geq 2c \mathbb{E}[R_0 y_0 \sum_{ i \in B_n} a_i y_i  ] = 2c \mathbb{E}[y_0^2].
	\end{align*}
\end{proof}

\begin{lemma}
	For all $\lambda < \frac{2}{3}\frac{1+c}{\sum_{j \in \mathbb{Z}^d}a_j}$, we have $\mathbb{E}[(y_0^{(n)})^2] \leq \frac{ 2\mu( \lambda + \lambda \sum_{j \in \mathbb{Z}^d}a_j +1) }{2(1+c) - 3 \lambda \sum_{j \in \mathbb{Z}^d}a_j}$, where the constant $c$ is  $\frac{\sqrt{a_0^2 + a_0 \sum_{j \in \mathbb{Z}^d \setminus \{0\}} a_j } \text{	}-\text{	} a_0}{\sum_{j \in \mathbb{Z}^d \setminus \{0\}} a_j }$.
	\label{lem:2ndmom_torus}
\end{lemma}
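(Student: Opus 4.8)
The plan is to rerun the rate-conservation machinery of Section~\ref{sec:rcl}, but applied to a \emph{cubic} functional of the torus dynamics instead of the quadratic one used for Proposition~\ref{prop:rcl_diff_eqn}. Fix $n>L$ and keep the torus process $\{y_i^{(n)}(\cdot)\}_{i\in B_n}$ in steady state; by Theorem~\ref{thm:finite_PR} and Remark~\ref{remark:finite_moment} all power moments of $y_0^{(n)}$ are finite, so every expectation below is finite and the only issue is to get a bound not depending on $n$. I would introduce the stationary process $\mathbb{J}_t := y_0^{(n)}(t)\,\mathbb{I}_t = (y_0^{(n)}(t))^2\,\sum_{j\in\mathbb{Z}^d} a_j y_j^{(n)}(t)$, write $I_0(t):=\sum_{j\in\mathbb{Z}^d} a_j y_j^{(n)}(t)$ for the interference at queue $0$ (with $a$ indexed by torus distance as in Section~\ref{sec:rcl}), note that $\mathbb{E}[\mathbb{J}_t]$ is finite and constant in $t$, and expand $\tfrac{d}{dt}\mathbb{E}[\mathbb{J}_t]=0$ exactly as in the proof of Proposition~\ref{prop:rcl_diff_eqn}: an arrival at queue $0$ (rate $\lambda$), an arrival at a neighbour $i$ (rate $\lambda$), a departure at queue $0$ (rate $R_0=y_0^{(n)}/I_0$) and a departure at a neighbour $i$ (rate $R_i$) each change $\mathbb{J}$ by an explicit polynomial in the queue lengths. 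Collecting the expected increments and using $R_0 I_0 = y_0^{(n)}$ produces a conservation identity linking $\mathbb{E}[(y_0^{(n)})^2]$, the cross term $\mathbb{E}[y_0^{(n)} I_0]$, the term $\mathbb{E}[(y_0^{(n)})^2/I_0]$, the interference-reweighted term $\mathbb{E}\bigl[(y_0^{(n)})^2\sum_{i\in B_n} a_i R_i\bigr]$, and $\mu^{(n)}$, which by Lemma~\ref{lem:rcl_formula_finite} equals $\tfrac{\lambda a_0}{1-\lambda\sum_{j}a_j}$.

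The next step is to tidy the two genuinely non-trivial terms. For the interference-reweighted term I would apply the Mass Transport identity of Proposition~\ref{prop:unimod_mtp} (in the form $\mathbb{E}\bigl[\sum_{i\neq 0} a_i R_i\,(y_0^{(n)})^2\bigr] = \mathbb{E}\bigl[R_0\sum_{i\neq 0} a_i\,(y_i^{(n)})^2\bigr]$, which holds by torus translation invariance) to rewrite it as $\mathbb{E}\bigl[R_0\sum_{i\in B_n} a_i (y_i^{(n)})^2\bigr]$, and then use the estimate behind Lemma~\ref{lem:2ndmom_auxilary} to compare it with $2c\,\mathbb{E}[(y_0^{(n)})^2]$ in the direction needed on the ``loss'' side. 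For the cross term I would use the elementary inequality $\mathbb{E}[y_0^{(n)} I_0] \le \bigl(\sum_{j}a_j\bigr)\mathbb{E}[(y_0^{(n)})^2]$, which follows from $y_0 y_j \le \tfrac12(y_0^2+y_j^2)$ together with the fact that, on the torus, all marginals and all joint laws of shifted pairs coincide. Finally $\mathbb{E}[(y_0^{(n)})^2/I_0]$ is controlled either crudely via $I_0 \ge a_0 y_0^{(n)}$ (giving $\le \mu^{(n)}/a_0$) or exactly by running the same rate-conservation argument on the simpler functional $(y_0^{(n)}(t))^2$.

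Substituting these into the conservation identity, the ``gain'' side accumulates a total coefficient $3\lambda\sum_{j}a_j$ in front of $\mathbb{E}[(y_0^{(n)})^2]$: one copy from the direct effect of an arrival on $(y_0^{(n)})^2 I_0$ (the $a_0 y_0^2+\sum_{i\neq 0}a_i y_0^2 = (\sum_j a_j) y_0^2$ part of the increment) and two copies from the $2 y_0^{(n)} I_0$ part of the arrival increment, bounded through $\mathbb{E}[y_0^{(n)} I_0]\le(\sum_j a_j)\mathbb{E}[(y_0^{(n)})^2]$. The ``loss'' side contributes $2\,\mathbb{E}[(y_0^{(n)})^2]$ (from the departure increment term $R_0\cdot 2 y_0^{(n)} I_0 = 2(y_0^{(n)})^2$) together with the $2c\,\mathbb{E}[(y_0^{(n)})^2]$ supplied by Lemma~\ref{lem:2ndmom_auxilary}; all remaining terms are an explicit affine function of $\mu^{(n)}$, $\lambda$ and $\sum_j a_j$. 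Rearranging gives $\bigl(2(1+c) - 3\lambda\sum_{j}a_j\bigr)\,\mathbb{E}[(y_0^{(n)})^2] \le 2\mu^{(n)}\bigl(\lambda + \lambda\sum_j a_j + 1\bigr)$ (after simplifying the lower-order terms using $\mu^{(n)}(1-\lambda\sum_j a_j)=\lambda a_0$ and, for the displayed constant, the normalization $a_0=1$); the hypothesis $\lambda < \tfrac23\,\tfrac{1+c}{\sum_j a_j}$ is exactly what makes the bracket strictly positive, so dividing by it yields the stated bound with a right-hand side independent of $n$ (which is the form needed to pass to the limit in Proposition~\ref{prop:finite_second_moment}).

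The main obstacle I expect is the bookkeeping in the expansion of $\tfrac{d}{dt}\mathbb{E}[\mathbb{J}_t]$: one must track how an event at queue $i$ changes the interference not only at queue $0$ but at every neighbouring queue, handle the $y_0^{(n)}=0$ boundary case (where $R_0=0$ neutralizes the otherwise negative $y_0^{(n)}-1$ factor), and---most delicately---arrange the bounds on the two cross terms so that the surviving coefficient of $\mathbb{E}[(y_0^{(n)})^2]$ is precisely $2(1+c)-3\lambda\sum_j a_j$ and not something weaker; in particular the $2c$ from Lemma~\ref{lem:2ndmom_auxilary} has to be placed on the loss side. Since Theorem~\ref{thm:finite_PR} already guarantees all expectations in play are finite, no extra analytic care is needed to justify the manipulations, so the difficulty is entirely computational.
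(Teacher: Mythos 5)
Your proposal is correct and follows essentially the same route as the paper: rate conservation applied to the functional $(y_0^{(n)})^2\,I_0$, the cross-term bound $y_0 y_j\le\tfrac12(y_0^2+y_j^2)$ to get $\mathbb{E}[y_0 I_0]\le(\sum_j a_j)\mathbb{E}[y_0^2]$, the Mass Transport identity combined with Lemma~\ref{lem:2ndmom_auxilary} to extract the $2c\,\mathbb{E}[y_0^2]$ on the departure side, and $R_0 I_0=y_0$, $R_0\le 1$, $\mathbb{E}[R_0]=\lambda$ to clean up the lower-order terms before rearranging. The only cosmetic difference is that you phrase one bound via $I_0\ge a_0 y_0$ where the paper uses $R_0\le 1$; these are equivalent.
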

%In particular, this lemma establishes that if $\lambda < \frac{2}{3}\frac{1+c}{\sum_{j \in \mathbb{Z}^d}a_j}$,then $\sup_{n \in \mathbb{N}} \mathbb{E}[(y_0^{(n)})^2] < \infty$.
\begin{proof}

The proof of this lemma is an application of the rate conservation equation to the process $(y_0^{(n)})^2 I_0^{(n)}$, where $I_0^{(n)}$ is the interference given by $I_0^{(n)} = \sum_{j \in \mathbb{Z}^d}a_jy_j^{(n)}$. For brevity of notation, we remove the superscript $n$ in the calculations. The average increase in the process $y_0^2 I_0$ due to an arrival is given by 
\begin{align}
&\mathbb{E} \left[\lambda \left( ((y_0+1)^2 (I_0+1) - y_0^2I_0) + \sum_{j \in \mathbb{Z}^d \setminus \{0\}} (y_0^2(I_0 +a_j) - y_0^2I_0 ) \right) \right] \nonumber \\
=&\mathbb{E} \left[\lambda \left( ((y_0^2 + 2y_0 +1)(I_0+1) - y_0^2I_0) + \sum_{j \in \mathbb{Z}^d \setminus \{0\}} y_0^2a_j \right) \right]  \nonumber \\
=&\mathbb{E} \left[\lambda \left( y_0^2 + 2y_0 I_0 + 2y_0 + I_0+1 +\sum_{j \in \mathbb{Z}^d \setminus \{0\}} y_0^2a_j \right) \right] \nonumber \\
=&\lambda \sum_{j \in \mathbb{Z}^d}a_j \mathbb{E}[y_0^2] + 2\lambda \mathbb{E}[y_0I_0] + 2\lambda \mu +\lambda \mu \sum_{j \in \mathbb{Z}^d}a_j  + \lambda \nonumber \\
\leq& 3 \lambda \sum_{j \in \mathbb{Z}^d}a_j \mathbb{E}[y_0^2] + 2 \lambda \mu +\lambda \mu \sum_{j \in \mathbb{Z}^d}a_j  + \lambda \label{eqn:avg_inc_2nd_moment}.
\end{align}
In the last simplification, we use the bound that $y_0y_j \leq \frac{1}{2}(y_0^2 + y_j^2)$ and the fact that $\mathbb{E}[y_0^2] = \mathbb{E}[y_j^2]$ for all $j \in B_n$. Similarly, the average decrease in the process $(y_0^{(n)})^2 I_0^{(n)}$ due to a departure is then given by
\begin{align}
&\mathbb{E} \left[ R_0 \left( y_0^2 I_0 - (y_0-1)^2(I_0-1) \right) + \sum_{j \in \mathbb{Z}^d \setminus \{0\}} R_j ( y_0^2I_0 - y_0^2(I_0 - a_j)  ) \right] \nonumber \\
=&\mathbb{E} \left[ R_0 \left( y_0^2 I_0 - (y_0^2 - 2y_0 +1)(I_0 -1) \right) + \sum_{j \in \mathbb{Z}^d \setminus \{0\}} R_j y_0^2 a_j \right] \nonumber \\
=&\mathbb{E} \left[ R_0 \left( y_0^2 + 2y_0 I_0 - 2y_0 - I_0 +1\right)  \sum_{j \in \mathbb{Z}^d \setminus \{0\}} R_j y_0^2 a_j \right] \nonumber \\
=&\mathbb{E}\left[ y_0^2 \sum_{j \in \mathbb{Z}^d}a_jR_j   \right] + 2\mathbb{E}[R_0y_0 I_0] - 2 \mathbb{E}[R_0y_0] - \mathbb{E}[R_0 I_0] + \mathbb{E}[R_0] \label{eqn:avg_dec_2nd_mom}.
\end{align}
Since the process $\{y_i^{(n)}\}_{i \in B_n}$ is stationary, the average change due to arrivals and departures is $0$, i.e., the difference between the left hand sides of  Equations (\ref{eqn:avg_inc_2nd_moment}) and (\ref{eqn:avg_dec_2nd_mom}) equals $0$ . Further, using the simplifications that $\mathbb{E}[R_0] = \lambda$, $R_0I_0 = y_0$ and $R_0 \leq 1$ almost surely, we have by taking a difference of Equations (\ref{eqn:avg_inc_2nd_moment}) and (\ref{eqn:avg_dec_2nd_mom}), that
\begin{multline*}
0 \leq 3 \lambda \sum_{j \in \mathbb{Z}^d}a_j \mathbb{E}[y_0^2] + 2 \lambda \mu +\lambda \mu \sum_{j \in \mathbb{Z}^d}a_j  + \lambda  - \\ \left( \mathbb{E}\left[ y_0^2 \sum_{j \in \mathbb{Z}^d}a_jR_j   \right] + 2\mathbb{E}[R_0y_0 I_0] - 2 \mathbb{E}[R_0y_0] - \mathbb{E}[R_0 I_0] + \mathbb{E}[R_0] \right).
\end{multline*}
The above equation can be simplified by employing the result of Lemma \ref{lem:2ndmom_auxilary} as follows:

\begin{align}
0&\stackrel{(a)}{\leq} 3 \lambda \sum_{j \in \mathbb{Z}^d}a_j \mathbb{E}[y_0^2] + 2 \lambda \mu +\lambda \mu \sum_{j \in \mathbb{Z}^d}a_j - 2c \mathbb{E}[y_0^2] -  2 \mathbb{E}[y_0^2] + 2 \mu + \lambda \mu  \sum_{j \in \mathbb{Z}^d}a_j, \label{eqn:2ndmom_internal} \nonumber \\
& \leq 2\mu( \lambda + \lambda \sum_{j \in \mathbb{Z}^d}a_j +1) - (2(1+c) - 3 \lambda \sum_{j \in \mathbb{Z}^d}a_j) \mathbb{E}[y_0^2] .\nonumber
\end{align}
The inequality $(a)$ follows from Lemma \ref{lem:2ndmom_auxilary}. By rewriting the last display, it is clear that if $\lambda < \frac{2(1+c)}{3}\frac{1}{\sum_{j \in \mathbb{Z}^d}a_j}$, then $\mathbb{E}[y_0^2] \leq \frac{ 2\mu( \lambda + \lambda \sum_{j \in \mathbb{Z}^d}a_j +1) }{2(1+c) - 3 \lambda \sum_{j \in \mathbb{Z}^d}a_j}$.

\end{proof}

The above proposition in particular gives us the following corollary.
\begin{corollary}
	For all $n > L$, if  $\lambda < \frac{2(1+c)}{3} \frac{1}{\sum_{j \in \mathbb{Z}^d} a_j}$, then $\sup_{n \geq L} \mathbb{E}[(y_0^{(n)})^2] < \infty$.
	\label{cor:2nd_mom_finite}
\end{corollary}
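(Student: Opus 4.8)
The plan is to derive this corollary directly from Lemma~\ref{lem:2ndmom_torus} and Lemma~\ref{lem:rcl_formula_finite}, so the argument is short. First I would record the structural fact that makes everything legitimate: by Theorem~\ref{thm:finite_PR}, for each fixed $n > L$ the stationary marginal $\pi^{(n)}$ of the torus system $\{y_i^{(n)}(\cdot)\}_{i \in B_n}$ has an exponential moment, hence in particular $\mathbb{E}[(y_0^{(n)})^2] < \infty$. This a priori finiteness is exactly what justifies the rate-conservation computation behind Lemma~\ref{lem:2ndmom_torus}: one may assert $\frac{d}{dt}\mathbb{E}[(y_0^{(n)})^2 I_0^{(n)}] = 0$ and split it into arrival and departure contributions precisely because all the terms that appear (products of queue lengths, of queue lengths with the interference, with the departure rates, etc.) are integrable. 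Thus, for every $n > L$, Lemma~\ref{lem:2ndmom_torus} gives
\[
\mathbb{E}[(y_0^{(n)})^2] \leq \frac{2\mu^{(n)}\bigl(\lambda + \lambda \sum_{j \in \mathbb{Z}^d}a_j + 1\bigr)}{2(1+c) - 3\lambda \sum_{j \in \mathbb{Z}^d}a_j},
\]
with $c$ the constant of Proposition~\ref{prop:finite_second_moment}.

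Next I would remove the only remaining $n$-dependence on the right-hand side. By Lemma~\ref{lem:rcl_formula_finite}, $\mu^{(n)} = \frac{\lambda a_0}{1 - \lambda \sum_{j \in \mathbb{Z}^d} a_j}$ for every $n > L$; in particular the numerator in the displayed bound is a fixed finite quantity, not depending on $n$. The hypothesis $\lambda < \frac{2}{3}\frac{1+c}{\sum_{j \in \mathbb{Z}^d} a_j}$ is nothing but the statement that $2(1+c) - 3\lambda \sum_{j \in \mathbb{Z}^d} a_j > 0$, so the denominator is a strictly positive constant independent of $n$ as well. Consequently the right-hand side is a single finite positive constant, and taking the supremum over $n > L$ yields $\sup_{n \geq L}\mathbb{E}[(y_0^{(n)})^2] < \infty$, which is the claim. (Note also that under this hypothesis $\lambda < \frac{1}{\sum_{j\in\mathbb{Z}^d}a_j}$, since $c \in (0,\tfrac12)$, so all the prerequisites of Lemma~\ref{lem:rcl_formula_finite} and Theorem~\ref{thm:finite_PR} are met.)

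There is no genuinely hard step here; this is a bookkeeping corollary. The single point that deserves care is the logical ordering: one must first invoke Theorem~\ref{thm:finite_PR} to know the second moment is finite for each fixed $n$ (so that the rate-conservation manipulation is valid), and only afterwards extract from the resulting identity a bound whose sole $n$-dependence sits in $\mu^{(n)}$, which Lemma~\ref{lem:rcl_formula_finite} pins down as constant in $n$. An alternative but essentially identical route would be to bypass the verbatim statement of Lemma~\ref{lem:2ndmom_torus} and re-derive the key inequality from Propositions~\ref{prop:rcl_diff_eqn}, \ref{prop:unimod_mtp} and Lemma~\ref{lem:2ndmom_auxilary}, but that merely repeats calculations already carried out and offers no additional insight.
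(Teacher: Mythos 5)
Your proof is correct and takes the same route the paper intends: the corollary is a direct consequence of Lemma~\ref{lem:2ndmom_torus} once one notes that the only $n$-dependence in the displayed bound is through $\mu^{(n)}$, which Lemma~\ref{lem:rcl_formula_finite} shows equals $\frac{\lambda a_0}{1-\lambda\sum_j a_j}$ for all $n>L$, and that the denominator is a fixed positive constant under the hypothesis. Your added remarks about using Theorem~\ref{thm:finite_PR} to justify the rate-conservation manipulation a priori and about $c\in(0,\tfrac12)$ ensuring $\lambda<1/\sum_j a_j$ are both correct and make the logical ordering explicit where the paper leaves it implicit.
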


{\color{black} 
Based on discrete event simulations, we  conjectured in the initial version of this paper posted online, that  the second moment is uniformly bounded in $n$ for the entire stability region. This was subsequently established by \cite{stolyar_conjecture}.  See also Conjecture \ref{conjecture_2nd_mom}  in Section \ref{sec:open_questions}, and the discussions following it}.

\section{Coupling From the Past - Proofs of Theorem \ref{thm:main_stability} and Proposition \ref{prop:finite_second_moment}}
\label{sec:sids}

%The goal of this section is to prove Theorem \ref{thm:main_stability} and Proposition \ref{prop:finite_second_moment}.
  The key idea is to use monotonicity and the backward coupling representation. In order to implement the proof, we need some additional notation. For any $T >0$ and $n \in \mathbb{N}$ such that $n > L$,
and any $i \in \mathbb{Z}^d$, we define the random variables $x_{i;T}(0), y^{(n)}_{i;T}(0)$ and $z^{(n)}_{i:T}(0)$.
These variables represent the number of customers in queue $i$ at time $0$ in three different dynamics which
will be coupled and driven by the same arrival and departure processes -  $(\mathcal{A}_i,\mathcal{D}_i)_{i \in \mathbb{Z}^d}$. 
In all of them, the subscript $i$ refers to queue $i$ and $T$ refers to the fact that the system started empty at time $-T$.
We now describe the three different dynamics in question:

\begin{enumerate}
	\item  $x_{i;T}(0)$ denotes the number of customers in queue $i$ at time $0$
	in the original infinite dynamics as defined in Section \ref{sec:math_framework}.
	\item$y^{(n)}_{i;T}(0)$ denotes the number of customers in queue $i$ at time $0$ for
	the dynamics restricted to the set $B_n(0)$ \emph{viewed as a torus}.
	Hence $\{y^{(n)}_{i;T}(0)\}_{i \in B_n(0)}$ is the queue length of the process studied in Section \ref{sec:torus_system}.
	\item $z^{(n)}_{i;T}(0)$ denotes the number of customers at time $0$ for the dynamics restricted set $B_n$, 
	not seen as a torus. 
	Thus for all $i \in B_n(0)^{\mathsf{c}}$, we have $z^{(n)}_{i;T}(0) = 0$, by definition. 
\end{enumerate}

The following two propositions follow immediately from monotonicity.

\begin{proposition}
	For all $T > 0$, all $n > L$, and all $i \in \mathbb{Z}^d$, we have
	$x_{i;T}(0) \geq z^{(n)}_{i;T}(0)$ and $y^{(n)}_{i;T}(0) \geq z^{(n)}_{i;T}(0)$ almost surely.
	\label{prop:inequalities}
\end{proposition}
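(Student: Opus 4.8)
This statement is the backward-coupling counterpart of Proposition~\ref{prop:truncation_mono}, and the plan is to prove it by the same pathwise monotone coupling used throughout Section~\ref{sec:mono}. First I would realize all three processes on the common probability space: each of $\{x_{i;T}(\cdot)\}$, $\{y^{(n)}_{i;T}(\cdot)\}$ and $\{z^{(n)}_{i;T}(\cdot)\}$ is started from the empty configuration at time $-T$ and driven by the same families $(\mathcal{A}_i,\mathcal{D}_i)_{i\in\mathbb{Z}^d}$ restricted to $[-T,0]$.

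For the inequality $x_{i;T}(0)\ge z^{(n)}_{i;T}(0)$, I would observe that the truncated (non-torus) dynamics is precisely the original infinite dynamics in which the arrival processes $\mathcal{A}_j$ for $j\notin B_n$ have been erased on $[-T,0]$: queues outside $B_n$ then remain empty forever, so the departure rate of any $i\in B_n$ equals $x_i/\sum_{j}a_{i-j}x_j$ with only the $j\in B_n$ contributing. Hence Lemma~\ref{lem:mono2}, applied on $[-T,0]$ (after the time shift $\theta_T$) with $X=\mathbb{Z}^d\setminus B_n$, gives the inequality at every coordinate in $B_n$, and it is trivial outside $B_n$ since $z^{(n)}_{i;T}(0)=0$ there.

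For $y^{(n)}_{i;T}(0)\ge z^{(n)}_{i;T}(0)$, both processes live on the finite set $B_n$, receive arrivals only in $B_n$, and start empty, so there are a.s.\ finitely many events on $[-T,0]$; I would order them and argue by induction exactly as in the proof of Lemma~\ref{lem:mono1}, driving both processes by the common arrivals and potential-departure clocks. Arrivals preserve the coordinatewise ordering trivially. At a potential departure at queue $i$ at time $t$: if $y^{(n)}_{i;T}(t^-)\ge z^{(n)}_{i;T}(t^-)+1$ the ordering survives because at most one customer leaves each queue; if $y^{(n)}_{i;T}(t^-)=z^{(n)}_{i;T}(t^-)$, I would compare the acceptance probabilities $\frac{y_i}{\sum_{j\in B_n}a_{d_n(i,j)}y_j}$ for the torus and $\frac{z_i}{\sum_{j\in\mathbb{Z}^d}a_{i-j}z_j}$ for the truncated process, showing the former is at most the latter; using the same uniform mark then forces a departure in the torus process to be accompanied by one in the truncated process, which preserves the ordering.

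The one step that needs care — and the place where the hypothesis $n>L$ is used — is the interference domination $\sum_{j\in B_n}a_{d_n(i,j)}y_j\ge\sum_{j\in\mathbb{Z}^d}a_{i-j}z_j$ under $y_i=z_i$ and $y_j\ge z_j\ge 0$. Since $z_j=0$ off $B_n$ and $a\ge 0$ one has $\sum_{j\in\mathbb{Z}^d}a_{i-j}z_j\le\sum_{j\in B_n}a_{i-j}y_j$, so it suffices that $a_{d_n(i,j)}\ge a_{i-j}$ for all $i,j\in B_n$: when $\|i-j\|_\infty\le L$ no wrap-around occurs so $d_n(i,j)=i-j$ and the two weights coincide, while when $\|i-j\|_\infty>L$ the right side is $0$; thus folding $B_n$ into a torus can only add nonnegative interference. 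Everything else is a verbatim repetition of the monotone coupling arguments of Section~\ref{sec:mono}, so I would present the proof briefly, pointing to Lemmas~\ref{lem:mono1} and~\ref{lem:mono2} and to Proposition~\ref{prop:truncation_mono}.
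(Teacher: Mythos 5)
Your proof is correct and follows the same monotone-coupling route that the paper invokes (the paper simply states that this proposition, like Proposition~\ref{prop:truncation_mono}, follows immediately from the monotonicity Lemmas~\ref{lem:mono1} and~\ref{lem:mono2} and omits details). You have also correctly isolated the only nontrivial step and the only place where $n>L$ is used, namely the pointwise bound $a_{d_n(i,j)}\ge a_{i-j}$ for $i,j\in B_n$, which says that identifying $B_n$ into a torus can only increase the interference seen at each queue and hence only decrease the torus departure rate when the queue lengths agree at $i$ and dominate elsewhere.
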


\begin{proposition}
	For all $n > L$, almost surely, the following limits exist:
	\begin{align*}
	x_{i;\infty}(0) &:= \lim_{T \rightarrow \infty} x_{i;T}(0),\\
	y^{(n)}_{i;\infty}(0) &:= \lim_{T \rightarrow \infty} y^{(n)}_{i;T}(0),\\
	z^{(n)}_{i;\infty}(0) &:= \lim_{T \rightarrow \infty} z^{(n)}_{i;T}(0).
	\end{align*}
	\label{prop:limits_as}
\end{proposition}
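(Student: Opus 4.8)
\textbf{Proof proposal for Proposition \ref{prop:limits_as}.}
The plan is to observe that each of the three quantities $x_{i;T}(0)$, $y^{(n)}_{i;T}(0)$ and $z^{(n)}_{i;T}(0)$ is, for fixed $i$ (and fixed $n$), a non-decreasing function of $T$ along $\mathbb{P}$-almost every sample path, and is $\mathbb{N}$-valued; hence the limit as $T\to\infty$ exists in $\mathbb{N}\cup\{\infty\}$ almost surely. The only work is to justify monotonicity in $T$, and this is exactly the content of the backward-coupling remark already made after Lemma \ref{lem:mono1}, which we reuse here, now also for the two space-truncated dynamics.

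First I would record the flow (semigroup) property of the construction: for any initial time $-T$, any intermediate time $-T'$ with $-T\le -T'\le 0$, and any of the three dynamics, the state at time $0$ is a deterministic measurable function of the state at time $-T'$ together with the restriction of the driving data $(\mathcal{A}_i,\mathcal{D}_i)_{i}$ to $[-T',0]$. For the finite systems $\{y^{(n)}\}$ and $\{z^{(n)}\}$ this is immediate since they are piecewise-deterministic jump processes driven by finitely many Poisson processes; for the infinite dynamics it follows from the construction in Appendix \ref{appendix_construction}. Now fix $0<T'<T$ and run the dynamics started empty at time $-T$. At time $-T'$ its queue lengths are non-negative integers, hence coordinate-wise $\ge 0$, i.e. they dominate the all-zero state from which the $-T'$-system is started. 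Applying the coupling in Lemma \ref{lem:mono1} (for the infinite dynamics) on the time interval $[-T',0]$ — and the verbatim analogue for the torus and truncated dynamics, which holds because in all three cases, if $x'\ge x$ coordinatewise and $x'_i=x_i$ then the departure rate at queue $i$ is larger for $x$ than for $x'$ (equal numerator, larger denominator) — we obtain, on the common probability space, $x_{i;T}(0)\ge x_{i;T'}(0)$, and likewise for $y^{(n)}$ and $z^{(n)}$, for every $i$, almost surely.

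Thus $T\mapsto x_{i;T}(0)$, $T\mapsto y^{(n)}_{i;T}(0)$ and $T\mapsto z^{(n)}_{i;T}(0)$ are non-decreasing (over all real $T>0$, by the argument above applied to arbitrary $T'<T$). A non-decreasing $\mathbb{N}$-valued sequence converges to a limit in $\mathbb{N}\cup\{\infty\}$, so the almost sure limits $x_{i;\infty}(0)$, $y^{(n)}_{i;\infty}(0)$, $z^{(n)}_{i;\infty}(0)$ exist, proving the proposition. (Taking $T\to\infty$ in the inequalities of Proposition \ref{prop:inequalities} additionally preserves $x_{i;\infty}(0)\ge z^{(n)}_{i;\infty}(0)$ and $y^{(n)}_{i;\infty}(0)\ge z^{(n)}_{i;\infty}(0)$, which will be used later.)

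The only genuinely delicate point is the first step — the well-posedness and the flow property of the infinite dynamics, i.e. that restarting the empty-at-$-T$ system from its (random, a.s.\ finite in each coordinate) state at $-T'$ and evolving with the same driving data reproduces the same trajectory on $[-T',0]$, and that the monotone coupling of Lemma \ref{lem:mono1} may be applied with this random initial condition. For the finite systems this is routine; for the infinite system it is precisely what the construction in Appendix \ref{appendix_construction} provides, so I would simply cite it rather than re-prove it here.
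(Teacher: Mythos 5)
Your proposal is correct and takes essentially the same route as the paper, which states that Proposition \ref{prop:limits_as} (like Proposition \ref{prop:inequalities}) "follows immediately from monotonicity" and, in Section \ref{sec:stable}, records precisely the observation you spell out — that Lemma \ref{lem:mono1} makes $T\mapsto x_{i;T}(0)$ non-decreasing, whence the a.s.\ limit exists in $\mathbb{N}\cup\{\infty\}$. You supply the details the paper elides (the flow property, the nonnegativity of the $-T'$ state, and the remark that the monotone coupling applies verbatim to the torus and truncated dynamics), all of which are accurate.
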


Note that the distribution of the random variable $y^{(n)}_{0;\infty}$ is 
the marginal on coordinate $0$ of the probability measure $\boldsymbol{\pi}^{(n)}$,
whose existence was proved in Theorem \ref{thm:finite_PR}
We also established in Corollary \ref{cor:tightness} that the sequence of probability
measures $\{\pi^{(n)}\}_{n \in \mathbb{N}}$ is tight. Moreover, in view of Lemma \ref{lem:joint_01}, it suffices to show that  queue $0$ is stable to conclude that the entire network is stable. Hence for notational brevity, we will omit the queue and time index
by adopting the following simplified notation for the rest of this section:
$x_{T} := x_{0;T}(0)$,
$y^{(n)}_{T} := y^{(n)}_{i;T}(0)$,
$z^{(n)}_{T} := z^{(n)}_{i;T}(0)$,
where $T \in [0,\infty]$.
% The main argument for establishing stability is contained in the following propositions. 

\begin{proposition}
	Almost surely, for every  $T \geq 0$, we have $\lim_{n \rightarrow \infty} z^{(n)}_{T} = x_T$.
	\label{prop:elem_conv}
\end{proposition}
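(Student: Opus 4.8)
$\lim_{n\to\infty} z^{(n)}_T = x_T$ a.s. for every fixed $T\ge 0$.

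\textbf{Plan.} First I would fix $T \ge 0$. By Proposition \ref{prop:inequalities} we already have $z^{(n)}_T \le x_T$ for every $n > L$, and by Lemma \ref{lem:mono2} the map $n \mapsto z^{(n)}_T$ is non-decreasing (enlarging $B_n$ only restores arrivals that had been suppressed in the complement), so $\lim_{n\to\infty} z^{(n)}_T$ exists and is bounded above by $x_T$. It therefore remains to prove the reverse inequality, and for this it suffices to exhibit, for $\mathbb{P}$-almost every $\omega$, a finite $N(\omega)$ such that $z^{(n)}_T = x_T$ for all $n \ge N(\omega)$.

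Here I would invoke the construction of Appendix \ref{appendix_construction}: started from the empty state at time $-T$, the value $x_{0;T}(0)$ is a measurable deterministic function of the driving data $(\mathcal{A}_i,\mathcal{D}_i)_{i \in X_{0,T}}$ on $[-T,0]$, where $X_{0,T}\subset\mathbb{Z}^d$ is an almost surely finite random set produced by a backward exploration from $0$ that only ever queries queues within $l_\infty$-distance $L$ of queues already in the partial cluster. The point is that this exploration, and the resulting evaluation map, are \emph{local}: they do not refer to the ambient index set. Hence running the identical procedure inside the truncated system on $B_n$ produces the same set and the same value, provided the exploration never has to query a queue outside $B_n$, which is guaranteed on the event $\{X_{0,T}\subseteq B_n\}$. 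Indeed, on that event every queue activated (in the empty-started infinite dynamics) that is relevant to queue $0$ lies in $B_n$; every other neighbor of such a queue has remained empty in \emph{both} systems; and queues frozen at $0$ outside $B_n$ contribute $0$ to each interference sum exactly as they do in the infinite dynamics. So on $\{X_{0,T}\subseteq B_n\}$ one gets $z^{(n)}_T = x_T$.

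Since $X_{0,T}$ is almost surely finite, for almost every $\omega$ there is $N(\omega)$ with $X_{0,T}(\omega)\subseteq B_n$ for all $n\ge N(\omega)$, which yields the required $N(\omega)$ and proves $\lim_{n\to\infty} z^{(n)}_T = x_T$ almost surely. To upgrade this to the statement that the convergence holds almost surely \emph{simultaneously for all} $T\ge 0$, I would use that the domination/percolation argument of Appendix \ref{appendix_construction} in fact furnishes, on a single almost sure event, the finiteness of $X_{0,T}$ for every $T$ (the dominating cluster being monotone in $T$); on that event $X_{0,T}\subseteq B_n$ eventually in $n$ for all $T$, which is all that is needed.

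I do not expect a genuine obstacle here beyond bookkeeping. The only content is the locality of the graphical construction, i.e.\ that the ancestor set $X_{0,T}$ and the evaluation map of Appendix \ref{appendix_construction} are insensitive to whether one works on $\mathbb{Z}^d$ or on the box $B_n$ once $B_n$ contains that (finite) set. The one step deserving a line of care is checking that queues frozen at $0$ in the truncated dynamics do not perturb the interference sums seen by the relevant queues — which holds precisely because those same queues are empty in the empty-started infinite dynamics as well.
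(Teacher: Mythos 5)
Your proof is correct and follows essentially the same route as the paper's: both rest on Corollary \ref{cor:construct_stable}, namely that $x_T$ is determined by the driving data on an almost surely finite random set $X_{0,T}$, so that $z^{(n)}_T = x_T$ as soon as $B_n \supseteq X_{0,T}$. You add some extra bookkeeping (the monotonicity inequalities and the simultaneity over all $T\ge 0$) that the paper leaves implicit, but the core locality argument is identical.
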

\begin{proof}
	From Corollary \ref{cor:construct_stable}, for every finite $T$, 
	there exists a random subset $X \subset \mathbb{Z}^d$ which is almost surely finite and such that
	the value of $x_T$ can be obtained by restricting the dynamics to the set $X$ in the time interval $[-T,0]$.
	Let $N$ be any integer such that $X$ is  contained in $B_n$. Then, for all $n \geq N$, $x_T = z^{(n)}_T$.
\end{proof}

\begin{lemma}
	The sequence $z^{(n)}_{\infty}$ is non-decreasing in $n$ and almost surely converges to
	a finite integer valued random variable denoted by $z^{(\infty)}_{\infty}$.
\end{lemma}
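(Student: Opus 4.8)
The plan is to establish the two claims separately: first monotonicity of $n \mapsto z^{(n)}_\infty$, then almost-sure convergence to a finite limit. For monotonicity, I would invoke Proposition~\ref{prop:inequalities} together with Lemma~\ref{lem:mono2}. Fix $n > L$. The dynamics producing $z^{(n+1)}_{i;T}(0)$ is the dynamics on $B_{n+1}$ (not a torus) with arrivals suppressed outside $B_{n+1}$, whereas the dynamics producing $z^{(n)}_{i;T}(0)$ is the same type of dynamics but with arrivals additionally suppressed on the annulus $B_{n+1} \setminus B_n$. Since the latter is obtained from the former by setting $\mathcal{A}_j([-T,0]) = 0$ for $j$ in that annulus (and keeping the queue lengths there at $0$), Lemma~\ref{lem:mono2} gives, under the common coupling driven by $(\mathcal{A}_i,\mathcal{D}_i)_{i \in \mathbb{Z}^d}$, that $z^{(n+1)}_{0;T}(0) \geq z^{(n)}_{0;T}(0)$ almost surely for every $T \geq 0$. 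Letting $T \to \infty$ and using Proposition~\ref{prop:limits_as} yields $z^{(n+1)}_\infty \geq z^{(n)}_\infty$ almost surely. Hence the sequence $z^{(n)}_\infty$ is non-decreasing in $n$, so the almost-sure limit $z^{(\infty)}_\infty := \lim_{n \to \infty} z^{(n)}_\infty$ exists in $\mathbb{N} \cup \{\infty\}$.

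It remains to show this limit is almost surely finite. The key point is that each $z^{(n)}$ dynamics is dominated by the torus dynamics $y^{(n)}$ (Proposition~\ref{prop:inequalities}: $y^{(n)}_{i;T}(0) \geq z^{(n)}_{i;T}(0)$), and the stationary marginal of the torus system is exactly $\pi^{(n)}$, for which Lemma~\ref{lem:rcl_formula_finite} gives the uniform bound $\mathbb{E}[y^{(n)}_{0;\infty}] = \mu^{(n)} = \frac{\lambda a_0}{1 - \lambda \sum_{j} a_j} =: \mu < \infty$, independent of $n$. Concretely: for fixed $n$, $z^{(n)}_T = z^{(n)}_{0;T}(0) \leq y^{(n)}_{0;T}(0)$ a.s. for all $T$; taking $T \to \infty$ gives $z^{(n)}_\infty \leq y^{(n)}_\infty$ a.s., where $y^{(n)}_\infty \sim \pi^{(n)}$ by the backward-coupling construction of the minimal stationary regime of the (positive recurrent, by Theorem~\ref{thm:finite_PR}) torus chain. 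Therefore $\mathbb{E}[z^{(n)}_\infty] \leq \mu$ for every $n$. By monotone convergence applied to the non-decreasing sequence $z^{(n)}_\infty \uparrow z^{(\infty)}_\infty$, we get $\mathbb{E}[z^{(\infty)}_\infty] = \lim_n \mathbb{E}[z^{(n)}_\infty] \leq \mu < \infty$, so $z^{(\infty)}_\infty < \infty$ almost surely, and being an increasing limit of $\mathbb{N}$-valued random variables it is itself $\mathbb{N}$-valued.

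I expect the only mild subtlety — and thus the "main obstacle" — to be the bookkeeping needed to identify $\lim_{T\to\infty} y^{(n)}_{0;T}(0)$ with a random variable having law $\pi^{(n)}$: this requires that the empty-start backward construction of the finite torus chain converges (in the a.s. monotone sense of Proposition~\ref{prop:limits_as}) to its unique stationary distribution, which holds because the chain is an irreducible positive recurrent continuous-time Markov chain on $\mathbb{N}^{|B_n|}$ (Theorem~\ref{thm:finite_PR}) and the construction is monotone in the starting time. Everything else is a direct application of the monotonicity lemmas, Proposition~\ref{prop:inequalities}, the uniform mean bound of Lemma~\ref{lem:rcl_formula_finite}, and monotone convergence; no new estimates are needed.
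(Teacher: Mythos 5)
Your proof is correct and follows essentially the same route as the paper: both establish monotonicity of $z^{(n)}_T$ in $n$ via the coupling/monotonicity lemmas and then pass to the $T\to\infty$ limit, and both deduce finiteness by dominating $z^{(n)}_\infty$ by the torus stationary marginal $y^{(n)}_\infty\sim\pi^{(n)}$ whose mean is uniformly bounded by Lemma~\ref{lem:rcl_formula_finite}. The only cosmetic difference is the final step: the paper repackages the uniform mean bound as tightness of $\{\pi^{(n)}\}$ via Corollary~\ref{cor:tightness} and combines tightness with the a.s.\ monotone convergence, whereas you apply the monotone convergence theorem directly to obtain $\mathbb{E}[z^{(\infty)}_\infty]\le\mu<\infty$ and hence a.s.\ finiteness, which is marginally more direct and even yields a moment bound for free, but rests on exactly the same rate-conservation input.
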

\begin{proof}
	Note that for all finite $T$, $z^{(n)}_T$ is non-decreasing in $n$. Thus for any $n > m$,
	we have $z^{(n)}_T \geq z^{(m)}_T$, for all $T$.
	Now, taking a limit in $T$ on both sides, which we know exist from Proposition \ref{prop:limits_as},
	we see that $z^{(n)}_{\infty} \geq z^{(m)}_{\infty}$.
	This establishes the fact that $z^{(n)}_{\infty}$ is an non-decreasing sequence
	and hence the almost sure limit $\lim_{n \rightarrow \infty} z^{(n)}_{\infty}:=z^{(\infty)}_{\infty}$ exists. We now show the finiteness of $z^{(\infty)}_{\infty}$. Note that for all $n$ and  $T$,
	$z^{(n)}_T \leq y^{(n)}_T$. Now, taking a limit in $T$, we see that $z^{(n)}_{\infty} \leq y^{(n)}_{\infty}$.
	The distribution of the random variable $y^{(n)}_{\infty}$ is the probability measure $\pi^{(n)}$ on $\mathbb{N}$.
	From Corollary \ref{cor:tightness}, the sequence $\{\pi_n\}$ is tight.
	Let $\tilde{\pi}^{(n)}$, $n \in \mathbb{N}$, denote the distribution of $z^{(n)}$.
	Thus the sequence $\{\tilde{\pi}^{(n)}\}_{n \in \mathbb{N}}$ is tight as well since $z^{(n)}_{\infty} \leq y^{(n)}_{\infty}$ almost surely.
	Moreover, due to monotonicity, $z^{(n)}_{\infty}$ converges almost surely to a random variable $z^{(\infty)}_{\infty}$. 
	But since the sequence $\{\tilde{\pi}^{(n)}\}_{n \in \mathbb{N}}$ is tight,
	the limiting random variable $z^{(\infty)}_{\infty}$ is almost surely finite.
\end{proof}

\begin{lemma}
	There exists a random $N \in \mathbb{N}$, such that for all $n \geq N$, there exists a random $T_n\in \mathbb{R}^+$,
	such that for all $t \geq T_n$, $z^{(\infty)}_{\infty} = z^{(n)}_{t}$.
	\label{lem:N_TN}
\end{lemma}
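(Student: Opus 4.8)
The plan is to chain together two elementary ``eventually constant'' observations for monotone integer-valued sequences, using only the monotonicity already in hand and the almost-sure convergences proved just above.

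\emph{Fixing the random index $N$.} The preceding lemma gives that $n \mapsto z^{(n)}_\infty$ is non-decreasing and converges almost surely to the finite integer-valued random variable $z^{(\infty)}_\infty$. Since a non-decreasing sequence of integers that converges to a finite limit must equal that limit from some point on, I would set $N := \inf\{\, n > L : z^{(n)}_\infty = z^{(\infty)}_\infty \,\}$. On the probability-one event on which $z^{(n)}_\infty \nearrow z^{(\infty)}_\infty < \infty$ this infimum is finite, so $N$ is an almost surely finite random variable; and because $z^{(n)}_\infty$ is non-decreasing in $n$ and bounded above by its limit $z^{(\infty)}_\infty$, we get $z^{(n)}_\infty = z^{(\infty)}_\infty$ for every $n \geq N$.

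\emph{Fixing $T_n$.} With $n \geq N$ fixed, I would run the same argument in the time variable. By the Loynes-type monotonicity of Lemma \ref{lem:mono1}, the map $T \mapsto z^{(n)}_T$ is non-decreasing: starting the $n$-truncated system empty at time $-T$ is dominated at time $0$ by starting it empty at an earlier time $-T'$ (with $T' \geq T$), since at time $-T$ the latter system already has non-negative queue lengths, and monotonicity then propagates this domination to time $0$. This map is integer-valued and, by Proposition \ref{prop:limits_as}, converges almost surely to $z^{(n)}_\infty$, which equals the finite quantity $z^{(\infty)}_\infty$ precisely because $n \geq N$. Applying once more the fact that a non-decreasing integer-valued trajectory with a finite limit is eventually constant, the random time $T_n := \inf\{\, t \geq 0 : z^{(n)}_t = z^{(n)}_\infty \,\}$ is almost surely finite, and for all $t \geq T_n$ one has $z^{(n)}_t = z^{(n)}_\infty = z^{(\infty)}_\infty$, which is exactly the assertion.

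\emph{Where the care is needed.} There is no genuine obstacle here; the content is simply that $\mathbb{N}$ has no accumulation point from below. The only points deserving a sentence each are: (i) the measurability and almost-sure finiteness of $N$ and $T_n$, both immediate since each is the infimum of a set defined by an equality that is attained almost surely; and (ii) recording explicitly that $T \mapsto z^{(n)}_T$ inherits monotonicity from Lemma \ref{lem:mono1}, which is the Loynes backward-coupling monotonicity already used to define the limits $z^{(n)}_\infty$.
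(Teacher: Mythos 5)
Your proposal is correct and follows essentially the same argument as the paper: identify $N$ from the eventual constancy of the monotone integer sequence $n \mapsto z^{(n)}_\infty$ converging to the finite limit $z^{(\infty)}_\infty$, then identify $T_n$ from the eventual constancy of the monotone integer trajectory $T \mapsto z^{(n)}_T$ converging to $z^{(n)}_\infty$. The only cosmetic difference is that you spell out $N$ and $T_n$ as explicit infima, while the paper asserts their existence directly.
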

\begin{proof}
	
	From the previous lemma, $z^{(n)}_{\infty}$ converges almost surely to a finite limit as $n \rightarrow \infty$.
	Since the random variables $\{z^{(n)}_{\infty}\}_{n \in \mathbb{N}}$ are integer valued, there exists a random $N$
	such that $z^{(\infty)}_{\infty} = z^{(n)}_{\infty}$, $\forall n \geq N$. 
	Now, since, for each $T$ and  $n$, $z^{(n)}_T$ is integer valued, the existence of an almost surely finite limit
	$\lim_{T \rightarrow \infty} z^{(n)}_T$ implies that there exists a $T_n$, almost surely finite and such that
	$z^{(n)}_t = z^{(n)}_{\infty}$ for all $t \geq T_n$. Now, combining the two, for every $n \geq N$, we can find a $T_n$ such that $z^{(n)}_t = z^{(n)}_{\infty}$
	for all $t \geq T_n$. Since $N$ is such that for all $n \geq N$, $z^{(n)}_{\infty}= z^{(\infty)}_{\infty}$,
	the lemma is proved. 
\end{proof}

\begin{lemma}
	Let $T_N$ be the random variable defined in Lemma \ref{lem:N_TN}.
	For all $t \geq T_N$, we have $x_t = z^{(\infty)}_{\infty}$.
	\label{lem:final_stable}
\end{lemma}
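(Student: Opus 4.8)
The plan is to work on the almost-sure event on which all the previously established facts hold simultaneously (the intersection of the full-measure events from Propositions \ref{prop:inequalities}, \ref{prop:limits_as}, \ref{prop:elem_conv} and Lemma \ref{lem:N_TN}, which is again almost sure), to show that for every fixed $t \geq T_N$ the sequence $(z^{(n)}_{t})_{n \geq N}$ is in fact constant equal to $z^{(\infty)}_{\infty}$, and then to pass to the limit in $n$ using Proposition \ref{prop:elem_conv}. Recall that all the quantities involved are nonnegative integers, so "convergence" of a monotone sequence means it is eventually constant; this is what makes the integer-valued limits above actually attained.

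First I would fix $t \geq T_N$ and an arbitrary $n \geq N$. Applying Lemma \ref{lem:N_TN} with index $N$ gives $z^{(N)}_{t} = z^{(\infty)}_{\infty}$. Now I would sandwich $z^{(n)}_{t}$: on one hand, $z^{(n)}_{t}$ is non-decreasing in $n$ for fixed $t$ (this is the same monotonicity in the truncation size that was used to show $z^{(n)}_{\infty}$ is non-decreasing, ultimately Lemma \ref{lem:mono2}), so $z^{(n)}_{t} \geq z^{(N)}_{t}$; on the other hand, $T \mapsto z^{(n)}_{T}$ is non-decreasing (Lemma \ref{lem:mono1}), hence $z^{(n)}_{t} \leq z^{(n)}_{\infty}$, and since $z^{(n)}_{\infty} \nearrow z^{(\infty)}_{\infty}$ we have $z^{(n)}_{\infty} \leq z^{(\infty)}_{\infty}$. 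Chaining these, $z^{(\infty)}_{\infty} = z^{(N)}_{t} \leq z^{(n)}_{t} \leq z^{(n)}_{\infty} \leq z^{(\infty)}_{\infty}$, so all inequalities are equalities and $z^{(n)}_{t} = z^{(\infty)}_{\infty}$ for every $n \geq N$.

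Finally I would invoke Proposition \ref{prop:elem_conv}, which for this fixed finite $t$ gives $x_t = \lim_{n \rightarrow \infty} z^{(n)}_{t}$ almost surely; since the sequence $(z^{(n)}_{t})_{n \geq N}$ is constant equal to $z^{(\infty)}_{\infty}$, the limit equals $z^{(\infty)}_{\infty}$, and therefore $x_t = z^{(\infty)}_{\infty}$. As $t \geq T_N$ was arbitrary, this proves the lemma. I do not expect a real obstacle here: the argument is just a chaining of the two monotonicity relations (in $n$ and in $T$) with the two convergence statements. The only point needing a little care is the bookkeeping of null sets and the remark that monotone integer-valued sequences stabilize, which is exactly why the "$\infty$" limits can be read off at a finite index $N$ and a finite time $T_N$.
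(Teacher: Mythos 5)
Your proof is correct and takes essentially the same approach as the paper: sandwiching $z^{(n)}_t$ between $z^{(\infty)}_\infty$ from below (monotonicity in the truncation size plus Lemma \ref{lem:N_TN} applied at index $N$) and $z^{(\infty)}_\infty$ from above, then letting $n\to\infty$ via Proposition \ref{prop:elem_conv}. The only cosmetic difference is in the upper bound: you go directly through $z^{(n)}_t \leq z^{(n)}_\infty \leq z^{(\infty)}_\infty$, whereas the paper first picks $t' \geq \max(t,T_n)$ and uses $z^{(n)}_t \leq z^{(n)}_{t'} = z^{(\infty)}_\infty$ — your version is a slight streamlining of the same idea.
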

\begin{proof}
	Let $m \geq N$ and $t \geq T_N$ be arbitrary. Observe that 
	$\lim_{T \rightarrow \infty}z^{(m)}_T = z^{(m)}_{\infty} =  z^{(\infty)}_{\infty}$,
	where the second equality follows from the fact that $m \geq N$.
	From Lemma \ref{lem:N_TN}, there exists an almost surely finite $T_m$ such that for all
	$t \geq T_m$, we have $z^{(m)}_t = z^{(m)}_{\infty} = z^{(\infty)}_{\infty}$.
	Let $t^{'} \geq \max(t,T_m)$. Since $t^{'} \geq T_m$, we have $z^{(m)}_{t^{'}} = z^{(\infty)}_{\infty}$.
	Basic monotonicity gives us the following two inequalities:
	\begin{align*}
	z^{(m)}_t \geq z^{(n)}_t = z^{(\infty)}_{\infty}, \\
	z^{(m)}_t \leq z^{(m)}_{t^{'}} = z^{(\infty)}_{\infty}.
	\end{align*}
	The first inequality follows from monotonicity in space and the second from monotonicity in time.
	Thus, $z_{t}^{(m)} = z^{(\infty)}_{\infty}$. But since $m \geq N$ was arbitrary, it must be the case 
	that $x_t = \lim_{m \rightarrow \infty}z^{(m)}_t = z^{(\infty)}_{\infty}$, where the first equality
	follows from Proposition \ref{prop:elem_conv}. Thus we have established that, for all $t \geq T_N$,
	we have $x_t = z^{(\infty)}_{\infty}$ and, in particular, 
	$x_{\infty} = \lim_{t \rightarrow \infty}x_t = z^{(\infty)}_{\infty}$ is an almost surely finite random variable.
	
\end{proof}

\begin{corollary}
	If $\lambda < \frac{1}{\sum_{i \in \mathbb{Z}^d }a_i}$, then the following interchange of limits holds true:
	\begin{align*}
	\lim_{t \rightarrow \infty} \lim_{n \rightarrow \infty} z^{(n)}_{t} = \lim_{n \rightarrow \infty} \lim_{t \rightarrow \infty} z^{(n)}_{t} = x_{\infty} = z^{(\infty)}_{\infty} < \infty \text{ a.s.}
	\end{align*}
	\label{cor:lim_switch}
\end{corollary}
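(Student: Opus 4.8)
The plan is to read off each of the two iterated limits directly from the lemmas already established in this section and then chain the resulting equalities. First I would handle the order $\lim_{t\to\infty}\lim_{n\to\infty}$. Proposition \ref{prop:elem_conv} gives, for every fixed finite $t\geq 0$, that $\lim_{n\to\infty}z^{(n)}_t = x_t$ almost surely. Since by Lemma \ref{lem:mono1} the map $t\mapsto x_t = x_{0;t}(0)$ is almost surely non-decreasing, the limit $\lim_{t\to\infty}x_t$ exists almost surely and equals $x_\infty = x_{0;\infty}(0)$, by the definition recalled in Proposition \ref{prop:limits_as}. Hence $\lim_{t\to\infty}\lim_{n\to\infty}z^{(n)}_t = x_\infty$ almost surely.

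Next I would handle the order $\lim_{n\to\infty}\lim_{t\to\infty}$. For each fixed $n>L$ the map $t\mapsto z^{(n)}_t$ is non-decreasing by monotonicity, so $\lim_{t\to\infty}z^{(n)}_t = z^{(n)}_\infty$ exists almost surely (Proposition \ref{prop:limits_as}); and the lemma preceding Lemma \ref{lem:N_TN} shows that $n\mapsto z^{(n)}_\infty$ is non-decreasing and converges almost surely to the finite integer-valued random variable $z^{(\infty)}_\infty$. Therefore $\lim_{n\to\infty}\lim_{t\to\infty}z^{(n)}_t = z^{(\infty)}_\infty$ almost surely. It then only remains to identify $x_\infty$ with $z^{(\infty)}_\infty$, which is precisely the content of Lemma \ref{lem:final_stable}: there $x_t = z^{(\infty)}_\infty$ for all $t\geq T_N$, so $x_\infty = \lim_{t\to\infty}x_t = z^{(\infty)}_\infty$, and the same lemma records that this common value is almost surely finite. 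Combining the three statements gives the claimed chain of equalities.

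I do not expect any real obstacle here: the corollary is essentially a bookkeeping repackaging of Lemma \ref{lem:final_stable}. The one genuinely substantive ingredient has already been carried out, namely the sandwich $z^{(n)}_t \leq z^{(m)}_t \leq z^{(m)}_{t'}$ which combines monotonicity in space (Lemma \ref{lem:mono1}, Proposition \ref{prop:inequalities}) with monotonicity in time to force the true-dynamics value $x_t$ to coincide with $z^{(\infty)}_\infty$ once $n,m\geq N$ and $t\geq T_N$. All that is new in the corollary is the observation that the two resulting iterated limits exist (by the various monotonicity statements that make every inner limit a limit of a monotone sequence) and that both equal $x_\infty = z^{(\infty)}_\infty$, which is finite under $\lambda\sum_{i\in\mathbb{Z}^d}a_i<1$ by Lemma \ref{lem:final_stable}.
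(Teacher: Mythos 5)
Your proposal is correct and takes essentially the same route the paper intends: the corollary is meant as a direct assembly of Proposition \ref{prop:elem_conv} (for the inner $n$-limit with $t$ fixed), the unnamed lemma preceding Lemma \ref{lem:N_TN} (monotonicity in $n$ of $z^{(n)}_\infty$ and finiteness of $z^{(\infty)}_\infty$), and Lemma \ref{lem:final_stable} (identifying $x_\infty$ with $z^{(\infty)}_\infty$), which is exactly the chain you give. The only minor quibble is attribution of the finiteness claim — it is first established in the unnamed lemma, though Lemma \ref{lem:final_stable} does restate it — but this does not affect correctness.
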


\begin{corollary}
	If $\lambda < \frac{1}{\sum_{j \in \mathbb{Z}^d}a_j}$, then  $\mathbb{E}[x_{\infty}] \leq \frac{\lambda a_0}{1 - \lambda (\sum_{j \in \mathbb{Z}^d}a_j)} < \infty$.
	\label{cor:mean_queue_len_ub}
\end{corollary}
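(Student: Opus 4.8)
The plan is to combine the monotone convergence theorem with the exact formula for $\mu^{(n)}$ established in Lemma \ref{lem:rcl_formula_finite}. Recall from Lemma \ref{lem:final_stable} and Corollary \ref{cor:lim_switch} that $x_\infty = z^{(\infty)}_\infty = \lim_{n \to \infty} z^{(n)}_\infty$ almost surely, and that the sequence $(z^{(n)}_\infty)_{n > L}$ is non-decreasing in $n$ (this monotonicity in $n$ was already used in the lemma preceding Lemma \ref{lem:N_TN}). Since each $z^{(n)}_\infty$ is a non-negative integer-valued random variable, the monotone convergence theorem yields $\mathbb{E}[x_\infty] = \lim_{n \to \infty} \mathbb{E}[z^{(n)}_\infty]$, where a priori this common value may be $+\infty$; it therefore suffices to bound $\mathbb{E}[z^{(n)}_\infty]$ uniformly in $n$.

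For the uniform bound, I would first take $T \to \infty$ in Proposition \ref{prop:inequalities} to get $z^{(n)}_\infty \le y^{(n)}_\infty$ almost surely for every $n > L$. Next, by Theorem \ref{thm:finite_PR} the torus dynamics $\{y^{(n)}_i(\cdot)\}_{i \in B_n(0)}$ is positive recurrent, with stationary marginal $\pi^{(n)}$ possessing an exponential (hence finite first) moment; the backward-coupling limit $y^{(n)}_\infty$ obtained by starting empty at time $-T$ is distributed according to $\pi^{(n)}$, so $\mathbb{E}[y^{(n)}_\infty] = \mu^{(n)}$. Finally, Lemma \ref{lem:rcl_formula_finite} identifies $\mu^{(n)} = \frac{\lambda a_0}{1 - \lambda \sum_{j \in \mathbb{Z}^d} a_j}$, which is finite precisely under the hypothesis $\lambda < \frac{1}{\sum_{j \in \mathbb{Z}^d} a_j}$ and, crucially, is independent of $n$. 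Chaining these, $\mathbb{E}[z^{(n)}_\infty] \le \mathbb{E}[y^{(n)}_\infty] = \frac{\lambda a_0}{1 - \lambda \sum_{j \in \mathbb{Z}^d} a_j}$ for all $n > L$, and letting $n \to \infty$ gives $\mathbb{E}[x_\infty] \le \frac{\lambda a_0}{1 - \lambda \sum_{j \in \mathbb{Z}^d} a_j} < \infty$.

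The only step requiring a little care is the identification $\mathbb{E}[y^{(n)}_\infty] = \mu^{(n)}$, i.e.\ the claim that the almost-sure monotone limit of the torus system started empty at time $-T$ is distributed as the stationary law $\pi^{(n)}$. This is standard for a positive recurrent Markov chain on a countable state space (the chain started from a fixed state converges in total variation to its unique stationary law, and the monotone backward-coupling limit, which exists by Lemma \ref{lem:mono1}, must then have that law), so I do not expect a genuine obstacle here; the real content of the corollary already resides in Lemma \ref{lem:rcl_formula_finite} together with the monotonicity and tightness arguments developed earlier in the section, and the argument above is essentially just the interchange of $\mathbb{E}$ with a monotone limit.
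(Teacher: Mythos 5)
Your proof is correct and follows essentially the same route as the paper: apply the monotone convergence theorem to $\mathbb{E}[x_\infty] = \lim_n \mathbb{E}[z^{(n)}_\infty]$, dominate $z^{(n)}_\infty$ by the torus variable $y^{(n)}_\infty$, and invoke the $n$-independent exact formula $\mu^{(n)} = \frac{\lambda a_0}{1 - \lambda\sum_j a_j}$ from Lemma~\ref{lem:rcl_formula_finite}. The only difference is that you spell out the identification of the law of the backward-coupling limit $y^{(n)}_\infty$ with the stationary law $\pi^{(n)}$ (via positive recurrence and the monotone coupling), which the paper leaves implicit but is indeed standard and correctly justified by your remark.
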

\begin{proof}
	From Corollary \ref{cor:lim_switch}, $x_{\infty} = \lim_{n \rightarrow \infty} z_{\infty}^{(n)}$.
	Moreover since $z_{\infty}^{(n)}$ is non-decreasing in $n$, it follows from the monotone convergence theorem
	that $\mathbb{E}[x_{\infty}] = \lim_{n \rightarrow \infty} \mathbb{E}[z_{\infty}^{(n)}]$. As
	$z_{\infty}^{(n)} \leq y_{\infty}^{(n)}$ and   
	$\sup_{n \geq L} \mathbb{E}[y_{\infty}^{(n)}] = \frac{\lambda a_0}{1 - \lambda (\sum_{j \in \mathbb{Z}^d}a_j)}$ from Lemma \ref{lem:rcl_formula_finite},
	we get  $\mathbb{E}[x_{\infty}] \leq \frac{\lambda a_0}{1 - \lambda (\sum_{j \in \mathbb{Z}^d}a_j)} < \infty$.
\end{proof}

Now to finish the proof of Theorem \ref{thm:main_stability},  we need to conclude about the mean queue length value, which we do in the following Lemma.

\begin{lemma}
	If $\lambda < \frac{1}{\sum_{j \in \mathbb{Z}^d}a_j}$, then $	\mathbb{E}[x_{0;\infty}(0)] \geq \frac{\lambda a_0}{1 - \lambda \sum_{j \in \mathbb{Z}^d}a_j}$.
	\label{lem:lb_mean_queue_length}
\end{lemma}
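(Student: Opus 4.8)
The plan is to establish the matching lower bound to Corollary~\ref{cor:mean_queue_len_ub} by passing to the non-torus truncated systems $\{z_i^{(n)}(\cdot)\}_{i \in B_n}$ and using the edge-aware rate conservation inequality of Proposition~\ref{prop:diff_eqn_edge_effects}. First I would record that, for each fixed $n > L$ and each $i \in B_n$, the monotonicity chain $x_{i;\infty}(0) \geq z^{(n)}_{i;\infty}(0)$ holds almost surely (Propositions~\ref{prop:truncation_mono}, \ref{prop:inequalities}, \ref{prop:limits_as}), and that since $\{z_i^{(n)}(\cdot)\}_{i\in B_n}$ is a positive recurrent finite Markov chain (Remark following Theorem~\ref{thm:finite_PR}) with exponential moments, its Loynes backward limit $z^{(n)}_{i;\infty}(0)$ is distributed according to the unique stationary law $\tilde\pi^{(n)}_i$; hence, by monotone convergence $z^{(n)}_{i;T}(0)\nearrow z^{(n)}_{i;\infty}(0)$ together with convergence to stationarity and uniform integrability, $\mathbb{E}[z^{(n)}_{i;\infty}(0)] = \nu^{(n)}_i$. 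Combining this with the translation invariance of the infinite dynamics (so that $\mathbb{E}[x_{i;\infty}(0)]=\mathbb{E}[x_{0;\infty}(0)]$ for all $i$), we obtain for every $n > L$
\begin{align*}
\mathbb{E}[x_{0;\infty}(0)] \;=\; \frac{1}{|B_n^{(I)}|}\sum_{i \in B_n^{(I)}} \mathbb{E}[x_{i;\infty}(0)] \;\geq\; \frac{1}{|B_n^{(I)}|}\sum_{i \in B_n^{(I)}} \nu^{(n)}_i,
\end{align*}
where $B_n^{(I)}$ is the interior set from Proposition~\ref{prop:diff_eqn_edge_effects}.

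Next I would feed in the rate conservation inequality. Setting the stationary derivative in Proposition~\ref{prop:diff_eqn_edge_effects} to zero and rearranging (using $1-\lambda\sum_j a_j>0$) gives $\sum_{i \in B_n^{(I)}}\nu^{(n)}_i \geq \bigl(\lambda a_0 |B_n| - \sum_{i \in B_n\setminus B_n^{(I)}}\nu^{(n)}_i\bigr)/(1-\lambda\sum_j a_j)$. Since $z_i^{(n)}\le y_i^{(n)}$ pointwise (Proposition~\ref{prop:truncation_mono}), we have $\nu^{(n)}_i \le \mu^{(n)} = \lambda a_0/(1-\lambda\sum_j a_j)$ by Lemma~\ref{lem:rcl_formula_finite}, so the boundary sum is at most $|B_n\setminus B_n^{(I)}|\,\mu^{(n)}$. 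Dividing through by $|B_n^{(I)}|$ and using that $B_n$ and $B_n^{(I)}$ are $l_\infty$-balls of radii $n$ and $n-L$, so $|B_n|/|B_n^{(I)}|\to 1$ and $|B_n\setminus B_n^{(I)}|/|B_n^{(I)}|\to 0$ as $n\to\infty$, the right-hand side converges to $\lambda a_0/(1-\lambda\sum_j a_j)$. Taking $\liminf_{n\to\infty}$ in the displayed inequality then yields $\mathbb{E}[x_{0;\infty}(0)] \geq \lambda a_0/(1-\lambda\sum_j a_j)$, which with Corollary~\ref{cor:mean_queue_len_ub} closes the proof of Theorem~\ref{thm:main_stability}.

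The only delicate points are bookkeeping rather than conceptual. The first is the identity $\mathbb{E}[z^{(n)}_{i;\infty}(0)] = \nu^{(n)}_i$: one must argue that the backward-coupling limit of the finite $z$-system genuinely realizes the stationary mean, and this is exactly where the exponential moment (and hence uniform integrability) from Theorem~\ref{thm:finite_PR} is used. The second is controlling the $O(|B_n\setminus B_n^{(I)}|)$ edge contributions, which is immediate once we know $\nu^{(n)}_i \le \mu^{(n)}$ with $\mu^{(n)}$ a constant independent of $n$. I expect the main (mild) obstacle is simply presenting the averaging-over-the-interior argument cleanly, since the $z$-system marginals really do differ across sites and the conclusion only survives because the boundary layer is asymptotically negligible compared to the interior.
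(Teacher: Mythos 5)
Your proposal is correct and follows essentially the same route as the paper: both apply the edge-aware rate conservation inequality from Proposition~\ref{prop:diff_eqn_edge_effects}, average $\nu_i^{(n)}$ over $B_n^{(I)}$, bound the boundary sum by a constant times $|B_n\setminus B_n^{(I)}|$ (the paper bounds $\nu_i^{(n)}\leq\mathbb{E}[x_{0;\infty}(0)]\leq\mu^{(n)}$ while you go directly to $\nu_i^{(n)}\leq\mu^{(n)}$, which is the same thing), and let $n\to\infty$ using $|B_n\setminus B_n^{(I)}|/|B_n^{(I)}|\to 0$.
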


\begin{proof}
	We shall choose $n > L$ arbitrary and consider the stochastic process $\tilde{\mathbb{I}}^{(n)}(t)$ defined in Proposition \ref{prop:diff_eqn_edge_effects}. We shall let $\tilde{\mathbb{I}}^{(n)}(t)$ be stationary as $\lambda < \frac{1}{\sum_{j \in \mathbb{Z}^d}a_j}$. Furthermore, notice from Theorem \ref{thm:finite_PR} that the truncated process $\{z_i^{(n)}(t)\}_{i \in B_n}$ has exponential moments. Thus, we have for all $n \in \mathbb{N}$ and all $t \in \mathbb{R}$, $\mathbb{E}[\tilde{\mathbb{I}}^{(n)}(t)] < \infty$. Thus, we can equate $\frac{d}{dt}\mathbb{E}[\tilde{\mathbb{I}}^{(n)}(t)] $ to $0$ in Proposition \ref{prop:diff_eqn_edge_effects} along with the fact $|B_n| \geq |B_n^{(I)}|$, to obtain

	\begin{align}
	0 \geq -2(1 - \lambda \sum_{j \in \mathbb{Z}^d}a_j) \sum_{i \in B_n^{(I)}} \nu_i^{(n)} + 2 \lambda |B_n^{(I)}| - 2\sum_{i \in B_n \setminus B_n^{(I)}} \nu_i^{(n)}.
	\end{align}
	Re-arranging the inequality, we see that
	\begin{align}
	\frac{1}{|B_n^{(I)}|}\sum_{i \in B_n^{(I)}}\nu_i^{(n)} \geq \frac{\lambda a_0}{1 - \lambda \sum_{j \in \mathbb{Z}^d}a_j}  - \frac{\sum_{i \in B_n \setminus B_n^{(I)}} \nu_i^{(n)}}{|B_n^{(I)}|}.
		\end{align}
		Notice that $\nu_i^{(n)} \leq \mathbb{E}[x_{0;\infty}(0)]$ which in turn thanks to Corollary \ref{cor:lim_switch} is upper bounded by $\frac{\lambda a_0}{1 - \lambda \sum_{j \in \mathbb{Z}^d}a_j}$. Further-more, from elementary counting arguments we have $\lim_{n \rightarrow \infty}{|B_n \setminus B_n^{(I)}|}{|B_n^{(I)}|} = 0$. Thus, we obtain for all $n > L$, 
		\begin{align}
		\mathbb{E}[x_{0;\infty}(0)] \geq 	\frac{1}{|B_n^{(I)}|}\sum_{i \in B_n^{(I)}}\nu_i^{(n)} \geq \frac{\lambda a_0}{1 - \lambda \sum_{j \in \mathbb{Z}^d}a_j} \left(1  - \frac{|B_n \setminus B_n^{(I)}|}{|B_n^{(I)}|}\right).
		\end{align}
		Taking a limit as $n \rightarrow \infty$ concludes the proof.
		
	\end{proof}

\subsection{Proof of Proposition \ref{prop:finite_second_moment}}

From Corollaries \ref{cor:2nd_mom_finite}, and \ref{cor:lim_switch}, the conclusion of Proposition \ref{prop:finite_second_moment} follows.

\section{Proof of Proposition \ref{prop:uniq_bdd_convergence} - Uniqueness of Stationary Solution}

\label{sec:proof_bounded_converge}

To carry out the proof, we shall employ the following rate conservation principle.

\begin{lemma}
	If $\{q_i\}_{i \in \mathbb{Z}^d}$ is a stationary solution to the dynamics satisfying $\mathbb{E}[q_0^2] < \infty$, then $\mathbb{E}[q_0] = \frac{\lambda a_0}{1 - \lambda \sum_{j \in \mathbb{Z}^d}a_j}$.
	\label{lem:infinite_system_mean_form}
\end{lemma}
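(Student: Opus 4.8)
The plan is to repeat, for the infinite translation invariant stationary regime $\{q_i\}_{i\in\mathbb{Z}^d}$, exactly the rate conservation computation of Section~\ref{sec:rcl} that underlies Proposition~\ref{prop:rcl_diff_eqn} and Lemma~\ref{lem:rcl_formula_finite}, but now with no edge effects at all, since the whole grid is translation invariant. First I would introduce the stationary process $\mathbb{I}_t := q_0(t)\sum_{j\in\mathbb{Z}^d} a_j q_j(t)$, the interference at a typical queue, and observe that the hypothesis $\mathbb{E}[q_0^2]<\infty$ is exactly what is needed to make this functional integrable: by Cauchy--Schwarz and translation invariance, $\mathbb{E}[\mathbb{I}_t]=\sum_j a_j\,\mathbb{E}[q_0q_j]\le\big(\sum_j a_j\big)\mathbb{E}[q_0^2]<\infty$, the sum being finite because $\{a_j\}$ is finitely supported. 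The same bound controls all the other terms that enter the rate conservation formula, and it is here that the second moment assumption is used.

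Next I would write the balance $\frac{d}{dt}\mathbb{E}[\mathbb{I}_t]=0$. On the arrival side, an arrival at queue $0$ (rate $\lambda$) raises $\mathbb{I}$ by $a_0+a_0q_0+\sum_j a_jq_j$ and an arrival at a queue $i\neq 0$ (rate $\lambda$) raises $\mathbb{I}$ by $a_iq_0$; summing over $i$ and using $\mathbb{E}[q_j]=\mathbb{E}[q_0]$ for all $j$ (translation invariance) gives a total average rate of increase of $\lambda\big(a_0+2\mathbb{E}[q_0]\sum_j a_j\big)$, exactly as in~(\ref{eqn:rcl1_a}). On the departure side, a departure from queue $i$ happens at rate $R(i)=q_i/\sum_j a_jq_{i-j}$ and lowers $\mathbb{I}$ by $R(0)\big(a_0(2q_0-1)+\sum_{i\neq 0}a_iq_i\big)$ if $i=0$ and by $R(i)a_iq_0$ if $i\neq 0$. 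The key step is the mass-transport identity $\mathbb{E}\big[\sum_{i\neq 0}R(i)a_iq_0\big]=\mathbb{E}\big[R(0)\sum_{i\neq 0}a_iq_i\big]$, which here follows from translation invariance of $\{q_i\}$ together with $a_i=a_{-i}$, by shifting the origin to $i$, exactly as in Proposition~\ref{prop:unimod_mtp}. After this substitution the departure term collapses, using $R(0)\sum_j a_jq_j=q_0$ and $\mathbb{E}[R(0)]=\lambda$, to $2\mathbb{E}[q_0]-\lambda a_0$. Equating the two rates yields
\[
\lambda a_0 + 2\lambda\,\mathbb{E}[q_0]\sum_{j}a_j \;=\; 2\,\mathbb{E}[q_0] - \lambda a_0,
\]
i.e. $\mathbb{E}[q_0]=\frac{\lambda a_0}{1-\lambda\sum_j a_j}$, as claimed.

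Two ingredients deserve care. The identity $\mathbb{E}[R(0)]=\lambda$ is itself a rate conservation statement for the scalar process $q_0(t)$ — its mean is constant, so the arrival rate $\lambda$ equals the mean departure rate $\mathbb{E}[R(0)]$ — and this uses only $\mathbb{E}[q_0]<\infty$, which is implied by the hypothesis. The main obstacle is the rigorous justification of the rate conservation principle of \cite{baccelli_bremaud} in this infinite-dimensional setting: one must verify that all the intensities-times-jump-sizes appearing above are integrable, so that the Palm-calculus rate conservation law applies to the stationary piecewise-constant process $\mathbb{I}_t$. This is precisely where $\mathbb{E}[q_0^2]<\infty$ is used decisively — it makes $\mathbb{E}[\mathbb{I}_t]<\infty$, it bounds $\mathbb{E}[q_0q_j]$ for the finitely many relevant neighbours $j$ (those with $\|j\|_\infty\le L$), and it bounds the departure contributions (where additionally $R(i)\le 1$), so that only finitely many bounded-in-$L^1$ terms survive on each side. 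Once these integrability checks are in place, the computation is just the torus computation of Section~\ref{sec:rcl} with $|B_n\setminus B_n^{(I)}|$ replaced by $0$, and the lemma follows.
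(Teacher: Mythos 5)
Your proof is correct and follows essentially the same route as the paper, which simply states that one can apply the proof of Proposition~\ref{prop:rcl_diff_eqn} and Lemma~\ref{lem:rcl_formula_finite} verbatim to the functional $\mathbb{I}_t := q_0(t)\sum_{i}a_i q_i(t)$, with $\mathbb{E}[q_0^2]<\infty$ ensuring integrability. You have just spelled out the integrability checks and the translation-invariance version of the mass-transport identity that the paper leaves implicit.
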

\begin{proof}
	Since $\mathbb{E}[q_0^2] < \infty$, then we can apply the same proof verbatim of Proposition \ref{prop:rcl_diff_eqn}, where the stochastic process $\mathbb{I}(t) := q_0(t) \sum_{i \in \mathbb{Z}^d}a_i q_i(t)$. Then the conclusion of Proposition \ref{prop:rcl_diff_eqn} and Lemma \ref{lem:rcl_formula_finite} follow.

\end{proof}

 We  now prove Proposition \ref{prop:uniq_bdd_convergence} with the aid of certain monotonicity arguments.

 \begin{proof}
 	Let $\boldsymbol{\pi}^{'}$ be a stationary measure on $(\mathbb{Z}^d)^{\mathbb{N}}$, with finite second moment for the marginals. Let this distribution be different from 
 	 $\boldsymbol{\pi}$, the distribution corresponding to the minimal stationary solution $\{x_{i;\infty}(0)\}_{i \in \mathbb{Z}^d}$. We show
 	by elementary coupling and monotonicity arguments that $\boldsymbol{\pi} = \boldsymbol{\pi}^{'}$.
 	Let $T > 0$ be arbitrary. We couple the evolutions of the two systems
 	$\{y_{i;T}(\cdot)\}_{i \in \mathbb{Z}^d}$ and $\{x_{i;T}(\cdot)\}_{i \in \mathbb{Z}^d}$ as follows:
 	Let $\{q_i\}_{i \in \mathbb{Z}^d}$ be distributed according to $\boldsymbol{\pi}^{'}$, independently of 
 	everything else. Let $\{y_{i;T}(-T)\}_{i \in \mathbb{Z}^d}$ be such that $y_{i;T}(-T) = q_i$, for all
 	$i \in \mathbb{Z}^d$  and $\{x_{i;T}(-T)\}_{i\in \mathbb{Z}^d}$ be empty, i.e., for all
 	$i \in \mathbb{Z}^d$, we have $x_{i;T}(-T) = 0$. Thus, for all $i \in \mathbb{Z}^d$, $x_{i;T}(-T) \leq y_{i;T}(-T)$.
 	Monotonicity in Lemma \ref{lem:mono1} implies that, almost surely, for all $i \in \mathbb{Z}^d$, we have $x_{i;T}(0) \leq y_{i;T}(0)$.
 	By the definition of invariance, $\{y_{i;T}(0)\}_{i \in \mathbb{Z}^d}$ is distributed as
 	$\boldsymbol{\pi}^{'}$ with $\mathbb{E}[y_{0;T}(0)]$ given in Lemma \ref{lem:infinite_system_mean_form}.
 	From Proposition \ref{lem:final_stable}, we know that as $T \rightarrow \infty$, $x_{0;T}(0)$ converges
 	almost surely to a random variable which has a finite first moment. Furthermore, from the hypothesis of the proposition, we know that the almost sure limit  $\lim_{T \rightarrow \infty}x_{0;T}(0)$  also possesses finite second moment. Thus from the dominated convergence theorem,
 	we have that $\lim_{T \rightarrow \infty} \mathbb{E}[x_{0;T}(0)] = \mathbb{E}[x_{0;\infty}(0)]$,
 	which is also the same as given in Lemma \ref{lem:infinite_system_mean_form}. Thus $\boldsymbol{\pi}^{'}$
 	coordinate-wise dominates $\boldsymbol{\pi}$. But they have the same first moment. 
 	This implies that the two probability measures are the same.
 \end{proof}

\section{Proof of Theorem \ref{thm:initial_cond_bdd_converge}}
\label{sec:proof_thm_initial_bdd}

{\color{black} We briefly summarize the main idea in the proof, before describing the details. We consider a comparison of three systems - the original system described in Section \ref{sec:math_framework} started with the empty initial condition and one wherein all queues have  $K$ customers, and the $K$-shifted system introduced in Section \ref{sec:model_estensions_K_shifted}. We are able to compare the dynamics of the three systems using the monotonicity property of the dynamics. Furthermore, using the fact that the $K$-shifted dynamics has finite second moment for its minimal stationary solution, monotonicity implies that the original system started with $K$ customers in all queues also has a finite second moment, in the limit of large time. Notice that, as time goes to infinity, the limiting law of the number of customers in each queue when all queues were started with exactly $K$ customers is translation invariant. Thus, the uniqueness of translation invariant stationary measures having finite second moment, implies the desired result}.

\begin{proof} 
	
	To prove this statement, we consider the $K$-shifted dynamics introduced in Section \ref{sec:model_estensions_K_shifted}. We know that for $\lambda < \frac{1}{\sum_{j \in \mathbb{Z}^d} a_j}$, there exists a minimal stationary solution for this dynamics with finite mean. Furthermore, from Proposition \ref{prop:K_shifted_2ndmom}, we know that for $\lambda < \frac{2(1+c)}{3\sum_{j \in \mathbb{Z}^d} a_j}$, where the value of $c$ is given in Proposition \ref{prop:finite_second_moment}, that the second moment of the minimal stationary solution is also finite.
	\\
	
	We set some notation to illustrate the proof. For each $t \leq 0$, denote by $\{\tilde{x}_i(t)\}_{i \in \mathbb{Z}^d}$ to be the stationary solution of the $K$-shifted dynamics given that this system was started with all queues having exactly $K$ customers at time minus infinity. In other words, for $s \geq t$, let $\tilde{x}_{i;s}(-t)$ be the number of customers in the $K$-shifted dynamics in queue $i \in \mathbb{Z}^d$ at time $-t$, given that it was started with all queues having exactly $K$ customers at time $-s$. Then, from proposition \ref{prop:K_shifted_stability_result}, we know that an almost surely finite limit $\lim_{s \rightarrow \infty}\tilde{x}_{i,s}(-t) := \tilde{x}_{i,\infty}(-t)$ exists for all $i \in \mathbb{Z}^d$ and $t \in \mathbb{R}$. Now, consider three coupled systems in the backward construction procedure. The first system $\{\tilde{x}_i^{(t)}(\cdot)\}_{i \in \mathbb{Z}^d}$ is started at time $-t$ with initial condition $\tilde{x}_i^{(t)}(-t) := \tilde{x}_{i,\infty}(-t)$ for all $i \in \mathbb{Z}^d$. The second system $\{\hat{x}_{i}^{(t)}(.)\}_{i \in \mathbb{Z}^d}$ is started with all queues having exactly $K$ customers, i.e., $\hat{x}_i^{(t)}(-t) = K$ for all $i \in \mathbb{Z}^d$. The third system $\{x_i^{(t)}(\cdot)\}_{i \in \mathbb{Z}^d}$ with the empty queue condition at time $-t$, i.e., 
	$x_i^{(t)}(-t) = 0$ for all $i \in \mathbb{Z}^d$. From the monotonicity in the dynamics, we clearly, have for all $t \geq 0$ and all $u \geq -t$, the inequality $x_i^{(t)}(u) \leq \hat{x}_i^{(t)}(u) \leq \tilde{x}_{i}^{(t)}(u)$ holding almost surely. Furthermore, we know that $\lim_{t \rightarrow \infty}x_i^{(t)}(0) := x_i^{(\infty)}(0)$ exists and has mean $\frac{\lambda a_0}{1 - \lambda \sum_{j \in \mathbb{Z}^d}a_j}$ and $\mathbb{E}[(x_i^{(\infty)}(0))^2] < \infty$.
	\\
	
	The key observation now is to notice that $\tilde{x}_0^{(t)}(0)$ is \emph{monotonically non-increasing} in $t$. To prove this. consider  any $t^{'} \geq t$. From monotonicity of the shifted dynamics and the original dynamics, we have  $\tilde{x}_0^{(t^{'})}(-t) \leq \tilde{x}_{0,\infty}(-t) = \tilde{x}_0^{(t)}(-t)$. Thus, from the monotonicity of the dynamics, we have that $\tilde{x}_0^{(t^{'})}(0) \leq \tilde{x}_{0}^{(t)}(0)$, thereby concluding that $\tilde{x}_0^{(t)}(0)$ is non-increasing in $t$. This ensures the existence of the almost sure limit of $\lim_{t \rightarrow \infty} \tilde{x}_{0}^{(t)}(0) := \tilde{x}_0^{(\infty)}(0)$. Furthermore since $\tilde{x}_0^{(t)}(0)$ is monotonically non-increasing in $t$ and  $\sup_{t \geq 0}\mathbb{E}[\tilde{x}_0^{(t)}(0)] \leq \mathbb{E}[\tilde{x}_0^{(0)}(0)] \leq \frac{\lambda + K}{1 - \lambda \sum_{j \in \mathbb{Z}^d a_j}} < \infty$, we have that the limit $\mathbb{E}[\tilde{x}_0^{(\infty)}(0)] < \infty$ has finite mean. Similarly, from Proposition \ref{prop:K_shifted_2ndmom}, we know that $\sup_{t \geq 0}\mathbb{E}[(\tilde{x}_0^{(t)}(0))^2] \leq \mathbb{E}[(\tilde{x}_0^{(0)}(0))^2] < \infty$. From the definition, it is clear that $\{\tilde{x}_i^{(\infty)}(0)\}_{i \in \mathbb{Z}^d}$ is a stationary solution to our dynamics as it is shift invariant in time. Furthermore, from Proposition \ref{prop:K_shifted_2ndmom}, the second moment of  $\{\tilde{x}_i^{(\infty)}(0)\}_{i \in \mathbb{Z}^d}$  is finite. Hence from the uniqueness result in Proposition \ref{prop:finite_second_moment}, it has to be the case that $\tilde{x}_0^{(\infty)}(0) = x_0^{(\infty)}(0)$. But since for all $t \geq 0$ and all $u \geq -t$, the inequality $x_i^{(t)}(u) \leq \hat{x}_i^{(t)}(u) \leq \tilde{x}_{i}^{(t)}(u)$ holds true, it has to be the case that the almost sure limit $\lim_{t \rightarrow \infty}\hat{x}_0^{(t)}(0) := \hat{x}_0^{(\infty)}(0)$ exists and further satisfies $\mathbb{E}[\hat{x}_0^{(\infty)}(0)] = \frac{\lambda a_0}{1 - \lambda\sum_{j \in \mathbb{Z}^d}a_j}$ and $\mathbb{E}[\hat{x}_0^{(\infty)}(0)^2] < \infty$. The proof is concluded by invoking the uniqueness result in Proposition \ref{prop:uniq_bdd_convergence}.

\end{proof}

\section{Large Initial Conditions from which queue lengths diverge  - Proof of Theorem \ref{thm:bad_initial_conditions}}
\label{sec:examples_diverge}
The proof of Theorem \ref{thm:bad_initial_conditions} is split into two parts - the first part of the theorem is proved in Subsection \ref{subsec:bad_state1} and the second part in Subsection \ref{subsec:bad_state2}. We omit the proof of Proposition \ref{prop:quant_diver_full}.

\subsection{Proof of Part $1$ of Theorem \ref{thm:bad_initial_conditions}}
\label{subsec:bad_state1}

\begin{proof} 
	
	We present the proof first for the simple case of $d=1$ and the interference sequence $\{a_i\}_{i \in \mathbb{Z}}$ such that $a_i = 1$ if $|i| \leq 1$ and $a_i = 0$ otherwise. This will illustrate the key idea of freezing some queue and considering its effect at the center. We then show how to generalize this argument to arbitrary $d$ and $\{a_i\}_{i \in \mathbb{Z}^d}$.
	\\
	
	Consider $d=1$ and $\{a_i\}_{i \in \mathbb{Z}}$ such that $a_i = 1$ if $|i| \leq 1$ and $a_i = 0$ otherwise. The proof for this case relies on a definition of a `frozen boundary state' system. Roughly speaking, the $n$th frozen system for some $n \in \mathbb{N}$ refers to the dynamics of queues at $\{-n,\cdots,0,\cdots n\}$, given that  queues at the  `boundary', i.e at $n+1$ and $-n-1$ are \emph{frozen} to some value $\alpha_n$. To formalize this consider the following system. Let $n \in \mathbb{N}$ be arbitrary. Consider the $(n,\infty)$ system in which the initial condition is such that $x_{i}^{(n,\infty)} = 0$ if $i \notin \{n,-n\}$ and $x_i^{(n,\infty)}(0) = \infty$ if $i \in \{n,-n\}$. Moreover, in this system, there are no arrivals to queues $j$ such that $|j| > n$. Hence we call it the $\{x_i^{(n,\infty)}(\cdot)\}_{i \in \mathbb{Z}}$ system since the arrivals are stopped for queues $j$ such that $|j| > n$ and queues at $n$ and $-n$ have an initial value of $\infty$. Now since the queues at $-n$ and $n$ are \emph{frozen} to infinity, it is an easy consequence to see that there will be no departures in queues $-n+1$ and $n-1$ as the departure rate will be $0$, and hence the queue lengths of queues $n-1$ and $-n+1$ will go to infinity almost surely at a positive rate $\lambda$. More generally, all the queues $i \in [-n+1,n-1]$ will converge to infinity almost surely at a positive rate. Thus, for every  $n \in \mathbb{N}$, there exists a $T_n > 0$ such that  
	\begin{align}
	\mathbb{P}[\{\inf_{t \geq T_n} x_0^{(n,\infty)}(t) \} \geq n] \geq 1 - 2^{-(n+2)}.
	\label{eqn:inf_inita_cond_divergence}
	\end{align}
	Furthermore, we can assume without loss of generality that $T_n \rightarrow \infty$ as $n \rightarrow \infty$.
	\\
	
	Now, consider a second system $(n,\alpha)$ for some $0 \leq \alpha < \infty$. This convention implies that the system $\{x_i^{(n,\alpha)}(\cdot)\}_{i \in \mathbb{Z}}$ is such that all arrivals into queues $j$ such that $|j| > n$ is stopped and the initial condition is that all queues except at $n$ and $-n$ have $0$ customers and queues at $n$ and $-n$ have $\alpha$ customers. The crucial observation is that for each fixed $t \geq 0$ and $n \in \mathbb{N}$, we have 
	\begin{align*}
	\lim_{\alpha \rightarrow \infty} \sup_{0\leq s \leq t}( x_i^{(n,\infty)}(t) -  x_i^{(n,\alpha)}(t) ) = 0 \text{ a.s.}
	\end{align*}
	This observation follows from the fact that, in a finite interval of time $t$, only finitely many events occur in the queues $-n,\cdots,0,\cdots ,n$. In particular, by choosing $\alpha$ sufficiently large, we can ensure that there are no departures in queues $n-1$ and $-n+1$ in the time interval $(0,t]$. This will ensure that the dynamics of queues $-n+1 < i < n-1$ in the time interval $[0,t]$, is unchanged in the system with frozen values of $\alpha$ and $\infty$. Hence, in particular, for each $n \in \mathbb{N}$ and $u_n \geq 0$ , we can find a  $\alpha_n$ such that
	\begin{align}
	\mathbb{P}[ \inf_{T_n \leq t \leq T_n + u_n} x_0^{(n,\alpha_n)}(t) \geq n] \geq 1 - 2^{-(n+1)},
	\label{eqn:init_cond_diver_un}
	\end{align}
	where $T_n$ is given in Equation (\ref{eqn:inf_inita_cond_divergence}). Furthermore, we can choose $\alpha_n$ even larger such that $\mathbb{P}[\text{Poi}(u_n+T_n) > \alpha_n] \leq 2^{-(n+1)}$. Thus, if we then consider an initial condition where queue $i$ has $2 \alpha_i$ customers, then by monotonicity and a simple union bound, we have for all $n$ sufficiently large
	\begin{align*}
	\mathbb{P}[ \inf_{T_n \leq t \leq T_n + u_n} x_0^{(n,2\alpha_n)}(t) \geq n] \geq 1 - 2^{-(n)}.
	\end{align*}

	%To complete the proof, consider an initial condition where at time $0$, every queue $i \in \mathbb{Z}$ has $2\alpha_{|i|}$ customers, i.e. $x_i(0) = \alpha_{|i|}$ for each $i \in \mathbb{Z}$. 
	
	To complete the proof, let $(b_n)_{n \in \mathbb{N}}$ be an arbitrary sequence of non-negative integers such that $b_n \rightarrow \infty$. Consider an initial condition such that $x_{b_i}(0) = x_{-b_i}(0) = 2\alpha_{b_i}$. For all $i \in \mathbb{N}$ such that $i \notin \{b_n: n\in \mathbb{N}\}$, we have $x_i(0) = x_{-i}(0) = 0$. In this case, from monotonicity that for all $t$ and all $n \in \mathbb{N}$, the inequality $x_0(t) \geq x_{0}^{(b_n,\alpha_{b_n})}(t)$ almost surely. In particular, from Equation (\ref{eqn:init_cond_diver_un}), $\mathbb{P}[x_0(T_{b_n}) \geq b_n] \geq 1-2^{-(b_n+1)}$ for every $n \in \mathbb{N}$.  Thus, by a standard Borel-Cantelli argument, the queue length at $0$, $x_0(t)$ converges almost surely to $+\infty$  as $t \rightarrow \infty$, as the sequence of times $(T_{b_n})_{n \in \mathbb{N}}$ is deterministic with $\lim_{n \rightarrow \infty} T_{b_n} = \infty$.
	\\
	
	Now consider arbitrary $d$ and arbitrary irreducible $\{a_i\}_{i \in \mathbb{Z}^d}$. Let $n > L := \sup \{||i||_{\infty} \in \mathbb{Z}^d : a_i > 0 \}$, be larger than the support of the interference sequence. We modify the definition of freezing where the $(n,\infty)$ system for some $n \in \mathbb{N}$ denotes a system where the arrivals into queues $i$ such that $||i||_{\infty} > n$ is suppressed and the initial condition is such that $x_i^{(n,\infty)}(0) = \infty$, if $||i||_{\infty} = n$ and $x_i^{(n,\infty)}(0) = 0$ if $||i||_{\infty} \neq n$. For such a system, we can find a sequence of times $(T_n)_{n \in \mathbb{N}}$ that satisfy Equation (\ref{eqn:inf_inita_cond_divergence}), for all $n > L$. This follows from the irreducibility of the interference sequence $\{a_i\}_{i \in \mathbb{Z}^d}$ and the fact that it is finitely supported. The reason we may have to avoid some finite $n$ is to account for certain $a_j$ being equal to $0$, and thus an infinite wall of customers may not ``influence'' the queue at $0$. Nevertheless, we can find a $T_n$ satisfying Equation (\ref{eqn:inf_inita_cond_divergence}) for all $n > L$. Thus, we can then define a $(n,\alpha)$ system  for all $n > L$, where the initial condition is such that $x_i^{(n,\alpha)}(0) = \alpha$ if $||i||_{\infty} = n$ and $x_i^{(n,\alpha)}(0) = 0$ otherwise. From monotonicity, for every non-negative sequence of $(u_n)_{n > n_0}$, there exists a non-negative sequence $(\alpha_n)_{n > n_0}$ such that Equation (\ref{eqn:init_cond_diver_un}) is satisfied for all $n > L$. The remainder of the proof follows from the discussion of the one dimensional case.
\end{proof}

\subsection{Proof of  Part $2$ of Theorem \ref{thm:bad_initial_conditions}}
\label{subsec:bad_state2}

\begin{proof}
	We will first carefully implement the proof for the case of $d=1$ and the interference sequence being $(a_i)_{i \in \mathbb{Z}}$ with $a_i = 1$ for $|i| \leq 1$ and $a_i = 0$ otherwise. The proof builds on the ideas developed in the previous proof. The key observation we make in this proof is to notice that for every non-negative sequence $(u_n)_{n \in \mathbb{N}}$, there exists a sequence $(\alpha_n)_{n \in \mathbb{N}}$ such that if the initial condition satisfies $\min (x_n(0),x_{-n}(0)) \geq \alpha_n$, then for all sufficiently large $n$, we have
	\begin{align}
	\mathbb{P}[\inf_{T_n \leq s \leq T_n + u_n} x_0(s) \geq n] \geq 1 - 2^{-n},
	\label{eqn:diver_extension}
	\end{align}
	where $T_n$ is as defined in Equation (\ref{eqn:inf_inita_cond_divergence}). To implement the proof of this theorem, let the initial conditions be such that $\{\zeta_i\}_{i \in \mathbb{Z}}$ be an i.i.d. sequence of $\mathbb{N}$ valued random variables independent of everything else. We will divide the queues into blocks denoted by sets $\mathcal{B}_k \subset \mathbb{Z}$ recursively using indices $(m_k)_{k \in \mathbb{N}}$ such that $m_0 = 0$ and $\mathcal{B}_k$ to be of the form $\mathcal{B}_k := \{-m_{k},\cdots, -m_{k-1}-1\} \cup \{m_{k-1}+1,\cdots,m_k\}$. The sequence $(m_k)_{k \in \mathbb{N}}$ is defined by $m_k := m_{k-1} + l_k$ where $l_k$ satisfies $\mathbb{P}[\text{Geom}(1/k) \geq l_k] \leq 2^{-(k+1)}$. Let the sequence of times $(T_i)_{i \in \mathbb{N}}$ be as defined in Equation (\ref{eqn:diver_extension}). For every $n \in \mathbb{N}$ such that $n \in \mathcal{B}_k$ for some $k \in \mathbb{N}$, let $u_n := max_{v \in \mathcal{B}_k} T_v - T_n$. Let $\hat{T}_k := \max_{v \in \mathcal{B}_k} T_v$. Thus, by definition, for all $k \in \mathbb{N}$ and for all $n \in \mathcal{B}_k$, we have $T_n + u_n = \hat{T}_k$. From Equation (\ref{eqn:diver_extension}), we know that for the particular $u_n$ we have constructed, there exists a $\alpha_n$ satisfying Equation (\ref{eqn:diver_extension}) for all sufficiently large $n$. Denote by $\gamma_k := \max_{0 \leq n \leq m_k} \alpha_n$, where $m_k$ was defined above.
	\\

	%Let the sequence of times $(\hat{t}_i)_{i \in \mathbb{N}}$ be as defined in Theorem \ref{thm:quant_diverg}. For every $n \in \mathbb{N}$ such that $n \in \mathcal{B}_k$ for some $k \in \mathbb{N}$, let $u_n := max_{v \in \mathcal{B}_k} \hat{t}_v - \hat{t}_n$. Let $T_k := \max_{v \in \mathcal{B}_k} \hat{t}_k$. Thus, by definition, for all $k \in \mathbb{N}$ and for all $n \in \mathcal{B}_k$, we have $\hat{t}_n + u_n = T_k$. From Equation (\ref{eqn:diver_extension}), we know that for the particular $u_n$ we have constructed, there exists a $\alpha_n$ satisfying Equation (\ref{eqn:diver_extension}) for all sufficiently large $n$. Denote by $\gamma_k := \max_{0 \leq n \leq m_k} \alpha_n$, where $m_k$ was defined above.

	Given the above set-up, we shall consider a random variable $\zeta$ on $\mathbb{N}$ such that $\mathbb{P}[\zeta \geq \gamma_k] \geq 1/\sqrt{k}$ for all sufficiently large $k$. This distribution forms the initial conditions that we consider. More precisely, the initial condition $(x_i(0))_{i \in \mathbb{Z}}$ corresponds to the i.i.d. sequence $(\zeta_i)_{i \in \mathbb{Z}}$ distributed according to $\zeta$ defined above. Define the event $\mathcal{E}_k$ as 
	\begin{align*}
	\mathcal{E}_k := \cup_{n \in \mathcal{B}_k} \{ \min(\zeta_n, \zeta_{-n}) \geq \gamma_k\}.
	\end{align*}
	From the definition of $\zeta$, it is clear that $\mathbb{P}[\mathcal{E}_k] = \mathbb{P}[\text{Geom}(1/k) \leq l_k ] \geq 1 - 2^{-k}$, where the second relation follows from the definition of $l_k$.
	\\
	
	Conditional on the event $\mathcal{E}_k$, we have that $\mathbb{P}[x_0(\hat{T}_k) \geq m_{k-1} \vert \mathcal{E}_k] \geq 1 - 2^{-m_{k-1}} \geq 1 - 2^{-k}$. This follows from the fact that at least one of the coordinates $i \in \mathcal{B}_k$ is such that $\min(x_i(0),x_{-i}(0)) \geq \gamma_k$ under the event $\mathcal{E}_k$  and hence  Equation (\ref{eqn:diver_extension}) holds. The conditioning does not affect the dynamics, as the initial conditions were chosen independent of everything else. Since $T_n + u_n = \hat{T}_k$ for all $n \in \mathcal{B}_k$, the claim follows. Thus, by un-conditioning, we get that $\mathbb{P}[x_0(\hat{T}_k) \geq m_{k-1} ]\geq (1 - 2^{-k+1} ) (1-2^{-k})  $ for all sufficiently large $k$. Now, by a standard Borel-Cantelli argument, we see that the event $\{ x_0(\hat{T}_k) < m_{k-1}\}$ occurs only finitely often. In particular, this yields that $\lim_{k \rightarrow \infty}x_0(\hat{T}_k) = \infty$ almost surely since $m_{k} \rightarrow \infty$. Since the sequence $(\hat{T}_k)_{k \in \mathbb{N}}$ is fixed and deterministic with $\lim_{k \rightarrow \infty} \hat{T}_k = \infty$, $\lim_{k \rightarrow \infty}x_0(\hat{T}_k) = \infty$ almost surely implies that $\lim_{t \rightarrow \infty} x_0(t) = \infty$ almost surely.	
	\\
	
	Now, we implement the proof in the general case.  Let the initial condition be given by the i.i.d. family $\{\xi_i\}_{i \in \mathbb{Z}^d}$ such that the initial condition $x_i(0) = \xi_i$. Furthermore, the random variable $\xi$ is such that for each $k \in \mathbb{N}$, $\mathbb{P}[\xi \geq \gamma_k] \geq k^{-\frac{1}{2d (2m_k)^{d-1}}}$, where $\gamma_k$ will be a sequence to be chosen and $m_k$ is the sequence defined in the preceding paragraph. Now, we define the blocks $\mathcal{B}_k$ as before by choosing the boundary values $(m_k)_{k \in \mathbb{N}}$ from before. Note that the sequence $m_k$ is such that $m_0 = 0$ and $m_k = m_{k-1}+l_{k-1}$ with $l_{k-1}$ satisfying $\mathbb{P}[\text{Geom}(1/k) \geq l_{k-1}] \leq 2^{-k}$ for all $k \in \mathbb{N}$. The block $\mathcal{B}_k$ is defined by $\mathcal{B}_k := \{i \in \mathbb{Z}^d: m_k < ||i||_{\infty} \leq m_{k+1} \}$. Now, for all $k \in \mathbb{N}$, let $\gamma_k := \max_{n \leq m_{k+1}} \alpha_n$, and the event $\mathcal{E}_k$ be defined as 
	\begin{align*}
	\mathcal{E}_k := \cup_{i = m_{k-1}+1}^{m_k} \cap_{ l \in \mathbb{Z}^d, ||l||_{\infty} = i}\{ \xi_{l} \geq \gamma_k  \}.
	\end{align*}
	
	From the tail probability of $\xi$, for all $k \in \mathbb{N}$, we have $\mathbb{P}[\mathcal{E}_k] \geq \mathbb{P}[\text{Geom}(1/k) \geq l_k] \geq 1 - 2^{-k}$.
	\\
	
	Since the interference sequence $\{a_i\}_{i \in \mathbb{Z}^d}$ is finitely supported and irreducible, Equation (\ref{eqn:inf_inita_cond_divergence}) will be satisfied for all $n > L$. Let $k_0 := \inf \{k: L < m_k \}$. The rest of the argument follows exactly from the one-dimensional case which we reproduce again. For $j \in \mathbb{Z}^d$ such that $||j||_{\infty} > L$ and $j \in \mathcal{B}_k$, define $u_j := \max_{v \in \mathcal{B}_k}T_v - T_j$. Similarly, define $\gamma_k := \max_{j \in \mathbb{Z}^d: ||j||_{\infty} \leq m_{k+1}} \alpha_j$. Recall that the random variable $\xi$ satisfied for all $k$, $\mathbb{P}[\xi \geq \gamma_k] \geq k^{-\frac{1}{2d (2m_k)^{d-1}}}$. It is easy to verify that for all $k \geq k_0$, we have on the event $\mathcal{E}_k$, $\mathbb{P}[x_0(\hat{T}_k) \geq m_{k-1} \vert \mathcal{E}_k] \geq 1-2^{-(k)}$. This follows, since there is at least one $i \in [m_k+1,m_{k+1}]$, such that the initial condition satisfies $x_j(0) \geq \gamma_{k+1}$ for all $j \in \mathbb{Z}^d$ such that $||j||_{\infty} = i$. By definition, $\gamma_{k+1} \geq \alpha_i$, and hence from monotonicity in the dynamics, Equation (\ref{eqn:init_cond_diver_un}) is satisfied for the specific chosen $u_n$.  
	The rest of the argument follows verbatim from the one dimensional case above, since $\mathbb{P}[\mathcal{E}_k] \geq 1 - 2^{-k}$.  The proof follows from un-conditioning and a standard application of the Borel-Cantelli lemma.
	
\end{proof}

\section{Transience - Proof of Theorem \ref{thm:transience}}
\label{sec:transience_proof}

In this section, we establish a converse to the stability result in the following theorem, which holds for the dynamics on the one dimensional grid with the interference sequence satisfying certain monotonicity property, which was specified in Definition \ref{def:monotone_ai} and reproduced here for the reader.

\begin{definition}
	The interference sequence $(a_i)_{i \in \mathbb{Z}}$ for the dynamics on the one dimensional grid is said to be \emph{monotone} if for all $ i \in \mathbb{Z}$, $a_i \geq a_{i+1}$ holds true.
\end{definition}

We now state the main result in this section regarding transience.
\begin{theorem}
	For the dynamics on the one dimensional grid with monotone $(a_i)_{i \in \mathbb{Z}}$, if $\lambda > \frac{1}{\sum_{i \in \mathbb{Z}}a_i}$, then there exists a $N_0$ large enough such that for all $N \geq N_0$, the dynamics truncated to the set $[-N,\cdots,N]$ is transient.
	\label{thm:transience_quant}
\end{theorem}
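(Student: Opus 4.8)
The plan is to study the finite truncated system on $[-N,N]$ and show that when $\lambda \sum_{i\in\mathbb Z} a_i > 1$ the Markov chain is transient, by exhibiting a Lyapunov-type function whose drift is positive far from the origin. The natural candidate is the total (or suitably weighted) number of customers $S_N(t) := \sum_{i=-N}^N x_i(t)$, or better, a monotone test function like $\sum_i x_i$ evaluated with the monotonicity of $(a_i)$ in mind. First I would set up the generator of the truncated chain: the arrival part contributes $+\lambda(2N+1)$ to the drift of $S_N$, while the departure part contributes $-\sum_{i=-N}^N R_i$, where $R_i = x_i / \sum_{j} a_{i-j} x_j$ is the departure rate. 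So the drift of $S_N$ is $\lambda(2N+1) - \mathbb{E}\big[\sum_i R_i\big]$, and the whole game is to upper-bound $\sum_i R_i$ when all queues are large.

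The key analytic step is a deterministic inequality: for any nonnegative vector $(x_i)$ on the torus/interval that is ``spread out'' enough, $\sum_{i} \frac{x_i}{\sum_j a_{i-j} x_j} \le \frac{2N+1}{\sum_j a_j} + (\text{boundary correction})$, with the correction sublinear in $N$ (of order $L$, the support radius, times a constant). Intuitively $\sum_j a_{i-j} x_j \approx (\sum_j a_j)\bar x$ when the profile is slowly varying, giving $R_i \approx x_i/((\sum a_j)\bar x)$ and $\sum_i R_i \approx (2N+1)/\sum_j a_j$; the monotonicity of $(a_i)$ is what lets one control the convexity/Jensen direction and the edge effects rigorously — this is presumably why the hypothesis is imposed. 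Here I would invoke a convexity argument: the function $(x_i) \mapsto \sum_i x_i/\sum_j a_{i-j}x_j$ is maximized, subject to fixed $\sum x_i$, in a way controllable by the monotone rearrangement, so that $\sum_i R_i \le (2N+1-cL)/\sum_j a_j + cL$ for a constant $c$. Then for $N$ large enough that $\lambda(2N+1) - (2N+1-cL)/\sum_j a_j - cL =: \delta_N > 0$ (possible precisely because $\lambda > 1/\sum_j a_j$ makes the leading coefficient positive), the drift of $S_N$ is bounded below by a positive constant $\delta_N$ whenever the state is outside a finite set (one also needs $x_i>0$ for all $i$, handled by noting the chain leaves the ``some queue empty'' region quickly, or by working with $S_N$ shifted).

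From a uniformly positive drift bounded increments (the jumps of $S_N$ are $\pm 1$ and the rates are bounded) one concludes transience by the standard Foster–Lyapunov / Lamperti criterion: $S_N$ is a submartingale-like process with positive drift and bounded jumps off a finite set, hence $S_N(t)\to\infty$ a.s., which is exactly transience of the truncated chain; this also feeds the qualitative Theorem \ref{thm:transience} by monotone coupling (the infinite system dominates the truncated one, Proposition \ref{prop:truncation_mono}-style). The main obstacle I anticipate is the deterministic inequality controlling $\sum_i R_i$: one must handle the boundary queues near $\pm N$ (where $\sum_j a_{i-j}x_j$ is a sum over fewer terms, making $R_i$ larger, not smaller), and show these contribute only an $O(L)$ correction uniformly in the configuration — this is where monotonicity of $(a_i)$ and careful bookkeeping (perhaps a mass-transport/rearrangement argument analogous to Proposition \ref{prop:unimod_mtp}) will be needed, and where the restriction to $d=1$ and monotone sequences really bites.
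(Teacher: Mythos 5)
Your overall plan — pick a Lyapunov-type functional, show its drift is uniformly positive when the queues are large, then apply a Lamperti/Meyn-style transience criterion — is the right genre, and the paper does work with the fluid-scaled truncated chain. But the specific functional you choose, $S_N = \sum_{i=-N}^N x_i$, does not work, and the inequality you need for it is false. The deterministic claim $\sum_i R_i \le (2N+1)/\sum_j a_j + O(L)$ fails for large configurations: take $d=1$, $a_i = \mathbf{1}_{|i|\le 1}$, and the alternating state $x_i = M$ for even $i$, $x_i = 0$ for odd $i$. Each nonempty queue sees only its own mass (its neighbours are empty), so $R_i = x_i/(a_0 x_i) = 1$ and $\sum_i R_i \approx N+1$, which far exceeds $(2N+1)/3 + O(1)$. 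With $\lambda \in (1/3, 1/2)$ the instantaneous drift of $S_N$ is then negative at this state even though the state is arbitrarily large, so $S_N$ is not a submartingale off a finite set and the Lamperti criterion does not apply. Monotone rearrangement cannot patch this: the rearranged (unimodal) state has the \emph{correct} small $\sum_i R_i$, but the Markov chain does not avoid the alternating region, and your proposal gives no mechanism that keeps the chain away from it.

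The paper evades this problem by two devices you did not hit on. First, it replaces the linear functional with the \emph{quadratic} interference functional $\mathbb J(t) := \sum_{i} y_i \sum_j a_j y_{i+j}$, whose time-derivative under the fluid ODE collapses to $2\sum_i y_i'\,(\sum_j a_j y_{i+j})$ and then, crucially, $R_i \cdot \sum_j a_j y_{i+j} = y_i$ cancels the awkward rate term entirely, giving $\tfrac{d}{dt}\mathbb J = 2\sum_i \bigl(\lambda \sum_j a_j y_{i+j} - y_i\bigr)$ with no $\sum_i R_i$ to bound at all. Second — and this is where monotonicity of $(a_i)$ enters, as you correctly suspected but did not localize — the paper works with the fluid ODE and proves that \emph{unimodality} of the profile is preserved along fluid trajectories (Proposition \ref{prop:transience_unimodal}); unimodality is then used only to pass from $\sum_i y_i$ to a multiple of the boundary coordinate $y_N$ in the final drift estimate. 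Without the quadratic functional the rate term does not cancel; without the fluid limit and unimodality preservation you cannot avoid the pathological configurations. So there is a genuine gap: the drift inequality at the heart of your proposal is false in the regime where you need it, and the fix requires both the quadratic Lyapunov functional and the unimodality-preservation lemma.
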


\begin{remark}
	We provide a proof using the fluid approximation approach. In the special case of $a_i = 1$ for $|i| \leq 1$ and $a_i = 0$ if $|i| > 1$, one can construct a `triangular' Lyapunov function and directly establish transience by using the results of \cite{foss_transience}. We present this alternate proof for this special case in Appendix $E$.
	%of the extended version  \cite{arxiv_version}
\end{remark}

\begin{proof}
	 Consider the case when $a_1 = 0$. Then, by monotonicity of the interference sequence, this implies that for all $i \neq 0$, $a_i = 0$, in which case the theorem is true for all $N \geq 1$, as each queue is an independent $M/M/1$ queue. Thus, we assume without loss of generality that $a_1 > 0$ in the rest of this proof. Let $N > L$ be larger than the support of $\{a_i\}_{i \in \mathbb{Z}}$ which will be chosen later. We will consider the dynamics restricted to the set $[-N,\cdots,N]$, which is a finite dimensional Markov Process denoted by the process $[Y_{-N}(\cdots),\cdots,Y_N(\cdots)]$. We will study this process in the fluid limit scaling and it is denoted by  $[y_{-N}(\cdot),\cdots,y_N(\cdot)]$. We shall study the fluid limit behavior of the Markov Chain and using the standard results of \cite{meyn_transience}, we will conclude about transience. For the truncated system $[Y_{-N}(\cdots),\cdots,Y_N(\cdots)]$, the fluid scale trajectories $[y_{-N}(\cdot),\cdots,y_N(\cdot)]$ are Lipschitz continuous functions satisfying the following system of differential equations, subject to a certain initial condition, given by
	\begin{align}
	\frac{d}{dt}y_i(t) = \begin{cases} \lambda - \frac{y_i}{\sum_{j \in \mathbb{Z} }a_{i-j} y_j  }, \text{    }& y_i(t) > 0,\\
	\lambda, \text{   }& y_i(t) = 0. \end{cases}
		\label{eqn:ode_fluid_transience}
	\end{align}
	In the equation above, $i \in \{-N,\cdots,N\}$ and for all $j$ such that $|j| > N$, and all $t \geq 0$, we have $y_j(t) = 0$ and $y_j^{'}(t) = 0$. For an initial condition $y(0):= [y_{-N}(0),\cdots,y_{N}(0)]$, we denote by the set $\mathcal{S}(y(0))$ of Lipschitz functions satisfying Equations (\ref{eqn:ode_fluid_transience}) with the initial condition specified by the vector $y(0)$. A formal derivation of this as a fluid limit ODE for the dynamics is standard (for example see \cite{sbd_tit} Theorem $6$) and we defer it to Proposition \ref{prop:transience_fluid_limit} stated and proved at the end of the section. The fluid limit Equations (\ref{eqn:ode_fluid_transience}) are also such that, if at a certain time all coordinates become equal to $0$, they stay at $0$.
	 In the rest of the proof, denote by $t_0 \in (0,\infty]$ such that for all $0 \leq t < t_0$, there exists a $i \in \{-N,\cdots,N\}$ such that $y_i(t) > 0$ and for all $t \geq t_0$ and all $i \in \{-N,\cdots,N\}$, $y_i(t) = 0$. The value of $t_0$ clearly depends on the initial value $[y_{-N}(0),\cdots,y_N(0)]$. From Proposition \ref{prop:transience_fluid_limit}, the fluid limit functions are more-over differentiable almost anywhere in $(0,t_0)$ and its derivative is given by Equations (\ref{eqn:ode_fluid_transience}). Moreover, the fluid limit functions have additional smoothness properties like the existence of higher order derivatives that we do not exploit in the present paper. Instead, we define a key notion of `unimodality' satisfied by our dynamics which is essential for establishing transience.

	\begin{definition}
		The vector $[v_{-N}, \cdots v_N] \in \mathbb{R}^{2N+1}$ is said to be \emph{ strictly unimodal} if for all $0 \leq i < j \leq N$, we have $v_i > v_j$, $v_i = v_{-i}$ and $v_i > 0$. The vector is said to be \emph{unimodal} if for all $0 \leq i < j \leq N$, we have $v_i \geq v_j$, $v_i = v_{-i}$ and $v_i \geq 0$.
		\label{defn:unimodal} 
	\end{definition}

The following  propositions characterizes the behavior of the system of Equations in (\ref{eqn:ode_fluid_transience}), which will be crucial in analyzing the system.

\begin{proposition}
	Assume the initial conditions $[y_{-N}(0), \cdots, y_N(0)]$ of the system of Equations (\ref{eqn:ode_fluid_transience}) is strictly unimodal and the interference sequence $(a_i)_{i \in \mathbb{Z}}$ is monotone. If $t_1$ is the first time any two coordinates of $[y_{0}(t), \cdots, y_N(t)]$ become equal, then $t_1 = t_0$, where $t_0$ is defined above.

%	If the initial conditions $[y_{_N}(0), \cdots, y_N(0)]$ of the system of Equations (\ref{eqn:ode_fluid_transience}) is unimodal and the interference sequence $(a_i)_{i \in \mathbb{Z}}$ is monotone, then for all $t \geq 0$, $[y_{_N}(t), \cdots, y_N(t)]$ in unimodal.
	\label{prop:transience_unimodal}
\end{proposition}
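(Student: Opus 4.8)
The plan is to show that strict unimodality (Definition \ref{defn:unimodal}) of the vector $[y_{-N}(t),\dots,y_N(t)]$ is preserved on the whole interval $[0,t_0)$. Since a strictly unimodal vector has pairwise distinct coordinates $y_0>y_1>\dots>y_N>0$, this shows no two of $y_0,\dots,y_N$ coincide before $t_0$, hence $t_1\ge t_0$; the reverse inequality $t_1\le t_0$ is trivial because at $t_0$ all coordinates equal $0$.

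I would first record two preliminaries. By $a_i=a_{-i}$ and the symmetry of the initial condition, the system (\ref{eqn:ode_fluid_transience}) preserves $y_i(t)=y_{-i}(t)$, so it suffices to track $y_0,\dots,y_N$. Second, and crucially, I would prove an \emph{interference monotonicity lemma}: if $(y_j)_j$ is symmetric and unimodal (in the weak sense of Definition \ref{defn:unimodal}) and $(a_i)_i$ is symmetric and monotone, then the interferences $I_k:=\sum_j a_{k-j}y_j$ satisfy $I_0\ge I_1\ge\dots\ge I_N$. This is the discrete fact that a convolution of symmetric unimodal sequences is symmetric unimodal; the short proof writes $I_k-I_{k+1}=\sum_j\bigl(a_{|k-j|}-a_{|k+1-j|}\bigr)y_j$, pairs the index $j$ with $j'=2k+1-j$ (an involution with no fixed point), and checks that each paired contribution $\bigl(a_{k-j}-a_{k+1-j}\bigr)\bigl(y_j-y_{j'}\bigr)$ is nonnegative for $j\le k$, because $a$ is nonincreasing and $|j'|\ge|j|$. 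All sums are finite since $a$ is finitely supported.

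Then I would set $\tau:=\sup\{t\in[0,t_0]:\ y_0(s)>y_1(s)>\dots>y_N(s)>0\text{ for all }s\in[0,t)\}$, which is positive by the strictly unimodal initial condition and continuity, and assume for contradiction that $\tau<t_0$. At $\tau$ the vector is weakly unimodal with $y_0(\tau)>0$ (as $\tau<t_0$), and the lemma gives $I_k(\tau)\ge I_{k+1}(\tau)$. Strict unimodality can fail at $\tau$ only in one of two ways. Case (a): $y_N(\tau)=0$. Let $m$ be the largest index with $y_m(\tau)>0$; then $0\le m<N$, $y_{m+1}(\tau)=0$, while $I_{m+1}(\tau)\ge a_1 y_m(\tau)>0$ (using $a_1>0$, a standing assumption in this section). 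By continuity $y_{m+1}(s)/I_{m+1}(s)\le\lambda/2$ on a left neighborhood of $\tau$, so $\dot y_{m+1}(s)\ge\lambda/2>0$ for a.e.\ such $s$, whence $y_{m+1}(\tau)>y_{m+1}(\tau-\delta)>0$, a contradiction. Case (b): $y_N(\tau)>0$ and $y_k(\tau)=y_{k+1}(\tau)$ for some $k$. On a left neighborhood of $\tau$ all coordinates are bounded below by a positive constant, so we are in the smooth branch of (\ref{eqn:ode_fluid_transience}); writing $w:=y_k-y_{k+1}$, a direct computation gives $\dot w=\frac{y_{k+1}(I_k-I_{k+1})-wI_{k+1}}{I_kI_{k+1}}\ge-\frac{w}{I_k}\ge-Cw$, using $I_k\ge I_{k+1}>0$ and the lower bound $I_k\ge a_0 y_k$, so Gr\"onwall's inequality yields $w(\tau)\ge w(\tau-\delta)e^{-C\delta}>0$, contradicting $w(\tau)=0$. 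Both cases being impossible, $\tau=t_0$, which is the claim.

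The main obstacle I anticipate is the careful handling of the non-smoothness of the fluid trajectories at the instants where a coordinate vanishes (Case (a)): one must work with the almost-everywhere differentiability and Lipschitz continuity supplied by Proposition \ref{prop:transience_fluid_limit} rather than naive differentiation, and ensure that every interference appearing in a denominator stays bounded away from $0$ on the relevant time window. The interference monotonicity lemma is the other essential ingredient, and it is exactly where the monotonicity hypothesis on $(a_i)_i$ enters; it fails for general interference sequences.
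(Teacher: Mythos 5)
Your proof is correct, and it takes a genuinely different route from the paper's. The paper assumes $t_1<t_0$, lets $k$ be the \emph{largest} index with $y_{k-1}(t_1)=y_k(t_1)$, and then establishes the \emph{strict} interference inequality $\sum_j a_{k-j}y_j(t_1)<\sum_j a_{k-1-j}y_j(t_1)$ by a WLOG flattening of the central block and a two-way case split on $L$ versus $2k-1$. From the strict inequality and Lipschitz continuity it deduces that $y_{k-1}'>y_k'$ on a left neighbourhood of $t_1$, which contradicts $y_{k-1}(t_1)=y_k(t_1)$ when integrated against the unimodal ordering at $t_1-\varepsilon$. You instead prove only the non-strict monotonicity $I_0\ge I_1\ge\dots\ge I_N$ for symmetric unimodal $y$ (the involution pairing $j\leftrightarrow 2k+1-j$ is the clean way to see this: it is exactly the classical fact that the convolution of symmetric unimodal sequences is symmetric unimodal), and you compensate for the loss of strictness by a Gr\"onwall argument on $w=y_k-y_{k+1}$, which shows $w$ cannot reach $0$ in finite time. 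This buys you two things: you avoid the somewhat delicate strict-inequality derivation (which in the paper's form implicitly relies on $y_k(t_1)>y_{k+1}(t_1)$ and $a_1>0$ in a less transparent way), and your integral/Gr\"onwall formulation sits more comfortably with the a.e.\ differentiability of the Lipschitz fluid trajectories than the paper's pointwise derivative comparison. The cost is that, because you aim to preserve the full strict unimodality (which includes $y_N>0$, needed to bound $I_k$ away from zero for Gr\"onwall), you must add Case (a) ruling out $y_N(\tau)=0$; the paper's formulation, which only tracks $t_1$ directly, sidesteps this extra case. Both uses of the monotonicity hypothesis on $(a_i)$ are essentially the same; yours is packaged as a standalone lemma, which makes it clearer where that hypothesis enters.
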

\begin{proposition}
	If at time $0$, certain coordinates of $[y_{-N}(0), \cdots, y_N(0)]$ are zero, then there exists a time $\varepsilon > 0$ such that, the coordinates of $[y_{-N}(t), \cdots, y_N(t)]$ are non-zero for all $0 < t \leq \varepsilon$.
	\label{prop:transience_non_zero}
\end{proposition}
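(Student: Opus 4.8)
The plan is to prove the stronger statement that no coordinate $y_i$, $|i|\le N$, vanishes anywhere on the open interval $(0,t_0)$, where $t_0\in(0,\infty]$ is the time (introduced just above) at which all coordinates first become simultaneously zero. Since the hypothesis guarantees that not every $y_i(0)$ is zero, continuity of the $y_i$ gives $t_0>0$, and the proposition then follows by choosing any $\varepsilon\in(0,t_0)$. I will use only that each $y_i$ is Lipschitz (hence absolutely continuous and differentiable a.e.), that $y_i\ge 0$, that the derivative of $y_i$ at each of its differentiability points is given by (\ref{eqn:ode_fluid_transience}), and the standing reduction $a_1>0$ made at the start of the proof of Theorem \ref{thm:transience_quant}, which makes the interference graph on $\{-N,\dots,N\}$ --- with an edge joining $i$ and $j$ whenever $a_{i-j}>0$ --- connected.

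First I would record a simple fact: for every $i$, the set $\{t\in(0,t_0):y_i(t)=0\}$ is Lebesgue--null. Indeed, at an interior point $t$ at which $y_i$ is differentiable and $y_i(t)=0$, the value $0$ is a global minimum of $y_i\ge 0$, forcing $\dot y_i(t)=0$; but (\ref{eqn:ode_fluid_transience}) prescribes $\dot y_i(t)=\lambda>0$ there. So $y_i$ is non-differentiable at each of its zeros, and a Lipschitz function is differentiable almost everywhere.

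The crux is the local claim: if $y_i(s)=0$ for some $i$ and some $s\in(0,t_0)$, then $y_j(s)=0$ for every $j$ with $a_{i-j}>0$. To prove it, put $D_i(t):=\sum_j a_{i-j}y_j(t)$ and suppose for contradiction that $D_i(s)>0$. By continuity of $D_i$ and of $y_i$ (and $y_i(s)=0$), there are $c>0$ and $\eta\in(0,s)$ with $D_i\ge c$ and $y_i\le c\lambda/2$ on $[s-\eta,s]$. By the fact just noted, for a.e.\ $t\in[s-\eta,s]$ the function $y_i$ is differentiable at $t$ with $y_i(t)>0$, so $\dot y_i(t)=\lambda-y_i(t)/D_i(t)\ge\lambda-y_i(t)/c\ge\lambda/2$. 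Integrating (using absolute continuity), $y_i(s)=y_i(s-\eta)+\int_{s-\eta}^{s}\dot y_i(t)\,dt\ge\lambda\eta/2>0$, which contradicts $y_i(s)=0$. Hence $D_i(s)=0$, and since every summand is nonnegative the claim follows.

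To conclude I would fix $s\in(0,t_0)$, set $V(s):=\{i:y_i(s)=0\}$, and suppose $V(s)\ne\emptyset$. The local claim shows that $V(s)$ is closed under passing to interference-graph neighbours, so by connectivity of that graph on $\{-N,\dots,N\}$ we get $V(s)=\{-N,\dots,N\}$: every coordinate vanishes at a time $s<t_0$, contradicting the definition of $t_0$. Therefore $V(s)=\emptyset$ for all $s\in(0,t_0)$, as required. The genuinely delicate step is the local claim --- i.e.\ excluding that a coordinate touches $0$ while a neighbour remains positive; the rest (the absorption of the fluid trajectory at the origin, noted after (\ref{eqn:ode_fluid_transience}), and connectedness of the truncated interference graph) is routine.
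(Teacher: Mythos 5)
Your proof is correct, and it takes a genuinely different route from the paper's. The paper first invokes Proposition~\ref{prop:monotone_transience} to reduce to an initial state with exactly one positive coordinate $i$, then propagates positivity outward by induction on hop distance from $i$: it shows $\liminf_{t\downarrow 0}y_{j}(t)/t>0$ for one-hop neighbours, then argues by contradiction (via the derivative formula at a sequence $t_r\downarrow 0$) that $\liminf_{t\downarrow 0}y_k(t)/t>0$ persists to two-hop neighbours, and so on. You instead work at an \emph{arbitrary} time $s\in(0,t_0)$ and prove a sharp local rigidity: a coordinate $y_i$ can touch zero at $s$ only if \emph{all} of its interference neighbours vanish at $s$ simultaneously, because otherwise the integral lower bound $y_i(s)\ge y_i(s-\eta)+\int_{s-\eta}^s\dot y_i\ge\lambda\eta/2$ forces $y_i(s)>0$. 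Combined with connectivity of the interference graph (guaranteed by the standing reduction $a_1>0$) and the definition of $t_0$, this makes the zero-set at any $s<t_0$ all-or-nothing, hence empty. What your approach buys: (i) it sidesteps the monotonicity reduction and the slightly delicate induction on $\liminf y_k(t)/t$; (ii) it gives the stronger conclusion that the coordinates are strictly positive on the whole interval $(0,t_0)$, not merely on some short $(0,\varepsilon]$ --- a fact the paper's main argument for Theorem~\ref{thm:transience_quant} separately needs and re-derives via $\mathbb{J}(t)$. The one extra ingredient you rely on, that $\{t\in(0,t_0):y_i(t)=0\}$ is Lebesgue-null (a minimum point of a differentiable nonnegative function has derivative $0$, incompatible with the a.e.\ ODE giving $\lambda>0$), is a clean Lipschitz fact and is used correctly in the integration step. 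Both arguments require knowing that Equations~(\ref{eqn:ode_fluid_transience}) hold at a.e.\ $t\in(0,t_0)$, which the paper supplies via Proposition~\ref{prop:transience_fluid_limit}.
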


\begin{proposition}
	Let at time $0$, the vector $[y_{-N}(0),\cdots,y_N(0)]$ be unimodal. Then, if \\ $\lim_{t \rightarrow \infty}y_0(t) = \infty$, then for all $i \in \{-N,\cdots,N\}$, $\lim_{t \rightarrow \infty} y_i(t) = \infty$.
	\label{prop:transience_all_infinity}
\end{proposition}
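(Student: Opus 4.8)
The plan is to argue by contradiction using the ordering structure established in Proposition \ref{prop:transience_unimodal} together with the fluid equations \eqref{eqn:ode_fluid_transience}. Suppose $y_0(t) \to \infty$ but there exists some coordinate $i$ with $\liminf_{t\to\infty} y_i(t) < \infty$; by the symmetry $y_i = y_{-i}$ built into the dynamics we may take $i > 0$, and we may take $i$ to be the \emph{smallest} such positive index. First I would dispose of the degenerate situations: by Proposition \ref{prop:transience_non_zero}, after an arbitrarily small time all coordinates are strictly positive, so we may restart the clock and assume $[y_{-N}(0),\dots,y_N(0)]$ is strictly positive and symmetric; moreover, since $y_0(t)\to\infty$ the system never empties, so $t_0 = \infty$. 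Now I claim the vector stays strictly unimodal on all of $(0,\infty)$: Proposition \ref{prop:transience_unimodal} says that under strictly unimodal (symmetric) initial data the first time two of the coordinates $y_0,\dots,y_N$ coincide equals $t_0 = \infty$, so strict unimodality is preserved forever (one should check that strict positivity plus symmetry of the restart configuration, which need not itself be strictly unimodal, still funnels into a strictly unimodal configuration after an additional infinitesimal time — this uses monotonicity of $(a_i)$ exactly as in the proof of Proposition \ref{prop:transience_unimodal}, and is where I expect to spend some care).

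With strict unimodality for all $t>0$ in hand, the ordering $y_0(t) > y_1(t) > \cdots > y_N(t) > 0$ holds. The key quantitative step is to bound the service rate $\dfrac{y_i(t)}{\sum_{j} a_{i-j} y_j(t)}$ at the smallest "bounded" coordinate $i$. Because the configuration is unimodal and symmetric and $(a_j)$ is monotone, the interference $\sum_j a_{i-j} y_j(t)$ seen at queue $i$ is dominated in a controlled way by $y_0(t)$ times a constant: more precisely $\sum_j a_{i-j} y_j(t) \ge a_{i} y_0(t)$ (the term $j=0$ alone), and $a_i > 0$ since $i \le N$ and, by monotonicity, $a_i \ge a_N$; if $a_N = 0$ one shrinks $N$ to the support of $(a_j)$, which is legitimate since the statement is for a fixed $N > L$ — actually here $N$ is given, but the indices $i$ with $a_i = 0$ are handled separately below. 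So along any time sequence where $y_i$ stays bounded, say $y_i(t_k) \le M$, the service rate at $i$ is at most $\dfrac{M}{a_i\, y_0(t_k)} \to 0$, hence $\dfrac{d}{dt} y_i = \lambda - o(1) \ge \lambda/2 > 0$ at such times; I would convert this into a genuine contradiction by a standard argument: either $y_i(t)$ is eventually bounded below (contradicting $\liminf < \infty$ being achieved with derivative bounded away from $0$ whenever small), using that $y_i$ is Lipschitz so it cannot oscillate between low and high values without spending a definite amount of time with positive drift.

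The remaining case is an index $i$ with $a_i = 0$, so $i$ does not directly "feel" $y_0$ through a single term; here I would instead propagate the conclusion inductively outward from the origin. Having shown $y_j(t)\to\infty$ for every $j$ with $a_j>0$ (equivalently every $|j|\le L$), the irreducibility/monotonicity of $(a_j)$ (monotone with $a_1 > 0$ forces $a_j > 0$ for all $1\le j\le L$ and $a_j = 0$ for $|j|>L$) means the support is exactly $[-L,L]$; for $|i| > L$ the queue $y_i$ interacts only with queues $y_{i-L},\dots,y_{i+L}$, and once $y_{i-1},\dots$ (its neighbors closer to a diverging queue) are known to diverge, the same interference-domination estimate $\sum_j a_{i-j}y_j(t) \ge a_1 y_{i-1}(t) \to \infty$ gives service rate $\to 0$ at $i$, hence $\dot y_i \ge \lambda/2$ eventually, hence $y_i(t)\to\infty$. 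Iterating from $i=L+1$ up to $i=N$ (and symmetrically) finishes the proof. The main obstacle I anticipate is the first paragraph: carefully reducing an arbitrary unimodal (but possibly non-strict, possibly partially zero) initial configuration to one to which Proposition \ref{prop:transience_unimodal} applies verbatim, and verifying that strict unimodality, once attained, genuinely persists for all time rather than just up to some finite horizon — everything after that is the soft "positive drift forces divergence" argument combined with the outward induction.
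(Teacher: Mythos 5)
Your core mechanism is the paper's: single out the smallest positive index $i$ whose queue fails to diverge, observe that the inner neighbor $y_{i-1}$ therefore diverges, bound the interference at $i$ from below by a single diverging term, and conclude that whenever $y_i$ is near any fixed level $C$ (at late times) the fluid derivative is at least $\lambda - C/(C+a_1 M) > 0$, forcing $\liminf_{t\to\infty} y_i(t) \geq C$ for every $C$. That barrier argument is exactly what the paper does.

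Where you diverge from the paper is in machinery you do not need, and the gap you flag lives entirely there. First, the strict-unimodality preservation detour. You spend the opening paragraph trying to upgrade a merely unimodal start to a strictly unimodal one so Proposition~\ref{prop:transience_unimodal} applies, and you correctly note this step is not airtight (a symmetric positive configuration need not be strictly unimodal after an infinitesimal restart). But the paper's proof of Proposition~\ref{prop:transience_all_infinity} never invokes unimodality or Proposition~\ref{prop:transience_unimodal} at all, and neither does your own interference estimate: the inequality $\sum_j a_{i-j}\,y_j(t) \geq a_1\,y_{i-1}(t)$ uses only nonnegativity of the coordinates. Delete the first paragraph and both the detour and the flagged gap disappear. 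Second, the case split on $a_i>0$ versus $a_i=0$ is redundant. Since $a_1>0$ (the degenerate $a_1=0$ case was disposed of at the start of the section), the single bound $\sum_j a_{i-j}\,y_j(t) \geq a_1\,y_{i-1}(t)$ applies to every $i\geq 1$, and $y_{i-1}\to\infty$ precisely because $i$ was chosen minimal. Your "direct via $y_0$" branch, which needs $a_i>0$, is correct but unnecessary; the "outward induction via $y_{i-1}$" branch is the entire argument, and it is the paper's argument. Finally, your "standard argument: positive drift at low values implies divergence" should be pinned down as the level-$C$ barrier sketched above rather than left as a heuristic about Lipschitz functions not oscillating; that is the one point where the paper is explicit and your proposal is vague.
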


\begin{proposition}
	If the system of Equations in \ref{eqn:ode_fluid_transience} have two initial conditions \\ $[y_{-N}(0),\cdots,y_N(0)]$ and $[\tilde{y}_{_N}(0),\cdots,\tilde{y}_N(0)]$ such that for all $i \in \{-N,\cdots,N\}$, we have $y_i(0) \geq \tilde{y}_i(0)$, then for all $ t\geq 0$ and all $i \in \{-N,\cdots,N\}$, we have $y_i(t) \geq \tilde{y}_i(t)$.
	\label{prop:monotone_transience}
\end{proposition}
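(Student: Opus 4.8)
The statement is a comparison (monotonicity) principle for the system of ordinary differential equations (\ref{eqn:ode_fluid_transience}), and the plan is to exhibit it as a cooperative (quasi-monotone) system and invoke the corresponding order-preservation theorem. Write the right-hand side as $F_i(v) = \lambda - v_i/I_i(v)$ when $v_i>0$ and $F_i(v)=\lambda$ when $v_i=0$, where $I_i(v):=\sum_{k}a_{i-k}v_k$. The first step is to verify the Kamke--M\"uller condition: if $u,v\in\mathbb R_{\ge 0}^{2N+1}$ satisfy $u\le v$ coordinatewise and $u_i=v_i$ for some fixed $i$, then $F_i(u)\le F_i(v)$. Indeed, if $u_i=v_i=0$ both sides equal $\lambda$; and if $u_i=v_i>0$ then, because $a_0=1$ contributes the term $v_i>0$ to each denominator while $a_{i-k}\ge 0$ and $u_k\le v_k$, we have $0<I_i(u)\le I_i(v)$, hence $F_i(u)=\lambda-u_i/I_i(u)\le\lambda-v_i/I_i(v)=F_i(v)$. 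Equivalently, $\partial F_i/\partial v_\ell = v_i a_{i-\ell}/I_i(v)^2\ge 0$ for $\ell\ne i$ on the region $\{v_i>0\}$. Thus the off-diagonal coupling of the field is nondecreasing, which is exactly the structure that makes the induced flow order-preserving.

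With quasi-monotonicity in hand, I would apply the classical comparison theorem for cooperative systems: given $y(0)\ge\tilde y(0)$ and solutions $y,\tilde y$ of (\ref{eqn:ode_fluid_transience}), one has $y(t)\ge\tilde y(t)$ throughout the common existence interval; since $\dot y_i\le\lambda$ forces $y_i(t)\le y_i(0)+\lambda t$, both solutions are global, so the conclusion holds for all $t\ge0$ and all $i\in\{-N,\dots,N\}$. The one point that must be addressed before quoting the theorem is regularity of $F$, which is discontinuous at the all-zero configuration (approaching it through $\{v_i>0,\ v_k=0\ (k\ne i)\}$ gives $F_i\to\lambda-1$, whereas $F_i\equiv\lambda$ on $\{v_i=0\}$). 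However, because $a_0=1$ gives $I_i(v)\ge v_i$, the field is locally Lipschitz at every point \emph{other} than the origin, which suffices to run the standard proof (a first-crossing-time argument applied to a $\delta$-perturbed upper solution $\dot y^\delta_i=F_i(y^\delta)+\delta$, followed by $\delta\downarrow 0$) on the forward-invariant set where not all coordinates vanish. The remaining case, in which both trajectories sit at the all-zero vector simultaneously, is covered by the convention recorded after (\ref{eqn:ode_fluid_transience}); alternatively, since $\lambda>0$, every coordinate leaves $0$ immediately (cf.\ Proposition \ref{prop:transience_non_zero}), so this case is vacuous after time $0$.

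I expect the main obstacle to be precisely making that first-crossing argument airtight under the limited regularity. A route that sidesteps the ODE technicalities altogether is to bootstrap from the pathwise monotonicity of the stochastic dynamics: approximate the initial data by rescaled integer states with $\lceil n y_i(0)\rceil\ge\lceil n\tilde y_i(0)\rceil$, couple the two truncated Markov processes on $[-N,N]$ so that Lemma \ref{lem:mono1} yields coordinatewise domination for all time, and pass to the fluid limit via Proposition \ref{prop:transience_fluid_limit}; coordinatewise order is a closed condition and so survives the limit. This proves the ordering for the fluid solutions that arise as such limits — which is all that is used later in the transience argument — and I would be inclined to present it as the main proof if one is willing to invoke the fluid-limit construction, retaining the quasi-monotone comparison above as the fully self-contained alternative.
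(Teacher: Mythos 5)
Your ``sidestep'' route is precisely the paper's proof: the paper picks a sequence $z_k\to\infty$, approximates the two fluid initial conditions by integer states $X^{(k)}(0)\ge\widetilde{X}^{(k)}(0)$ with $z_k^{-1}X^{(k)}(0)\to y(0)$ and $z_k^{-1}\widetilde{X}^{(k)}(0)\to\tilde y(0)$, couples the two truncated chains so that Lemma \ref{lem:mono1} yields coordinatewise domination for all time, and passes to the fluid limit via Proposition \ref{prop:transience_fluid_limit}, using that coordinatewise order is a closed condition. So the second half of your write-up reproduces the paper's argument essentially verbatim.

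Your primary route, verifying the Kamke--M\"uller quasi-monotonicity of $F_i(v)=\lambda - v_i/I_i(v)$ (with $I_i(v)=\sum_k a_{i-k}v_k$) and invoking the comparison theorem for cooperative systems, is a genuinely different and self-contained ODE-level argument, and the quasi-monotonicity check you give is correct. Two caveats deserve flagging. First, the claim that ``$F$ is locally Lipschitz at every point other than the origin'' is not accurate: the singular set for $F_i$ is $\{v: I_i(v)=0\}$, and for $N>L$ the union of these sets over $i$ is strictly larger than $\{0\}$ (e.g.\ any configuration supported at distance more than $L$ from coordinate $i$ makes $I_i(v)=0$ while $v\neq 0$). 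The correct regularity statement is that $F$ is locally Lipschitz on the open set $\{v: I_i(v)>0\ \forall i\}$; the $\delta$-perturbed first-crossing argument should be run on that set, which requires first showing that fluid trajectories enter it instantaneously. Second, you justify that instantaneous entry by ``cf.\ Proposition \ref{prop:transience_non_zero}'', but the paper's proof of \ref{prop:transience_non_zero} itself invokes \ref{prop:monotone_transience}, so as written this would create a circular dependence; you would need to prove the instantaneous-entry fact directly (a short argument from $\dot y_i\ge\lambda-1>-\infty$ and irreducibility suffices) rather than cite it. With those repairs the cooperative-systems route is sound and has the virtue of not depending on the fluid-limit construction, while the paper's route is shorter precisely because it reuses the stochastic coupling already in hand.
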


Before we present the proofs of these results, we will demonstrate how to use them to conclude the proof of Theorem \ref{thm:transience_quant}. From Proposition \ref{prop:transience_non_zero}, we can assume without loss of generality that all coordinates $[y_{-N}(0), \cdots, y_N(0)]$ are non-zero. Furthermore, from monotonicity in the dynamics in Proposition \ref{prop:monotone_transience}, we can suppose that $[y_{-N}(0), \cdots, y_N(0)]$ is strictly unimodal. For if it were not strictly unimodal, then there exists $[\tilde{y}_{-N}(0), \cdots, \tilde{y}_N(0)]$ that is strictly unimodal satisfying $\tilde{y}_i(0) \leq y_i(0)$ for all $i \in [-N,\cdots,N]$. 
\\

The key quantity to study is the evolution of $\mathbb{J}(t):=\sum_{i = -N}^{N} y_i(\sum_{j = -N}^N y_{i+j}a_j )$ and concluding that $\frac{d}{dt} \mathbb{I}(t) > \epsilon$ for some $\epsilon > 0$, for all sufficiently large $ t$, provided $N$ is suitably large. To aid in understanding, we first write the equations for the simple case when $a_i = 1$ for $|i| \leq 1$ and $a_i = 0$ otherwise before attacking the general monotone $(a_i)_{i \in \mathbb{Z}}$ case. For notational brevity, we skip explicitly denoting that $y_i$ and $y_i^{'}$ are functions of time $t$.
\begin{align}
\frac{d}{dt} \sum_{i=-N}^{N} y_i(y_{i-1}+y_i + y_{i+1}) &= 2 \sum_{i=-N}^{N} y_i y_i^{'} + \sum_{i=-N}^{N} y_i y_{i-1}^{'} + y_{i-1}y_i^{'} + y_{i}y_{i+1}^{'} + y_{i+1}y_i^{'}, \nonumber \\
&= 2 \sum_{i = -N}^{N} y_iy_i^{'} + 2\sum_{i=-N}^{N}y_i^{'}(y_{i-1}+y_{i+1}), \nonumber  \\
&= 2 \sum_{i=-N}^{N} y_i^{'}(y_{i-1}+y_i + y_{i+1}), \nonumber \\
& \stackrel{(a)}{=} 2 \sum_{i = -N}^{N} \lambda (y_{i-1}+y_i +y_{i+1}) - y_i, \nonumber \\
&= 2 ( 3 \lambda - 1)\sum_{i=-N}^{N} y_i - \lambda(y_{-N} + y_N), \nonumber \\
&= 4 ( 3 \lambda - 1)\sum_{i=0}^{N} y_i -2 \lambda y_N, \nonumber \\
& \stackrel{(b)}{\geq} \left(4 (3 \lambda - 1)N - 2 \lambda \right)y_N.
\label{eqn:transience_fluid_lyapunov}
\end{align}
In the calculations above, step $(a)$ follows from substituting Equation (\ref{eqn:ode_fluid_transience}) for $y_i^{'}$ and step $(b)$ follows from unimodality which gives that for all $i \in \{0,1,\cdots,N-1\}$, $y_i \geq y_N$. 
\\

Let $N$ be sufficiently large so that the coefficient of $y_N$ in Equation (\ref{eqn:transience_fluid_lyapunov}) be strictly positive. From standard results in fluid limits of Markov Process (\cite{meyn_transience}), if we establish that $\liminf_{t \rightarrow \infty} \frac{\mathbb{J}(t)}{t} > 0$ whenever $\mathbb{J}(0) > 0$, then the underlying Markov Process is transient. Let the initial condition $[y_{-N}(0),\cdots,y_N(0)]$ be arbitrary such that $\mathbb{J}(0) > 0$. From Proposition \ref{prop:transience_non_zero}, we can assume without loss of generality that all coordinates $[y_{-N}(0), \cdots, y_N(0)]$ are non-zero. Furthermore, from monotonicity in the dynamics in Proposition \ref{prop:monotone_transience}, we can suppose that $[y_{-N}(0), \cdots, y_N(0)]$ is strictly unimodal. For if it were not strictly unimodal, then there exists $[\tilde{y}_{-N}(0), \cdots, \tilde{y}_N(0)]$ that is strictly unimodal satisfying $\tilde{y}_i(0) \leq y_i(0)$ for all $i \in [-N\cdots,N]$. We will now argue that given this arbitrary strictly unimodal initial condition, we have $\liminf_{t \rightarrow \infty} \frac{\mathbb{J}(t)}{t} > 0$.  Notice that from Equation (\ref{eqn:transience_fluid_lyapunov}),  $\frac{d}{dt}\mathbb{J}(t) \geq 0$ for all $t \geq 0$. Furthermore, since $\mathbb{J}(0) > 0$, it follows that $\inf_{t \geq 0}\mathbb{J}(t) > 0$. 
\\

We will first argue that if the initial conditions $[y_{-N}(0),\cdots,y_N(0)]$ is strictly unimodal, then $t_0 = \infty$. Recall $t_0$ is the first time when all coordinates become equal to $0$.  From the definition of $t_0$, it is clear that $\mathbb{J}(t_0) = 0$. But since $\inf_{t \geq 0} \mathbb{J}(t) > 0$, it has to be the case that $t_0 = \infty$. Thus, the conclusion of Proposition \ref{prop:transience_unimodal} holds for all $t \geq 0$ when started with an arbitrary strictly unimodal initial condition, provided $\inf_{t \geq 0}\mathbb{J}(t) > 0$, which in turn holds under the conditions in Theorem \ref{thm:transience_quant}.
\\

From Proposition \ref{prop:transience_all_infinity}, if $\lim_{t \rightarrow \infty} \mathbb{J}(t) = \infty$, then for all $k \in \{-N,\cdots,N\}$, $\lim_{t \rightarrow \infty} y_k(t) = \infty$. From Equation (\ref{eqn:transience_fluid_lyapunov}), this will yield that $\liminf_{t \rightarrow \infty} \frac{\mathbb{J}(t)}{t} > 0$, thereby from \cite{meyn_transience}, establishing the truncated Markov Chain is transient. Hence it suffices to show that $\lim_{t \rightarrow \infty} \mathbb{J}(t) = \infty$. Further from Equation (\ref{eqn:transience_fluid_lyapunov}) and the Lipschitz continuity of $t \rightarrow y_N(t)$, we can see that if $\limsup_{t \rightarrow \infty} y_N(t) > 0$, then $\lim_{t \rightarrow \infty} \mathbb{J}(t) = \infty$.  To establish that $\limsup_{t \rightarrow \infty}y_N(t) > 0$, assume on the contrary that $\lim_{t \rightarrow \infty}y_N(t) = 0$. Since $\inf_{t \geq 0} \mathbb{J}(t) > 0$, we must have some $k \in \{1,\cdots,N-1\}$ such that $\limsup_{t \rightarrow \infty} y_k(t) > 0$, but $\lim_{t \rightarrow \infty} y_{k+1}(t) = 0$. Denote by $\delta := \limsup_{t \rightarrow \infty} y_k(t) > 0$. Let $0 \leq \varepsilon < \frac{\lambda \delta}{2}$ be arbitrary. Let $t^{'} \geq 0$ be such that $y_{k}(t^{'}) \geq \delta/2$ and $y_{k+1}(t_0) \leq \varepsilon$.
There exist infinitely many choices for $t^{'}$ from our assumption that $\lim_{t \rightarrow \infty}y_{k+1}(t) = 0$ and $\limsup_{t \rightarrow \infty} y_k(t) = \delta >0$. The derivative of $y_{k+1}(\cdot)$ at time $t^{'}$ satisfies $\frac{d}{dt}y_{k+1}(t) \vert_{t = t^{'}} \geq \lambda - \frac{\varepsilon}{ \delta/2} > 0$. As $\limsup_{t \rightarrow \infty} y_{k}(t) \geq \delta/2$, and $t \rightarrow y_{k+1}(t)$ is Lipschitz continuous, we have that $\limsup_{t \rightarrow \infty} y_{k+1}(t) > \varepsilon$, contradicting the assumption that $\lim_{t \rightarrow \infty} y_{k+1}(t) = 0$. The general case can be handled similarly as follows.

\begingroup
\allowdisplaybreaks

\begin{align}
\frac{d}{dt} \sum_{i=-N}^{N} y_i(\sum_{j = -N,j\neq 0}^{N} a_{j}y_{i+j}  ) &= 2 \sum_{i=-N}^{N} y_i y_i^{'} + \sum_{i=-N}^{N} \sum_{j = -N,j\neq 0}^{N} a_j (y_i y_{i+j}^{'} + y_{i+j}y_i^{'}) \nonumber \\
&= 2 \sum_{i = -N}^{N} y_iy_i^{'} + 2\sum_{i=-N}^{N}y_i^{'} \sum_{j = -N,j\neq 0}^{N} a_j y_{i+j}, \nonumber  \\
&= 2 \sum_{i=-N}^{N} y_i^{'}(\sum_{j = -N}^{N} a_j y_{i+j}), \nonumber \\
& \stackrel{(a)}{=} 2 \sum_{i = -N}^{N} \lambda (\sum_{j = -N}^{N} a_j y_{i+j}) - y_i, \nonumber \\
& \stackrel{(b)}{\geq} 2 (\lambda \sum_{j \in \mathbb{Z}} a_j -1) \sum_{i=-N}^{N}y_i - 2\sum_{j \in \mathbb{Z}}a_j \sum_{i = N-L}^{N}y_i \nonumber \\
& \stackrel{}{=}  2 \left( (\lambda \sum_{j \in \mathbb{Z}} a_j -1) \bigg\lfloor \frac{N}{L} \bigg\rfloor - 2 \sum_{j \in \mathbb{Z}}a_j  \right) \sum_{j=N-L}^{N}y_j \nonumber.
\end{align}
\endgroup

In the calculations above, step $(a)$ follows from substituting Equation (\ref{eqn:ode_fluid_transience}) for $y_i^{'}$ and step $(b)$ follows from unimodality which gives that for all $i \in \{0,1,\cdots,N-1\}$, $y_i \geq y_{i+1}$. Since $\sum_{j=N-L}^{N}y_j(t)  \geq 0$ for all $t \geq 0$ and $\sum_{j=N-L}^{N}y_j(0) > 0$, by choosing $N$ sufficiently large, we get from similar arguments as above that the Markov process $[y_{-N}(\cdot),\cdots,y_N(\cdot)]$ is transient.
\end{proof}

We now prove Proposition \ref{prop:transience_unimodal}, which was the main structural result used in the above theorem.

\begin{proof} 
	
	We prove this by contradiction. Clearly $t_1 \leq t_0$, since at $t_0$, all coordinates of $y(t)$ are equal to $0$. Assume $t_1 < t_0$. Let $k \in \{1,\cdots,N\}$ be the largest integer $j$ such that $y_{j-1}(t_1) = y_j(t_1)$. Our first claim is  that the interference in coordinates $k$ and $k-1$ at time $t_1$ satisfies 
	\begin{equation}
	\sum_{j \in \mathbb{Z}} a_{k-j} y_{j}(t_1) < \sum_{j \in \mathbb{Z}} a_{k-j-1}y_{j}(t_1).
	\label{eqn:transience_unimod}
	\end{equation}
	
	To establish Equation (\ref{eqn:transience_unimod}), we can without loss of generality, assume that $y_{-k}(t_1) = \cdots = y_k(t_1)$. Indeed, recall that for all $j \in \mathbb{N}$, we have $a_j \geq a_{j+1}$ and $a_j = a_{-j}$. Thus for any $i \in \{-k+2,\cdots,k-2\}$, we have $a_{i-k+1}y_i\geq a_{i-k}y_i$ - in other words, by assuming $y_{-k}(t_1) = \cdots = y_k(t_1)$, we only decrease  the term on the left hand side and increase the term on the right hand side  of Equation (\ref{eqn:transience_unimod}). Now if $L \leq 2k-1$, then we have $\sum_{i \leq 0}a_i y_{k-1+i} = \sum_{i \leq 0 }a_iy_{k+i}$, while, 
	\begin{eqnarray*}
	\sum_{i > 0} a_i y_{k-1+i}(t_1) = a_1 y_k(t_1) + \sum_{i \geq 2}a_i y_{k-1+i}(t_1) > a_1y_{k+1}(t_1) +  \sum_{i \geq 2} a_i y_{k+i}(t_1) = \sum_{i > 0} a_i y_{k+i}(t_1).
	\end{eqnarray*}
The first inequality follows from the fact that $a_1 > 0$ and $y_k(t_1) > y_{k+1}(t_1)$ (by definition of $k$) and $y_{k-1+i}(t_1) \geq y_{k+i}(t_1)$, since the vector $[y_{-N}(t_1),\cdots,y_N(t_1)]$ is unimodal. This establishes Equation (\ref{eqn:transience_unimod}). If, on the other hand, $L > 2k-1$, let $L = 2k-1 + M$, where $M > 0$. For $j = k-1,k$, split the interference $\sum_{i = -L}^{L}a_i y_{i+j}$ into four terms as $\sum_{i=-L}^{-2k}$, $\sum_{i=-2k+1}^{0}$, $\sum_{i=1}^{M}$ and $\sum_{i = M+1}^{L}$. We denote the $4$ sums as $S_1(j)$, $S_2(j)$, $S_3(j)$ and $S_4(j)$, $j = k-1,k$ respectively. Since $y_{-k}(t_1) = \cdots = y_k(t_1)$, we have $S_2(k-1) = S_2(k)$. Furthermore as the interference sequence $\{a_i\}_{i \in \mathbb{Z}}$ is monotone and for all $q \in \{k,k+1,\cdots,N\}$, $y_q(t_1) > y_{q+1}(t_1)$ by definition of $k$, we have $S_4(k-1) > S_4(k)$. This follows since we have assumed that $N$ is so large that $N > L$. This ensures in particular that $S_4(k-1) > 0$. The strict inequality follows from the monotonicity of the interference sequence and the definition of $k$. We  now claim that $S_1(k-1)+ S_3(k-1) \geq S_1(k) + S_3(k)$. This claim will then conclude Equation (\ref{eqn:transience_unimod}). Indeed, from the definitions, one can write $ (S_3(k-1) - S_1(k) ) - (S_3(k) - S_1(k-1)) = \sum_{i=1}^{M} (a_i - a_{i+2k-1}) (y_{k+i-1}(t_1) - y_{k+i}(t_1)) \geq 0$ from the fact that $[y_{-N}(t_1),\cdots,y_N(t_1)]$ is unimodal and the interference sequence is monotone. Thus, Equation (\ref{eqn:transience_unimod}) holds.
\\

To conclude the proof of the proposition from Equation (\ref{eqn:transience_unimod}), we proceed as follows. We know from Proposition \ref{prop:transience_fluid_limit}, that the functions $y_k(\cdot)$ and $y_{k-1}(\cdot)$ are Lipschitz continuous functions. Hence, from Equations (\ref{eqn:ode_fluid_transience}) and (\ref{eqn:transience_unimod}), there exists an $\varepsilon > 0$ such that $y_{k-1}^{'}(t) > y_k^{'}(t)$, for all $t \in [t_1 - \varepsilon , t_1]$. This contradicts $y_{k}(t_1) = y_{k-1}(t_1)$ since $y_j(t_1) = y_{j}(t_1 - \varepsilon) + \int_{u=t_1 - \varepsilon}^{t_1} y_{j}^{'}(u) du$, for $j \in \{k,k-1\}$, with $y_{k-1}(t_1 - \varepsilon) \geq y_{k}(t_1 - \varepsilon)$ by unimodality.

\end{proof}
	
We now provide a proof of Proposition \ref{prop:transience_non_zero}.
\begin{proof}
	Let $N$ be fixed and assume $y_i(0) > 0$ for  some $i \in [-N,\cdots,N]$ and $y_j(0) = 0$ for all $j \neq i$. From Proposition \ref{prop:monotone_transience}, it suffices to consider this case due monotonicity.  From Equations (\ref{eqn:ode_fluid_transience}), it is clear that there exists an $\varepsilon > 0$ such that $y_j(t) > 0$ for all $0 < t \leq \varepsilon$ and all $j \in \{-N,\cdots,N\}$ such that $a_{i-j} > 0$. This follows from the Lipschitz continuity of the functions $y_j(\cdot)$ and the right derivative of $y_j(\cdot)$ at time $0$ is equal to $\lambda > 0$. Now, to conclude the proof, we must argue that there exists a  $\varepsilon > 0$ such that $y_j(t) > 0$ for all $0 < t \leq \varepsilon$ and \emph{for  all} $j \in \{-N,\cdots,N\}$. We do so by induction as follows. Consider a $k \in \{-N,\cdots,N\}$ such that $y_k(0) = 0$, and all $j \in \{-N,\cdots,N\}$ such that $a_{k-j} > 0$ has $y_j(0) = 0$, but there exists a $j^{'} \in \{-N,\cdots,N\}$ such that $a_{k-j^{'}} > 0$ and $a_{i - j^{'}} > 0$, where $y_i(0) > 0$. Essentially, consider a coordinate $k \in \{-N,\cdots,N\}$, which is a `second hop' neighbor of coordinate $i$. Since $j^{'} \in \{-N,\cdots,N\}$ is such that $a_{j^{'}-i} > 0$, we have $\liminf_{t \downarrow 0} \frac{y_{j^{'}}}{t} := \delta > 0$. We claim that this implies that $\liminf_{t \downarrow 0} \frac{y_{k}(t)}{t} > 0$. Assume on the contrary that $\lim_{t \downarrow 0} \frac{y_{k}(t)}{t} = 0$. This implies there exists a sequence $t_1 > t_2 >\cdots$ such that $ \lim_{r \rightarrow \infty} t_r = 0$ with the property that $ \lim_{r \rightarrow \infty} \frac{y_{k}(t_r)}{t_r} = 0$, but  $ \liminf_{r \rightarrow \infty} \frac{y_{j^{'}}(t_r)}{t_r} \geq  \delta /2$. Thus, the departure rate at queue $k$ at time $t_r$ is at most $\frac{ y_{j}(t_r) }{a_{j-j^{'}} y_{j^{'}}(t_r) }$, which tends to $0$ as $r$ goes to infinity. From Equations (\ref{eqn:ode_fluid_transience}), this implies that $\frac{d}{dt} y_k(t_r)$ is converging to $\lambda$ as $r$ goes to infinity, contradicting  $\lim_{t \downarrow 0} \frac{y_{k}}{t} = 0$. Thus, it has to be the case that $\liminf_{t \downarrow 0} \frac{y_{k}(t)}{t} > 0$. Then by induction on the number of hops of a coordinate $l$ to $i$, one can conclude the proof of the Proposition.

	\begin{comment}
	there exists an $\epsilon > 0$, such that for all $j \in \mathbb{Z}$ with $a_j > 0$, $y_j(\epsilon) > 0$ and $y_i(\epsilon) > 0$. This follows since for all $t \geq 0$ $y_i(t) > y_i(0) - t$ and $\frac{d}{dt}y_j(t)$ evaluated at $t=0$ is $\lambda > 0$, and $y_j(\cdot)$ is Lipschitz continuous. Now iterating this argument, we can find another time $\epsilon^{'}>0$ such that at time $\epsilon + \epsilon^{'}$, all coordinates $j \in \{-N,\cdots,N\}$ such that there exists $k,l \in \mathbb{Z}$ not necessarily distinct with $a_k > 0$ and $a_l > 0$ and $i + k + l =j$ will satisfy $y_j(\epsilon + \epsilon^{'}) > 0$. As the interference sequence $\{a_i\}_{i \in \mathbb{Z}}$ is irreducible and there are only finitely many coordinates, after a finite such steps, all coordinates will be strictly positive.
	\end{comment}
\end{proof}

We now prove Proposition \ref{prop:transience_all_infinity}.
\begin{proof}
	Assume $\lim_{t \rightarrow \infty} y_0(t) = \infty$ and there exists a $k \in \{1,\cdots,N\}$ such that $\liminf_{t \rightarrow \infty}y_k(t) := C < \infty$, but $\lim_{t \rightarrow \infty}y_{k-1}(t) = \infty$. We will show that this is not possible, thereby completing the proof. More precisely, we will argue that for any $C \geq 0$, it must be the case that $\liminf_{t \rightarrow \infty}y_k(t) \geq C$. From the hypothesis, since $\lim_{t \rightarrow \infty} y_{k-1}(t) = \infty$, for all $M \geq 0$, there exists $t_M \geq 0$, such that for all $ t \geq t_M$, we have $y_{k-1}(t) \geq M$. Let $C \geq 0$ be arbitrary and choose $M > \max(0, Ca_1^{-1}(\lambda^{-1}-1))$. Let $t_0 \geq t_M$ be such that $y_k(t_0) = C$. If no such $t_0$ exists, then $\liminf_{t \rightarrow \infty} y_k(t) \geq C$. If such a $t_0$ exists, then from Equation (\ref{eqn:ode_fluid_transience}), the derivative of $y_k(\cdot)$ at time $t_0$ satisfies   $\frac{d}{dt}y_k(t) \vert_{t = t_0} \geq \lambda - \frac{C}{C + a_1 M} > 0$, which is strictly positive from the choice of $M$. Thus from the Lipschitz continuity of $ y_{k}(\cdot)$, we have that $\liminf_{t \rightarrow \infty} y_k(t) \geq C$. As $C$ was arbitrary, this concludes that $\lim_{t \rightarrow \infty} y_k(t) = \infty$.
\end{proof}

We now prove Proposition \ref{prop:monotone_transience}.
\begin{proof}
	This proof essentially follows from monotonicity of the stochastic dynamics stated in Proposition \ref{lem:mono1}, and the definition of fluid-limit equation. Consider a non-negative sequence $(z_k)_{k \in \mathbb{N}}$ such that $z_k \rightarrow \infty$ and a sequence of two initial conditions $(X^{(k)}(0))_{k \in \mathbb{N}}$ and $(\widetilde{X^{(k)}}(0)_{k \in \mathbb{N}}$ such that for all $k \in \mathbb{N}$, we have $X^{(k)}(0), \widetilde{X^{(k)}}(0) \in \mathbb{N}^{2N + 1}$. Furthermore, assume the afore mentioned sequences are such that $\lim_{k \rightarrow \infty} \frac{X^{(k)}(0)}{z_k} = [y_{-N}(0),\cdots,y_{N}(0)]$ and $\lim_{k \rightarrow \infty} \frac{\widetilde{X^{(k)}}(0)}{z_k} = [\tilde{y}_{-N}(0),\cdots,\tilde{y}_{N}(0)]$, the two initial conditions under consideration in the proposition. Since, we know that for all $i \in \{-N,\cdots,N\}$, the inequality $y_i(0) \geq \tilde{y}_i(0)$ holds, we can choose the sequences $(X^{(k)}(0))_{k \in \mathbb{N}}$ and $(\widetilde{X^{(k)}}(0)_{k \in \mathbb{N}}$ such that for all $k \in \mathbb{N}$ and all coordinates $i \in \{-N,\cdots,N\}$, we have $X^{(k)}_i(0) \geq \widetilde{X^{(k)}_i}(0)$. Now having considered such a sequence, the proof will follow essentially from the monotonicity of the dynamics and Proposition \ref{prop:transience_fluid_limit}. To do so, we will set some notation. For any vector $x \in \mathbb{N}^{2N+1}$, denote by the process $\mathbf{Y}^{(x)}(\cdot)$ to be the process in consideration in Theorem \ref{thm:transience_quant} with the initial condition $\mathbf{Y}^{(x)}(0) = x$. Thus, from the monotonicity of the dynamics and the choice of the sequences, we have for all $k \in \mathbb{N}$, and all $t \geq 0$, $z_k^{-1}\mathbf{Y}^{(X^{(k)}(0))}(z_k t) \geq z_k^{-1} \mathbf{Y}^{(\widetilde{X^{(k)}}(0))}(z_k t) $ almost surely. Thus, from Proposition \ref{prop:transience_fluid_limit}, we know that as $k$ goes to infinity, the processes $z_k^{-1}\mathbf{Y}^{(X^{(k)}(0))}(z_k t) \geq z_k^{-1} \mathbf{Y}^{(\widetilde{X^{(k)}}(0))}(z_k t) $ converge in probability to the fluid limit described in Equation (\ref{eqn:ode_fluid_transience}) with the appropriate initial conditions. However, since for all $k \in \mathbb{N}$, all $i \in \{-N, \cdots, N\}$ and all $ t \geq 0$, $z_k^{-1}\mathbf{Y}^{(X^{(k)}(0))}(z_k t) \geq z_k^{-1} \mathbf{Y}^{(\widetilde{X^{(k)}}(0))}(z_k t) $ holds almost surely, even the fluid limits will satisfy this inequality, i.e., for all $t \geq 0$, $y_i(t) \geq \tilde{y}_i(t)$.

\end{proof}

We now prove the fluid limit scaling of the Markov Process and establish Equations (\ref{eqn:ode_fluid_transience}) as an appropriate law of large numbers for the original stochastic dynamics. 
\begin{proposition}
	Consider a sequence of deterministic initial conditions \\ $([Y_{-N}^{(k)}(0),\cdots,Y_{N}^{(k)}(0)])_{k \in \mathbb{N}}$ for the Markov Chain $[Y_{N}(\cdots),\cdots,Y_N(\cdot)]$ and a sequence of positive integers $(z_k)_{k \in \mathbb{N}}$ with $\lim_{k \rightarrow \infty}z_k = \infty$ such that the limit $\lim_{k \rightarrow \infty} z^{-1}_k([Y_{-N}^{(k)}(0),\cdots,Y_{N}^{(k)}(0)]) := [y_{-N}(0),\cdots,y_N(0)]:= y(0)$ exists. Then for all $T > 0$ and $\delta > 0$, we have
	\begin{align*}
	\lim_{k \rightarrow \infty} \mathbb{P} \left[  \inf_{f \in \mathcal{S}(y(0))}\sup_{t \in [0,T]} | z_k^{-1}\mathbf{Y}(z_kt) - f(t)| > \delta  \right] = 0.
	\end{align*}
	\label{prop:transience_fluid_limit}
\end{proposition}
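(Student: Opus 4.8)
The plan is to run the standard fluid-limit (functional law of large numbers) argument for density-dependent jump Markov chains, in the spirit of \cite{sbd_tit} (Theorem~6) and feeding into the transience criterion of \cite{meyn_transience}; the one non-routine point is that the departure rate functions $\mathbf v\mapsto R_i(\mathbf v):=v_i/\sum_{j}a_{i-j}v_j$ (with $0/0:=0$) are discontinuous at the origin. First I would write $\mathbf Y(\cdot)=[Y_{-N}(\cdot),\dots,Y_N(\cdot)]$ in its random time-change form, using independent unit-rate Poisson processes $\mathcal N_i^A$ (arrivals) and $\mathcal N_i^D$ (potential departures):
\[
Y_i(t)=Y_i^{(k)}(0)+\mathcal N_i^A(\lambda t)-\mathcal N_i^D\!\left(\int_0^t R_i(\mathbf Y(u))\,du\right),\qquad i\in\{-N,\dots,N\}.
\]
Two features make this tractable: there are only finitely many queues, so no infinite-dimensional compactness is involved; and each $R_i$ is homogeneous of degree $0$, i.e.\ $R_i(c\mathbf v)=R_i(\mathbf v)$ for $c>0$. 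Setting $\bar{\mathbf Y}^{(k)}(t):=z_k^{-1}\mathbf Y(z_k t)$, changing variables in the departure clock and invoking this homogeneity, the rescaled equations read
\[
\bar Y_i^{(k)}(t)=\bar Y_i^{(k)}(0)+z_k^{-1}\mathcal N_i^A(\lambda z_k t)-z_k^{-1}\mathcal N_i^D\!\left(z_k\int_0^t R_i(\bar{\mathbf Y}^{(k)}(u))\,du\right),
\]
with $\bar{\mathbf Y}^{(k)}(0)\to y(0)$ by hypothesis.

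Second, I would establish relative compactness of $\{\bar{\mathbf Y}^{(k)}\}$ in $C([0,T],\mathbb R^{2N+1})$. Since arrivals to queue $i$ occur at rate $\lambda$ and departures at rate at most $1$, the number of jumps of $Y_i$ during any real-time interval of length $h$ is dominated by increments of $\mathcal N_i^A(\lambda\,\cdot)$ and $\mathcal N_i^D(\cdot)$ over comparable intervals; combining this with the functional strong law $z_k^{-1}\mathcal N(z_k\cdot)\to(\cdot)$, valid uniformly on $[0,T]$ almost surely, and the almost-sure uniform modulus of continuity of the rescaled Poisson processes, one obtains, on a probability-one event and uniformly in $k$, the Lipschitz-type bound $\sup_{|t-s|\le h,\,s,t\in[0,T]}|\bar Y_i^{(k)}(t)-\bar Y_i^{(k)}(s)|\le(\lambda+1)h+o(1)$. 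Hence every subsequence has a further subsequence along which $\bar{\mathbf Y}^{(k)}\to f=[f_{-N},\dots,f_N]$ uniformly on $[0,T]$, with $f$ being $(\lambda+1)$-Lipschitz, nonnegative, and $f(0)=y(0)$; writing $B_i^{(k)}$ for the rescaled cumulative departure count of queue $i$, along the same subsequence $B_i^{(k)}\to b_i$ with $b_i$ nondecreasing and $1$-Lipschitz, so $f_i(t)=f_i(0)+\lambda t-b_i(t)$ and $b_i(t)=\int_0^t g_i(s)\,ds$ for some measurable $g_i:[0,T]\to[0,1]$.

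Third, I would identify each limit $f$ as an element of $\mathcal S(y(0))$, i.e.\ show that $f$ solves (\ref{eqn:ode_fluid_transience}) for a.e.\ $t$, equivalently $g_i(s)=R_i(f(s))$ a.e. On any open subinterval where $f_i$ stays above some $\eta>0$, for $k$ large one has $\bar Y_i^{(k)}>\eta/2$ and $\sum_j a_{i-j}\bar Y_j^{(k)}\ge a_0\eta/2>0$ there, so by degree-$0$ homogeneity $R_i(\bar{\mathbf Y}^{(k)})$ agrees with the continuous function $s\mapsto f_i(s)/\sum_j a_{i-j}f_j(s)$ up to a uniformly small error; dominated convergence identifies the limit of $\int R_i(\bar{\mathbf Y}^{(k)})\,ds$, and the functional law for $\mathcal N_i^D$ then yields $g_i(s)=f_i(s)/\sum_j a_{i-j}f_j(s)$ on that interval. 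The step I expect to be the main obstacle is the behaviour at times where $f_i=0$, where $R_i$ is discontinuous and one cannot pass to the limit directly; the resolution, as in \cite{sbd_tit}, is to show the level set $\{t\in[0,T]:f_i(t)=0\}$ has Lebesgue measure zero --- a positive-measure sojourn of $f_i$ at $0$ would force $g_i=\lambda$ on it (an absolutely continuous function has vanishing derivative a.e.\ on a level set), contradicting $g_i=R_i(f)=0$ there, while the prelimit dynamics, with arrivals entering at the strictly positive fluid rate $\lambda$, prevent $f_i$ from remaining pinned at $0$ --- after which (\ref{eqn:ode_fluid_transience}) holds a.e.\ and $f\in\mathcal S(y(0))$. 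I note that in the only use made of this proposition, the proof of Theorem~\ref{thm:transience_quant} where $\lambda>1/\sum_j a_j$ and, after the reductions made there, $y(0)$ is strictly positive and unimodal, Propositions \ref{prop:transience_unimodal}, \ref{prop:transience_non_zero} and \ref{prop:monotone_transience} force the fluid trajectories to stay strictly positive, so this obstacle never arises and the local argument above already covers all of $[0,T]$.

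Finally, I would deduce the stated convergence in probability by the routine subsequence argument: were it false, there would be $\delta,\varepsilon>0$ and a subsequence along which $\mathbb P\big[\inf_{g\in\mathcal S(y(0))}\sup_{t\in[0,T]}|\bar{\mathbf Y}^{(k)}(t)-g(t)|>\delta\big]\ge\varepsilon$; extracting from it a further subsequence with $\bar{\mathbf Y}^{(k)}\to f\in\mathcal S(y(0))$ uniformly in probability would give $\inf_{g\in\mathcal S(y(0))}\sup_t|\bar{\mathbf Y}^{(k)}-g|\le\sup_t|\bar{\mathbf Y}^{(k)}-f|\to0$ in probability, a contradiction. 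The infimum over $\mathcal S(y(0))$ in the statement is precisely what lets this go through without proving uniqueness for the discontinuous-right-hand-side system (\ref{eqn:ode_fluid_transience}): only the inclusion that every subsequential limit lies in $\mathcal S(y(0))$ is needed.
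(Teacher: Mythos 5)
Your proposal is correct and takes essentially the same route as the paper: write $\mathbf Y$ in random-time-change form with independent unit-rate Poisson clocks, rescale, obtain relative compactness in $C([0,T],\mathbb R^{2N+1})$ via a Lipschitz bound on the rescaled trajectories (Arzel\`a--Ascoli), identify every subsequential limit with an element of $\mathcal S(y(0))$ using the degree-$0$ homogeneity of the rate map, and finish by a routine subsequence argument for convergence in probability. The only cosmetic differences are that the paper phrases the conclusion as a proof by contradiction and controls the Poisson fluctuations with a large-deviations estimate followed by Borel--Cantelli along a carefully chosen subsequence, whereas you invoke the functional strong law directly; and you note the degree-$0$ homogeneity of $R_i$ explicitly, which the paper uses tacitly. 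One small remark about your treatment of the boundary: the argument that a positive-measure sojourn of $f_i$ at $0$ forces $g_i=\lambda$ a.e.\ there, "contradicting $g_i=R_i(f)=0$", presupposes the identification $g_i=R_i(f)$ on precisely the set where it has not yet been established, so as written it is circular; the substantive content must come from the prelimit estimate you gesture at (that the rescaled departure rate from queue $i$ is dominated by $\bar Y_i^{(k)}/\!\sum_j a_{i-j}\bar Y_j^{(k)}\to0$ wherever the limiting interference stays bounded away from zero). That said, you correctly observe that in the one place this proposition is used — after the unimodality reductions in Theorem~\ref{thm:transience_quant} — the fluid trajectories remain strictly positive, so the obstacle never materialises; and the paper's own derivative-identification step passes over the same discontinuity without comment, so your write-up is in fact the more careful of the two.
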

\begin{proof}
	The proof of this is quite standard (for example \cite{sbd_tit}, Theorem $6$) and is produced here for completeness. This can be proved by contradiction. Assume that there exists an $\epsilon > 0$ and a sequence $(z_k)_{k \in \mathbb{N}}$ such that $z_k \rightarrow \infty$ such that 
	\begin{align}
	\limsup_{k \in \mathbb{N}} \mathbb{P}\left[  \inf_{f \in \mathcal{S}(y(0))}\sup_{t \in [0,T]} | z_k^{-1}\mathbf{Y}(z_kt) - f(t)| > \epsilon  \right] \geq \epsilon.
	\label{eqn:transience_fluid_contra}
	\end{align}
	Without loss of generality, we can assume the above inequality to hold for all $k \in \mathbb{N}$, by appropriately choosing the sequence $(z_k)_{k \in \mathbb{N}}$. The trajectories of the process $\mathbf{Y}^{(k)}(\cdot)$ can be written as 
	\begin{equation*}
	Y_i^{(k)}(t) = Y_i^{(k)}(0) + A_i^{(k)}(\lambda t) - D_i^{(k)} \left( \int_{s=0}^{t} \frac{Y_i^{(k)}(s)}{ \sum_{j \in \mathbb{Z}} Y_j^{(k)}(s)a_{i-j}  } ds   \right),
	\end{equation*}
	where $(A_i^{(k)}(\cdot))_{i=-N}^{N}$ and $(D_{i}^{(k)}(\cdot))_{i=-N}^{N}$ are i.i.d. unit rate Poisson Point Process on $\mathbb{R}_{+}$. One can rewrite the above equations by a change of variable as 
	\begin{equation*}
	 \frac{1}{z_k}	Y_i^{(k)}(z_k t) = \frac{1}{z_k}Y_i^{(k)}(0) + \frac{1}{z_k}A_i^{(k)}(\lambda z_k t) - \frac{1}{z_k}D_i^{(k)} \left( \int_{s=0}^{z_k t} \frac{Y_i^{(k)}(s)}{ \sum_{j \in \mathbb{Z}} Y_j^{(k)}(s)a_{i-j}  } ds   \right),
	\end{equation*}
	which by a change of variables in the departure process, can be re written as 
	\begin{equation*}
	\frac{1}{z_k}	Y_i^{(k)}(z_k t) = \frac{1}{z_K}Y_i^{(k)}(0) + \frac{1}{z_k}A_i^{(k)}(\lambda z_k t) - \frac{1}{z_k}D_i^{(k)} \left( z_k \int_{l=0}^{ t} \frac{Y_i^{(k)}(z_k l)}{ \sum_{j \in \mathbb{Z}} Y_j^{(k)}(z_k l)a_{i-j}  } dl  \right).
	\end{equation*}
	We can rewrite the above equation as a sum of a deterministic part plus an error term as 
	\begin{equation*}
		\frac{1}{z_k}	Y_i^{(k)}(z_k t) = \frac{1}{z_K}Y_i^{(k)}(0) + \frac{1}{z_k} \lambda z_k t - \frac{1}{z_k} z_k \int_{l=0}^{ t} \frac{Y_i^{(k)}(z_k l)}{ \sum_{j \in \mathbb{Z}} Y_j^{(k)}(z_k l)a_{i-j}  } dl  + \delta_k(t),
	\end{equation*}
	where the stochastic process $\delta^{(k)}_i(\cdot)$ satisfies 
	\begin{equation*}
	\sup_{t \in [0,T]} | \delta^{(k)}_i(t)| \leq \frac{1}{z_k} \sup_{t \in [0,T]}| A_i^{(k)}(z_k t) - z_k t| + \frac{1}{z_k} \sup_{t \in [0,T]} | D_i^{(k)}(z_k t) - z_kt|.
	\end{equation*}
	From standard results, (for example \cite{Poisson_LDP}), we have the following large deviations for the unit rate Poisson Process :
	\begin{lemma}
		Let $\Xi$ be a unit rate Poisson Process on $\mathbb{R}_{+}$. Then for all $T > 0$ and $\lambda > 0$, it holds that 
		\begin{align*}
		\mathbb{P} \left[ \sup_{0 \leq t \leq T} | \Xi(t) - t| \geq \lambda T \right] \leq e^{-T h(\lambda)} + e^{- T h(-\lambda) },
		\end{align*}
		where the function $h(\lambda ):= (1+\lambda) \log (1+ \lambda ) - \lambda$. In the above formula, it is understood that $h(- \lambda)	= + \infty$ if $\lambda > 1$.
	\end{lemma}
The above lemma implies that there exists a subsequence of $z_k$ denoted by $z_{k(l)}$ and another sequence $\varepsilon(l)$ with $\lim_{l \rightarrow \infty} \varepsilon(l) = 0$ such that $\sum_{l \geq 1} \mathbb{P}[ |\delta_i^{(k(l))} | \geq \varepsilon(l)] < \infty$. For example, the following particular choice of $k(l)$ can be verified to satisfy the above statement :
\begin{equation*}
\begin{cases} k(1) = 1, \\ k(l) := \min{k > k(l-1): z_k \geq l}, \text{    }&l \geq 2, \\
\epsilon_l = l^{-1/4}, \text{    }& l \geq 1. \end{cases}
\end{equation*}
Without loss of generality, we can assume that the finiteness property holds for the original sequence $k \geq 1$. Thus, by Borel-Cantelli lemma, almost surely, $\lim_{k \rightarrow \infty} \sup_{t \in [0,T]} | \delta_i^{(k)}(t)| = 0$. As there are only a finitely many coordinates $i \in \{-N,\cdots,N\}$, we have almost surely,\\ $\lim_{k \rightarrow \infty}\sup_{i \in \{-N,\cdots,N\}} \sup_{t \in [0,T]} | \delta_i^{(k)}(t)| = 0$. 
\\

Now consider the random function $\omega_k(t) := \int_{s=0}^{t} \frac{Y_i^{(k)}(z_k s)}{\sum_{j \in \mathbb{Z}} Y_j^{(k)}(z_k s)a_{i+j}} ds$, which is Lipschitz for each sample path, i.e., for all $0 \leq t \leq u$ and $k \in \mathbb{N}$, we have
\begin{align*}
\omega_i^{(k)}(u) - \omega_i^{(k)}(t) =  \int_{s=t}^{u} \frac{Y_i^{(k)}(z_k s)}{\sum_{j \in \mathbb{Z}} Y_j^{(k)}(z_k s)a_{i+j}} ds \leq (t-u) \text{  } a.s.
\end{align*}
Thus, from the Arzela-Ascoli lemma, almost surely, there exists a subsequence $k(l)$ such that $\omega_i^{(k(l))}(\cdot)$ converges uniformly on $[0,T]$ to a Lipschitz continuous function $D_i(\cdot)$. This, along with the bound on $|\delta_i^{(k)}(\cdot)|$ yields that there exists a random sub-sequence $k(l)$ such that, almost surely 
\begin{align*}
\lim_{l \rightarrow \infty}\frac{1}{z_{k(l)}} Y_i^{(k(l))}(z_k t) = y_i(0) + \lambda t - D_i(t), 
\end{align*}
where the convergence happens uniformly over $[0,T]$. Since $D_i(t)$ is Lipschitz continuous on the interval $[0,T]$, it is differentiable almost-everywhere on $[0,T]$ and its derivative, whenever it exists is defined by 
\begin{align*}
\frac{d}{dt}D_i(t) := \lim_{h \downarrow 0} \lim_{l \rightarrow \infty} \int_{s=t}^{t+h} \frac{Y_{i}^{(k(l))}(z_{k(l)} s)}{\sum_{j \in \mathbb{Z}} a_{i-j} Y_j^{(k(l))} (z_{k(l)} s)    }ds &= \lim_{h \downarrow 0}  \int_{s=t}^{t+h} \frac{y_i(s)}{\sum_{j \in \mathbb{Z}}a_{i-j}y_j(s)},\\
& = \frac{y_i(t)}{\sum_{j \in \mathbb{Z}} a_{i-j}y_j(t)}.
\end{align*}
for all $t \in [0,T]$ whenever the limit in $h$ exists. As the function $D_i(\cdot)$ is Lipschitz, the above limit in $h$ will exists for $t \in [0,T]$, Lebesgue almost-everywhere.
\\

Thus, we have shown that given a sequence $(z_k)_{k \in \mathbb{N}}$, there exists a further random subsequence such that $\frac{1}{z_{k(l)}}Y_i^{(k(l))}(z_{k(l)}(t))$ converges almost surely to a Lipschitz continuous function defined by the fluid trajectories in Equation (\ref{eqn:ode_fluid_transience}). Thus, by standard results, $\frac{1}{z_k}Y_i(z_kt)$ converges in probability to a Lipschitz continuous function, uniformly on the interval $[0,T]$, thereby contradicting Equation (\ref{eqn:transience_fluid_contra}).
\end{proof}

\section{Discussion and Conclusion}

\begin{figure}
	\centering
	\includegraphics[scale=0.4]{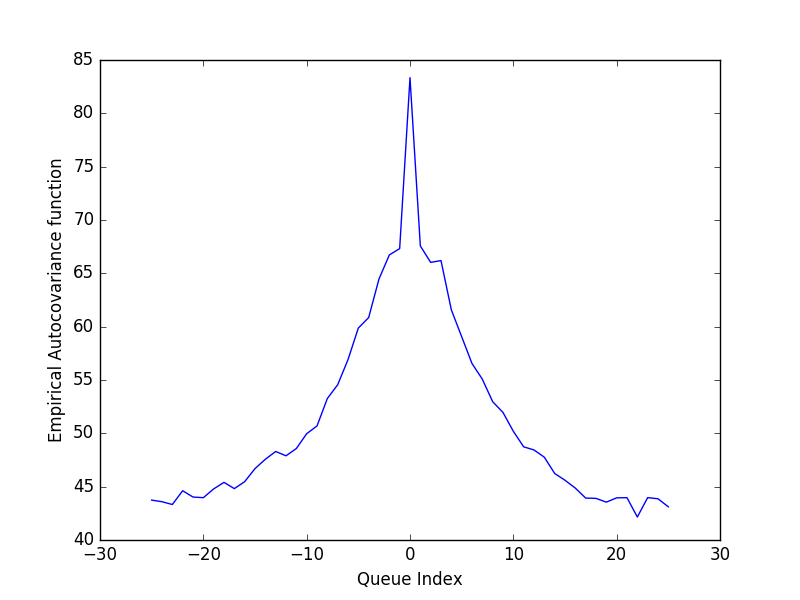}
	\caption{A  plot of the empirical covariance function of queue lengths in steady state. We consider $d=1$ and $51$ queue placed on a ring. The arrival $\lambda = 0.1419$ while $\lambda_c = 1/7$ and the interaction function is $a_i = 1$ if $|i| \leq 3$ and $0$ otherwise.}
	\label{fig:queue_corr}
\end{figure}

In this paper, we introduce a model of infinite spatial queueing system with the queues interacting with each other in a translation invariant fashion. This model is neither reversible nor admits any mean-field type approximations to analyze it. In the present paper, we analyzed this model using rate conservation and coupling arguments, which can be of interest to study other large interacting queueing systems. We establish a sufficient condition for stability which we also conjecture to be necessary. Surprisingly, we are able to compute an exact formula for the mean number of customers in steady state in any queue. Furthermore, we identify a subset of the stability region in which the stationary solution with finite second moment is unique. Interestingly however, we see that our system is sensitive to initial conditions. We construct for every $\lambda$, both a deterministic and translation invariant random initial conditions, such that the queue lengths diverges to infinity almost surely, even though the stability conditions hold. 
\\

However, our paper leaves open many intriguing questions as discussed in Section \ref{sec:open_questions}. In particular, the correlation across queues is interesting as it can be numerically simulated and is shown in Figure \ref{fig:queue_corr}. In Figure \ref{fig:queue_corr}, we are empirically estimating the function $i \rightarrow \mathbb{E}[(x_0(t) - \mu)(x_i(t) - \mu)]$, where $\mu$ is the mean queue length given in the formula in Theorem \ref{thm:main_stability}. However, we cannot simulate an infinite system and hence consider a finite system of $51$ queues placed on a ring (i.e., one dimensional torus). We use the interaction function $a_i = 1$ if $|i| \leq 3$ and $0$ otherwise. The critical arrival rate is $0.14285$ and we used a $\lambda = 0.1419$ to simulate. The mean queue length in this example is $21.18$. We estimate the function $\mathbb{E}[(x_0(t) - \mu)(x_i(t) - \mu)]$ by  collecting many independent samples approximating the steady state queue lengths $\{x_{i}^{(25)}\}_{i \in [-25,25]}$. For each collected sample, we evaluate an empirical covariance function by setting the value at $i \in [-25,25]$ to be $(x^{(25)}_{i} - \mu)(x^{(25)}_0-\mu)$, where $\mu$ is the mean queue length equal to $21.18$ in this example. We plot after averaging over many such functions computed on independent queue-length samples. From the plot, the strong positive correlations are very evident, as the function plotted is always large and positive. The figure also supports our intuition that the correlations must decay with distance as one would guess, but yields no concrete insight for the exact nature of this decay, for instance does the correlations decay polynomially or exponentially with the distance. Exploring this and other related questions in our model  is an exciting line of future work.

\section*{APPENDIX}
\begin{appendix}

		\section{Construction of the Process}
	\label{appendix_construction}
	
	In this section, we precisely describe the construction of the process alluded to in Section \ref{sec:math_framework}. To show that the dynamics is well defined, it suffices to establish that the value of the process
	at some finite time $T<\infty$  can be expressed as a deterministic function of an \emph{arbitrary}
	initial state $\{x_i(s)\}_{i \in \mathbb{Z}^d}$ for any $T>s > -\infty$ and the driving data
	$(\mathcal{A}_i,\mathcal{D}_i)_{i \in \mathbb{Z}^d}$. Roughly speaking, the queues evolve by 
	adding a customer to queue $i$ at times $A_{q}^{(i)}$ and removing a customer from a queue $i$ 
	at times  $D_{q}^{(i)}$ if $U_{q}^{(i)} \leq \frac{x_i(D_{q}^{(i)})}{\sum_{j \in \mathbb{Z}^d} a_{j-i}x_j(D_{q}^{(i)})  }$.
	In other words, we remove a customer from queue $i$ at time $D_{q}^{(i)}$ with probability
	$\frac{x_i(D_{q}^{(i)})}{\sum_{j \in \mathbb{Z}^d} a_{j-i}x_j(D_{q}^{(i)})}$ independently of everything else.
	If we had a finite collection of queues, then the above verbose description would be a sufficient
	description of the dynamics as there is a definitive `first-event' and we can sequentially order all potential events in the network in increasing order of time.
	However, the main effort in this section is to show that the dynamics described above
	in words can in fact be constructed when there are infinitely many queues.
	To show this we will need a few definitions.

	\begin{definition}
		For any $X \subset \mathbb{Z}^d$ and any $s\leq t \in \mathbb{R}$, we say that \emph{an arrival  occurs in $X$ in the interval $[s,t]$ } if $\sum_{i \in X}\mathcal{A}_i([s,t])  \geq 1$.
	\end{definition}
	
	\begin{definition}
		For any $X \subset \mathbb{Z}^d$ and any $s\leq t \in \mathbb{R}$, we say that \emph{a potential departure  occurs in $X$ in the interval $[s,t]$ } if $\sum_{i \in X} \mathcal{D}_i([s,t]) \geq 1$.
	\end{definition}
	
	\begin{definition}
		For any $X \subset \mathbb{Z}^d$ and any $s\leq t \in \mathbb{R}$, we say that \emph{a potential event occurs in $X$ in the interval $[s,t]$ } if $\sum_{i \in X}\mathcal{A}_i([s,t]) + \mathcal{D}_i([s,t]) \geq 1$ i.e., if either an arrival or a potential departure event occur.
	\end{definition}
	
	%\subsection{Construction for Finite Sets of Queues}

	We first consider the simpler problem of constructing the dynamics if the set of queues
	were a finite set $X \subset \mathbb{Z}^d$ instead of being the entire grid. For this, let $s \leq t$ be given and $\{x_i(s)\}_{i \in X} \in \mathbb{N}^{|X|}$ be arbitrary and given. From the arrival and departure process $(\mathcal{A}_i,\mathcal{D}_i)_{i \in X}$, we can identify the set of all potential events in all of the queues in $X$ as $\{s \leq t_1 < t_2 \cdots t_n \leq t \}$. This set is finite and distinct for all finite $X \subset \mathbb{Z}^d$ and all $t \geq s$, almost surely. This is the crucial property that follows since the restricted system can be thought of being driven by a Poisson Process of intensity $(\lambda+1)|X|$, which is a finite intensity process. Thus, the atoms of this process will be distinct almost surely and will be locally finite, i.e., will contain finitely many points in any compact interval of time. Given that the set of potential events $\{s \leq t_1 < t_2 \cdots t_n \leq t \}$ is finite and distinct almost surely, we can then sequentially consider the events in chronological order of time and update the state of the queues $\{x_i(s)\}_{i \in X}$, thereby uniquely and unambiguously constructing the state $\{x_i(t)\}_{i \in X}$ at time $t$.
	\\

	To show that the dynamics is well defined, we need to show that given any initial condition $\{x_i(0)\}_{i \in \mathbb{Z}^d}$, we are able to construct the state of the system $\{x_i(t)\}_{i \in \mathbb{Z}^d}$, for all $t\geq0$. Since the dynamics is translation invariant in space and time, it suffices to show that we can unambiguously construct the state $x_0(t)$ of queue $0$, at an arbitrary time $t$. 	Before we establish this, some definitions are in order.

	\begin{definition}
		A subset $S \subseteq \mathbb{Z}^d$ is said to be  connected if for all $x,y \in S$, there exists $k \geq 1$ and $x_0:=x, x_1, \cdots, x_k := y$ such that  $x_i \in S$ for all $i \in [0,k]$, and $||x_i - x_{i-1}||_{\infty}  = 1$ for all $i \in [1,k]$.
	\end{definition}
	
	\begin{definition}
		For each $x \in \mathbb{Z}^d$ and each $L \in \mathbb{N}$, denote by $B_{\infty}(x,L)$ to be the $l_{\infty}$ ball of side-length $2 \lceil \frac{L}{2} \rceil +1$ centered around $x$. Given a set $X \subset \mathbb{Z}^d$, define its \emph{L-Thickening} to be the set $\tilde{X}_L := \cup_{z \in X} B_{\infty}(z,L)$.
	\end{definition}
	
	The following is a simple and well-known result in Boolean model percolation where the size of a connected component can be upper bounded by the total progeny of a certain branching process. We provide a short proof here for completeness.
	
	\begin{lemma}
		For every $d \geq 1$ and every $L \in \mathbb{N}$ that is finite, there exists  $p > 0$, such that if each $z \in \mathbb{Z}^d$ is declared \emph{open} with probability $p$ independent of everything else, we have almost surely, every connected subset of the random subset $\cup_{z \in \mathbb{Z}^d} \mathbf{1}(z \text{ is open } ) B_{\infty}(z,L)$ to be finite.
		\label{lem:bool_perc}
	\end{lemma}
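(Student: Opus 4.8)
The plan is to reduce the statement to a standard first-moment (expected-progeny) argument for a subcritical branching process dominating the connected components of the Boolean model $\Xi := \cup_{z \in \mathbb{Z}^d} \mathbf{1}(z \text{ open}) B_\infty(z,L)$. First I would fix a deterministic finite cap: each site $z \in \mathbb{Z}^d$ is open independently with probability $p$, and each open site contributes the block $B_\infty(z,L)$, which has cardinality $m := (2\lceil L/2\rceil+1)^d$, a finite constant depending only on $d$ and $L$. Two open sites $z, z'$ have overlapping (or touching) blocks only if $\|z-z'\|_\infty \le 2\lceil L/2\rceil + 1 =: r$; hence the ``adjacency'' graph on open sites has bounded degree, with at most $N := (2r+1)^d - 1$ potential neighbors for any given site. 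So the component of $\Xi$ containing a fixed point $x$ is controlled by the cluster of open sites reachable from the (boundedly many) sites whose block covers $x$, in this bounded-degree site-percolation-like graph.

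The key steps, in order: (1) Observe that it suffices to show that, almost surely, every \emph{cluster of open sites} in the graph where $z \sim z'$ iff $0 < \|z-z'\|_\infty \le r$ is finite, since each such cluster contributes a finite union of blocks and distinct clusters give blocks at $\ell_\infty$-distance $> 2\lceil L/2\rceil+1$ apart, hence in distinct connected components of $\Xi$; conversely any connected component of $\Xi$ arises from a single such cluster. (2) Bound the size of the open cluster containing a fixed site $0$ by the total progeny of a Galton–Watson branching process in which each individual has $\mathrm{Binomial}(N, p)$ offspring: explore the cluster breadth-first, and at each newly discovered open site reveal at most $N$ previously-unexplored potential neighbors, each open independently with probability $p$; this gives a stochastic domination of the cluster size by the total progeny $\mathcal{T}$ of the $\mathrm{Binom}(N,p)$ Galton–Watson tree. (3) Choose $p$ small enough that $Np < 1$, i.e. $p < 1/N$; then the branching process is subcritical, so $\mathbb{E}[\mathcal{T}] = 1/(1-Np) < \infty$ and in particular $\mathcal{T} < \infty$ almost surely, whence the open cluster of $0$ is a.s. finite. (4) Since $\mathbb{Z}^d$ is countable, a union bound over all sites $z$ (each of whose open cluster, conditional on $z$ being open, is a.s. finite by translation invariance) shows that almost surely \emph{every} open cluster is finite; translating back via step (1), every connected subset of $\Xi$ is finite a.s.

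The main obstacle — really the only point requiring any care — is making the exploration/domination argument in step (2) precise, i.e. verifying that the breadth-first exploration of the open cluster can be coupled to the Galton–Watson tree so that each explored site contributes an \emph{independent} $\mathrm{Binom}(N,p)$ (or fewer) number of children. The standard resolution is to explore sites one at a time, maintaining a set of ``active'', ``explored'', and ``unexplored'' sites; when an active site $z$ is processed, its $\le N$ neighbors not yet explored or active each have their openness revealed now (independently, since site-openness variables are i.i.d. and none of these has been queried before), and the open ones become active. Allowing the process to over-count (treating a neighbor that is simultaneously a neighbor of two active sites as a fresh potential child each time) only increases the offspring, so the true cluster size is dominated by $\mathcal{T}$. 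Everything else — the cardinality computations, the translation-invariance union bound, the formula $\mathbb{E}[\mathcal{T}] = 1/(1-Np)$ — is entirely routine. One should also note explicitly that the claim is an existence statement (``there exists $p>0$''), so it is enough to exhibit one such $p$, namely any $p \in (0, 1/N)$.
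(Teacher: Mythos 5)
Your proof is correct and follows essentially the same strategy as the paper: a breadth-first exploration of the cluster, dominated by a subcritical Galton--Watson branching process, followed by countable additivity over the grid. The only cosmetic difference is the dominating offspring law — you use $\mathrm{Binom}(N,p)$ over the auxiliary site-percolation graph with adjacency radius $r = 2\lceil L/2\rceil+1$, whereas the paper uses ``$(L+1)^d$ children with probability $p$, else $0$'' in its exploration algorithm — but both are subcritical for small enough $p$, and the lemma only asserts existence of such a $p$, so this has no bearing on the validity of the argument.
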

	
	This is a classical result and much more general statements have been proven in \cite{bartek_percolation}. However, for completeness, we provide a simple proof of Lemma  \ref{lem:bool_perc} in Appendix $A.2$.
	\\
	%of the extended version  of the paper \cite{arxiv_version}.

	We now use Lemma \ref{lem:bool_perc} to give a construction of our process. Given $\lambda$ and 
	$\{a_i\}_{i \in \mathbb{Z}^d}$, choose $L := \sup\{ ||i||_{\infty} : i \in \mathbb{Z}_d, a_i > 0 \}$.
	Choose  time $\hat{t} >0$ such that $\exp(-(\lambda+1)\hat{t})) \geq 1-p$, where $p$
	is defined in Lemma \ref{lem:bool_perc}. Now, we will do our construction in time steps of $\hat{t}$ units. 
	\\

	We show that we can decide on the state of queue $0$ at time $T$ in an almost surely finite number of steps.
	This will then conclude that we can do so for every queue, since the model is translation invariant.
	Divide the time interval $[0,T]$ into intervals $[0,\hat{t}], (\hat{t}, 2\hat{t}], \cdots $, i.e.,
	the interval $[0,T]$ is partitioned into finitely many blocks (i.e., $\lceil T/\hat{t} \rceil$)
	with each block being of at most $\hat{t}$. Denote by $\hat{\kappa} := T/\hat{t}$ and by
	$\kappa := \lceil T/\hat{t} \rceil$, the number of time blocks.

	\begin{definition}
		Given any $0 \leq s < t$ and any $j \in \mathbb{Z}^d$, we say $j$ is \emph{open in the time interval $[s,t]$}
		if $\mathcal{A}_j([s,t]) + \mathcal{D}_j([s,t]) \geq 1$, i.e., if there is either an arrival or a
		possible departure from queue $j$ in the time interval $[s,t]$.
	\end{definition}

	To proceed with the construction, we set some further notation. For any $r \in [1,\kappa]$,
	denote by $\mathcal{O}^{(r)}$ the set of sites of $\mathbb{Z}^d$ open in the time interval 
	$[(r-1) \hat{t}, \min( r \hat{t}, T)]$. Let $\tilde{\mathcal{O}}_{L}^{(r)}$ be its L-Thickening.
	For any $j \in \mathbb{Z}^d$, denote by $\mathcal{C}_r(j)$ the connected subset of $\tilde{\mathcal{O}}_L$ containing $j$.

	%Denote by $\mathcal{C}_{1: \kappa}(j) := \cup_{r \in [1,\kappa]} \mathcal{C}_r(j)$. 

	We define 
	$\mathcal{L}_{\kappa} \subseteq \mathcal{L}_{\kappa-1} \subseteq \cdots \subseteq \mathcal{L}_1 = \mathcal{L}_0 \subset \mathbb{Z}^d$,
	to be the collection of connected subsets of $\mathbb{Z}_d$ that contain the origin in a recursive fashion as follows:
	\begin{align*}
	\mathcal{L}_{\kappa} &:= \mathcal{C}_{\kappa}(0) \\
	\mathcal{L}_{i-1} &:= \cup_{j \in \mathcal{L}_i} \mathcal{C}_{i-1}(j) \text{    } \forall i \in \{\kappa, \cdots , 2\} \\
	\mathcal{L}_0 &:= \mathcal{L}_1.
	\end{align*}
	
	It is easy to check that we have $\cup_{i = 1}^{\kappa} \mathcal{L}_i$ is finite almost surely,
	since, almost surely, for all $j \in \mathbb{Z}^d$ and all $i \in \{1, \cdots , \kappa\}$, $\mathcal{C}_i(j)$ is finite.
	\\

	The following fact is now an immediate consequence of the definitions.
	\begin{proposition}
		For all $i \in \{1,\cdots ,\kappa\}$ and all $j \in \mathcal{L}_i$ and $j^{'} \in \mathcal{L}_{i}^{c}$ such that both $j$ and $j^{'}$ are open in the time interval $[(i-1)\hat{t},i\hat{t}]$, we have $d_{\infty}(j,j^{'}) > L$.
		\label{prop:non_interaction}
	\end{proposition}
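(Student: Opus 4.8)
The plan is a proof by contradiction. Suppose that $j\in\mathcal{L}_i$ and $j'\in\mathcal{L}_i^{c}$ are both open in the $i$-th block, so that $j,j'\in\mathcal{O}^{(i)}\subseteq\tilde{\mathcal{O}}_L^{(i)}$, and suppose moreover that $d_\infty(j,j')\le L$. I will show this forces $j'\in\mathcal{L}_i$, which is the contradiction. Two ingredients are needed: a purely geometric fact, that two nearby open sites lie in a common connected component of the thickening, and a structural fact, that $\mathcal{L}_i$ is a union of whole connected components of $\tilde{\mathcal{O}}_L^{(i)}$ (those carrying at least one open site).

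For the geometric fact, let $u,v\in\mathbb{Z}^d$ with $m:=d_\infty(u,v)\le L$, and construct the lattice path $u=p_0,p_1,\dots,p_m=v$ where $p_{k+1}$ is obtained from $p_k$ by moving every coordinate that still differs from the corresponding coordinate of $v$ one unit towards it. Consecutive points differ by exactly $1$ in the $l_\infty$ norm, so this is a path in the sense of the connectedness definition. If $k\le\lceil L/2\rceil$ then $d_\infty(p_k,u)\le k\le\lceil L/2\rceil$, so $p_k\in B_\infty(u,L)$; if $k\ge m-\lceil L/2\rceil$ then $d_\infty(p_k,v)\le m-k\le\lceil L/2\rceil$, so $p_k\in B_\infty(v,L)$. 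Since $m-\lceil L/2\rceil\le L-\lceil L/2\rceil=\lfloor L/2\rfloor\le\lceil L/2\rceil$, these two ranges of indices cover $\{0,1,\dots,m\}$, and hence the entire path lies in $B_\infty(u,L)\cup B_\infty(v,L)$. Taking $u=j$, $v=j'$ and using that $j,j'$ are open in block $i$ (so $B_\infty(j,L),B_\infty(j',L)\subseteq\tilde{\mathcal{O}}_L^{(i)}$), I conclude that $j$ and $j'$ lie in a common connected component $K$ of $\tilde{\mathcal{O}}_L^{(i)}$.

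For the structural fact, recall that for $1\le i\le\kappa-1$ we have $\mathcal{L}_i=\bigcup_{m\in\mathcal{L}_{i+1}}\mathcal{C}_i(m)$, while $\mathcal{L}_\kappa=\mathcal{C}_\kappa(0)$. Since $j\in\mathcal{L}_i$, there is some $m_0$ (equal to $0$ when $i=\kappa$, otherwise in $\mathcal{L}_{i+1}$) with $j\in\mathcal{C}_i(m_0)$. As $j$ is open in block $i$ it lies in $\tilde{\mathcal{O}}_L^{(i)}$, hence in a well-defined connected component — namely $K$ — and $\mathcal{C}_i(m_0)$, being the connected component of $\tilde{\mathcal{O}}_L^{(i)}$ containing $m_0$ and meeting $K$ at $j$, must coincide with $K$ (in particular $m_0\in\tilde{\mathcal{O}}_L^{(i)}$). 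Therefore $K=\mathcal{C}_i(m_0)\subseteq\mathcal{L}_i$, and combined with the previous paragraph this gives $j'\in K\subseteq\mathcal{L}_i$, contradicting $j'\in\mathcal{L}_i^{c}$. Hence $d_\infty(j,j')>L$.

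The statement is deterministic and set-theoretic, so there is no analytic or probabilistic subtlety; the only points requiring a little care are the index bookkeeping in the geometric lemma (where $m\le L$ is exactly what makes the two ball-ranges overlap and cover $\{0,\dots,m\}$) and the convention for $\mathcal{C}_i(m)$ when $m\notin\tilde{\mathcal{O}}_L^{(i)}$. Under the natural convention $\mathcal{C}_i(m)=\{m\}$ in that degenerate case, the situation does not arise in the argument above: $j\in\mathcal{C}_i(m_0)$ together with $j$ open would force $j=m_0$ to be open, hence $m_0\in\tilde{\mathcal{O}}_L^{(i)}$ after all.
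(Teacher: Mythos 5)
Your proof is correct and takes essentially the same route as the paper: reduce to showing that two open sites at $\ell_\infty$-distance at most $L$ lie in the same connected component of the thickening, then use the fact that $\mathcal{L}_i$ is a union of such components. You supply two details the paper glosses over --- the explicit lattice-path argument in the geometric lemma and the bookkeeping for $\mathcal{C}_i(m)$ when $m$ itself is not in the thickening --- and you derive the contradiction from $d_\infty(j,j')\le L$, whereas the paper's proof assumes $d_\infty(j,j')\le L-1$ and so literally only yields $d_\infty(j,j')\ge L$; your version matches the stated strict inequality $>L$ exactly.
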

	\begin{proof}
		Observe that, for any $r \in \{1,\cdots , \kappa\}$, if we have $\mathcal{C}_r(j) \neq \mathcal{C}_r(j^{'})$ and $j$ and $j^{'}$
		are open in the time interval $[(r-1) \hat{t}, \min( r \hat{t}, T)]$, then $d_{\infty}(j,j^{'}) > L$.
		This can be seen through contradiction as follows. Assume that $j$ and $j^{'}$ are open in the time interval
		$[(r-1) \hat{t}, \min( r \hat{t}, T)]$ and $d_{\infty}(j,j^{'}) \leq L-1$. This implies that there exists a
		connected path from $j$ to $j^{'}$ in the $L$-thickening of the set of open sites in the time interval
		$[(r-1) \hat{t}, \min( r \hat{t}, T)]$. This contradicts the fact that $\mathcal{C}_r(j) \neq \mathcal{C}_r(j^{'})$. 
		Since the set $\mathcal{L}_i$ is the union of a $\mathcal{C}_i(j)$ for some set of $j$, the result follows.
	\end{proof}

	The following proposition establishes that we can construct the state of queue $0$ at time $T$.

	\begin{proposition}
		For all $i \in \{0,1, \cdots, \kappa \}$, given the state of all queues in $\mathcal{L}_{i}$ at time $i\hat{t}$,
		the state of each queue in $\mathcal{L}_{i+1}$ at time $(i+1)\hat{t}$ is obtained by running the dynamics restricted to the set $X = \mathcal{L}_{i+1}$, from time  $s = i \hat{t}$ to $T = (i+1)\hat{t}$.

	%	Algorithm 
	%	\ref{alg:finite_construction} on the input data $X = \mathcal{L}_{i+1}$, $s = i \hat{t}$, $T = (i+1)\hat{t}$ and the driving data.
		\label{prop:construction}
	\end{proposition}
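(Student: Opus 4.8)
The plan is to show that, over a single time block $[i\hat{t},(i+1)\hat{t}]$, the true trajectory of the network restricted to the sites of $\mathcal{L}_{i+1}$ obeys \emph{exactly} the finite dynamics obtained by restricting the model to $X=\mathcal{L}_{i+1}$; running that finite dynamics from the supplied initial data then recovers the correct state of $\mathcal{L}_{i+1}$ at time $(i+1)\hat{t}$. First I would note that this is a legitimate computation: since $\mathcal{L}_{i+1}\subseteq\mathcal{L}_i$, the supplied state of $\mathcal{L}_i$ at time $i\hat{t}$ contains the state of $\mathcal{L}_{i+1}$ at time $i\hat{t}$, and the restricted model on $X=\mathcal{L}_{i+1}$ is driven by the finitely many Poisson processes $\{\mathcal{A}_j,\mathcal{D}_j\}_{j\in\mathcal{L}_{i+1}}$, so on $[i\hat{t},(i+1)\hat{t}]$ it has, almost surely, only finitely many and pairwise distinct potential events, which can be processed in chronological order exactly as in the finite-$X$ construction described above.

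The key step — and essentially the only one that is not bookkeeping — is the locality claim: if $k\in\mathcal{L}_{i+1}$ is open in $[i\hat{t},(i+1)\hat{t}]$ (i.e.\ $k\in\mathcal{O}^{(i+1)}$), then $B_{\infty}(k,L)\subseteq\mathcal{L}_{i+1}$. I would prove this directly from the definitions: $k\in\mathcal{O}^{(i+1)}$ forces $B_{\infty}(k,L)\subseteq\tilde{\mathcal{O}}_L^{(i+1)}$ by definition of the $L$-thickening; and $\mathcal{L}_{i+1}$ is a union of sets $\mathcal{C}_{i+1}(\cdot)$, each of which is a connected component of $\tilde{\mathcal{O}}_L^{(i+1)}$ (possibly together with an isolated seed vertex). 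Since $k\in\tilde{\mathcal{O}}_L^{(i+1)}$, the seed case is excluded, so the connected component of $\tilde{\mathcal{O}}_L^{(i+1)}$ through $k$ lies inside $\mathcal{L}_{i+1}$; as $B_{\infty}(k,L)$ is a connected subset of $\tilde{\mathcal{O}}_L^{(i+1)}$ containing $k$, it lies in that component, hence in $\mathcal{L}_{i+1}$. This is precisely the non-interaction phenomenon of Proposition~\ref{prop:non_interaction} and the reason the $L$-thickening was built into the definition of $\mathcal{L}_\bullet$.

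With the locality claim in hand I would finish by comparing the two dynamics event by event over $[i\hat{t},(i+1)\hat{t}]$. An arrival at $j\in\mathcal{L}_{i+1}$ is intrinsic, so it acts identically in both systems. A potential departure in the true dynamics can only occur at a queue $k$ open in the block; if $k\in\mathcal{L}_{i+1}$, the locality claim shows the interference sum $\sum_{j}a_{k-j}x_j$ ranges only over $j\in B_{\infty}(k,L)\subseteq\mathcal{L}_{i+1}$, so this sum — and hence the departure decision $U^{(k)}_q\le x_k/\sum_j a_{k-j}x_j$ — is the same in the restricted dynamics, where sites outside $\mathcal{L}_{i+1}$ are simply absent. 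A queue in $\mathcal{L}_{i+1}$ not open in the block has neither an arrival nor a potential departure there, so it keeps its time-$i\hat{t}$ value in both systems; and an event outside $\mathcal{L}_{i+1}$ changes only queues outside $\mathcal{L}_{i+1}$. Processing the finitely many events in order, a straightforward induction gives that the two trajectories agree on $\mathcal{L}_{i+1}\times[i\hat{t},(i+1)\hat{t}]$, which is the proposition. I would close with the remark that iterating this over consecutive blocks, starting from $\{x_i(0)\}$ restricted to $\mathcal{L}_0$ and reading off queue $0$ at time $T$ (legitimate since $0\in\mathcal{L}_\kappa$), expresses $x_0(T)$ as a deterministic function of the initial data and $(\mathcal{A}_i,\mathcal{D}_i)_{i\in\mathbb{Z}^d}$ — a finite computation because $\cup_i\mathcal{L}_i$ is almost surely finite by Lemma~\ref{lem:bool_perc} and the choice of $\hat{t}$. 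The only genuine obstacle is the locality claim; the rest is routine once it is in place.
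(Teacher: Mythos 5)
Your proof is correct and follows the same route as the paper: both rest on the locality/non-interaction property that open sites inside $\mathcal{L}_{i+1}$ cannot have their interference affected by sites outside $\mathcal{L}_{i+1}$ during the block $[i\hat{t},(i+1)\hat{t}]$, which is exactly the content of the paper's Proposition~\ref{prop:non_interaction}, followed by an event-by-event comparison of the restricted and full dynamics. If anything, your locality claim that $B_{\infty}(k,L)\subseteq\mathcal{L}_{i+1}$ whenever $k\in\mathcal{L}_{i+1}\cap\mathcal{O}^{(i+1)}$ is a slightly sharper form of what the paper invokes (it also rules out interference from non-open sites outside $\mathcal{L}_{i+1}$, whose states the restricted dynamics never sees), so it actually makes the step that the paper summarizes ``in words'' fully rigorous.
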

	
	\begin{proof}

		For any $i$, denote by $\tilde{\mathcal{L}}_i \subset \mathcal{L}_i$ the set of queues that are active
		in the time interval $[(i-1)\hat{t},i\hat{t}]$. We know from Proposition \ref{prop:non_interaction}
		that any $j \in \mathcal{L}_{i}^{c}$ that is active in the time interval $[(i-1)\hat{t},i\hat{t}]$
		is such that $d_{\infty}(j, \tilde{\mathcal{L}}_i) > L$. In words, the queues outside $\mathcal{L}_i$
		do not interact with the active queues in $\mathcal{L}_i$ during the time interval $[(i-1)\hat{t},i\hat{t}]$. 
		Thus, to know the state of queues in $\mathcal{L}_i$ in the time interval $[(i-1)\hat{t},i\hat{t}]$,
		it suffices to look at the evolution of the dynamics inside the set $\mathcal{L}_i$
		ignoring the evolutions outside this set.  Thus, the statement of the proposition follows.
	\end{proof}

	As a corollary, for any finite $T$, and any initial state $\{x_i(0)\}_{i \in \mathbb{Z}^d}$,
	we can determine $x_0(T)$ by only looking at finitely many events of the driving data
	$(\mathcal{A}_i,\mathcal{D}_i)_{i \in \mathbb{Z}^d}$.
	Since the system is translation invariant, we can do this for all $j \in \mathbb{Z}^d$. As a result of the analysis, we present the following corollary, which will be useful later on.

	\begin{corollary}
		Given any $i \in \mathbb{Z}^d$, any $s \leq T$, and any initial condition $\{x_j(s)\}_{j \in \mathbb{Z}^d}$,
		there exists a random set $X_{i;s,T} \subset \mathbb{Z}^d$ which is a deterministic function of the driving
		data $(\mathcal{A}_i,\mathcal{D}_i)_{i \in \mathbb{Z}^d}$, such that the value of $x_i(T)$ obtained by restricting the dynamics to the set $X_{i;s,T}$ in the time interval $[s,T]$.
		\label{cor:construct_stable}
	\end{corollary}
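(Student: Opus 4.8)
The plan is to observe that Corollary~\ref{cor:construct_stable} is essentially a restatement of the construction carried out in Propositions~\ref{prop:non_interaction}--\ref{prop:construction}, combined with the time- and space-translation invariance of the driving data. First I would reduce to the case $i = 0$, $s = 0$: since $(\mathcal{A}_j,\mathcal{D}_j)_{j \in \mathbb{Z}^d}$ is invariant under the time shifts $(\theta_u)_{u \in \mathbb{R}}$ and under translations of $\mathbb{Z}^d$, any construction of $x_{0;0,T'}$ as a function of the driving data restricted to $[0,T']$ transports, by shifting time by $s$ and translating space by $i$, to a construction of $x_{i;s,T}$ with $T' = T - s$.

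For the base case I would take
\[
X_{0;0,T} := \mathcal{L}_0,
\]
the connected set built recursively in the paragraph preceding Proposition~\ref{prop:construction} from the $L$-thickenings $\tilde{\mathcal{O}}^{(r)}_L$ of the sets of open sites in the successive time blocks $[(r-1)\hat t, \min(r\hat t, T)]$, $r = 1,\dots,\kappa$. Since $\mathcal{L}_\kappa \subseteq \cdots \subseteq \mathcal{L}_1 = \mathcal{L}_0$, the set $\mathcal{L}_0$ contains every $\mathcal{L}_r$. Three points then need to be checked. (i) $\mathcal{L}_0$ is a deterministic function of $(\mathcal{A}_j,\mathcal{D}_j)_{j \in \mathbb{Z}^d}$ alone: each set $\mathcal{O}^{(r)}$ of open sites is read off directly from the atoms of the arrival and potential-departure processes, and the $L$-thickening, the connected-component map, and the recursion are all deterministic; in particular $\mathcal{L}_0$ does not depend on the initial data $\{x_j(s)\}_{j}$. (ii) $\mathcal{L}_0$ is almost surely finite: by the choice $\exp(-(\lambda+1)\hat t) \ge 1 - p$, each site is open in a given time block with probability at most $p$, independently across sites, so Lemma~\ref{lem:bool_perc} shows every connected component of the associated Boolean model is a.s.\ finite; hence each $\mathcal{C}_r(j)$ is a.s.\ finite, and $\mathcal{L}_0$, being a finite union (over $\kappa$ blocks, applied to finitely many seeds at each stage) of such components, is a.s.\ finite. (iii) Running the dynamics restricted to $\mathcal{L}_0$ over $[0,T]$ reproduces $x_0(T)$ for every initial condition: this is exactly Proposition~\ref{prop:construction}, applied block by block from $r = 1$ up to $r = \kappa$, using Proposition~\ref{prop:non_interaction} to ensure that during block $r$ any active queue outside $\mathcal{L}_r$ is at $\ell_\infty$-distance $> L$ from every active queue inside $\mathcal{L}_r$, hence exerts no influence on the evolution inside $\mathcal{L}_r$.

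The only point requiring a little care — and the natural place for the argument to be spelled out — is (iii) for an \emph{arbitrary}, possibly unbounded, initial condition: one must check the deterministic sample-path facts that, within a single block of length at most $\hat t$, a queue receiving no arrival and no potential departure in that block cannot change its value, and that any queue whose value does change lies in some component $\mathcal{C}_r(\cdot)$; the finite interference range $L$ together with the $L$-thickening in the definition of $\mathcal{L}_r$ then isolates $\mathcal{L}_r$ from the rest of $\mathbb{Z}^d$ for the purpose of computing queue lengths inside $\mathcal{L}_r$ over that block. This uses nothing about the initial data beyond the evolution rules of Section~\ref{sec:math_framework}. Assembling (i)--(iii) and then undoing the time and space shifts produces the set $X_{i;s,T}$ with the asserted properties, completing the proof.
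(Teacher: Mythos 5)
Your proposal is correct and takes essentially the same route as the paper: both set $X_{i;s,T}$ equal to the set $\mathcal{L}_1$ (which equals $\mathcal{L}_0$ by definition) constructed in the paragraph preceding Proposition \ref{prop:construction}, and justify correctness via the block-by-block isolation argument of Propositions \ref{prop:non_interaction} and \ref{prop:construction}. The paper's proof is a one-liner deferring entirely to those propositions; you simply spell out the same ingredients (determinism from the driving data, a.s.\ finiteness via Lemma \ref{lem:bool_perc}, and the reduction to $i=0,s=0$ by translation invariance) in more detail.
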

	\begin{proof}
		Setting $X_{i;s,T}$ to be equal to the set $\mathcal{L}_1$ concludes the proof. 
	\end{proof}

	\subsection{Specialization to One Dimensional Systems}

	The construction for $d=1$ is far simpler since for any finite $L$, all values of $p<1$ satisfy Lemma \ref{lem:bool_perc}. Thus given any $T$, any initial configuration $\{x_i(0)\}_{i \in \mathbb{Z}}$, given any $j \in \mathbb{Z}$, and the driving data $(\mathcal{A}_i,\mathcal{D}_i)_{i \in \mathbb{Z}}$, there almost surely exists two finite  coordinates $j_l$ and $j_r$ such that $ j_l \leq j \leq j_r$ such that there is no event in the time interval $[0,T]$ in the set of queues $\{j_l, \cdots j_l - L\}$ and in the set of queues $\{j_r, \cdots , j_r + L\}$.

	\subsection{Proof of Lemma \ref{lem:bool_perc}}

\begin{proof}
	
	Denote by the term `cluster of $j$' for some $j \in \mathbb{Z}^d$ to be the  connected component containing $j$ in the random set $\cup_{z \in \mathbb{Z}^d} \mathbf{1}(z \text{ is open } ) B_{\infty}(z,L)$. To show the lemma, it is sufficient to show that for any arbitrary $j \in \mathbb{Z}^d$, the cluster of $j$ is finite almost surely. This, along with countable additivity will establish that the cluster of all $j \in \mathbb{Z}^d$ is finite almost surely, which will prove the lemma.
	\\

	Let $j \in \mathbb{Z}^d$ be arbitrary. Consider a $p$ such that $p(L+1)^d < 1$. We can upper bound the 
	cardinality of the cluster of $j$ by the total progeny size in a certain sub-critical branching process. We sequentially construct the cluster of $j$ through the following dynamic procedure. This is a classical method to provide an upper bound on the size of the connected component of a vertex in a random graph. See for example \cite{bartek_percolation}.
	\begin{algorithm}
		\caption{Identification of the $*$-Connected Component of $j$ in $\cup_{z \in \mathbb{Z}^d} \mathbf{1}(z \text{ is open } ) B_{\infty}(z,L)$}
		\label{alg:boolean_perc}
		\begin{algorithmic}[1]
			\Procedure{Construct-Cluster}{$j$}
			\State $\mathcal{Q} \gets \{j\}$
			\State $\mathcal{U} \gets \mathbb{Z}^d \setminus \{j\}$
			\State $\mathcal{Y} \gets \emptyset$
			\Comment{We will given an upper bound to this set's cardinality}
			\While{$\mathcal{Q} \neq \emptyset$}
			\State $c \gets \text{POP}(\mathcal{Q})$ \Comment{Pop an arbitrary element from the set $\mathcal{Q}$}
			\State Set $c$ to be open with probability $p$
			\State $\mathcal{Q} \gets \mathcal{Q} \setminus \{c\}$
			\If{$c$ is declared open}
			\State $\mathcal{Q} \gets \mathcal{Q}  \cup (B_{\infty}(c,L) \cap \mathcal{U})$
			\State $\mathcal{Y} \gets \mathcal{Y} \cup B_{\infty}(c,L)$
			\State $\mathcal{U} \gets \mathcal{U} \setminus B_{\infty}(c,L)$
			\EndIf
			\State $\mathcal{Y} \gets \mathcal{Y} \cup \{c\}$
			\EndWhile
			\Return $\mathcal{Y}$
			\EndProcedure
		\end{algorithmic}
	\end{algorithm}
	
	Loosely speaking, we are performing a breadth-first exploration of the cluster of $j$.
	It is easy to see that $\mathcal{Y}$ returned by the algorithm is the cluster of $j$. 
	Moreover, it is easy to see that in each run of the while-loop, the size of $\mathcal{Y}$
	increases by at most $(L+1)^d$ with probability $p$ or remains the same with probability $(1-p)$.
	Thus, $|\mathcal{Y}|$ is stochastically dominated by the progeny size in a branching process
	where each point produces either $(L+1)^d$ off-springs with probability $p$ or no progeny with probability $1-p$.
	Since, we have that $p(L+1)^d < 1$, we know that $|\mathcal{Y}|$ is finite almost surely and
	thus, by countable additivity, all clusters of $\mathbb{Z}^d$ are finite almost surely.
	
\end{proof}

	\section{Monotonicity Proofs}
	\label{appendix_monotonicity}
	\subsection{Proof of Lemma \ref{lem:mono1}}
	\begin{proof}

		We will consider the coupling where the two systems are driven by the same arrival and departure process. Pick $\hat{t}$ as described in the construction. We will show that for all $0 \leq t\leq \hat{t}$, $x^{'}_{0}(t) \geq x_0(t)$.
		Since, the dynamics is translation invariant, this will then establish that $\{x_{i}^{'}(\hat{t})\}_{i \in \mathbb{Z}^d}$
		coordinate-wise dominates $\{x_i(\hat{t})\}_{i \in \mathbb{Z}^d}$. Since $T$ was finite, we can iterate the above argument
		in blocks of $\hat{t}$ steps and conclude the proof. 
		\\

		Denote by $\mathcal{O}$ the set of sites of $\mathbb{Z}^d$ open during the time interval $[0,\hat{t}]$ and
		by $\tilde{\mathcal{O}}_L$ its $L$-thickening. For any $j \in \mathbb{Z}^d$, denote by $\mathcal{C}(j)$ 
		the connected subset of $\tilde{\mathcal{O}}_L$ containing $j$. From the definition of $\hat{t}$, we know that,
		for all $j \in \mathbb{Z}^d$, $\mathcal{C}(j)$ is finite almost surely. Thus, we can order the events
		in $\mathcal{C}(0)$ during the time interval as $\mathcal{E}_1, \cdots, \mathcal{E}_n$ which occur at times
		$0  \leq T_1 <T_2 \cdots < T_n \leq \hat{t}$. From elementary properties $n$ is finite and $T_i < T_{i+1}$ almost surely. 
		\\

		Now, we show by induction that after the operations at all times $\{T_i\}_{i = 1}^{n}$, the ordering
		$x^{'}_j(T_i) \geq x_j(T_i)$ is maintained for all $j \in \mathcal{C}(0)$. We know that at time $0$ the 
		inequality is true. Consider the first event. If it is an arrival, then the inequality holds true  after the arrival
		since the arrivals occur in both systems. If the event $\mathcal{E}_1$ is a departure from a queue $j \in \mathcal{C}(0)$,
		then  two cases are possible. Either $x^{'}_{j}(T_{1}^{-}) \geq x_j(T_{1}^{-1}) + 1$,
		in which case the ordering $x^{'}_{j}(T_1) \geq x_j(T_1)$ is trivially true since we have at most one departure per event.
		Or, we have equality, i.e., $x^{'}_{j}(T_{1}^{-}) = x_j(T_{1}^{-})$, in which case we have the inequality of death
		probability $\frac{x_{j}^{'}(T_{1}^{-})}{\sum_{k \in \mathbb{Z}^d} a_{k-j}x_{k}^{'}(T_{1}^{-})} \leq \frac{x_{j}(T_{1}^{-})}{{\sum_{k \in \mathbb{Z}^d} a_{k-j}x_{k}(T_{1}^{-})}}$.
		We have this inequality since at time $T_{1}^{-}$, for all $k \in \mathcal{C}(0)$, we
		have $x^{'}_{k}(T_{1}^{-}) \geq x_k(T_{1}^{-})$. Since, the death probability is ordered and
		the two systems are driven by the same data, if $x^{'}_j(T_1) = x^{'}_j(T_{1}^{-}) - 1$, then it 
		must be the case that $x_j(T_1) = x_j(T_1) - 1$.
		Thus, we have that at time $T_1$, $x_{j}^{'}(T_1) \geq x_j(T_1)$ for all $j \in \mathcal{C}(0)$. Now,
		iterating the above arguments over the finitely many events, we see have the
		inequality $x_{j}^{'}(\hat{t}) \geq x_j(\hat{t})$ for all $j \in \mathcal{C}(0)$.

	\end{proof}
\subsection{Proof of Lemma \ref{lem:mono2}}
\begin{proof}
	We define two systems $\{x^{'}_j(u)\}_{j \in \mathbb{Z}^d}$ and  $\{x_j(u)\}_{j \in \mathbb{Z}^d}$ such that at time $s$,
	we have for all $j \in \mathbb{Z}^d$, $x^{'}_{j}(s) = x_j(s)$.
	We compute the  state of the queues $\{x^{'}_j(u)\}_{j \in \mathbb{Z}^d}$ for $u \geq s$ without the arrivals
	stopped in set $X$ during the time interval $[s,t]$ and evolve the system $\{x_j(u)\}_{j \in \mathbb{Z}^d}$
	with the  arrivals stopped, i.e., setting $\mathcal{A}_i([s,t]) = 0$ for all $i \in X$. Notice that at time $s$, we have $x^{'}_{k}(s) \geq x_k(s)$, for all $k \in \mathbb{Z}^d$.
	In fact, we have equality, but we represent it as an inequality to set up an induction argument.
	We first show that at time $\hat{t} + s$, we have the inequality $x^{'}_{k}(\hat{t} +s) \geq x_{k}(\hat{t} + s)$,
	for all $k \in \mathbb{Z}^d$. Now since $T$ is finite, we can iterate the above argument in blocks of time steps
	$\hat{t}$ to conclude the lemma. To prove coordinate-wise domination at time $s$ implies coordinate-wise domination
	at time $\hat{t} + s$, it suffices to show that, for any $j \in \mathbb{Z}^d$,  $x^{'}_{k}(\hat{t} + s) \geq x_k(\hat{t} + s)$,
	for all $k \in \mathcal{C}(j)$. Note that in this proof, $\mathcal{C}(j)$ is the connected component containing $j$ of
	the $L$-thickening of the set of sites open in the time-interval $[s,\hat{t}+s]$.	
	\\
	
	As above, let $j \in \mathbb{Z}^d$ be arbitrary. Denote by $\mathcal{C}(j)$ the cluster of sites that contain $j$ 
	and are open in the time interval $[s,s+\hat{t}]$. As seen before, this cluster is almost surely finite.
	Thus, there is a first event at time $T_1 \geq s$ and a last event at time $T_n \leq \hat{t}+s$ in the set $X$
	in the time interval $[s,\hat{t} +s]$. We show that the desired inequality holds through induction on the events, i.e.,
	we show that for all $i$, $\{x_{k}^{'}(T_i)\}_{k \in \mathcal{C}(j)} \geq \{x_{k}(T_i)\}_{k \in \mathcal{C}(j)}$ holds coordinate-wise. 
	\\

	If the event $\mathcal{E}_1$ is an arrival in any queue of $\mathcal{C}(j)$,
	then the inequality is trivially preserved. If the event $\mathcal{E}_1$ is a departure event from 
	queue $k \in \mathcal{C}(j)$, then there are two cases. Either $x^{'}_{k}(T_{1}^{-}) \geq x_k(T_{1}^{-}) + 1$
	or $x^{'}_{k}(T_{1}^{-}) = x_k(T_{1}^{-})$. Since, there is at most one departure per event,
	the inequality trivially holds if $x^{'}_{k}(T_{1}^{-}) \geq x_k(T_{1}^{-}) + 1$.
	If on the other hand $x^{'}_{k}(T_{1}^{-}) = x_k(T_{1}^{-})$, then the death probabilities are ordered, i.e.,
	we have $\frac{x_{k}^{'}(T_{1}^{-})}{\sum_{k \in \mathbb{Z}^d} a_{k-j}x_{k}^{'}(T_{1}^{-})} \leq \frac{x_{k}(T_{1}^{-})}{{\sum_{k \in \mathbb{Z}^d} a_{k-j}x_{k}(T_{1}^{-})}}$.
	This follows from the fact that {\color{black}at} time $T_{1}^{-}$, for all $k \in \mathcal{C}(j)$, we have $x^{'}_{k}(T_{1}^{-}) \geq x_{k}(T_{1}^{-})$.
	Thus, if there is a death in the system without stopping the arrivals, i.e., if $x_{k}^{'}(T_1) = x_{k}^{'}(T_{1}^{-}) - 1$, 
	then we will have $x_{k}(T_1) = x_{k}(T_{1}^{-}) - 1$. Hence, the inequality is preserved after the first event.
	Thus, iterating over the finitely many events, we have our desired inequality.

\end{proof}

%\section{Proof of Proposition \ref{lem_01_law}}
%\label{appendix_01_law}
%

%\section{Proof of Proposition \ref{prop:unimod_mtp}}
%\label{appendix_mtp}

\section{Proofs of the Model Extensions}

In this section, we conclude about the stability of the model extensions introduced in Section \ref{sec:model_extensions}. In particular, we will prove Propositions \ref{prop:infinite_support_existence} and \ref{prop:K_shifted_stability_result}. Proposition \ref{prop:infinite_support_existence} shows that the system that has an infinite support for the interference sequence also admits a non-trivial stability region and Proposition \ref{prop:K_shifted_stability_result} establishes the existence of a stationary solution with finite mean for the $K$-shifted system introduced in Section \ref{sec:model_extensions}.

\subsection{proof of Proposition \ref{prop:infinite_seq_stablity}}
\label{appendix:infinite_support_proof}

\begin{proof}
	The proof follows from elementary monotonicity arguments. As for the original model, we argue this using the backward construction idea. Denote by $x_{i;t}(0)$ the queue length of queue $0$ at time $0$ in the model with the infinite support interference sequence, when started empty at time $-t$. For each $K$, denote by $x_{i,t}^{(K)}(0)$ the queue length at $0$ at time $0$ in the model with $K$ truncated interference when started empty at time $-t$. The previous proposition establishes that for each $i \in \mathbb{Z}^d$ and $t \geq 0$,  $x_{i,t}(0) = \lim_{K \rightarrow \infty}x_{i,t}^{(K)}(0)$ almost surely. Moreover, from monotonicity in the dynamics, we have $\lim_{t \rightarrow \infty}x_{0,t}(0) := x_{0,\infty}(0)$ exists almost surely. Similarly for each $K$, we have the monotone limit $x_{0,\infty}^{(K)}(0) = \lim_{t \rightarrow \infty}x_{0,t}^{(K)}(0)$. From the previous results, we also have that $\mathbb{E}[x_{0,\infty}^{(K)}] = \frac{\lambda}{1 - \lambda \sum_{j \in \mathbb{Z}^d}a_j^{(K)}}$. Thus, we have $\sup_{K \in \mathbb{N}}\mathbb{E}[x_{0,\infty}^{(K)}] < \infty$ as $\lambda \sum_{j \in \mathbb{Z}^d}a_j <  1$ . This immediately yields the existence of the monotone almost sure limit $x_{0,\infty}^{(\infty)}(0) := \lim_{K \rightarrow \infty}x_{0,\infty}^{(K)}$. Furthermore, this limit satisfies $\mathbb{E}[x_{0,\infty}^{(\infty)}] = \frac{\lambda}{1 - \lambda \sum_{j \in \mathbb{Z}^d}a_j} < \infty$ as $\lim_{K \rightarrow \infty} \sum_{j \in \mathbb{Z}^d}a_{j}^{(K)} = \sum_{j \in \mathbb{Z}^d}a_j$. It remains to argue that $\lim_{t \rightarrow \infty} x_{0,t}(0) = x_{0,\infty}^{(\infty)}(0)$. 
	\\
	
	We will argue this by the simple observation that $x_{0,t}^{(K)}(0)$ is monotone in both $K$ and $t$. Thus, we have that $x_{0,\infty}^{(K)} \geq x_{0,t}^{(K)}$. Now taking a monotone limit on both sides, we obtain that $x_{0,\infty}^{(\infty)} \geq \lim_{K \rightarrow}x_{0,t}^{(K)} = x_{0,t}(0)$. Now, taking a limit on $t$, we observe that $x_{0,\infty}^{(\infty)}(0) \geq x_{0,\infty}(0)$. Now, to argue the opposite inequality, we consider $x_{0,t}(0) \geq x_{0,t}^{(K)}(0)$. Now, we take a limit on $t$ on both sides and obtain that $x_{0,\infty}(0) \geq x_{0,\infty}^{(K)}(0)$. Now taking a limit with $K$, we see that $x_{0,\infty}(0) \geq x_{0,\infty}^{(\infty)}$. Thus, it must be that $x_{0,\infty}(0) = x_{0,\infty}^{(\infty)}(0)$, which concludes the proof.
\end{proof}

\subsection{proof of Proposition \ref{prop:K_shifted_stability_result}}
\label{appendix:K_shifted_proof}

\begin{proof}
	The proof of stability follows  identical to the proof of Theorem \ref{thm:finite_PR} with the \emph{only change} being in Equation (\ref{eqn:re_norm}) to change $y_0$ to $\max(y_0,K)$, with the rest of the proof being identical. The arguments of Section \ref{sec:sids} can be repeated verbatim, as the $K$-shifted dynamics exhibits the same monotonicity when started with the initial condition of $x_i(-t) = K$ for all $i \in \mathbb{Z}^d$. Thus, the only new equation to be established for the stability program to follow is the rate conservation Equation to prove the bound in Equation (\ref{eqn:K_shifted_mean}). We use the same rate conservation equation, with the difference being in the simplification \ref{eqn:rcl_diff_eqn} with the different rate function given in Equation (\ref{eqn:K_shifted_rate_func}). The derivation is identical up to Equation (\ref{eqn:rate_conserv}) with the following steps
	\begin{align*}
	2 \lambda \mathbb{E}[\tilde{x}_0] \sum_{j \in \mathbb{Z}^d}a_j = 2 \mathbb{E}[\hat{R}_0^{(K)}(0) \sum_{j \in \mathbb{Z}^d}a_j \tilde{x}_j(0)] 
	= 2\mathbb{E}[\tilde{x}_0(0) \mathbf{1}(\tilde{x}_0(0) > K)]
	 \geq 2(\mathbb{E}[\tilde{x}_0(0)] - K)
	\end{align*}
	Thus, rearranging, yields $\mathbb{E}[\tilde{x}_0(0)] \leq \frac{\lambda + K}{1 - \lambda \sum_{j \in \mathbb{Z}^d}a_j}$.

\end{proof}

\subsection{Proof of Proposition \ref{prop:K_shifted_2ndmom}}
\label{appendix:K_shifted_proof_2ndmom}

\begin{proof}
	The proof is again identical to Propositions \ref{lem:2ndmom_auxilary} and \ref{lem:2ndmom_torus}, except with slight modifications that we indicate here.
	The program is identical to the original dynamics - namely we study the space truncated system and then write rate conservation equations. To conclude about the final infinite system, we consider a limit identical to the program carried out in Section \ref{sec:sids}. The proof of Lemma \ref{lem:2ndmom_auxilary} needs to be modified only in the conclusion as $\mathbb{E}[R_0 \sum_{i \in B_n} y_i^2 a_i] \geq 2c \mathbb{E}[R_0 y_0 \sum_{ i \in B_n} a_i y_i]$. Since $R_0(t) = \frac{y_0^{(K)}(t)}{\sum_{j \in \mathbb{Z}^d}a_j {y}_j^{(K)}(t)} \mathbf{1}({y}_0(t) > K)$. Thus, the conclusion of Proposition \ref{lem:2ndmom_auxilary} can be concluded as $\mathbb{E}[R_0 \sum_{i \in B_n} y_i^2 a_i] \geq 2c \mathbb{E}[y_0^2 \mathbf{1}(y_0 > K)]$. 
	\\
	
	Now, using this, the proof of Proposition of Lemma \ref{lem:2ndmom_torus}, the proof is identical until Equation (\ref{eqn:2ndmom_internal}) which is modified as follows.
	\begin{align*}
	0 \leq 3 \lambda \mathbb{E}[y_0^2] + C - 2c \mathbb{E}[y_0^2 \mathbf{1}(y_0 > K)] - 2 \mathbb{E}[y_0^2 \mathbf{1}(y_0 > K)],
	\end{align*}
	where $C$ absorbs all the constants independent of $y_0$ in Equation (\ref{eqn:2ndmom_internal}). Now, rearranging the above display, one arrives at the conclusion that under the conditions in Proposition \ref{prop:finite_second_moment}, the minimal stationary solution of the $K$-shifted dynamics admits a finite second moment.
\end{proof}

\section{Proof of Quantitative Statements of Bad Initial Conditions  }
\label{appendix:bad_quant_proof}

In this section, we provide a proof of Proposition \ref{prop:quant_diver_full}. In order to do so, we will need more notation and a proposition.   Let $(t_i)_{i=1}^n$ be a decreasing sequence of times defined as follows. Denote by $t_1 = n^{n2^{n+1}}$ and for all $i \in \{2,\cdots,n\}$, $t_i = \frac{\sqrt{t_{i-1}}}{ n^4}$. Thus, we see that $t_n := \frac{(t_1)^{1/2^{n-1}}}{(n^4)^{n-1}} = n^4$. For any $i \in \{1,\cdots,n\}$, let $\hat{t}_j := \sum_{i=1}^{j} t_j$. The following proposition is the main quantitative result regarding sensitivity of initial conditions.

\begin{proposition}
	Consider the system on $d=1$ and $a_i = 1$ if $|i| \leq 1$ and $a_i = 0$ otherwise. Let $\lambda \in (0, 1/3)$ and $\epsilon > 0 $ be such that $\lambda - \epsilon > 0$ and $\lambda + \epsilon < 1/3$. There exists $n_0 \in \mathbb{N}$ depending on $\lambda$ and $\epsilon$, such that for all $n \geq n_0$, if at time $0$, the initial condition satisfies, $\max(x_{n}(0),x_{-n}(0)) \geq n^{n2^{n+2}+8}$, then $\mathbb{P}[x_0(\hat{t}_n) \geq (\lambda - \epsilon)n^4] \geq 1 - n^{-4}$, where $\hat{t}_n$ is defined above.
	\label{thm:quant_diverg}
\end{proposition}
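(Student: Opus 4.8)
The strategy is to combine the monotonicity of the dynamics with a cascading ``front'' estimate: the wall of $W_n := n^{n2^{n+2}+8}$ customers at $\pm n$ forces queue $n-1$ to fill up like a pure Poisson arrival stream, which forces queue $n-2$ to fill up, and so on down to the origin, the $k$-th step of the cascade taking time of order $t_k$. First I would reduce to a clean configuration. By the reflection symmetry $i\mapsto -i$ (which fixes $\{a_i\}$ and the law of the driving data) it suffices to treat the case $x_n(0)\ge W_n$, i.e.\ a wall on the right. By Lemma \ref{lem:mono1}, lowering the initial state only lowers $x_0(\hat t_n)$, so I may assume $x_n(0)=\lceil W_n\rceil$ and all other queues empty; and by Lemma \ref{lem:mono2} (suppressing the arrival processes $\mathcal A_j$ for $j\notin\{0,1,\dots,n\}$) it suffices to lower bound $x_0(\hat t_n)$ for the dynamics truncated to $\{0,1,\dots,n\}$, with queues outside this set held at $0$, since truncation only removes interference contributions and hence produces a process dominated by the true one. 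Recall that here the departure rate of queue $i$ is $x_i/(x_{i-1}+x_i+x_{i+1})$, so a queue whose right neighbour strongly dominates it behaves essentially like its own arrival stream.

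The heart of the argument is an induction over $k=0,1,\dots,n$ that tracks, on a good event $\mathcal G_k$ of probability $\ge 1-(k+1)e^{-cn^4}$, a two-sided control on queue $n-k$: it stays between explicit decreasing thresholds on a time window ending at $\hat t_n$. The base case is that queue $n$ stays above $W_n/2$ on $[0,\hat t_n]$, which holds off an event of probability $\le e^{-cW_n}$ because it loses at most $\mathcal D_n([0,\hat t_n])=\mathrm{Poi}(\hat t_n)$ customers and $\hat t_n\le 2t_1\ll W_n$ (here $t_1^2/W_n=n^{-8}$); on this event one may, by a monotone coupling as in Lemma \ref{lem:mono1} with the wall pinned from below, replace queue $n$ by a frozen wall of height $W_n/2$. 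For the inductive step, on $\mathcal G_{k-1}$ the upstream neighbour queue $n-k+1$ is confined to an interval whose lower end $\underline\ell_{k-1}$ exceeds by an arbitrarily large factor the upper bound on queue $n-k$; hence the departure rate of queue $n-k$ stays $\le\delta$ (for a fixed $\delta\in(0,\epsilon)$) throughout window $k$, so queue $n-k$ has net drift $\ge\lambda-\delta$; applying the Poisson path large-deviation bound recalled in the proof of Proposition \ref{prop:transience_fluid_limit} on this window of length $t_k\ge t_n=n^4$ gives the desired lower threshold for queue $n-k$ off an event of probability $\le e^{-cn^4}$, and propagates it forward in time up to $\hat t_n$. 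The reason the thresholds and errors line up is precisely the numerology of the statement: $t_k^2/t_{k-1}=n^{-8}$ (so the integrated departures out of queue $n-k$ over window $k$ are at most $1$), $t_1^2/W_n=n^{-8}$ (for the first step), and $t_1=n^{n2^{n+1}}$ doubly exponential in $n$ (so the tolerated fluctuation sizes, of size polynomial in $n$ and $2^n$, are negligible against the thresholds, which are of order $t_1$ up to an $e^{-O(n)}$ factor).

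Having propagated the control to $k=n$, on $\cap_{k=0}^n\mathcal G_k$ queue $0$ satisfies $x_0(\hat t_n)\ge \underline\ell_n/2$, and since $\underline\ell_n$ is of order $e^{-O(n)}\,\lambda\,\hat t_n\ge e^{-O(n)}\lambda\,t_1=e^{-O(n)}\lambda\,n^{n2^{n+1}}$, this is far larger than $(\lambda-\epsilon)n^4$ (the stated bound is, in fact, a gross underestimate; more precisely one even obtains $x_0(\hat t_n)\ge \frac12(\lambda-\delta)n^4$ directly from the last step). A union bound over the $n+1$ good events leaves total failure probability at most $(n+1)e^{-cn^4}+e^{-cW_n}$, which is $\le n^{-4}$ once $cn^4\ge\ln(2(n+1)n^4)$, i.e.\ for all $n\ge n_0$ with $n_0$ depending only on $c=c(\lambda,\delta)$ and hence only on $\lambda$ and $\epsilon$. (Alternatively, one can route the whole argument through the fluid limit: scale space and time by $W_n$, use the quantitative estimate underlying Proposition \ref{prop:transience_fluid_limit} on the window $[0,\hat t_n/W_n]$ — legitimate only because $W_n\cdot(\hat t_n/W_n)=\hat t_n$ remains astronomically large — and lower bound the relevant component of the fluid solution from the wall initial condition by an $e^{-O(n)}$-geometric profile.)

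The main obstacle is the two-sided control, and specifically its \emph{upper} half: keeping each queue $n-k$ from over-accumulating relative to its wall queue $n-k+1$. A priori one only knows $x_{n-k}(t)\le\mathcal A_{n-k}([0,t])=\mathrm{Poi}(\lambda t)$, which is far too weak to make queue $n-k$ dominated by its wall for large $k$; one genuinely needs the self-correcting ``unimodality is preserved'' phenomenon — the stochastic counterpart of Proposition \ref{prop:transience_unimodal}, and compatible with the monotone comparison of Proposition \ref{prop:monotone_transience} — which is realised through a Lyapunov/reflected-random-walk estimate showing that whenever a downstream queue climbs to a fixed fraction of its upstream neighbour it acquires a strictly negative drift. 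Handling this together with the lower bounds, via a single first-exit-time argument in which every drift estimate only involves neighbouring queues and is therefore valid before the exit time, and then verifying that the accumulated additive errors across all $n$ windows stay below the target while the total failure probability stays below $n^{-4}$ — this is the delicate bookkeeping that the particular choice of $\hat t_n$ and of the exponent $n2^{n+2}+8$ is engineered to make go through.
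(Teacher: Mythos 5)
There is a genuine gap, and it comes from overlooking a second, decisive application of Lemma~\ref{lem:mono2}. In the paper's proof, monotonicity is used not only to truncate to $\{0,\dots,n\}$ but, crucially, to \emph{delay the arrival streams}: arrivals into queue $n-i$ are suppressed until the start of slot $i$ (of length $t_i$). Consequently queue $n-i$ is empty at the start of its slot and accumulates at most $\mathrm{Poi}(\lambda t_i)\approx(\lambda+\epsilon)t_i$ customers during it, while its wall-side neighbour $n-i+1$ already carries $\approx(\lambda-\epsilon)t_{i-1}$ customers from the preceding slot. The ``upper control'' that you single out as the principal obstacle is thus trivial in the paper's coupled system, and there is no two-sided confinement, no unimodality lemma, and no first-exit-time bookkeeping to run. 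The numerology $t_i^2/t_{i-1}=n^{-8}$ then directly bounds the \emph{expected number} of departures from queue $n-i$ during slot $i$ by $O(n^{-8})$; the paper can therefore work with the strong event $D_i$ (``zero departures from queue $n-i$ in slot $i$'') rather than a positive-drift estimate, and a union bound over $i\le n$ (with $\mathbb{P}[D_i^{\complement}]\lesssim i^2/n^8$) gives the target failure probability $n^{-4}$.

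By leaving all arrivals on from time $0$, you manufacture a real difficulty: by the start of window $k$, queue $n-k$ may have $\Theta(\lambda\hat t_{k-1})\approx\Theta(\lambda t_1)$ customers from its unsuppressed arrival stream, while queue $n-k+1$ has only been confined on its much shorter window. You correctly identify that rescuing the argument would require a stochastic analogue of Proposition~\ref{prop:transience_unimodal}, but you neither prove it nor show it survives the numerology; this is exactly the content that a proof must supply and the paper avoids supplying. The two figures you quote for $x_0(\hat t_n)$ — the aspirational $e^{-O(n)}\lambda t_1$, which would only follow from the unproven unimodality, versus $\tfrac12(\lambda-\delta)n^4$, which is precisely what the delayed-arrival coupling yields because queue $0$ then receives arrivals only during the final slot of length $t_n=n^4$ — are separated by a double-exponential factor, and it is the second (smaller) figure that the proposition asserts. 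The missing idea, in short, is to apply Lemma~\ref{lem:mono2} a second time to delay arrivals, after which your entire cascade goes through with the upper bound for queue $n-k$ being the trivial $(\lambda+\epsilon)t_k$ and the estimate reducing to Chernoff bounds for Poisson random variables.
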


\begin{proof}
	We will implement the proof by a suitable discretization of time into slots indexed from $1$ to $n$. Slot $i$ corresponds to the time interval $[t_{i-1},t_i)$, with the convention that $t_0 = 0$. Furthermore from symmetry of the  interference sequence, we can assume that $x_n(0) = x_{-n}(0)$. We now define certain events which will help control the arrivals and departures into queues. For all $i \in \{1,2,\cdots,n-1\}$, we define the events $A_{i}^{(1)}, A_i^{(2)},D_i$ and $L_i$ as follows. The event $A_i^{(1)}$ is said to occur if the  number of arrivals into queues $n-i$ and $-n+i$ in slot $i$ is in the range $[(\lambda - \epsilon)t_i,(\lambda+\epsilon)t_i]$. The event $A_i^{(2)}$ is said to occur if the maximum number of arrivals into queues $n-i$ and $-n+i$ in time slot $i+1$ is at most $(\lambda + \epsilon)t_{i+1}$. Note that the events $A_i^{(1)}$ and $A_{i}^{(2)}$ correspond to events in different time slots. The event $D_i$ is said to occur if no departures happen in queues $n-i$ and $-n+i$ in the time slot $i$. The event $L_i$ is said to occur if no more that $(1 + \epsilon)t_{i+1}$ potential departures occur from both queues $n-i$ and $-n+i$ in time slot $i+1$. For the central queue, i.e., corresponding to index $n$, events $A_n^{(1)}$ and $D_n$ are defined as above. However, we do not define events $A_n^{(2)}$ and $L_n$, since we are only analyzing the dynamics up to time slot $n$. Similarly, for the other corner case, we only consider events $A_{0}^{(2)}$ and $L_0$ which corresponds to events in queues $n$ and $-n$ in time slot $1$.
	\\
	
	Since the system is monotone, we can further assume that arrivals into any queue $i \in \{0,1,\cdots,n\}$ only begins at the beginning of time slot $n-i$. The important fact to note is that under the event $\mathcal{E}_n : = \bigcap_{i=1}^{n-1} (A_{i}^{(1)} \cap A_{i}^{(2)} \cap D_i \cap L_i) \cap A_{0}^{(2)} \cap L_0 \cap A_n^{(1)} \cap D_n$, we have that $x_0(\hat{t}_n) \geq (\lambda -\epsilon) n^4$. This follows since in time slot $n$, there are at-least $(\lambda - \epsilon) t_n = (\lambda - \epsilon) n^4$ arrivals into queue $0$ due to event $A_n^{(1)}$ and no departures due to event $D_n$. Thus, it suffices to conclude that $\mathbb{P}[\mathcal{E}_n] \geq 1 - n^{-4}$ for all $n$ sufficiently large.
	\\
	
	For any $i \in \{0,\cdots,n\}$, the probability of events $A_{i}^{(1)}$ and $A_{i}^{(2)}$ are bounded from below by $1 - e^{-c_1 t_i}$ and $1 - e^{-c_2 t_{i+1}}$ for some $c_1,c_2 > 0$ that depend on $\lambda$ and $\epsilon$. This follows from elementary Chernoff arguments on the Poisson random variable. Observe that we do not consider $A_n^{(2)}$ and $A_0^{(1)}$ in the above computation.
	Similarly, for any $i \in \{0,1,\cdots n-1\}$, the probability of the event $L_i$ is bounded from below by $1 - e^{-c_3 t_i}$ for some $c_3 > 0$ that depends only on  $\epsilon$. Thus, it only remains to estimate the probability of events $D_i$ for $i \in \{1,\cdots,n\}$. We do this recursively, i.e., first estimate $D_1$ and then the other as follows. From the total probability law, we have that
	\begin{align*}
	\mathbb{P}[D_1^{\complement}]  \leq 1 - \mathbb{P}[D_1 \vert A_1^{1},A_0^{(2)}, L_0] \mathbb{P}[A_1^{1},A_0^{(2)}, L_0].
	\end{align*}
	Observe that $\mathbb{P}[A_1^{1},A_0^{(2)}, L_0] \geq 1- e^{-c t_1}$ for some $c > 0$ that depends only on $\lambda$ and $\epsilon$. Conditional on the events  $A_1^{1},A_0^{(2)}, L_0$, the maximum departure probability in queue $n-1$ in time slot $1$ is 
	\begin{align*}
	\frac{(\lambda + \epsilon) t_1 }{  (\lambda-\epsilon)t_1   +  (n)^{n2^{n+2}+8} - (1+\epsilon)t_1 } & \leq   \frac{(\lambda + \epsilon) ( n )^{n2^{n+1}} }{  (\lambda-\epsilon)( n )^{n2^{n+1}}   +  (n)^{n2^{n+2}+8} - (1+\epsilon)(n )^{n2^{n+1}} } \\
	& \leq \frac{(\lambda + \epsilon)}{(\lambda-\epsilon) + ( n )^{n2^{n+1} +8} - (1+ \epsilon) } \\
	&\leq \frac{ (\lambda + \epsilon)  }{  (n)^{n2^{n+1} + 8} - (1+\epsilon)   }.
	\end{align*}
	The first line follows since there can be no more than $(\lambda + \epsilon)t_1$ customers at any time in slot $1$. This is guaranteed by the event $A_1^{(1)}$. The minimum number of customers in queue $n+1$ at-any duration in time slot $1$ is $ (n)^{n2^{n+2}+8} - (1+\epsilon)t_1$ which follows from the initial condition and event $L_0$. Since the time slot is of duration $t_1$, the total number of possible departures from queue $n$ is stochastically dominated by a Poisson random variable with mean $ t_1 \frac{ (\lambda + \epsilon)  }{  (n)^{n2^{n+1} + 8} - (1+\epsilon)   } = \frac{(\lambda + \epsilon)}{n^8 - (1+\epsilon)t_1^{-1}} \stackrel{(a)}{\leq}  \frac{(\lambda + 2\epsilon)}{n^8 } \stackrel{(b)}{ \leq }\frac{1}{ n^8}$. Inequality $(a)$ holds for sufficiently large $n$ that only depends on $\lambda$ and $\epsilon$ while inequality $b$ holds  since $\lambda + 2 \epsilon < 2/3$. From the definition of the event $D_1$, it is the probability that there are no departure from queues $n-1$ and $-n+1$ in time slot $t_1$, which is at-least the probability that two independent Poisson random variables of mean $ n^{-8}$ both take value $0$. More precisely, we have
	\begin{align*}
	\mathbb{P}[D_1 \vert A_1^{1},A_0^{(2)}, L_0] \geq \mathbb{P}[X = 0]^2
	\end{align*}
	where $X$ is a Poisson random variable of mean $ n^{-8}$. From basic computations, this probability is equal to 
	\begin{align*}
	\mathbb{P}[D_1 \vert A_1^{1},A_0^{(2)}, L_0] \geq 1 - 2n^{-8},
	\end{align*}
	for all $n$ sufficiently large that do not depend on either $\lambda$ or $\epsilon$. Thus, combining the estimates, we get that 
	\begin{align*}
	\mathbb{P}[D_1^{\complement}] \leq 1 - (1 -  2n^{-8}) (1 - e^{-cn}) 
	 \leq  6 n^{-8},
	\end{align*}
	for all $n \geq n_0(\lambda,\epsilon)$ since $c$ only depends on $\lambda$ and $\epsilon$.
	\\
	
	Now, we can recursively estimate the probability of $D_i$ for $i \geq 2$. Assume we have established that for all $j \in \{1,\cdots i-1\}$, $\mathbb{P}[D_j^{\complement}] \leq (6 j^2)n^{-8}$. We now want to estimate the probability of event $D_i$. This can be written using total probability as 
	\begin{align*}
	\mathbb{P}[D_i^{\complement}] \leq 1 - \mathbb{P}[D_i \vert A_i^{1}, A_{i-1}^{(2)}, L_{i-1}, D_{i-1}] \mathbb{P}[A_i^{1}, A_{i-1}^{1}, L_{i-1}, D_{i-1}].
	\end{align*}
	Similarly as before, conditional on the events $A_i^{1}, A_{i-1}^{1}, L_{i-1}, D_{i-1}$, the maximum departure probability in slot $i$ in queue $n-i$ is at most
	\begin{align*}
	\frac{ (\lambda + \epsilon)  t_i  }{ (\lambda + \epsilon)  t_i + (\lambda - \epsilon) t_{i-1} - (1+ \epsilon) t_i  } &\leq    \frac{ (\lambda + \epsilon)  t_i  }{ (\lambda - \epsilon)  t_i + (\lambda - \epsilon) t_{i}^2  n^8 - (1+ \epsilon) t_i  } \\
	& \leq \frac{(\lambda + \epsilon) }{  (\lambda -\epsilon)t_i  n^8 - (1 - \lambda + 2 \epsilon) }.
	\end{align*}
	Thus, the total number of departures in queue $n-i$ in slot $i$ is stochastically dominated by a Poisson random variable of mean $\frac{(\lambda + \epsilon) t_i}{  (\lambda -\epsilon)t_i n^8 - (1 - \lambda+2\epsilon) } = \frac{\lambda + \epsilon}{  n^8 (\lambda - \epsilon) - (1 - \lambda+2\epsilon)t_i^{-1}   } \leq \frac{2}{n^8}$, for all $n$ sufficiently large that depends on $\lambda$ and $\epsilon$. Following similar reasoning as above, we have that $ \mathbb{P}[D_i \vert A_i^{1}, A_{i-1}^{(2)}, L_{i-1}, D_{i-1}]  \geq 1 - 5n^{-8}$, for all $i \in \{1,\cdots,n\}$ holding for all $n$ sufficiently large that do not depend on either $\lambda$ or $\epsilon$. An easy upper bound estimate for $\mathbb{P}[A_i^{1}, A_{i-1}^{1}, L_{i-1}, D_{i-1}] \leq 1 - \frac{6 (i-1)^2}{ n^8} - e^{-c t_n}$, where we use the fact that $(t_i)_{i=1}^{n}$ is decreasing in $n$. Combining these estimates, we get that     
	\begin{align*}
	\mathbb{P}[D_i^{\complement}] &\leq 1 - \left(1 - 5n^{-8}\right) \left(1 - \frac{6(i-1)^2}{ n^8} - e^{-c t_n} \right), \nonumber \\
	&\leq 5n^{-8} +  \frac{6(i-1)^2}{ n^8} + e^{-c t_n},\nonumber \\
	&=  \frac{6i^2 - 12i +11}{n^8} + e^{-ct_n}, \nonumber \\
	& = \frac{6 i^2}{n^8} + (e^{-ct_n}) + \frac{11- 12i}{n^8}, \nonumber \\
	& \leq \frac{6 i^2}{n^8} + (e^{-ct_n})  - \frac{1}{n^8} ,\nonumber \\
	&\leq  \frac{6 i^2}{n^8},
	\end{align*}
	holds for all $i \in \{2,\cdots,n\}$ when  $n \geq n_1(\lambda,\epsilon)$ is sufficiently large, since $c$ only depends on $\lambda$ and $\epsilon$. Thus, by a direct union bound, we have that $\mathbb{P}[\mathcal{E}_n] \geq 1 - \sum_{i=1}^n \frac{6 i^2}{ n^8} - n e^{-ct_n}$  which is lower bounded by $1-n^{-4}$ for all $n \geq n_2(\lambda,\epsilon)$ since $c$ only depends on $\lambda$ and $\epsilon$.

\end{proof}

With the above proposition in hand, we can conclude the proof of Proposition \ref{prop:quant_diver_full}.
\begin{proof}
	Proposition \ref{prop:quant_diver_full} gives that for all $n$ sufficiently large, if $\min(x_n(0),x_{-n}(0)) \geq (n)^{n2^{n+2}+8}$, then $\mathbb{P}[x_0(\hat{t}_n) \geq (\lambda - \epsilon) n^4  ] \geq 1 - n^{-4}$. In particular, since $\sum_{n}(j_n)^{-4} < \infty$, $\hat{t}_{j_n} \rightarrow \infty$ and $(\lambda - \epsilon)(j_n)^2 \rightarrow \infty$, the event $\{x_0(\hat{t}_{j_n}) \geq K \}$ for all $K$ does not occur for infinitely many $n$, thanks to Borel-Cantelli lemma. In particular, this yields that $\lim_{n \rightarrow \infty} x_0(\hat{t}_{j_n}) = \infty$ a.s. Since $\{\hat{t}_{j_n}\}_{n \in \mathbb{N}}$ is a deterministic sequence of times with $\hat{t}_{j_n} \rightarrow \infty$, $\lim_{n \rightarrow \infty} x_0(\hat{t}_{j_n}) = \infty$ a.s. implies that $\lim_{t \rightarrow \infty} x_0(t) = \infty$ a.s.
\end{proof}

%\end{comment}

\section{Proof of Transience through a direct Lyapunov Function construction}
\label{appendix_direct_lyapunov_transience}

\begin{proof}
	From monotonicity, it suffices to establish that the restriction of the queueing dynamics to the set $[-n,n]$
	is transient for some sufficiently large $n$. We will consider a slightly modified process 
	$\{\tilde{x}_i(t)\}_{i \in [-n,n]}$, such that $\tilde{x}_i(t) \leq x_i(t)$ for all $i \in [-n,n]$
	and all $t \geq 0$. This modified dynamics will evolve according to the same rules as our original
	dynamics, except that the arrival process is different along with a key-renormalization step. 
	We will argue that, almost surely, for all $i \in [-n,n]$, $\tilde{x}_i(t)$ converges to infinity,
	thereby implying transience for our original infinite system. To describe the modified dynamics,
	we will need several constants which we now describe.
	\\

	Given $\lambda > \frac{1}{3}$, we choose  $\delta > 0$ such that $\lambda > \frac{1}{3} + \delta$.
	We pick $h > 0$ and $\epsilon > 0$ small enough for having 
	$\mathbb{P}[\text{Exp}(\lambda) \leq h] - (1/3 + \delta)h \geq \epsilon > 0$.
	Here $\text{Exp}(\lambda)$ refers to an exponential random variable with mean $\lambda^{-1}$. 
	Such a $h$ and $\epsilon$ exist since $\lambda > 1/3 + \delta$.
	We pick $n \in \mathbb{N}$ sufficiently large so that $\frac{1}{3 - 2.01/n} < \frac{1}{3} + \delta$.
	From the arrival process $\mathcal{A}_i$, we construct a sub-process $\widehat{\mathcal{A}}_i$ 
	such that at most one arrival (the first if there is more than one) occurs in each queue and each time-slot.
	We choose $r \in \mathbb{N}$ with $r \geq 4n/\epsilon$ sufficiently large so that
	\begin{align}
	\frac{1}{r} \mathbb{E}[\min_{i \in [-n,n]} \widehat{\mathcal{A}}_i[0,rh] ] \geq (\frac{1}{3}+\delta)h + \frac{2}{3} \epsilon \label{eqn:r_arrival_bound} \\
	\frac{1}{r} \mathbb{E}[\max_{i \in [-n,n]} {\mathcal{D}}_i[[0,rh] \times [0,1/3+\delta] ]] \leq (\frac{1}{3}+\delta)h + \frac{1}{3} \epsilon \label{eqn:r_departure}
	\end{align}

	Such a choice for $r$ exists since $\frac{1}{r} \min_{i \in [-n,n]} \widehat{\mathcal{A}}_i[0,rh]$ converges
	almost surely. as $r$ goes to infinity to a value greater than or equal to $(1/3 + \delta)h + \epsilon$.
	Moreover, the convergence is also in $L^{1}$ since the family of random variables
	$\{\frac{1}{r} \min_{i \in [-n,n]} \widehat{\mathcal{A}}_i[0,rh]\}_{r \geq 1}$ are bounded from above and below by 
	$1$ and $0$, respectively. Similarly, the strong law of large numbers entails that 
	$\frac{1}{r} \max_{i \in [-n,n]} {\mathcal{D}}_i[0,rh]$ converges almost surely. to $(1/3+\delta)h$.
	This convergence also occurs in $L^{1}$ since the family of random variables $\{\frac{1}{r}  {\mathcal{D}}_i[0,rh]\}_{i \in [-n,n],r \geq 1}$ 
	is uniformly integrable (UI). Since $n$ is fixed, this immediately implies that
	$\{\frac{1}{r} \min_{i \in [-n,n]} {\mathcal{D}}_i[0,rh]\}_{i \in [-n,n],r \geq 1}$ is UI. 
	The UI of $\{\frac{1}{r}  {\mathcal{D}}_i[0,rh]\}_{i \in [-n,n],r \geq 1}$
	follows since the second moment \\ $\sup_{r \geq 1,i \in [-n,n]}\mathbb{E}\left[ \left(\frac{1}{r}  {\mathcal{D}}_i[0,rh]\right)^2 \right] < \infty$
	is uniformly bounded  in $r$ and $i$. 
	\\

	The dynamics of our lower-bound is identical to that of the original system, except that it is driven by
	the modified arrival process $\widehat{\mathcal{A}}$
	along with a modification at the end of every $r$ time slots.
	After every $r$ time slots, we {reduce} queue lengths to the largest possible
	\emph{envelope function}, which serves as a Lyapunov function. Denote by $L_n(\cdot):
	\mathbb{N} \rightarrow \mathbb{N}^{(2n+1)} : = [x,2x,\cdots, (n-1)x,nx,(n-1)x,\cdots,x] $, 
	the linear triangle function.  At the end of every $r$ slots, we will reduce the queue lengths to 
	$L_n(x)$ for the largest possible $x$.
	In other words, if the  queue length at the end of $r$ slots is given by the vector
	$Q:= [q_{-n},\cdots,q_0,\cdots,q_n]$, then we reduce the queue lengths to $L_n(x)$ for
	the largest possible $x \in \mathbb{N}$ such that no coordinate of $Q - L_n(x)$ is strictly negative.
	From the monotonicity of the dynamics, for all $i \in [-n,n]$ and $t \geq 0$, we have $\tilde{x}_i(t) \leq x_i(t)$.
	\\

	For $m \in \mathbb{Z}$, let $Y_m := \tilde{x}_{n}(mrh)$, i.e., $Y_m$ is the length of queue $n$
	\emph{just after} the reduction of customers to the level function $L_n(\cdot)$. 
	Since the dynamics is driven by Poisson point processes, it is easy to verify that $Y_m$ is
	a Markov process with respect to the filtration $\mathcal{F}_m^{Y} := \sigma(\cdots, Y_0,\cdots, Y_m)$.
	The following is a key structural lemma which will enable us to conclude about the proof of Theorem \ref{thm:transience}.
	
	\begin{lemma}
		There exists a $x_0 \in \mathbb{N}$ and $\gamma > 0$ such that for all $x \geq x_0$, we have $\mathbb{E}[Y_{1} - Y_0|Y_0 = x] \geq \gamma$. 
		\label{lem:transience_drift}
	\end{lemma}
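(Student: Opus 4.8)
The plan is to condition on $Y_0=x$, so that immediately after the renormalization at time $0$ the profile $\{\tilde x_i\}_{i\in[-n,n]}$ equals the triangular vector $L_n(x)$, run the $r$ slots up to time $rh$, and estimate the effect of the next renormalization. Write $L_n(x)_i=c_ix$ with $(c_i)$ symmetric, unimodal, positive, $c_n=1$, and — crucially — $c_{i-1}+c_i+c_{i+1}=3c_i$ for every $i\neq 0$, with only an $O(1)$ defect $c_{-1}+c_0+c_1=3c_0-O(1)$ at the apex; this defect is precisely what the slack $2.01/n$ in the choice of $n$ absorbs, since the apex departure rate in the profile $L_n(x)$ is $\frac{c_0}{3c_0-O(1)}\le\frac{1}{3-2.01/n}<\frac13+\delta<\lambda$ once $n$ is large. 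The first step is to show that over the \emph{fixed} horizon $[0,rh]$ the whole profile stays within an additive perturbation of $L_n(x)$ whose size does not grow with $x$: any queue receives at most $r$ arrivals on $[0,rh]$ (one per slot, by construction of $\widehat{\mathcal A}$), and loses at most $\mathcal D_i([0,rh]\times[0,1])$ customers, an a.s.\ finite $\mathrm{Poi}(rh)$ variable independent of $x$. Hence on the event $G_x:=\{\max_i\mathcal D_i([0,rh]\times[0,1])<\epsilon_0 x\}$ (for a small fixed $\epsilon_0$) no queue empties and each queue length stays within $(1\pm o_x(1))c_ix$, so by continuity every departure rate $R_i(t)$, $t\in[0,rh]$, is at most $\frac{1}{3-2.01/n}\le\frac13+\delta$ for all large $x$; on $G_x^{\mathsf c}$ we use the trivial bound $Y_1-Y_0\ge -x$, and since $\mathbb P[G_x^{\mathsf c}]\le(2n+1)\mathbb P[\mathrm{Poi}(rh)\ge\epsilon_0x]$ decays super-exponentially in $x$, $x\,\mathbb P[G_x^{\mathsf c}]\to 0$.

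On $G_x$ the accounting is a dominated renewal-type comparison: since the true departure rate never exceeds $\frac13+\delta$, the number of actual departures out of queue $i$ on $[0,rh]$ is at most $\mathcal D_i([0,rh]\times[0,\frac13+\delta])$, so the net increment $\Delta_i:=\tilde x_i(rh)-c_ix$ satisfies $\Delta_i\ge\widehat{\mathcal A}_i([0,rh])-\mathcal D_i([0,rh]\times[0,\frac13+\delta])$. Taking the minimum over $i$, $\min_i\Delta_i\ge W:=\min_i\widehat{\mathcal A}_i([0,rh])-\max_i\mathcal D_i([0,rh]\times[0,\frac13+\delta])$, and by \eqref{eqn:r_arrival_bound}–\eqref{eqn:r_departure} and linearity of expectation $\mathbb E[W]\ge\frac13 r\epsilon$. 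The renormalization gives $Y_1=\min_i\lfloor\tilde x_i(rh)/c_i\rfloor\ge\min_i\lfloor(c_ix+W)/c_i\rfloor=x+\min_i\lfloor W/c_i\rfloor\ge x+\phi(W)$, where $\phi(W):=\lfloor W/c_0\rfloor$ if $W\ge0$ and $\phi(W):=W$ if $W<0$; one checks $\phi(W)\ge\frac{W}{c_0}-1-|W|\mathbf 1_{\{W<0\}}$ in both cases. Thus on $G_x$, $Y_1-Y_0\ge\phi(W)$.

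It remains to show $\mathbb E[\phi(W)]$ is bounded away from $0$ uniformly in $x$ — which it is, since $W$ does not depend on $x$ at all. Writing $W$ as a sum of $r$ i.i.d.\ per-slot contributions, each bounded (the arrival parts Bernoulli, the departure parts $\mathrm{Poi}((\frac13+\delta)h)$ and hence with exponential moments), a Cramér/Chernoff bound gives $\mathbb P[W\le\frac16 r\epsilon]\le(2n+1)e^{-cr}$ for $r$ large while $\|W\|_2=O(r)$; enlarging the constant $r$ once more if necessary, $\mathbb E[|W|\mathbf 1_{\{W<0\}}]\le\|W\|_2\,\mathbb P[W<0]^{1/2}\le\frac{r\epsilon}{12c_0}$, so $\mathbb E[\phi(W)]\ge\frac{\mathbb E[W]}{c_0}-1-\frac{r\epsilon}{12c_0}\ge\frac{r\epsilon}{4c_0}-1=:2\gamma_0>0$. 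Since on $G_x^{\mathsf c}$ one still has $\phi(W)\ge W\ge-\sum_i\mathcal D_i([0,rh]\times[0,1])$, Cauchy–Schwarz with $\mathbb P[G_x^{\mathsf c}]\to0$ gives $\mathbb E[\phi(W)\mathbf 1_{G_x^{\mathsf c}}]\to0$, whence $\mathbb E[Y_1-Y_0\mid Y_0=x]\ge 2\gamma_0-o_x(1)-x\,\mathbb P[G_x^{\mathsf c}]\ge\gamma_0$ for all $x\ge x_0$ with $x_0$ chosen large. This is the claim with $\gamma=\gamma_0$.

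The \textbf{main obstacle} is the apparent circularity in the first step: controlling the departure rates needs the profile to stay near-linear, while keeping the profile near-linear needs the rates controlled. The resolution — the one place the structure of the model is genuinely used — is that over a \emph{fixed} time horizon $rh$ the number of potential departures per queue is a.s.\ bounded by a variable that does not grow with $x$, so the profile's perturbation is $O(1)$ in $x$ and the identity $c_{i-1}+c_i+c_{i+1}=3c_i$ (apex defect absorbed by $2.01/n$) forces all rates below $\frac13+\delta$ for large $x$; the remaining arithmetic — floors, the $\min/\max$ over the $2n+1$ queues, the negative tail of $W$ — costs only $O(1)$, negligible against $\gamma_0\asymp r\epsilon/n$.
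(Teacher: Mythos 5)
Your argument is essentially the paper's: decompose on a good/bad event, bound the departure rates on the good event, dominate the drift there by a floored, renormalized functional of $W:=\min_i\widehat{\mathcal A}_i([0,rh])-\max_i\mathcal D_i([0,rh]\times[0,\tfrac13+\delta])$ using Equations~(\ref{eqn:r_arrival_bound})--(\ref{eqn:r_departure}), and discharge the bad event with the trivial $Y_1-Y_0\ge -x$ bound against a probability that vanishes as $x\to\infty$. Two small imprecisions are worth recording.

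First, your good event $G_x$ uses the threshold $\epsilon_0 x$ on the \emph{raw} potential departure counts $\mathcal D_i([0,rh]\times[0,1])$, where the paper uses $Crh\log x$ on small-marked counts. Both choices work, but with yours the uniform deviation of the profile on $G_x$ is of order $\epsilon_0 x$, not $o_x(1)c_ix$ as you write: at an edge queue with $c_i=1$ the relative error is $\Theta(\epsilon_0)$, a constant. Consequently the edge departure rate on $G_x$ is $\approx 1/(3-2\epsilon_0)$, which has nothing to do with $1/(3-2.01/n)$; you need to record explicitly that $\epsilon_0$ is chosen small \emph{relative to $\delta$}, not merely to $1/n$, to keep all rates below $\tfrac13+\delta$. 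The paper's $Crh\log x$ threshold makes this step automatic since the perturbation is genuinely $o(x)$.

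Second, the chain ``$\phi(W)\ge W$'' is false when $W\ge0$ (for $c_0\ge1$ one has $\lfloor W/c_0\rfloor\le W$). What you need — and easily have — is $|\phi(W)|\le|W|$ together with $\|W\|_2<\infty$ (independent of $x$) and $\mathbb P[G_x^{\mathsf c}]\to 0$, whence Cauchy--Schwarz gives $|\mathbb E[\phi(W)\mathbf 1_{G_x^{\mathsf c}}]|\to 0$. Your $\phi$ device is in fact somewhat more careful than the paper's $\lfloor W/n\rfloor$, which agrees with $\min_i\lfloor W/c_i\rfloor$ only for $W\ge0$; the two treatments coincide on the dominant event.
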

	
	\begin{proof}
		To prove this lemma, we need to define a constant $C$ which satisfies
		$r(2n+1) \mathbb{P}[\text{Poi}((\frac{1}{3} + \delta)rh) \geq Crh \log(x)] \leq x^{-2}$ for all sufficiently large $x$.
		We call a collection of $r$ time-slots $[prh,(p+1)rh]$, for any $p \in \mathbb{Z}$ `x-good', if  $\max_{i \in [-n,n]} \mathcal{D}_i[[prh,(p+1)rh] \times [0,1/3+\delta]] \leq Cr h \log(x)$.
		In other words a collection of $r$ time-slots is `x-good' if the total number of potential departures
		in every queue that has its mark less than or equal to $1/3 + \delta$ does not exceed $Crh \log(x)$. 
		We can then use this definition to break up the drift into two terms as follows -

		\begin{align*}
		\mathbb{E}[Y_1 - Y_0|Y_0 = x]  = \mathbb{E}[(Y_1 - Y_0) \mathbf{1}_{ [0,rh] \text{ is }  x\text{-Good}  } \vert Y_0 = x] +  \mathbb{E}[(Y_1 - Y_0) (1-\mathbf{1}_{ [0,rh] \text{ is }  x\text{-Good}  }) \vert Y_0 = x].
		\end{align*}
		We can bound the drift trivially as follows - 
		\begin{align*}
		\mathbb{E}[Y_1 - Y_0|Y_0 = x] \geq \mathbb{E}[(Y_1 - Y_0) \mathbf{1}_{ [0,rh] \text{ is }  x\text{-Good}  } \vert Y_0 = x] -   x \mathbb{P}[[0,rh] \text{ is not } x\text{-Good}].
		\end{align*}
		Now using the definition of $x$-Good, we get
		\begin{align}
		\mathbb{E}[Y_1 - Y_0|Y_0 = x]	\geq   \mathbb{E}[(Y_1 - Y_0) \mathbf{1}_{ [0,rh] \text{ is }  x\text{-Good}  } \vert Y_0 = x] - \frac{1}{x}.
		\label{eqn:drift_ptve}
		\end{align}
		
		%		\begin{align}
		%		\mathbb{E}[Y_1 - Y_0|Y_0 = x] \nonumber \\ &= \mathbb{E}[(Y_1 - Y_0) \mathbf{1}_{ [0,rh] \text{ is }  x\text{-Good}  } \vert Y_0 = x] +\nonumber  \mathbb{E}[(Y_1 - Y_0) (1-\mathbf{1}_{ [0,rh] \text{ is }  x\text{-Good}  }) \vert Y_0 = x] \nonumber \\
		%		& \geq \mathbb{E}[(Y_1 - Y_0) \mathbf{1}_{ [0,rh] \text{ is }  x\text{-Good}  } \vert Y_0 = x] - \nonumber  x \mathbb{P}[[0,rh] \text{ is not } x\text{-Good}] \nonumber \\
		%		& \geq  \mathbb{E}[(Y_1 - Y_0) \mathbf{1}_{ [0,rh] \text{ is }  x\text{-Good}  } \vert Y_0 = x] - \frac{1}{x}
		%		\label{eqn:drift_ptve}
		%		\end{align} 
		Now let $x$ be sufficiently large so that 
		\begin{align}
		\frac{1}{3 - (\frac{2}{n} + \frac{3Crh \log(x)}{x} + \frac{3r}{x})} < \frac{1}{3} + \delta.
		\label{eqn:large_x_formula}
		\end{align}
		Such a choice of $x$ is possible since $n$ is so large that $\frac{1}{3 - 2.01/n} < \frac{1}{3} + \delta$.
		The crucial fact is that if $x$ satisfies Equation (\ref{eqn:large_x_formula}), then, on the event that a slot is x-Good,
		the number of departures from any queue in the collection of $r$-slots is at most the number of
		potential departures that have a mark less than or equal to $1/3 + \delta$.
		Conditioned on the collection of $r$ time-slots being x-Good, the maximum departure probability
		in any queue in that time period is 
		\begin{align}
		\frac{nx + r}{2(n-1)x - 2Crh\log(x) + nx - Crh \log(x)},
		\end{align}
		which is less than or equal to $1/3 + \delta$, since $x$ satisfies Equation (\ref{eqn:large_x_formula}).
		\\
		
		For notational convenience, for any $p \in \mathbb{Z}$ and $x \in \mathbb{N}$,
		denote by $E^{p}_x$ the event that the time interval $[prh,(p+1)rh]$ is $x$-Good. 
		\\
		
		If the time interval $[0,rh]$ is $x$-Good, then the drift $Y_1-Y_0$ is at-least the minimum difference
		in the total number of arrivals and departures with marks at most $1/3+\delta$,
		with a factor of $1/n$ to account for the re-normalization. In other words, we have
		
		\begin{align}
		\mathbb{E}[(Y_1 - Y_0) E^{0}_x \vert Y_0 = x] \geq   \mathbb{E}\bigg[ \bigg\lfloor \frac{1}{n}\min_{i \in [-n,n]} \nonumber\left( \widehat{\mathcal{A}}_i[0,rh] - (\mathcal{D}_i[[0,rh]\times[0,1/3+\delta]])E^{0}_x \right) \bigg\rfloor \bigg| Y_0 = x \bigg], \nonumber
		\end{align}
		
		The above inequality can be further simplified using Equation (\ref{eqn:r_arrival_bound}) and
		the fact that for all $x \in \mathbb{R}$, $\lfloor x \rfloor \geq x-1$ as follows.
		
		\begin{multline}
		\mathbb{E}[(Y_1 - Y_0) E^{0}_x \vert Y_0 = x] 	{\geq} \\ \frac{1}{n} \bigg( r\left((\frac{1}{3}+\delta)h + \frac{2\epsilon}{3} \right) -  \mathbb{E}[\max_{i \in [-n,n]} \mathcal{D}_i[[0,rh]\times[0,1/3+\delta]] E^{0}_x \vert Y_0 = x ]  \bigg)     -1.
		\label{eqn:drift_ptve2}
		\end{multline}
		%where the  inequality $(a)$ follows from Equation (\ref{eqn:r_arrival_bound}) and
		%	the fact that for all $x \in \mathbb{R}$, $\lfloor x \rfloor \geq x-1$.
		Now since  $\lim_{x \rightarrow \infty}\mathbf{1}_{[0,rh] \text{ is }x\text{-Good}} = 1$ almost surely,
		we have from dominated convergence and Equation (\ref{eqn:r_departure}) that 
		\begin{equation}
		\mathbb{E}[\max_{i \in [-n,n]} \mathcal{D}_i[[0,rh]\times[0,1/3+\delta]] E^{0}_x \vert Y_0 = x ] \leq r\bigg((\frac{1}{3} + \delta)h +  \frac{\epsilon}{3} + o(x) \bigg).
		\label{eqn:drift_ptve3}
		\end{equation}
		
		Combining Equations (\ref{eqn:drift_ptve},\ref{eqn:drift_ptve2}) and (\ref{eqn:drift_ptve3}),
		we get that for all $x$ that satisfy Equation (\ref{eqn:large_x_formula}), we have
		\begin{align*}
		\mathbb{E}[Y_1 - Y_0\vert Y_0 = x] \geq \frac{r}{n} \left(\frac{ \epsilon}{3} -  o(x) \right) -1.
		\end{align*}
		Since $r \geq 4n/\epsilon$, by choosing $x$ sufficiently large such that the $o(x)$ term is less
		than $\epsilon/12$, the drift will be strictly positive.

	\end{proof}
	
	We now complete the Proof of Theorem \ref{thm:transience}
	
	\begin{proof}

		It suffices to establish that the Markov chain $(Y_m)_{m \in \mathbb{N}}$ is transient. 
		This is apparent since $Y_m$ satisfies all the conditions of the main result in \cite{foss_transience},
		which we verify here for completeness. Denote by $\Delta_m := Y_{m+1} - Y_m$. If we establish the following
		three conditions on the one-step drift $\Delta_m$, then transience is concluded in view of
		Lemma \ref{lem:transience_drift} and the main result in \cite{foss_transience}.

		\begin{enumerate}
			\item For each $m \in \mathbb{N}$, $\Delta_m \leq r$ a.s.
			\item For all 
			$x,t \in \mathbb{N}$, $\mathbb{P}[|\Delta_1| \geq t \vert Y_0 = x ] \leq \mathbb{P}[\text{Poi}((2n+1)r(\lambda+1)rh) \geq t]$.
			In particular, for all $s \in \mathbb{R}$ and all $x \in \mathbb{N}$, 
			$\sup_{x \in \mathbb{N}} \mathbb{E}[{e^{s |\Delta_1|} \vert Y_0 = x}] < \infty$.
			\item For all $y \in \{0,1,\cdots,x_0-1\}$ where $x_0$ is defined in Lemma \ref{lem:transience_drift},
			$\mathbb{P}[\exists m \geq 1: Y_m \geq x_0 \vert Y_0 = y] = 1$.
		\end{enumerate}
		
		To finish the proof, we resort to the technique of \cite{foss_transience} which we reproduce for our scenario for completeness. 
		\begin{lemma}
			There exists a $s > 0$ such that  $\tilde{Y}_m = e^{-s(Y_m - x_0)^{+}}$ is a positive super-martingale.
		\end{lemma}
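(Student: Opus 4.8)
The plan is to pick $s>0$ small and verify the one‑step supermartingale inequality $\mathbb{E}[\tilde Y_{m+1}\mid\mathcal F^Y_m]\le\tilde Y_m$ directly, treating separately the states $Y_m\le x_0$ and $Y_m> x_0$. Positivity and integrability come for free, since $\tilde Y_m=e^{-s(Y_m-x_0)^+}\in(0,1]$. Because $(Y_m)_{m\in\mathbb N}$ is a Markov chain with respect to $\mathcal F^Y_m$, it is enough to show $g(x):=\mathbb{E}\big[e^{-s(Y_1-x_0)^+}\mid Y_0=x\big]\le e^{-s(x-x_0)^+}$ for every $x$. For $x\le x_0$ this is immediate: the left‑hand side is at most $1=e^{-s(x-x_0)^+}$.

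For $x>x_0$ set $u:=x-x_0>0$ and $\Delta:=Y_1-Y_0$, so $(Y_1-x_0)^+=(u+\Delta)^+$ and the desired bound $g(x)\le e^{-su}$ is equivalent to $\mathbb{E}\big[\exp\!\big(s(u-(u+\Delta)^+)\big)\mid Y_0=x\big]\le 1$. A direct case check gives the identity
\[
u-(u+\Delta)^+=\min(-\Delta,u)\le -\Delta ,
\]
hence $\exp\!\big(s(u-(u+\Delta)^+)\big)\le e^{-s\Delta}$, and it suffices to prove $\sup_{x>x_0}\mathbb{E}\big[e^{-s\Delta}\mid Y_0=x\big]\le 1$. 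This reduction is the point: it decouples the truncation at $x_0$ from the tail estimate, which is now a routine Cram\'er computation.

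To finish, use the second‑order expansion $e^{-s\Delta}=1-s\Delta+\tfrac{s^2}{2}\Delta^2e^{-s\theta\Delta}$ for some $\theta\in[0,1]$, so that for $x>x_0$,
\[
\mathbb{E}\big[e^{-s\Delta}\mid Y_0=x\big]\le 1-s\,\mathbb{E}\big[\Delta\mid Y_0=x\big]+\tfrac{s^2}{2}\,\mathbb{E}\big[\Delta^2e^{s|\Delta|}\mid Y_0=x\big].
\]
By Lemma~\ref{lem:transience_drift}, $\mathbb{E}[\Delta\mid Y_0=x]\ge\gamma$ for all $x\ge x_0$; by condition~(2) listed above, $|\Delta|$ is stochastically dominated, uniformly in $x$, by a fixed Poisson variable, so $M:=\sup_x\mathbb{E}[\Delta^2e^{|\Delta|}\mid Y_0=x]<\infty$ and $\mathbb{E}[\Delta^2e^{s|\Delta|}\mid Y_0=x]\le M$ for $0<s\le 1$. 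Hence $\mathbb{E}[e^{-s\Delta}\mid Y_0=x]\le 1-s\gamma+\tfrac{s^2}{2}M\le 1$ as soon as $0<s\le\min(1,2\gamma/M)$. Fixing such an $s$ yields $g(x)\le e^{-s(x-x_0)^+}$ for all $x$, i.e. $\mathbb{E}[\tilde Y_{m+1}\mid\mathcal F^Y_m]\le\tilde Y_m$, so $(\tilde Y_m)_{m\in\mathbb N}$ is a positive supermartingale.

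\textbf{Main difficulty.} There is no deep obstacle here, only a point requiring care: the interface at $x=x_0$, where a downward jump of $Y$ can cross the threshold and the truncation $(\cdot)^+$ could a priori break the inequality; this is exactly what the bound $u-(u+\Delta)^+\le-\Delta$ (equivalently $\min(-\Delta,u)\le-\Delta$) absorbs. The only other technical ingredient is uniformity of the quadratic remainder over the state $x$, which is supplied by the state‑independent Poisson domination of $|\Delta|$ from condition~(2).
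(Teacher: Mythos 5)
Your proof is correct, and it takes a genuinely different and in fact more careful route than the paper's. The paper fixes $a := y - x_0 > 0$, studies $f_a(s) := \mathbb{E}\bigl[e^{-s(a+\Delta_y)^{+}}\bigr] - e^{-sa}$ as a function of $s$, computes $f_a'(0)$, observes it is bounded below $-\delta' < 0$ uniformly in $a$, and then asserts ``by continuity'' that a single $s>0$ works for all $a \ge 0$. That final step is a uniform-in-$a$ claim which the paper does not actually justify (a pointwise $f_a'(0) < -\delta'$ together with pointwise continuity does not by itself give a uniform interval of negativity over an unbounded family), and the displayed derivative formula also has a typo, omitting the $+a$ term from $\frac{d}{ds}\bigl(-e^{-sa}\bigr)\big|_{s=0}$. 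Your argument avoids both issues: the elementary identity $u - (u+\Delta)^{+} = \min(-\Delta,u) \le -\Delta$ strips away the dependence on $u = x - x_0$ entirely, reducing the supermartingale inequality to the $u$-free bound $\sup_{x>x_0}\mathbb{E}\bigl[e^{-s\Delta}\mid Y_0=x\bigr]\le 1$; and the second-order Taylor bound, with the remainder controlled uniformly in $x$ by the state-independent Poisson domination from condition~(2), then gives an explicit admissible $s \le \min(1, 2\gamma/M)$. This is the same Cram\'er-type estimate the paper is gesturing at, but your reduction makes the uniformity transparent and the choice of $s$ constructive. Everything you cite (Lemma~\ref{lem:transience_drift} for the uniform positive drift $\gamma$, and condition~(2) for the exponential-moment control) is exactly what is available at that point in the paper's argument.
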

		\begin{proof}
			It suffices to establish that $\mathbb{E}[\tilde{Y}_1 - \tilde{Y}_0 \vert \tilde{Y}_0 = y] \leq 0$, for all $y \in \mathbb{N}$.
			If $y \leq x_0$, the inequality is immediate. Let $y > x_0$ and denote by $a:= y - x_0 > 0$.
			In this case, one can decompose the martingale difference as 
			\begin{align}
			\mathbb{E}[\tilde{Y}_1 - \tilde{Y}_0 \vert \tilde{Y}_0 = y] = \mathbb{E}[e^{-s(a + \Delta_y)^{+}} - e^{-sa}].
			\end{align}
			But since for all $s > 0$ and all $y$, $\mathbb{E}[e^{s|\Delta_y|}] < \infty$,
			the function $s \rightarrow \mathbb{E}[e^{-s(a + \Delta_y)^{+}} - e^{-sa}]$ 
			is bounded and continuous for each fixed $a$. Moreover the derivate at $s=0$ satisfies
			\begin{align*}
			\frac{d}{ds}\mathbb{E}[e^{-s(a + \Delta_y)^{+}} - e^{-sa}] \vert_{s=0} &= - \mathbb{E}[\Delta_y \mathbf{1}_{\Delta_y \geq -a}] -  a\mathbb{P}[\Delta_y \geq -a], \\
			& \stackrel{(a)}{\leq} -\mathbb{E}[\Delta_y \mathbf{1}_{\Delta_y \geq \epsilon}], \\
			&  \stackrel{(b)}{\leq} -\epsilon \mathbb{P}[\Delta_y \geq \epsilon].
			\end{align*} 
			Where $\epsilon$ in step $(a)$ is such that $\inf_{y \geq x_0} \mathbb{P}[\Delta_y \geq \epsilon] > 0$.  It is easy to observe that a $\epsilon > 0$ exists satisfying  $\inf_{y \geq x_0} \mathbb{P}[\Delta_y \geq \epsilon] > 0$ since the network is driven by a collection of Poisson Process and hence Step $(b)$ is a strict inequality. In particular, we have that 
			\begin{align}
			\inf_{a \geq 0} \frac{d}{ds}\mathbb{E}[e^{-s(a + \Delta_y)^{+}} - e^{-sa}] \leq -\epsilon \mathbb{P}[\Delta_y \geq \epsilon] < -\delta^{'},
			\end{align}
			for some $\delta^{'} > 0$. As the function $\mathbb{E}[e^{-s(a + \Delta_y)^{+}} - e^{-sa}]$ is also continuous around $0$ for all $a$, there exists a $s > 0$, such that for all $y \geq x_0$, we have $\mathbb{E}[e^{-s(a + \Delta_y)^{+}} - e^{-sa}] \leq 0$.
		\end{proof}

		Thus, by familiar martingale convergence theorems, $\tilde{Y}_m$ and hence $Y_m$ has an almost sure
		limit as $m$ goes to infinity. To conclude that $Y_m \rightarrow \infty$ almost surely,
		it suffices to establish that $\limsup_{m \rightarrow \infty} \tilde{Y}_m = 0$ almost surely which is equivalent to asserting that $\limsup_{m \rightarrow \infty} Y_m = \infty$ almost surely. It suffices to then establish that for all $b > 0$, $\mathbb{P}[Y_m > b \text{ infinitely often } \vert Y_0 = 0 ] = 1$.  This is in a sense immediate from the description of the dynamics as follows. We know that for all $y \in \mathbb{N}$,  
		$\mathbb{P}[Y_1 - Y_0 = r\vert Y_0 = y] \geq \left(\left(\frac{1}{3}+\delta\right)h + \epsilon\right)^re^{-rh} := \eta > 0$.
		Now let $b \in \mathbb{N}$ be arbitrary and let $\mathcal{E}_k$ be the event that
		$Y_{ k \lceil \frac{b}{r} \rceil } \geq b$.
		From the monotonicity of the dynamics, for all $y \in \mathbb{N}$, we have that
		$\mathbb{P}[\mathcal{E}_k\vert Y_{(k-1)\lceil \frac{b}{r} \rceil} = y] \geq \eta > 0$.
		Thus $\mathbb{P}[\mathcal{E}_{k}^{\mathsf{c}} \vert \mathcal{E}_{1}^{\mathsf{c}}, \cdots , \mathcal{E}_{k-1}^{\mathsf{c}}] \leq 1 - \eta^{'} < 1$. 
		It follows that $\mathbb{P}[ \cap_{k \geq 1} \mathcal{E}_{k}^{\mathsf{c}}] = 0$. 
		This implies that the random variable $\tau_1 := \min\{m \geq 0: Y_m \geq b \vert Y_0 = 0\}$ is a.s. finite.
		Now, from the strong Markov property, the sequence of random times
		$\tau_{k+1}:= \min\{m \geq \tau_{k}+1: Y_m \geq b\}$ are almost surely finite.
		This implies that $\mathbb{P}[Y_m \geq b \text{ infinitely often } \vert Y_0 = y] = 1$, for all $y \in \mathbb{N}$.
		Since $b$ was arbitrary, this implies that $\limsup_{m \rightarrow \infty}Y_m = \infty $ almost surely, thereby concluding the proof.
	\end{proof}

\end{proof}

	\end{appendix}

\section*{Acknowledgements}

The authors  thank several interesting discussions with Thomas Mountford on the model. In particular, the extension of the model to infinite interference support in Proposition \ref{prop:infinite_support_existence} was suggested by him. The authors also thank wonderful discussions with David Gamarnik, which prompted us to consider Proposition \ref{prop:quant_diver_full}. The authors also thank anonymous reviewers for making several suggestions that helped improve the presentation.
\\

A Sankararaman  thanks the generous hospitality of S. Foss for inviting and hosting him for two weeks, when parts of this work was carried out. S.Foss thanks the hospitality of F.Baccelli for inviting and hosting him at The University of Texas at Austin, when parts of this work were carried out.  A. Sankararaman and F. Baccelli were supported by the grant of the Simons Foundations  ($\#197982$ to The University of Texas at Austin) and by the NSF grant NSF-CCF-$1514275$. 
%S.Foss was partially supported by RSF research grant No. 17-11-01173. 

\bibliographystyle{plain}
\bibliography{aoap-interference-queues}

\end{document}